\theoremstyle{plain}
\newtheorem{Thm}{Theorem}
\newtheorem{Cor}[Thm]{Corollary}
\newtheorem{Lem}[Thm]{Lemma}
\newtheorem{Prop}[Thm]{Proposition}
\theoremstyle{definition}
\newtheorem{Def}[Thm]{Definition}
\newtheorem{Rmk}[Thm]{Remark}
\newtheorem{Eg}[Thm]{Example}
\theoremstyle{remark}
\font\fiverm=cmr5 \fi
\begin{document}


\title[Synchronization  for an infinite class of continued fraction   transformations]{Synchronization is full measure for all  $\alpha$-deformations of an infinite class of continued fraction   transformations}
\author{Kariane Calta}
\address{Vassar College} 
\email{kacalta@vassar.edu }

\author{Cor Kraaikamp}
\address{Technische Universiteit Delft and Thomas Stieltjes Institute of Mathematics\\ EWI\\ Mekelweg 4\\ 2628 CD Delft, the Netherlands}
\email{c.kraaikamp@tudelft.nl}

\author{Thomas A. Schmidt}
\address{Oregon State University\\Corvallis, OR 97331}
\email{toms@math.orst.edu}%
\thanks{The second and third named authors thank the Mathematics Department of Vassar College for support of a stimulating   visit.}
\keywords{Continued fractions, $\alpha$-continued fractions, matching, interval map}
\subjclass[2010]{11K50, 37A10, 37A25, 37E05}
\date{16 January 2017}%


\begin{abstract}     We study an infinite family of one-parameter deformations, so-called $\alpha$-continued fractions,  of interval maps associated to distinct triangle Fuchsian groups. 
  In general for such one-parameter deformations,  the function giving the entropy of the map indexed by $\alpha$ varies in a way directly related to whether or not the orbits of the endpoints of the map synchronize.     
For two cases of one-parameter deformations associated to the classical case of the modular group $\text{PSL}_2(\mathbb Z)$, the set of $\alpha$ for which synchronization occurs has been determined (see \cite{CT, CIT}, \cite{KraaikampSchmidtSteiner}). 
 
Here, we explicitly determine the  synchronization sets for each  $\alpha$-deformation in our infinite family.  (In general, our Fuchsian groups are not subgroups of the modular group, and hence the tool of relating $\alpha$-expansions back to regular continued fraction expansions is not available to us.)  A curiosity here is that all of our  synchronization sets can be described in terms of a single tree of words.    In a paper in preparation,  we identify the natural extensions of our maps, as well as the entropy functions associated to each deformation.  
\end{abstract}

\maketitle

\tableofcontents

\section{Introduction} 

\subsection{Main results}  Associated to each of the infinite family of groups $G_{3,n}$ defined below in \eqref{e:generators},  we introduce the continued fraction type maps $T_{3,n,\alpha}\,$, defined in \eqref{e:maps} below, parametrized  by $\alpha \in [0,1]$.   When $\alpha = 0$   this gives the (unaccelerated) maps treated in \cite{CaltaSchmidt}. We show that for each $n$, the set of those $\alpha$ such that the $T_{3,n,\alpha}$-orbits of the endpoints of the interval of definition, denoted $\ell_0(\alpha)$ and $r_0(\alpha)$ respectively,  eventually agree has full Lebesgue measure.    We call such agreeing of orbits {\em synchronization}.  We give a full description of the set of $\alpha$ for which synchronization occurs.   The following is a simply stated implication of this detailed description.

\begin{Thm}\label{t:FullMeasure}   For  $n\ge 3$,   
the set of $\alpha \in (0, 1)$ such that there exists $i = i_{\alpha}, j = j_{\alpha}$ with 
$T_{3,n,\alpha}^{i}(\,r_0(\alpha)\,) = T_{3,n,\alpha}^{j}(\,\ell_0(\alpha)\,)$ is of full Lebesgue measure.  
\end{Thm}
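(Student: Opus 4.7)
My plan is to prove the theorem by constructing an explicit countable family of pairwise disjoint open intervals, the \emph{matching intervals}, whose union has full Lebesgue measure in $(0,1)$, and on each of which synchronization holds with fixed indices $i,j$ and fixed symbolic itineraries for the two endpoints up to the synchronization time. These intervals will be indexed by the nodes of a single combinatorial tree of words, which is the explicit description promised in the abstract.

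First I would establish that synchronization is an open condition. The maps $T_{3,n,\alpha}$ and the endpoints $\ell_0(\alpha), r_0(\alpha)$ depend piecewise-continuously on $\alpha$, so if at some $\alpha_0$ one has $T_{3,n,\alpha_0}^{\,i}(r_0(\alpha_0)) = T_{3,n,\alpha_0}^{\,j}(\ell_0(\alpha_0))$ with the common value lying in the interior of a common cylinder for both iterates, then the same equality persists for $\alpha$ in a neighborhood of $\alpha_0$. Each synchronizing $\alpha$ therefore lies in a maximal matching interval on which the two itineraries up to synchronization are constant finite words.

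Next I would build the tree of words. The root describes the simplest possible coincidence of itineraries; each node is obtained from its parent by inserting a prescribed digit block into the itineraries of the two endpoints, producing a new matching interval nested into a gap between previously constructed ones. The combinatorial heart of the construction is to show that the intervals generated from the tree exhaust $(0,1)$ up to a negligible residual: every gap between already-produced matching intervals must itself contain countably many new matching intervals arising from deeper nodes. This is where the specific branch-structure of $T_{3,n,\alpha}$ enters, together with the admissibility conditions on digit sequences, and where the careful choice of the groups $G_{3,n}$ ensures that the same abstract tree governs all $n\ge 3$.

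Finally, to conclude full measure, I would show that the complement is Lebesgue null. Parameters in the complement correspond to infinite admissible paths in the tree, and along such a path the nested gap intervals contract at a definite geometric rate coming from the uniform expansion of the branches of $T_{3,n,\alpha}$; a Borel--Cantelli or self-similarity argument along the tree then yields measure zero. The main obstacle I expect is the combinatorial covering step, namely verifying that every gap left by the matching intervals is in turn filled by descendants in the tree of words. In the absence of a bridge to regular continued fractions, as was available in the modular-group case of \cite{CT, CIT, KraaikampSchmidtSteiner}, this covering must be established directly from the geometry and arithmetic of the maps $T_{3,n,\alpha}$, and doing so uniformly in $n\ge 3$ will be the heart of the argument.
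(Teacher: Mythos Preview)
Your broad architecture---synchronization intervals indexed by a tree of words, a combinatorial covering argument, and a measure-zero complement---does match the paper's strategy. However, your proposal is a plan rather than a proof, and two substantive points separate it from what the paper actually does.

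First, you do not identify the mechanism that produces synchronization. In the paper, synchronization on each interval $\mathscr J_{k,v}$ is not merely an open coincidence of orbits but is forced by an explicit group identity in $G_{3,n}$: one has $L_{k,v} = C^{-1}AC\,R_{k,v}$ (for small $\alpha$) or $L_{-k,v} = C^{-1}AC^2\,R_{-k,v}$ (for large $\alpha$), where $R$ and $L$ are the matrices encoding the right and left itineraries. These identities (Propositions~\ref{p:longWordInWmIsThree} and~\ref{l:longWordInU}) are what make the tree $\mathcal V$ the correct indexing object and what determine the endpoints $\zeta_{k,v},\eta_{k,v}$ of each synchronization interval. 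Without them, the ``combinatorial covering step'' you flag as the main obstacle has no starting point: the tree is not discovered abstractly but is dictated by how the synchronization relation propagates when admissibility of one endpoint's itinerary fails. The paper also splits $(0,1)$ into three regimes $(0,\gamma_{3,n})$, $(\gamma_{3,n},\epsilon_{3,n})$, $(\epsilon_{3,n},1)$ governed by different relations, a structural feature your outline does not anticipate.

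Second, your measure-zero argument is different and, as stated, insufficient. You propose geometric contraction of nested gaps plus Borel--Cantelli. The paper explicitly notes (Subsection~\ref{ss:nonSynchMeasZero}) that knowing the complement is a Cantor set does not rule out positive measure, and it does \emph{not} establish a contraction rate. Instead, it observes that any non-synchronizing $\alpha$ has $\underline{d}^\alpha_{[1,\infty)}$ using only the digits $\{-1,-2\}$ (respectively $\{-k,-k-1\}$ for large $\alpha$), so the $T_\alpha$-orbit of $\ell_0(\alpha)$ coincides with the orbit under the fixed map $g = T_{3,n,0}$ of~\cite{CaltaSchmidt}. Ergodicity of (an acceleration of) that map with respect to a finite measure equivalent to Lebesgue then forces the set of such $\alpha$ to be null. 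Your contraction approach might be salvageable, but you would need to prove a uniform ratio bound on $|\mathscr I_{k,\Theta_q(v)}|/|\mathscr I_{k,v}\setminus \mathscr J_{k,v}|$, which the paper neither provides nor needs.
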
 

Synchronization is key to determining planar models of natural extensions for many continued fraction type maps.   See for example  \cite{CT, KraaikampSchmidtSteiner}.   Indeed,  in a forthcoming paper we apply the results obtained in the present to give the natural extensions of the $T_{3, n, \alpha}$.  

A key phenomenon of our setting is that for all $n\ge 3$ there are two {\em synchronization relations},  in the sense that for $n$ fixed  there is a large subinterval of the values of $\alpha$ along which {\em all} values where synchronization occurs is announced by a basic relation in the group $G_{3,n}$ being satisfied by the elements $R, L$ such that  $T_{3,n,\alpha}^{i-1}(\,r_0(\alpha)\,) = R\cdot r_0(\alpha)$ and $T_{3,n,\alpha}^{j-1}(\,\ell_0(\alpha)\,)= L\cdot \ell_0(\alpha)$.      These relations,   discovered by computational investigation and justified in Propositions~\ref{p:longWordInWmIsThree} and ~\ref{l:longWordInU}, determine intervals along which synchronization occurs.

These {\em synchronization intervals} are in particular defined by admissibility of both the digits of the expansion of $T_{3,n,\alpha}^{i-1}(\,r_0(\alpha)\,)$ and of $T_{3,n,\alpha}^{j-1}(\,\ell_0(\alpha)\,)$.   An appropriate endpoint of each interval is determined by one of the expansions no longer being admissible, the applicable synchronization relation allows us to determine the other expansion at that endpoint.  This  leads to the tree of words, $\mathcal V$, defined in Definition~\ref{d:descendents}.

The intervals are  indexed by  $\mathbb Z_{\neq 0} \times \mathcal V$, where $\mathcal V$ is the  tree of words defined in Definition~\ref{d:descendents}.   (A proper subset of $\mathcal V$ is necessary when $-1$ is the indexing natural number.)   That the complement in $[0,1]$ of the collection of the intervals of synchronization is a thin Cantor set is proven by use of the ergodicity result of \cite{CaltaSchmidt} for the setting of $\alpha = 0$.  

The admissibility of the two expansions defining a synchronization interval is shown by induction, with the expansion related to the indexing pair being straightforward and the other expansion requiring a more delicate induction argument.    A new phenomenon presents itself in the proofs of admissibility:   the interval of admissibility of a candidate expansion of digits for some endpoint (in other terms, the corresponding higher rank cylinder) has  an endpoint determined by the longest string of digits having a property that we name full-branched, see \S~\ref{sss:fullBranched}. 

The determination of the synchronization set of $\alpha$ proceeds by treating three partitioning subintervals of $(0,1)$, after initial results in Sections~\ref{s:AlpZero}, ~\ref{s:alpOne}  describing the dynamics in the setting of $\alpha = 0,1$ for all $n, m \ge 3$.   The indexing is such that positive $k$ correspond to the leftmost subinterval of $\alpha$ where we express the synchronization intervals in terms of right digits; the rightmost subinterval corresponds to  $k<-1$ for this and the middle case of $k=1$, the synchronization intervals are expressed in terms of left digits.      

We expect that the case of $m>3$ will be very similar,  although the synchronization relations will involve longer words and thus some arguments will become awkwardly tedious.  Our work raises the question of whether there is a simple characterization of when a one-parameter deformation of interval maps has a set of synchronization relations. 

\subsection{Motivation} 

The $\alpha$-continued fractions of Nakada \cite{N} are associated to the modular group $\text{SL}_2(\mathbb R)$, whose projective quotient is $G_{2,3}$.   Nakada determined natural extensions and more for the setting of $\alpha \ge 1/2$.  Kraaikamp \cite{K} gave a more direct method for treating these values.   Intermediate results were obtained in particular by Marmi-Cassa-Moussa \cite{MCM}.  The  work of  Luzzi-Marmi \cite{LM} reinvigorated this area.   Nakada-Natsui \cite{NN} confirmed a numeric observation of \cite{LM} by showing what in our terminology is that a certain synchronization relation holds for the Nakada $\alpha$-continued fractions.  It was left to Tiozzo et al \cite{CIT} and, independently, \cite{KraaikampSchmidtSteiner} to show that the relation accounts for all synchronization.      These authors also showed that the entropy function, assigning to $\alpha$ the entropy of the interval indexed by $\alpha$, varies in a way previsely described in terms of the synchronization intervals.  (Note that some authors refer to synchronization as {\em matching}.) 

Partial results when the underlying groups being the Hecke triangle groups (thus, the $G_{2,q}, q\ge 3$) were given by \cite{DKS, KraaikampSchmidtSmeets}.    Tiozzo and others \cite{CIT} treat a one-parameter deformation sitting inside a two-dimensional family of continued fractions with underlying group the modular group introduced by Katok and Ugarcovici \cite{KatokUgarcovici10a}.   For both of the families associated to the modular group studied to date, Tiozzo and coauthors \cite{CIT, CT2, BCIT} relate the entropy function to explicit subsets of the Mandelbrot set; see also Tiozzo's thesis, \cite{T}.      
 
 Our goal is to study deformation families of continued fractions defined over higher degree fields, and determine how the entropy function varies.   The current work is the key step in this, in particular highlighting the central nature of synchronization relations.  In work in preparation, we apply these results to give the entropy functions.

\subsection{The basics of our maps}  
  We use the groups considered in \cite{CaltaSchmidt}.    Fix integers $n\ge m \ge 3$, and let $\mu = \mu_m = 2 \cos \pi/m, \,\nu = \nu_n =  2 \cos \pi/n$. Also let $t= \mu+\nu$ that is,

\[ t  := t_{m,n} = 2 \cos \pi/m + 2 \cos \pi/n.\]
  
Let  $G_{m,n}$ be generated by

\begin{equation}\label{e:generators} 
A = \begin{pmatrix} 1& t\\
                                      0&1\end{pmatrix},\, B = \begin{pmatrix} \nu& 1\\
                                      -1&0\end{pmatrix},\,   C = \begin{pmatrix} -\mu& 1\\
                                      -1&0\end{pmatrix}\,,
\end{equation}

\medskip
\noindent
and note that $C= AB$.    We work projectively,  hence $B, C$ are of order $n,m$ respectively while $A$ is of infinite order.    That is, $G_{m,n}$ is a Fuchsian triangle group of signature $(m,n,\infty)$.  

 \bigskip
 Fix  $\alpha \in [0,1]$ and define 
\[ \mathbb I_{m,n, \alpha}  := \mathbb I_{\alpha} = [\,(\alpha - 1)t, \alpha t\,)\,.\]

\noindent
Let 
\begin{equation}\label{e:maps} 
  T_{\alpha} = T_{m,n,\alpha}: x \mapsto A^{k} C^{l}\cdot x, 
\end{equation}
\bigskip 

\noindent
 where as usual, any  $2 \times 2$ matrix {\small$\begin{pmatrix}a & b \\ c & d\end{pmatrix}$} acts on real numbers by {\small$\begin{pmatrix}a & b \\ c & d\end{pmatrix} \cdot x = \dfrac{a x + b}{c x + d}$}, and  
 
\begin{itemize} 
\item  $l$ is minimal such that $C^{l} \cdot x \notin \mathbb I$

\medskip 
\item $k = -\lfloor (C^{l} \cdot x)/t + 1 - \alpha\, \rfloor$.

\end{itemize}

\medskip 
\noindent
We consider $T_{\alpha}$ as a map on the closed interval taking values in the half-open interval $ \mathbb I_{\alpha}$,
\[  T_{\alpha}:  [\,(\alpha - 1)t, \alpha t\,] \to \mathbb I_{\alpha}\,.\]

\bigskip 
\subsection{Geometric perspective, well-definedness}
The reader may well ask if there always does exist an $l$ such that  $C^{l} \cdot x \notin \mathbb I_{\alpha}$.   For the special cases of $\alpha = 0, 1$ see below; here we briefly indicate the setting for all other $\alpha$.     A quick study of the graph of the function $x \mapsto C\cdot x$ shows that this has horizontal asymptotes given by $y = \mu$, a pole at $x = 0$ and a zero at $x = 1/\mu$.  Of course this is an increasing function.   In fact,   $C$ is an elliptic matrix   (that is, its trace is of absolute value less than 2) that fixes a point in the upper half-plane of real part $\mu/2$.  It thus acts as a rotation about that fixed point.   Indeed, it acts as a rotation on a hyperbolic $m$-gon; from the words above,    this $m$-gon has consecutive vertices   $1/\mu, 0, \infty , \mu$.  (Note that when $m=3$, we have $\mu = 1/\mu =1$.)  Therefore the remaining $m-4$-vertices lie between $1/\mu$ and $\mu$; let us denote the set of all vertices by  $v_1 = \mu, v_2, \dots, v_{m-3}, v_{m-2} = 1/\mu, v_{m-1} = 0, v_{m} = \infty$.       Thus, $C$ acts on the real line as  $(-\infty, 0) \to (\mu, \infty) \to (v_2, \mu)\to (v_2, v_3) \to \cdots \to (1/\mu, v_{m-3})\to (0, 1/\mu) \to (-\infty, 0)$.  For $x \in \mathbb I_{\alpha}$,  the map $T_{\alpha}$ is thus the composition of rotating by powers of $C$ until $C^l\cdot x$ is no longer in $\mathbb I_{\alpha}$, and then shifting by applying the appropriate power of $A$ to bring this image back into $\mathbb I_{\alpha}$.

Certainly the left endpoint of $\mathbb I_{\alpha}$, being negative,  is sent by $C$ to a positive real number.  We now briefly indicate why this value is greater than  $\alpha t$. It then follows that $T_{\alpha}$ on the left endpoint is given by some $A^{-k}C$ with $k>0$. 
In fact,  for all negative $x\in \mathbb I_{\alpha}$, we claim that $C\cdot x > x+t$.  Elementary calculus shows that the graph of $x \mapsto C\cdot x$ has a tangent line with slope $1$ of equation $y= x + \mu +2$.   Since $\nu < 2$,  the tangent line lies below the line $y=x+t$.  Since the map has a pole at $x=0$, it easily follows that the claim holds.     The claim implies that there is a leftmost subinterval sent outside of $\mathbb I_{\alpha}$ by $C$; we can partition $\mathbb I_{\alpha}$ by applications of powers of $C^{-1}$ to this leftmost subinterval (with the rightmost image subinterval  restricted to its intersection with $\mathbb I_{\alpha}$).   In particular, it follows that  there always does indeed exist an $l$ such that  $C^{l} \cdot x \notin \mathbb I_{\alpha}$. 
\bigskip 

\subsection{Continued fraction perspective}   
The main aim of this subsection is to assure the reader that the $T_{m,n,\alpha}$ indeed yield continued fractions, and therefore are reasonably called continued fraction-like maps.

Since 
\[
C\cdot x = \frac{-\mu x+1}{-x} = \frac{-1}{x} + \mu,
\]
we immediately find that
\begin{equation}\label{cf:Cpart1}
x = \frac{1}{\mu-C\cdot x},
\end{equation}
Now
\[
C^2\cdot x = C\cdot (C\cdot x) =  \frac{-1}{C\cdot x} + \mu,
\]
yielding that
\begin{equation}\label{cf:Cpart2}
C\cdot x = \frac{1}{\mu-C^2\cdot x}.
\end{equation}
Now~(\ref{cf:Cpart1}) and~(\ref{cf:Cpart2}) yield that
\[
x = \frac{1}{\mu - \displaystyle{\frac{1}{\mu - C^2\cdot x}}} .
\]
After $l$ times we find
\begin{equation}\label{cf:Cpart3}
x = \frac{1}{\mu - \displaystyle{\frac{1}{\mu - \displaystyle{\ddots \displaystyle{-\frac{1}{\mu - C^l\cdot x}}}}}}.
\end{equation}
Since $A^k\cdot x = x+kt$, we see that $T_{\alpha}(x) = A^k\cdot (C^{l}\cdot x) = C^{l}\cdot x +kt$, yielding that
$C^{\ell}\cdot x = T_{\alpha}(x) - kt$. Substituting this in~(\ref{cf:Cpart3}) gives
$$
x = \frac{1}{\mu - \displaystyle{\frac{1}{\mu - \displaystyle{\ddots \displaystyle{-\frac{1}{\mu - \displaystyle{\frac{1}{\mu + kt - T_{\alpha}(x)}}}}}}}}.
$$
Continuing in this way we find a continued fraction expansion of $x$ with partial quotients given by $\mu$ and $\mu+kt$, with $k\in\mathbb N$.

 
\subsection{Digits, cylinders,  admissible words, ordering}\label{ss:digitsCylsEtc}

When studying the dynamics of our maps, the orbits of the interval endpoints  of $\mathbb I_{\alpha}$ are of utmost importance.  We define
\[ 
\begin{aligned}
\ell_0&=(\alpha-1) t &\textrm{ and }&\; \ell_i=T^{i}_\alpha(\ell_0), \textrm{ for }i\geq1\,,\\
r_0&=\; \alpha t      &\textrm{ and }& \; r_j=T^{j}_\alpha(r_0), \textrm{ for }j\geq1\,.
\end{aligned}
\]

\subsubsection{Cylinders,  notation for digit sequences, full cylinders}\label{sss:cylsSequence}
   \noindent

\begin{figure}[h]
\noindent
\scalebox{.7}{
\begin{tikzpicture}[x=3.5cm,y=5cm]
\draw  (-1.62, 0)--(.38, 0);
\draw  (-0.8, -1)--(1.2, -1);
\draw  (-0.38, -2)--(1.62, -2);
 
\draw  (0, -0.1)--(0, 0.2);
\draw  (0, -1.1)--(0, -0.8);
\draw  (0.54, -1.1)--(0.54, -0.8);
\draw  (1, -1.1)--(1, -0.8);
\draw  (0, -2.1)--(0, -1.8);
\draw  (0.68, -2.1)--(0.68, -1.8);
\draw  (1, -2.1)--(1, -1.8);
\draw[thin,dashed] (-0.72, 0)--(-0.72, 0.2);
\draw[thin,dashed] (-0.28, 0)--(-0.28, 0.2);
\draw[thin,dashed] (0.24, 0)--(0.24, 0.2);
\draw[thin,dashed] (-.48, -1.1)--(-.48, -0.8);        
\draw[thin,dashed] (-0.18, -1)--(-0.18, -0.8); 
\draw[thin,dashed] (0.25, -1)--(0.25, -0.8);
\draw[thin,dashed] (0.85, -1)--(0.85, -0.8);
\draw[thin,dashed] (1.15, -1)--(1.15, -0.8);
\node at (-1.3,  0.12) {$\Delta_{\alpha}(-1,1)$};
\node at (-0.5, 0.12) {$\Delta_{\alpha}(-2,1)$};
\node at (0.45, 0.12) {$\Delta_{\alpha}(k,1)$};
\foreach \x/\y in {-1.62/-0.2, -0.8/-1.2,-0.38/-2.2, 
} { \node at (\x,\y) {$\ell_0$}; }  
\node at (0.38, -0.2) {$r_0$};    
\node at (1.2, -1.2) {$r_0$}; 
\node at (1.62, -2.2) {$r_0$}; 
\node at (0.5, -0.2) {$\mathfrak b$};  
\node at (0.55, -1.2) {$\mathfrak b$};
\node at (0.71, -2.2) {$\mathfrak b$}; 
%
\foreach \x/\y in {-0.15/0.12, 0.15/0.12, 
-0.1/-0.88,  0.1/-0.88,0.93/-0.88, 1.08/-0.88,
-0.09/-1.88, 0.09/-1.88, 1.045/-1.88 
} { \node at (\x,\y) {$\cdots$}; }  
\foreach \x/\y in {-1.62/0, .38/0, .5/0,
-0.8/-1,  1.2/-1, 
 -0.38/-2, 1.62/-2
} { \node at (\x,\y) {$\bullet$}; }   
\node at (0, -0.2) {$0$};  
\node at (0, -1.2) {$0$};
\node at (0, -2.2) {$0$};
\node at (1, -1.2) {$1$};
\node at (1, -2.2) {$1$};
\node at (-0.65,-0.88) {$(-1,1)$};   
\node at (-0.325,-0.88) {$(-2,1)$};
\node at (0.4,-0.88) {$(1,1)$};
\node at (0.7,-0.88) {$(-1,2)$};
\node at (1.3,-0.88) {$(k,2)$}; 
\node at (-0.4, -1.88) {$(-k,1)$};   
\node at (0.3, -1.88) {$(2,1)$};      
\node at (0.55, -1.88) {$(1,1)$};   
\node at (0.84, -1.88) {$(-k,2)$}; 
\node at (1.29, -1.88) {$(2,2)$}; 
\node at (1.6, -1.88) {$(1,2)$}; 
\draw[thin,dashed] (-0.22, -2)--(-0.22, -1.8);        
\draw[thin,dashed] (0.19, -2)--(0.19, -1.8); 
\draw[thin,dashed] (0.42, -2)--(0.42, -1.8);
\draw[thin,dashed] (1, -2)--(1, -1.8);
\draw[thin,dashed] (1.18, -2)--(1.18, -1.8);
\draw[thin,dashed] (1.4, -2)--(1.4, -1.8);   

\end{tikzpicture}
}
\caption{Schematic representation of cylinders for three values of $\alpha$ when $m=n=3$.  Top: $\alpha< \gamma_{3,3}$; middle $\alpha < \epsilon_{3,3}$; bottom: $\alpha> \epsilon_{3,3}$.  For the bottom two,  $(k,l)$ denotes $\Delta_{\alpha}(k,l)$.}%
\label{f:someIalphaWithCylinders}%
\end{figure}
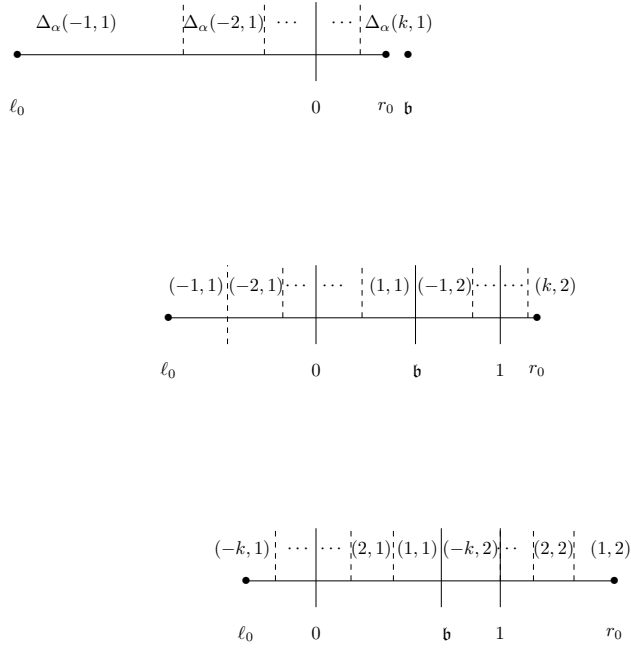

The {\em cylinders} for the map $T_{\alpha}$ are
\[ \Delta(k, l) =\Delta_{\alpha}(k, l) := \{ \, x \,|\, T_\alpha(x) = A^{k}C^{l}\cdot x\,\}\,.\]  
See Figure~\ref{f:someIalphaWithCylinders} for a representation of some explicit cylinders.
 Note that since each of 
$A, C$ are of positive determinant,  $T_{\alpha}$ is an {\em increasing function} on each of its cylinders.
We call $(k,l)$ the $\alpha$-digit of $x$ if $x\in \Delta_{\alpha}(k, l)$.  
 Define 
 \[\underline{b}_{[1, \infty)}^{\alpha} =  (k_1, l_1)(k_2, l_2)\dots \]  to be the sequence of digits for the orbit of $\ell_0 = (\alpha-1)t$; that is,  $\ell_0 \in \Delta(k_1, l_1)$, $\ell_1 \in \Delta(k_2, l_2)$, etc.  
 Similarly, define $\overline{b}{}^{\alpha}_{[1, \infty)}$ as the word giving the digits of the orbit of $r_0$.

A cylinder $\Delta_{\alpha}(k,l)$ is called {\em full} if its image under $T_{\alpha}$ is all of $\mathbb I_{\alpha}$.      Since the action by $C$ has a pole at $x=0$,  for all $\alpha \neq 0,1$ there are full cylinders $\Delta_{\alpha}(k, 1)$ with $k\in \mathbb Z$ of arbitrarily large absolute value.

  The only cylinder with $l = 1$ and $k<0$ that could be non-full is the leftmost cylinder of $\mathbb I_{\alpha}$, thus the cylinder of $\ell_0(\alpha)$.     Let 
  \[\mathfrak b = \mathfrak b_{\alpha} = C^{-1}\cdot \ell_0(\alpha).\]   Note that since $\ell_0(\alpha)<0$, one has $\mathfrak b < 1/\mu$.   If $\mathfrak b \notin \mathbb I_{\alpha}$,  then the rightmost cylinder of $\mathbb I_{\alpha}$ has $l=1, k>0$ and may be non-full.   If $\mathfrak b \in \mathbb I_{\alpha}$,  then all cylinders of index $(k,l), l=1, k>0$ are full; indeed,  (with possible exception of the rightmost cylinder) the cylinder of index $(k,2)$ is full if and only if the cylinder of index $(k,1)$ is (this, as $C$ acts so as to send $\Delta_{\alpha}(k, l+1)$ to $\Delta_{\alpha}(k, l)$\;). Continuing with analysis of this type shows that in general,  the only candidates for non-full cylinders are those of index $\{ (k, l), (k, l+1), \dots, (k, m-1),  (k', l')\}$ where $(k, l)$ is the $\alpha$-digit of $\ell_0(\alpha)$ and $(k', l')$ that of $r_0(\alpha)$.     Note that   the  $T_{\alpha}$ image of $\Delta_{\alpha}(k,l+j)$ is exactly the $T_{\alpha}$ image of $\Delta_{\alpha}(k,l)$, which is $[\ell_1, r_0)$.  (These are what one calls {\em right full} cylinders.)   Also,  the $T_{\alpha}$ image of $\Delta_{\alpha}(k',l')$  is $[\ell_0, r_1)$.   (It is a  {\em left full} cylinder.)

\begin{figure}
\scalebox{.5}{
{\includegraphics{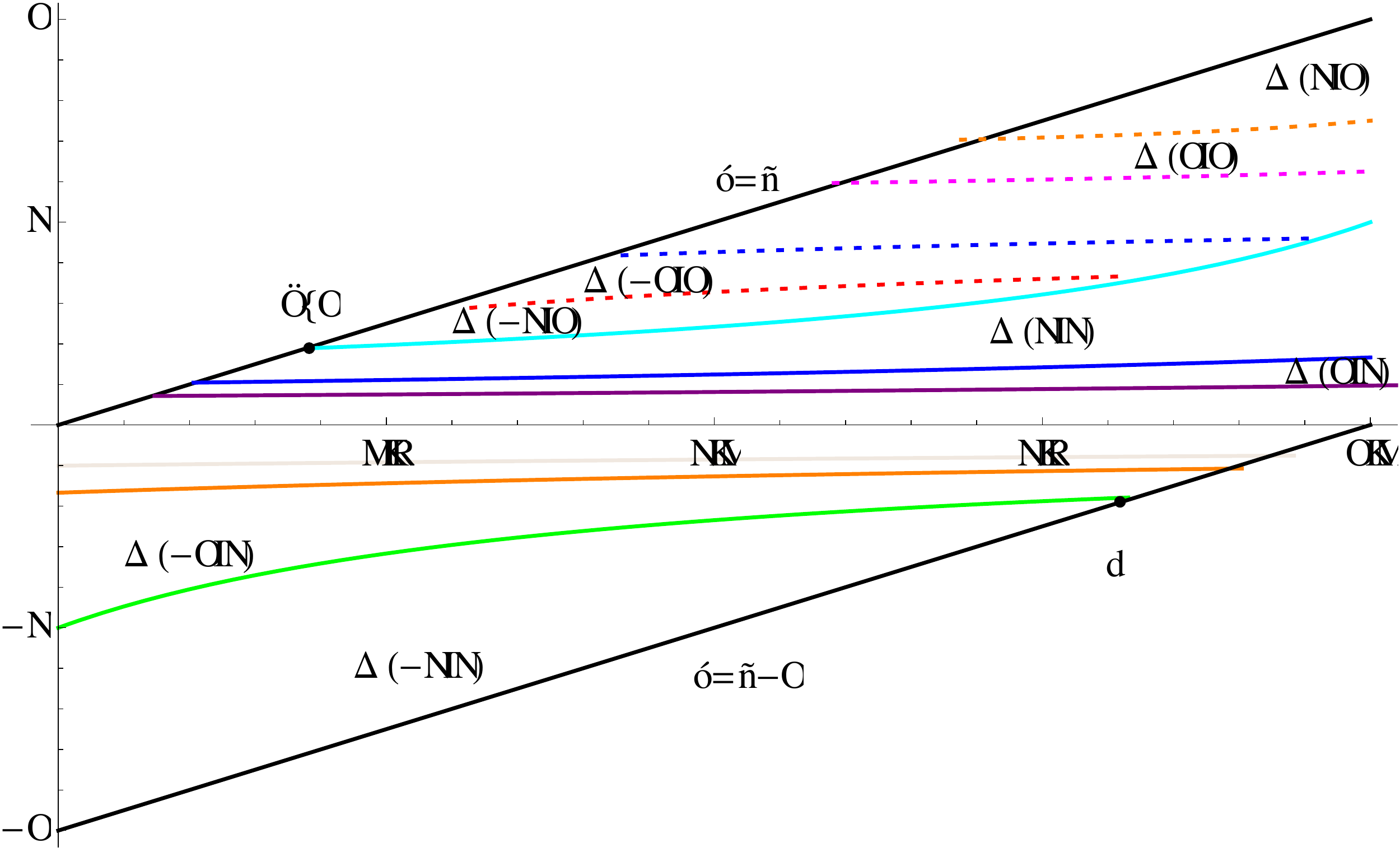}}}
\caption{The unions of the various $\Delta_{3,3,\alpha}(k,l)$, see Subsection~\ref{ss:digitsCylsEtc}.  Each $\mathbb I_{\alpha}$ is given as a vertical fiber (of coordinate $x = \alpha t$ with $t=t_{3,3}= 2$), from the left endpoint at the bottom up to the right  endpoint at the top of the fiber.   For each $\alpha$ such that they exist, 
the $\Delta_{\alpha}(k, 2)$ have limiting value as $|k|\to \infty$ of $1$.  Similarly, 
the  $\Delta_{\alpha}(k, 1)$ have limiting value as $|k|\to \infty$ of $0$.  Here $2\epsilon_{3,3} = G= (1 +\sqrt{5})/2$ and $2\gamma_{3,3} = g^2 = (G-1)^2$ correspond to the main division points in the interval of the parameter $\alpha$.}
\label{allDigs}
\end{figure}

\subsubsection{Admissibility and orders, definitions}\label{sss:admissOrd}
A word $U$ in the letters $A$, $C$ is called {\em admissible} for a pair $\alpha$ and $x\in \mathbb I_\alpha$ if  $U =A^{k_u}C^{l_u}\cdots A^{k_1}C^{l_1}$ and for each $j$, with $1\le j \le n$, one has  $A^{k_j}C^{l_j}\cdots A^{k_1}C^{l_1}\cdot x = T_{\alpha}^j(x)$.  Note that this is equivalent to having for each $j$  both  that (1) $A^{k_j}C^{l_j}\cdots A^{k_1}C^{l_1}\cdot x \in \mathbb I_{\alpha}$ and (2) $l_j$ is minimal such that 
$C^{l_j}A^{k_{j-1}}C^{l_{j-1}}\cdots A^{k_1}C^{l_1}\notin \mathbb I_{\alpha}$.    We also simply say that such a word $U$ is admissible for $\alpha$ if there exists an $x \in \mathbb I_{\alpha}$ such that $U$ is admissible for the pair $\alpha, x$.

   The $\alpha$-alphabet is the set of possible single digits for $x \in \mathbb I_{\alpha}$, that is all $(k, l)$ such that  $\Delta_{\alpha}(k, l)\neq \emptyset$.   The standard ordering of real numbers then induces an ordering of this alphabet:   $(k, l) \prec  (k', l')$ if $\Delta_{\alpha}(k, l)$ lies to the left of $\Delta_{\alpha}(k', l')$.  (Confer Figures  ~\ref{f:someIalphaWithCylinders} and ~\ref{allDigs};  in this second figure the order on each $\mathbb I_{\alpha}$ is rather from bottom to top.)  
   
   The analysis for the setting of fullness of cylinders also yields that for $k>0$, $\Delta_{\alpha}(-k,1)$ lies to the left of any  $\Delta_{\alpha}(-k -j,1)$ with $j>0$, as well as to the left of any $\Delta_{\alpha}(k',1)$, with $k'>0$.  Similarly,   $\Delta_{\alpha}(k',1)$ lies to the left of any $\Delta_{\alpha}(k'-j,1), j<k'$.    Now since $C$ acts in an order preserving manner,  we find that any $\Delta_{\alpha}(k,l)$ lies to the left of all $\Delta_{\alpha}(k,l+1)$.   We thus find that for each $\alpha$, the ordering on the $\alpha$-alphabet is a restriction of  the following order.

For each $\alpha$, the $\alpha$-alphabet is a (strict) subset of  $(\mathbb Z \setminus \{0\}\,) \times \{1, 2, \dots, m-1\}$.   We define the   {\em full order}  on $(\mathbb Z \setminus \{0\}\,) \times \{1, 2, \dots, m-1\}$  by  
\begin{equation}\label{e:theFullOrder}
\begin{aligned} 
(k, l) \prec  (k', l') \;\; \text{if and only if}\;\;&(i)\;  l<l', \\
                                                                \text{or}\;\; &(ii)\; l=l'\; \text{and one of}\;  k<k'<0, \;  k<0<k', \;\text{or}\; 0<k'<k.
\end{aligned}
\end{equation} 
  Thus,  
when $m=3$  and $n\ge 3$,   the full ordering is  given in Table~\ref{t:theOrder}.  
\begin{table}
\[ \begin{aligned}
&(1, 2) \succ (2,2) \succ \cdots  \succ (N,  2)\succ \cdots  \\
&---------------- \\
&\succ \cdots \succ (-M, 2) \succ  \cdots \succ  (-2, 2)  \succ (-1,2)\\
&---------------- \\
&\succ (1,1) \succ (2,1) \succ  \cdots \succ (N,  1) \succ  \cdots \\
&---------------- \\
&\succ \cdots \succ (-M,1) \succ \cdots \succ (-2,1) \succ(-1, 1).
\\
\\
\end{aligned}
\]

\caption{  The full order.  Here $m=3, n\ge 3$.}\label{t:theOrder}
\end{table} 
This  ordering extends to the set of all words  (including infinite words) in the usual lexicographic manner. 

 The $\alpha$-alphabet depends only on the first digits of $\ell_0(\alpha)$ and $r_0(\alpha)$, as we now prove.
\begin{Lem}\label{l:alphaAlpha}  Fix $m,n,\alpha$.  Then the  $\alpha$-alphabet depends only on the first digits of $\ell_0(\alpha)$ and $r_0(\alpha)$.  

More precisely,  denote the cylinders of $\ell_0(\alpha)$ and $r_0(\alpha)$ by $(-k, 1), (k', l')$, respectively.    Then the $\alpha$-alphabet is 
\[ \{(k'+j, l')\,|\, j\ge 0\} \cup \bigcup_{1\le l\le l'}\, \{(-k-j, l)\,|\, j\ge 0\}\cup \bigcup_{1\le l< l'}\, \{(j, l)\,|\, j> 0\}.\]
\end{Lem}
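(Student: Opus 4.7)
My plan is to enumerate the nonempty cylinders $\Delta_\alpha(k,l)$ by exploiting the recursion
\[
\Delta_\alpha(k, l) \;=\; \mathbb{I}_\alpha \cap C^{-1}\bigl(\Delta_\alpha(k, l-1)\bigr) \qquad (l \ge 2),
\]
which follows from the observation that for $l \ge 2$, $x \in \Delta_\alpha(k,l)$ if and only if $C\cdot x \in \mathbb{I}_\alpha$ and $C\cdot x$ itself has $T_\alpha$-digit $(k, l-1)$. Equivalently, $(k,l)$ lies in the $\alpha$-alphabet iff $\Delta_\alpha(k, l-1) \cap C(\mathbb{I}_\alpha) \neq \emptyset$, so it suffices to determine the $l=1$ alphabet explicitly and then iterate.

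For the base case $l=1$ I use the geometric analysis given just before the statement. Every negative $x \in [\ell_0, 0)$ satisfies the bound $C\cdot x > x + t \ge r_0$, so $l=1$ with $k<0$ throughout; as $x$ sweeps from $\ell_0$ to $0^-$ the image $C \cdot x$ sweeps continuously from $C\cdot \ell_0$ up to $+\infty$, producing the family $\{\Delta_\alpha(-k-j, 1) : j \ge 0\}$ with $(-k,1)$ the digit at $\ell_0$. Symmetrically, for $x \in (0, \mathfrak{b})$ one has $C\cdot x < \ell_0$, so $l=1$ with $k>0$, and $C\cdot x$ sweeps from $-\infty$ up to $\ell_0$, producing the positive-$k$ family $\{\Delta_\alpha(j,1) : j \ge 1\}$. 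This family extends all the way to $j=1$ when $r_0 \ge \mathfrak{b}$; if instead $r_0 < \mathfrak{b}$ the family is clipped at $r_0$'s cylinder, which in that case has $l'=1$.

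For the inductive step, I would combine the recursion with the description of $C(\mathbb{I}_\alpha) \cap \mathbb{I}_\alpha$ as a union of at most two subintervals whose endpoints lie among $\ell_0, r_0, C \cdot \ell_0, C \cdot r_0$. Because $C^{-1}$ is order-preserving on each of its branches and because the pole of $C$ sits at $0 \in \mathbb{I}_\alpha$, the two infinite families transport faithfully from one level to the next whenever they are not cut off by the boundaries of $\mathbb{I}_\alpha$. Iterating yields: for $1 < l < l'$ both families propagate in full, giving $\{(j, l) : j>0\}$ and $\{(-k-j, l) : j \ge 0\}$; for $l = l'$ the family containing $r_0$'s cylinder is clipped exactly at $(k', l')$, giving $\{(k'+j, l') : j \ge 0\}$; and for $l > l'$ no cylinders survive, since by the definition of $l'$ the iterate $C^{l'} \cdot r_0$ lies outside $\mathbb{I}_\alpha$ so the recursion terminates.

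The main obstacle is the inductive bookkeeping. One must check that the intermediate-level families are indeed untruncated, which comes down to verifying that the successive $C^{-1}$-pullbacks of the ``exit strip'' of $\mathbb{I}_\alpha$ continue to contain a neighborhood of the pole of $C$, so as to regenerate the infinite positive-$k$ family at each level. One must also check that the level-$l'$ truncation lands exactly on $(k', l')$, which is essentially a restatement of the definition of the first digit of $r_0$. Assembling the three resulting families gives the union asserted in the statement.
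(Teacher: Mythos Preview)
Your proposal is correct and takes essentially the same approach as the paper. Both arguments analyze the $l=1$ layer directly from the geometry of $C$ on $\mathbb I_\alpha$ (splitting at the pole $x=0$ and at $\mathfrak b=C^{-1}\cdot\ell_0$), and then propagate to higher $l$ via the relation $\Delta_\alpha(k,l)=\mathbb I_\alpha\cap C^{-1}\Delta_\alpha(k,l-1)$, terminating the induction at the level $l'$ read off from the digit of $r_0$; the paper phrases this pullback step as ``the pre-images of these cylinders under powers of $C$ are also cylinders of $T_\alpha$, up to and including the $l'{}^{\text{th}}$ power,'' and is just as sketchy as your outline about the intermediate bookkeeping.
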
 
\begin{proof}  Since $\ell_0(\alpha)\in \Delta_{\alpha}(-k,1)$,  all indices  corresponding to cylinders between $\Delta_{\alpha}(-k,1)$ and $x=0$ are certainly in the alphabet.   These indices are
 $(-k-j, 1)$, with $j> 0$.   The pre-images of these cylinders under powers of $C$ are also cylinders of $T_{\alpha}$,  up to and including the $l'^{\text{th}}$ power.     If $l'=1$,  then all  corresponding to cylinders between $0$ and $\Delta_{\alpha}(k',1)$ are in the alphabet.    If $l'>1$,  then all $(j, 1)$ with $j> 0$ are present, and so are all preimages  under powers of $C$ are also cylinders of $T_{\alpha}$,  up to and including the $(l'-1)^{\text{st}}$ power.    The cylinders between the pole of $C^{l'}$ and $\Delta_{\alpha}(k',l')$ have their indices in the language, and we have accounted for all possible indices. 
\end{proof}

 \bigskip 
 \subsubsection{Relating admissibility and orders}\label{sss:admissOrdProofs}

If $x \in \mathbb I_{\alpha}$, then each of $T_{\alpha}(x), T^2_{\alpha}(x), \ldots$ is also in $\mathbb I_{\alpha}$.   This is directly related to the notion of admissibility.  (Experts will note that  we could discuss the following in terms of cylinders of rank greater than one.)
\begin{Lem}\label{l:theLang}  Fix $m,n,\alpha$.   A word 
$A^{k_u}C^{l_u}\cdots A^{k_1}C^{l_1}$, with each $(k_i, l_i)$ in the $\alpha$-alphabet,  is admissible for $\alpha$    if and only if 
\[  \underline{b}_{[1,u-j+1]}^{\alpha}  \preceq (k_j, l_j) \cdots (k_u, l_u) \preceq \overline{b}{}^{\alpha}_{[1, u-j+1]}\]
for each $1\le j\le u$.
\end{Lem}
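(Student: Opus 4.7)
My plan is to prove this by induction on the word length $u$, powered by a preliminary observation that on $\mathbb I_{\alpha}$ the usual order of real numbers corresponds exactly to the full order on digit sequences. Granting this, both directions essentially reduce to translating statements $\ell_0 \le T_\alpha^{j-1}(x) \le r_0$ (\emph{`$T_\alpha^{j-1}(x)$ lies in $\mathbb I_\alpha$'}) into lexicographic comparisons of digit sequences of length $u-j+1$.

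For the preliminary observation, I would argue as follows. The $\alpha$-alphabet is ordered so that $(k,l) \prec (k',l')$ precisely when $\Delta_\alpha(k,l)$ lies strictly to the left of $\Delta_\alpha(k',l')$ in $\mathbb I_\alpha$; this is exactly how the full order \eqref{e:theFullOrder} was constructed. Since both $A$ and $C$ have positive determinant, $T_\alpha$ is increasing on each cylinder, so if two points $x<y$ of $\mathbb I_\alpha$ share a first digit, then $T_\alpha(x) < T_\alpha(y)$, and the comparison of digit sequences devolves onto the next position. Inducting on the first index where the digit sequences disagree, one obtains that $x \le y$ in $\mathbb I_\alpha$ iff the digit sequence of $x$ is $\preceq$ that of $y$ lexicographically (with equality only in case the (possibly finite) sequences agree).

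For the forward direction, suppose $U = A^{k_u}C^{l_u}\cdots A^{k_1}C^{l_1}$ is admissible with witness $x \in \mathbb I_\alpha$. Then for each $j$, one has $T_\alpha^{j-1}(x) \in [\ell_0, r_0]$ with first $u-j+1$ digits $(k_j,l_j)\cdots (k_u,l_u)$, while $\ell_0, r_0$ have first $u-j+1$ digits $\underline b^\alpha_{[1, u-j+1]}$, $\overline b{}^\alpha_{[1, u-j+1]}$ respectively; the preliminary observation gives the required lexicographic bracketing.

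The backward direction I would establish by induction on $u$. The base $u=1$ is immediate since any cylinder in the $\alpha$-alphabet is nonempty, and the single-digit inequality is automatic because every cylinder lies between the leftmost and rightmost ones. For the step, the inequalities for $j = 2, \ldots, u$ are exactly the hypotheses needed to apply the induction to the suffix $(k_2, l_2)\cdots(k_u, l_u)$, yielding some $y \in \mathbb I_\alpha$ whose first $u-1$ digits form this suffix. It then suffices to find $x \in \Delta_\alpha(k_1, l_1)$ with $T_\alpha(x) = y$, i.e., to verify $y \in T_\alpha(\Delta_\alpha(k_1, l_1))$. The fullness analysis of Subsection~\ref{sss:cylsSequence} says this image equals all of $\mathbb I_\alpha$ unless $(k_1, l_1) = \underline b{}^\alpha_1$ (right-full, image $[\ell_1, r_0)$) or $(k_1, l_1) = \overline b{}^\alpha_1$ (left-full, image $[\ell_0, r_1)$); in the truly full case there is nothing to check, and in the two boundary cases the $j=1$ inequality, with first digits equal, collapses to a second-digit comparison that by the preliminary observation yields exactly $\ell_1 \le y$ or $y \le r_1$, as needed.

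The main technical nuisance I expect is keeping endpoint conventions straight: $\mathbb I_\alpha$ is half-open on the right, cylinders inherit half-open conventions, and the preliminary observation must be applied carefully when lexicographic equality occurs, so that the inductively produced preimage $x$ lies in the intended cylinder (or its closure) in a way compatible with the admissibility definition. Once these boundary issues are pinned down and the identification of the image of non-full cylinders with $[\ell_1, r_0)$ or $[\ell_0, r_1)$ is invoked, the rest of the argument is clean lexicographic bookkeeping.
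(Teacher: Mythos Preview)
Your approach matches the paper's: both directions rest on the correspondence between real order on $\mathbb I_\alpha$ and lexicographic order on digit sequences, and the backward direction is handled by reducing to the image of a non-full first cylinder and canceling a common first digit in the relevant inequality. The paper organizes the backward direction by enumerating the non-full positions $i_1<\cdots<i_f$ and working back from $i_f$, while your induction on $u$ is a cleaner packaging of the same idea.

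One point to flag: your case split assumes the only non-full cylinders are those with digit $\underline{b}^\alpha_1$ or $\overline{b}^\alpha_1$, but Subsection~\ref{sss:cylsSequence} also lists the right-full cylinders $(k,l+j)$ for $j\ge 1$, which share the image $[\ell_1,r_0)$ with the leftmost cylinder yet have first digit strictly larger than $\underline{b}^\alpha_1$. For such $(k_1,l_1)$ the $j=1$ inequality is strict at the first position and yields no constraint on $(k_2,\ldots,k_u)$, so your ``cancel the common first digit'' step does not literally apply, and one still has to produce $y\ge\ell_1$. The paper's own proof elides this in exactly the same way (its phrase ``canceling the respective common first digit'' likewise presumes the first digit equals $\underline{b}^\alpha_1$ or $\overline{b}^\alpha_1$), so you are not missing anything the paper supplies; a fully rigorous treatment would need to address these extra right-full cylinders separately.
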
 
\begin{proof}  The forward direction is straightforward as we show first.   By definition,  admissibility   implies that there is an   
$x\in \mathbb I_{\alpha}$ whose $\alpha$-digit sequence begins $(k_1, l_1), (k_2, l_2), \ldots, (k_u, l_u)$.   
Thus,  each 
$A^{k_{j-1}}C^{l_{j-1}}\cdots A^{k_1}C^{l_1}\cdot x$  has $\alpha$-digit sequence beginning  $ (k_j, l_j) \cdots (k_u, l_u)$, and each of these is in 
$\mathbb I_{\alpha}$.     Since  the endpoints of $\mathbb I_{\alpha}$ correspond
to $\underline{b}_{[1, \infty)}^{\alpha}$ and $\overline{b}{}^{\alpha}_{[1, \infty)}$, and  the ordering in the language corresponds 
to the usual ordering of real numbers, the inequalities hold.

On the other hand, if  a word $U$ has its exponents defining letters in the $\alpha$-alphabet all of which are indices of full cylinders, then  it is easy to show that $U$ is admissible for $\alpha$.  Indeed,  given this full cylinder condition,  we can choose any $x_u \in \Delta_{\alpha}(k_u,l_u)$ and then iteratively find some $x_i \in \Delta_{\alpha}(k_i,l_i)$ such that $A^{k_i}C^{k_i}\cdot x_i = x_{i+1}$.   Then $U$ is admissible for $x_1$.

Suppose that there is some positive number $f\le u$ of   non-full cylinder indices among the  $(k_i,l_i)$;  let us enumerate these $(k_{i_1}, l_{i_1}), \ldots, (k_{i_f}, l_{i_f})$.  Each of these  is thus  in $\{ (k, l), (k, l+1), \dots, (k, m-1),  (k', l')\}$ where $(k, l)$ is the $\alpha$-digit of $\ell_0(\alpha)$ and $(k', l')$ that of $r_0(\alpha)$.   Recall that $T_{\alpha}(\, \Delta_{\alpha}(k',l')\,) = [\ell_1, r_0)$ and $T_{\alpha}(\, \Delta_{\alpha}(k,l+j)\,) = [\ell_0, r_1)$ for each $(k,l+j)$ in the $\alpha$-alphabet.  


Since  $ \underline{b}_{[1,u-i_f]}^{\alpha}  \preceq (k_{i_f}, l_{i_f}), (k_{i_{f}+1}, l_{i_{f}+1}), \ldots,  (k_u, l_u) \preceq \overline{b}{}^{\alpha}_{[1, u-i_f]}$,   canceling the respective common first digit, we find that either $\ell_1$ begins with an $\alpha$-digit sequence that is less than or equal to $(k_{i_{f}+1}, l_{i_{f}+1}), \ldots,  (k_u, l_u)$, or else $r_1$ begins with a sequence that is greater than or equal to it.   In both cases, there is thus a subinterval of points $x_{i_f}\in \Delta_{\alpha}(k_{i_f}, l_{i_f})$  whose $\alpha$-digit sequence begins $(k_{i_f}, l_{i_f}), (k_{i_{f}+1}, l_{i_{f}+1}), \ldots,  (k_u, l_u)$.     We now continue iteratively,  noting that if some of the $i_j$ give consecutive integers,  then we will need to cancel more than one digit at steps in this argument.   
\end{proof} 

\bigskip
The following two results  are key tools for proving admissibility of digits by induction, see for example \S~\ref{sss:leftAdmiss}.
\begin{Lem}\label{l:admissBetween2Pts}  Suppose $N\in \mathbb N$ and for some $\alpha', \alpha''$, with  $0\le \alpha'< \alpha''\le 1$, one has $\underline{b}{}^{\alpha'}_{[1,N]} = \underline{b}{}^{\alpha''}_{[1,N]}$.     Then $\underline{b}{}^{\alpha}_{[1,N]} = \underline{b}{}^{\alpha'}_{[1,N]}$  for all $\alpha \in [\alpha', \alpha'']$.   
\end{Lem}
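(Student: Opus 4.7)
The plan is to proceed by induction on $N$, the case $N=0$ being vacuous. For the inductive step, assume the result for $N-1$ and suppose
\[
\underline{b}^{\alpha'}_{[1,N]}\,=\,\underline{b}^{\alpha''}_{[1,N]}\,=\,(k_1,l_1)\cdots(k_N,l_N).
\]
Applying the inductive hypothesis to the common first $N-1$ digits, every $\alpha\in[\alpha',\alpha'']$ satisfies $\underline{b}^\alpha_{[1,N-1]}=(k_1,l_1)\cdots(k_{N-1},l_{N-1})$, so on this whole interval the $(N-1)$-st iterate is given by the single formula
\[
F(\alpha)\;:=\;\ell_{N-1}(\alpha)\;=\;M\cdot(\alpha-1)t,\qquad M\;:=\;A^{k_{N-1}}C^{l_{N-1}}\cdots A^{k_1}C^{l_1},
\]
with $M$ independent of $\alpha$. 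Since $M$ has positive determinant and $F(\alpha'),F(\alpha'')$ are finite elements of $\mathbb{I}_{\alpha'},\mathbb{I}_{\alpha''}$, the function $F$ is continuous and strictly monotone on $[\alpha',\alpha'']$.

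What remains is to show $F(\alpha)\in\Delta_\alpha(k_N,l_N)$ for every $\alpha\in[\alpha',\alpha'']$, which is given at the two endpoints. By definition this membership is a finite conjunction of inequalities: $(\alpha-1)t\le C^{i}\cdot F(\alpha)<\alpha t$ for $0\le i<l_N$, together with $C^{l_N}\cdot F(\alpha)\notin\mathbb{I}_\alpha$ and $A^{k_N}C^{l_N}\cdot F(\alpha)\in\mathbb{I}_\alpha$. Each inequality compares a Möbius image of $F(\alpha)$—itself a Möbius function of $(\alpha-1)t$—with the linear functions $(\alpha-1)t$ or $\alpha t$ defining the endpoints of $\mathbb{I}_\alpha$, and hence its sign is controlled by a polynomial of degree at most $2$ in $\alpha$.

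The main obstacle is to rule out a ``dip-and-return'' scenario, in which $F(\alpha)$ exits $\Delta_\alpha(k_N,l_N)$ through some boundary $b(\alpha)$ and re-enters it before $\alpha$ reaches $\alpha''$. I would dispose of this case by invoking Lemma~\ref{l:alphaAlpha} to identify each relevant boundary as either a fixed pole of a power of $C$ or a strictly monotone Möbius image of $\ell_0(\alpha)$ or $r_0(\alpha)$, both of which are linear in $\alpha$. A dip-and-return would force the polynomial $F(\alpha)-b(\alpha)$ of degree at most $2$ to have two roots $\alpha_1<\alpha_2$ in $(\alpha',\alpha'')$, giving the complete sign pattern of $F-b$ on $[\alpha',\alpha'']$. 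Then strict monotonicity of $F$ together with monotonicity of $b$ pins down a specific ``neighbouring'' digit $(k',l')$ into which $F(\alpha)$ must fall throughout $[\alpha_1,\alpha_2]$; feeding this back into the admissibility characterization of Lemma~\ref{l:theLang} forces a violation at one of the endpoints $\alpha',\alpha''$, contradicting the hypothesis. This contradiction completes the induction.
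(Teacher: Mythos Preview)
Your approach is genuinely different from the paper's, and the crucial last step has a gap. The paper does not induct on $N$ and never analyses cylinder boundaries or polynomial sign patterns. Instead it works entirely through the order $\preceq$: since $\ell_0(\beta)$ and $r_0(\beta)$ are increasing in $\beta$, Lemma~\ref{l:alphaAlpha} gives that the $\alpha$-alphabet contains the $\alpha'$-alphabet, and one has $\underline{b}^{\alpha}\preceq\underline{b}^{\alpha''}$ and $\overline{b}^{\alpha'}\preceq\overline{b}^{\alpha}$. The common word $W=\underline{b}^{\alpha'}_{[1,N]}=\underline{b}^{\alpha''}_{[1,N]}$ is admissible for both $\alpha'$ and $\alpha''$, so by Lemma~\ref{l:theLang} every suffix $W_j$ satisfies $\underline{b}^{\alpha''}_{[1,\cdot]}\preceq W_j\preceq\overline{b}^{\alpha'}_{[1,\cdot]}$; the sandwich then gives $\underline{b}^{\alpha}_{[1,\cdot]}\preceq W_j\preceq\overline{b}^{\alpha}_{[1,\cdot]}$, so $W$ is admissible for $\alpha$, and equality $\underline{b}^{\alpha}_{[1,N]}=W$ then follows from monotonicity of $\ell_0$. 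No induction, no boundary tracking.

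The gap in your argument is the sentence ``feeding this back into the admissibility characterization of Lemma~\ref{l:theLang} forces a violation at one of the endpoints $\alpha',\alpha''$.'' Lemma~\ref{l:theLang} tests whether a \emph{given} word is admissible for a \emph{given} $\alpha$; at $\alpha'$ and $\alpha''$ the word $(k_1,l_1)\cdots(k_N,l_N)$ is by hypothesis the actual initial digit block of $\ell_0$, hence trivially admissible there, and nothing is violated. Knowing that on a subinterval $[\alpha_1,\alpha_2]\subset(\alpha',\alpha'')$ the $N$-th digit of $\ell_0$ equals some $(k',l')\neq(k_N,l_N)$ is a statement about those intermediate parameters only; to turn it into a contradiction at $\alpha'$ or $\alpha''$ you would need the map $\alpha\mapsto\underline{b}^{\alpha}_{[1,N]}$ to be $\preceq$-monotone, which is precisely the content you are trying to establish. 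Your degree-two crossing count controls intersections of $F$ with one boundary curve, but it does not by itself exclude $F(\alpha)$ from sitting in a different cylinder on an interior subinterval. The paper's order-theoretic route supplies exactly the monotonicity that your argument is missing, and does so without the case analysis.
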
 
\begin{proof}  Fix some $\alpha \in (\alpha', \alpha'')$. Recall that for any $\beta\in [0,1]$,  the initial digit of $\ell_0(\beta)$ is $(-k, 1)$ for some $k>0$.   Since the real numbers $\ell_0(\beta)$  strictly increase with $\beta$,   the initial digit of $\ell_0(\alpha)$ is the same as that shared by $\ell_0(\alpha')$ and $\ell_0(\alpha'')$.    Since also the $r_0(\beta)$ are increasing, Lemma~\ref{l:alphaAlpha}  yields that the  $\alpha$-alphabet  contains the $\alpha'$-alphabet.   In particular, each of the $N$ digits in $\underline{d}{}^{\alpha'}_{[1,N]}$ is contained in the $\alpha$-alphabet.   

Again from the increasing nature of the endpoints $\ell_0, r_0$, we find for each $j\le N$   both 
$\underline{b}{}^{\alpha}_{[1,N-j+1]}\preceq   \underline{b}{}^{\alpha''}_{[1,N-j+1]}$ and   $\overline{b}{}^{\alpha'}_{[1,N-j+1]}\preceq \overline{b}{}^{\alpha}_{[1,N-j+1]}$.    Thus by Lemma~\ref{l:theLang}, the admissibility of $\underline{b}{}^{\alpha'}_{[1,N]}$ for $\alpha', \alpha''$ implies the admissibility of this sequence for $\alpha$.   Finally,  $\underline{b}{}^{\alpha}_{[1,N]} = \underline{b}{}^{\alpha'}_{[1,N]}$ for otherwise we would contradict the increasing nature of $\ell_0$ (with respect to one of $\alpha', \alpha''$).
\end{proof} 

\medskip 
The following is proven {\em mutatis mutandi}.
\begin{Lem}\label{l:admissBetween2PtsRvals}  Suppose $N\in \mathbb N$ and for some $\alpha', \alpha''$, with  $0\le \alpha'< \alpha''\le 1$, one has $\overline{b}{}^{\alpha'}_{[1,N]} = \overline{b}{}^{\alpha''}_{[1,N]}$.     Then $\overline{b}{}^{\alpha}_{[1,N]} = \overline{b}{}^{\alpha'}_{[1,N]}$  for all $\alpha \in [\alpha', \alpha'']$.   
\end{Lem}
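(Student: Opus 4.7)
The plan is to dualize the proof of Lemma~\ref{l:admissBetween2Pts}, interchanging the roles of $\ell_0$ and $r_0$ throughout; the only substantive adjustment will be that the direction of the alphabet containment reverses. I would fix $\alpha\in(\alpha',\alpha'')$ and first note that, since $r_0(\beta)=\beta t$ is strictly increasing in $\beta$ and by hypothesis $r_0(\alpha')$ and $r_0(\alpha'')$ share an initial digit $(k',l')$, the intermediate $r_0(\alpha)$ also has $(k',l')$ as its initial digit.

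By Lemma~\ref{l:alphaAlpha}, the $\alpha$-alphabet is determined by the initial digits of $\ell_0(\alpha)$ and $r_0(\alpha)$. Since $\ell_0(\alpha)\le \ell_0(\alpha'')$, the first digit $(-k_\alpha,1)$ of $\ell_0(\alpha)$ satisfies $k_\alpha\le k_{\alpha''}$; combined with the shared $r_0$-initial digit, this gives that the $\alpha$-alphabet contains the $\alpha''$-alphabet. In particular every digit of $\overline{b}{}^{\alpha''}_{[1,N]} = \overline{b}{}^{\alpha'}_{[1,N]}$ lies in the $\alpha$-alphabet.

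Next I would verify admissibility of the common word $\overline{b}{}^{\alpha'}_{[1,N]}$ for $\alpha$ via Lemma~\ref{l:theLang}. Applied to the tautologically admissible words $\overline{b}{}^{\alpha''}_{[1,N]}$ for $\alpha''$ and $\overline{b}{}^{\alpha'}_{[1,N]}$ for $\alpha'$, that lemma yields, for each $j\le N$,
\[
\underline{b}{}^{\alpha''}_{[1,N-j+1]} \preceq \overline{b}{}^{\alpha''}_{[j,N]} = \overline{b}{}^{\alpha'}_{[j,N]} \preceq \overline{b}{}^{\alpha'}_{[1,N-j+1]},
\]
where the middle equality is the hypothesis. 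The monotonicity of the endpoints in $\beta$ then supplies $\underline{b}{}^{\alpha}_{[1,N-j+1]} \preceq \underline{b}{}^{\alpha''}_{[1,N-j+1]}$ (from $\ell_0(\alpha)\le \ell_0(\alpha'')$) and $\overline{b}{}^{\alpha'}_{[1,N-j+1]} \preceq \overline{b}{}^{\alpha}_{[1,N-j+1]}$ (from $r_0(\alpha')\le r_0(\alpha)$). Chaining these with the display produces
\[
\underline{b}{}^{\alpha}_{[1,N-j+1]} \preceq \overline{b}{}^{\alpha'}_{[j,N]} \preceq \overline{b}{}^{\alpha}_{[1,N-j+1]}
\]
for every $j$, and Lemma~\ref{l:theLang} will then certify admissibility of $\overline{b}{}^{\alpha'}_{[1,N]}$ for $\alpha$.

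To conclude $\overline{b}{}^{\alpha}_{[1,N]} = \overline{b}{}^{\alpha'}_{[1,N]}$, I would invoke the same squeeze argument used at the end of Lemma~\ref{l:admissBetween2Pts}: since the admissibility just established gives $\overline{b}{}^{\alpha'}_{[1,N]} \preceq \overline{b}{}^{\alpha}_{[1,N]}$, any strict inequality would contradict the strict increase of $r_0$ with respect to $\alpha''$. The hard part will be nothing more than bookkeeping, namely getting the direction of the alphabet inclusion correct: here one needs $\alpha$-alphabet $\supseteq \alpha''$-alphabet, rather than $\supseteq \alpha'$-alphabet as in the original proof; once that flip is in place, the remaining estimates are a verbatim translation of the preceding argument.
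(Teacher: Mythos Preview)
Your proposal is correct and is exactly the \emph{mutatis mutandis} dualization the paper intends; indeed, the paper gives no explicit argument for this lemma, simply stating that it is proven in the same way as Lemma~\ref{l:admissBetween2Pts}. Your identification of the one non-mechanical change---that the alphabet inclusion reverses to $\alpha$-alphabet $\supseteq$ $\alpha''$-alphabet---is precisely the care the phrase ``mutatis mutandis'' is meant to cover.
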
 


\section{Dynamics in the case of $\alpha = 0$  for all signatures }\label{s:AlpZero}  
 
 Calta-Schmidt \cite{CaltaSchmidt} study the dynamics of  what in our notation is $T_{3,n,\alpha}$ with $\alpha=0$,  and of its natural extension.    We briefly generalize that work in this section (our $T(x)$ gives their $g(x)$ upon restricting to $m=3$). 

Fix integers $m,n$ (both greater than 2),   let $\mathbb I := \mathbb I_{m,n,0}$, thus $\mathbb I  = [-t,0]$.  We have
\[ 
\begin{aligned} 
T:= T_{m,n,0}: \mathbb I  &\to \mathbb I \\
           x        &\mapsto A^{-k}C\cdot x\,,
\end{aligned}           
\]
where $k = k(x)$ is the unique positive integer such that $T(x) \in \mathbb I$.         Notice that $T(x) = -k t + 2\cos \pi/m - 1/x$.       Let 
$\Delta_k := \Delta_{\alpha=0}(-k, 1)$.   For $k \ge 2$ we have the full cylinders $\Delta_k = [ \frac{1}{\mu- (k-1) t},\frac{1}{\mu- k t})$; that is, $T$ sends each surjectively onto $\mathbb I$.  
  Setting $\nu = 2\cos \pi/n$, we have that  $\Delta_1 = [\, -t ,  -1/\nu\,) $ and its  image under $T$ is the interval $[-\nu + 1/t,\,0)$. 
The $T$-orbit of $x = -t$ is of central importance,  thus let 
\[\ell_j = T^j(-t)\;  \text{for}\; j = 0, 1, \dots\, .\]

\begin{figure}
\scalebox{.4}{
{\includegraphics{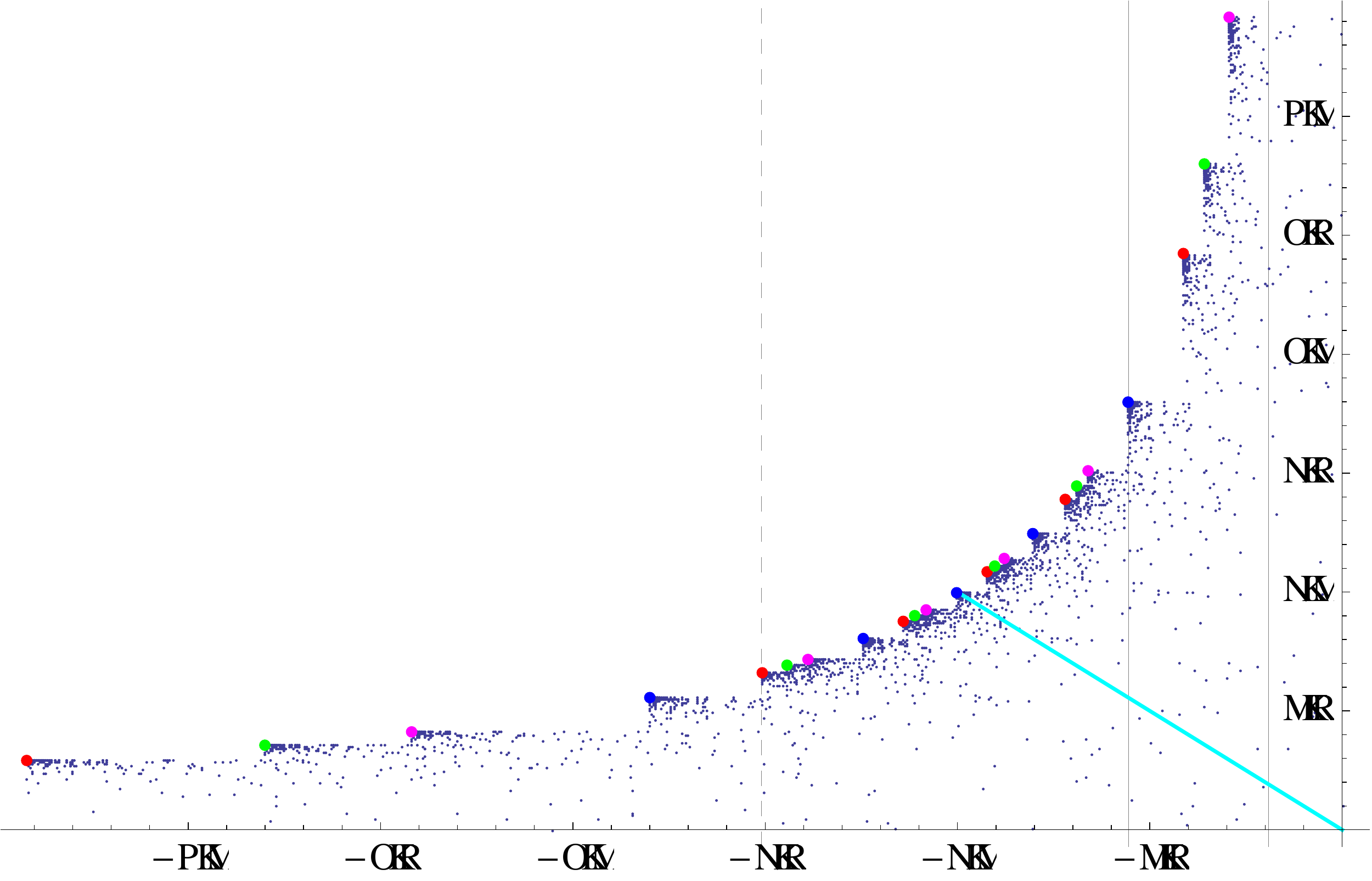}}
}
\caption{A trace of 20,000 consecutive orbit elements to show the natural extension of the determinant plus one map when $(m,n)=(5,7)$ and $\alpha = 0$. Red points: initial orbit of vertex of $x$-coordinate $\ell_0 = - t$; green points: second piece of this orbit; magenta: third piece; blue: fourth and final, before returning to initial red vertex.   Vertical lines: dotted at $\ell_1$, and right ends of cylinders for $k=1$ and $k=2$. Cyan line segment:  the anti-diagonal $y = -x$. Thus, here ${\color{red}\ell_0}< {\color{green}\ell_6}< {\color{magenta}\ell_{12}} < {\color{blue}\ell_{18}} <  {\color{red}\ell_1} <{\color{green}\ell_7}< {\color{magenta}\ell_{13}} <{\color{blue}\ell_{19}} < {\color{red}\ell_2}<{\color{green}\ell_8}< {\color{magenta}\ell_{14}} <{\color{blue}\ell_{20}} < {\color{red}\ell_3}<{\color{green}\ell_9}< {\color{magenta}\ell_{15}} <{\color{blue}\ell_{21}} < {\color{red}\ell_4}<{\color{green}\ell_{10}}< {\color{magenta}\ell_{16}} <{\color{blue}\ell_{22}} < {\color{red}\ell_5}<{\color{green}\ell_{11}}< {\color{magenta}\ell_{17}}$.}
\label{natExtFigFiveSevenZero}
\end{figure}
\bigskip
 
The following element is key to the study of this orbit and therefore to many arguments in this paper. 
\begin{equation}\label{e:Wdefd}  W = A^{-2}C \, (A^{-1}C)^{n-3} \, \big[A^{-2}C (A^{-1}C)^{n-2}\big]^{m-2} \,.
\end{equation}
Since $A^{-1}C  = B$ and $B^n = \text{Id}$ (projectively),   
\[
\begin{aligned}
  W &= A^{-1} B\,    \, B^{n-3} \, \big[A^{-1}B\, B^{n-2}\big]^{m-2}\\
       &= A^{-1} B^{-2} \, \big[A^{-1} B^{-1}\big]^{m-2} \\
       &= A^{-1} B^{-2} \, \big[B C^{-1} B^{-1}\big]^{m-2}\\
       &= A^{-1} B^{-2} \,B \big[C^{-1}\big]^{m-2}B^{-1}\\
       &=A^{-1} B^{-1}  C^{2}B^{-1}\\
       &=A^{-1} B^{-1}  (AB)^{2}B^{-1}\\
       &=A^{-1} B^{-1}   A \, B A \,.
\end{aligned}
\]

Now, since $A \cdot (-t) = 0$, and $B\cdot 0 = \infty$, while $A$ fixes $\infty$, we certainly have that $W$ fixes $x=-t$.
Substituting  $A^{-1}C$ for $B$ gives the form that we will use several times below. 
\begin{equation}\label{e:WiD} 
W = A^{-1} C^{-1}   A C A \,.
\end{equation}

We claim that the right hand side of  \eqref{e:Wdefd} is the admissible word for the corresponding element in the $T$-orbit of $\ell_0=-t$.     That is, all $\ell_j$  lie in $\Delta_1$ other  then $\ell_{n-2 + k(n-1)}$ for $0\le k \le m-3$ and also  $\ell_{2n-4 + (m-3)(n-1)}$; all of these latter orbit entries lie in $\Delta_2$.  Furthermore,   $\ell_{2n-4 + (m-3)(n-1)}$ is the left endpoint of $\Delta_2$, and thus $\ell_{2n-3+ (m-3)(n-1)} = \ell_{0}$.    To justify this,   we show: 
\[\tag{i} \quad (A^{-1}C)^{n-3}\cdot \ell_0 < -1/\nu < (A^{-1}C)^{n-2}\cdot \ell_0\,;\]
\[\tag{ii} \, [A^{-2}C (A^{-1}C)^{n-2}]^{m-2}\cdot \ell_0 = - \nu\,;\]
\[\tag{iii} \, A^{-2}C (A^{-1}C)^{n-2}\cdot (-\nu) = \infty\,.\]
 Since   $A^{-1}C\cdot x$ is increasing and has no pole in $\mathbb I$,  (i) implies that $\ell_j =(A^{-1}C)^{j}\cdot \ell_0$ for $0 \le j \le n-2$ is correct.    Likewise,  (iii) implies that  $x \mapsto A^{-2}C (A^{-1}C)^{n-2}\cdot x$ is increasing on $(-t, -\nu)$, and combined with (ii) that for each $j< m-2$ we have  $\ell_0 < [A^{-2}C (A^{-1}C)^{n-2}]^{j}\cdot \ell_0 < -\nu < -\nu + 1/t = \ell_1$.  This in turn gives the correctness of the various $\ell_j$ corresponding to the sub-words of $[A^{-2}C (A^{-1}C)^{n-2}\big]^{m-2}$.   That the remaining factors of \eqref{e:Wdefd} correspond to the $T$-orbit is easily argued, especially since the fact that $W$ fixes $x=-t$ combines with (ii) to show that $[A^{-2}C \, (A^{-1}C)^{n-3} ]^{-1}\cdot \ell_0 = -\nu$. 
  
Note that (iii) is immediate, as the pole of $A^{-2}C (A^{-1}C)^{n-2}$ is that of $A^{-1}C (A^{-1}C)^{n-2} = B^{-1}$,  and certainly $B\cdot \infty = -\nu$.    As well, 
$[A^{-2}C (A^{-1}C)^{n-2}]^{m-2} = (A^{-1}B^{-1})^{m-2} = (A^{-1}B^{-1})^{-2}$, as $A^{-1}B^{-1}$ is a conjugate of $C^{-1}$ and thus has order $m$.  From this,  we find that 
\[ [A^{-2}C (A^{-1}C)^{n-2}]^{m-2}\cdot \ell_0 = (BA)^2\cdot (-t) = BC\cdot 0 = B\cdot \infty = -\nu\,,\]
 and (ii) also holds.  Finally,  we have $\ell_0 = -t < - \nu < - \nu + 1/t = \ell_1 = B\cdot \ell_0$ and since $B\cdot -1/\nu = 0$,  $B\cdot 0 = \infty$,  $B\cdot \infty = - \nu$, we find that 
 $\ell_0 < B^3\cdot(-1/\nu)< \ell_1$, and now (i) easily follows. 

\medskip 
We thus have that the ordering of the $T$-orbit of $\ell_0$ as real numbers is as given in Table~\ref{t:realOrderOfEllOrbit}.  Note that the orbit elements contained in $\Delta_2$ are found as final entry from each column. 
\bigskip 

\begin{table}
\[ 
\begin{aligned}
& \ell_0&< \ell_{n-1}&< \ell_{2n-2} &< \cdots&<\ell_{(m-2)(n-1)}    \\
 < \,&\ell_1&< \ell_{n}   &< \ell_{2n-1} &< \cdots &<\ell_{(m-2)(n-1)+1} \\
 \vdots &&&\vdots&\vdots\\\
< \,&  \ell_{n-3}&< \ell_{2n-2}   &< \ell_{3n-3} &< \cdots &<\ell_{(m-2)(n-1)+n-3} \\
< \,&  \ell_{n-2}&< \ell_{2n-3}   &< \ell_{3n-4} &< \cdots<\ell_{(m-3)(n-1)+n-2} &&\\
 \end{aligned}
 \]
 \medskip 
 
\caption{The $T$-orbit of $\ell_0$ ordered as real numbers, when $\alpha = 0$.}\label{t:realOrderOfEllOrbit}
\end{table}

\bigskip

We let $\Omega$ be the the union of  $mn - m - n$ rectangles whose bases lie on the $x$-axis with endpoints being  consecutive elements in the orbits of $\ell_0$ under the real ordering, beginning with $[\ell_0, \ell_{n-1}]$,  along with $[\ell_{(m-3)(n-1)+n-2}, 0]$,  and whose heights we label $L_i, 1 \le i \le mn - m - n$, also in accordance with the real order of the bases.   

Set $L_1 = 1/t$; 
 $L_{m-1+i} = (R A^{-1} C R^{-1}) \cdot L_{i}$ for any $1\le i < (m-1)(n-2)$;  and, 
$L_{i} =  (R A^{-2} C R^{-1}) \cdot L_{i-1 +(n-2)(m-1)}$ for $2\le i \le m-1$.  Since these relations accord with $\mathcal T = \mathcal T_{m,n,0}$,   and $-t$ being fixed by $W$ gives that $1/t$ is fixed by $RWR^{-1}$, we have that the left upper vertex $\Omega$ is $(-t, 1/t)$ and in fact  the left upper vertex of the $i^\text{th}$ rectangle is  $(-1/L_i, L_i)$, thus showing that $\Omega$ has infinite $\mu$-measure.   We have seen that $\ell_{mn - m - n-1} = -1/\nu$,  and hence find that 
the rightmost element of that orbit has value 
\[\ell_{(m-3)(n-1)+n-2} = [(A^{-1}C)^{n-3}A^{-2}C]^{-1}\cdot (-1/\nu) =  B^{-1}A B^3 \cdot(-1/\nu) = B^{-1}A\cdot (-\nu) = -1/t\,.\]  
Therefore,  the rightmost rectangle has height $L_{mn - m - n} = t$.  One can show  that   $\mathcal T$ is bijective on $\Omega$ up to a set of measure zero. We will return to this point in our next paper.  

\bigskip
\begin{Rmk} 
In  \cite{CaltaSchmidt}, an acceleration of the $\alpha=0$ interval map (when $m=3$) is defined.  This new interval map is shown to be ergodic with respect to a finite invariant measure (that is absolutely continuous with respect to Lebesgue measure).    
\end{Rmk}

\section{Dynamics in the case of  $\alpha = 1$ for all signatures}\label{s:alpOne}  

\begin{figure}[h!]
\noindent
\parbox{1 in}{
   \scalebox{.22}{ \includegraphics{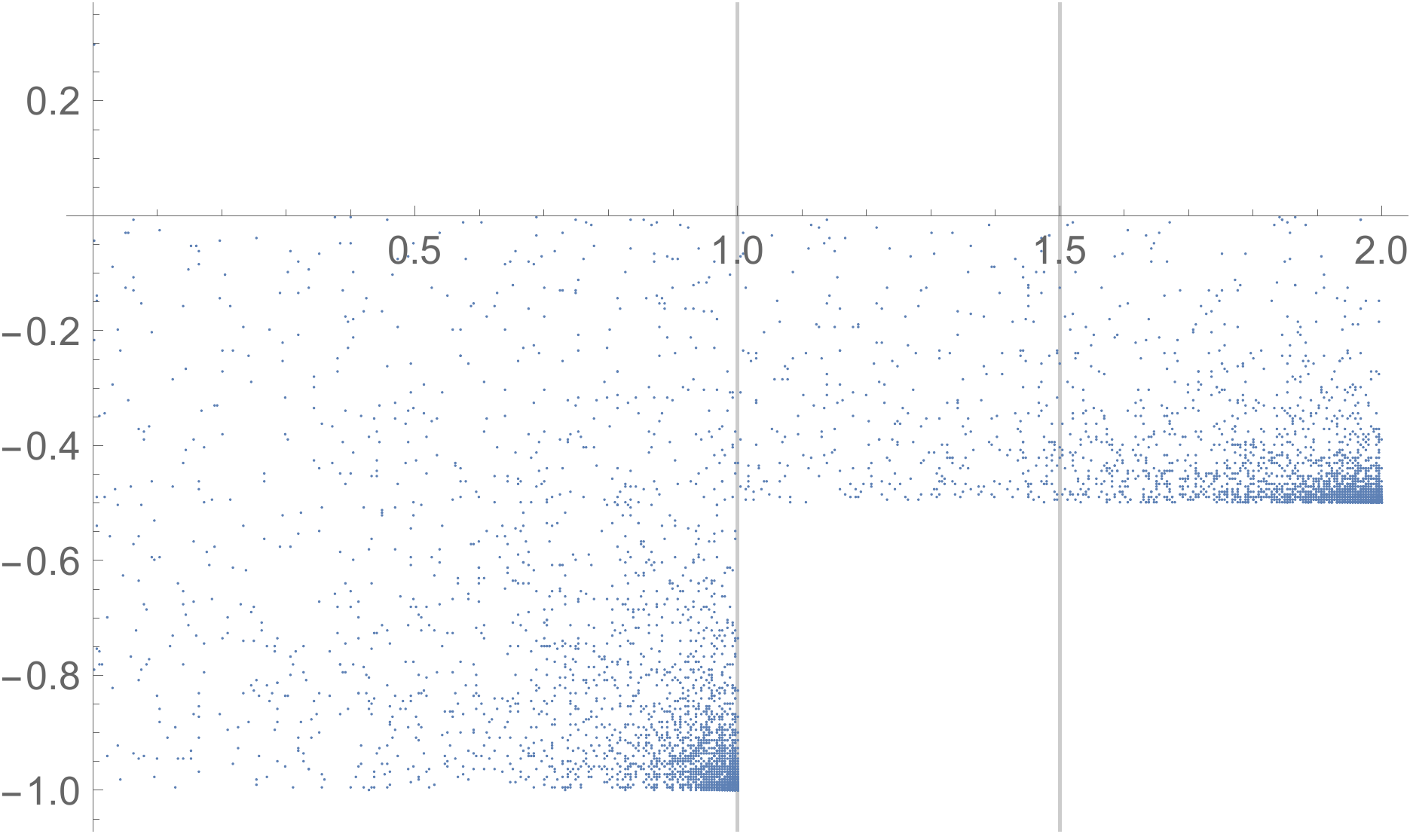}}
}%
\qquad \qquad \qquad \qquad \qquad \qquad
\begin{minipage}{1in}%
\scalebox{.22}{\includegraphics{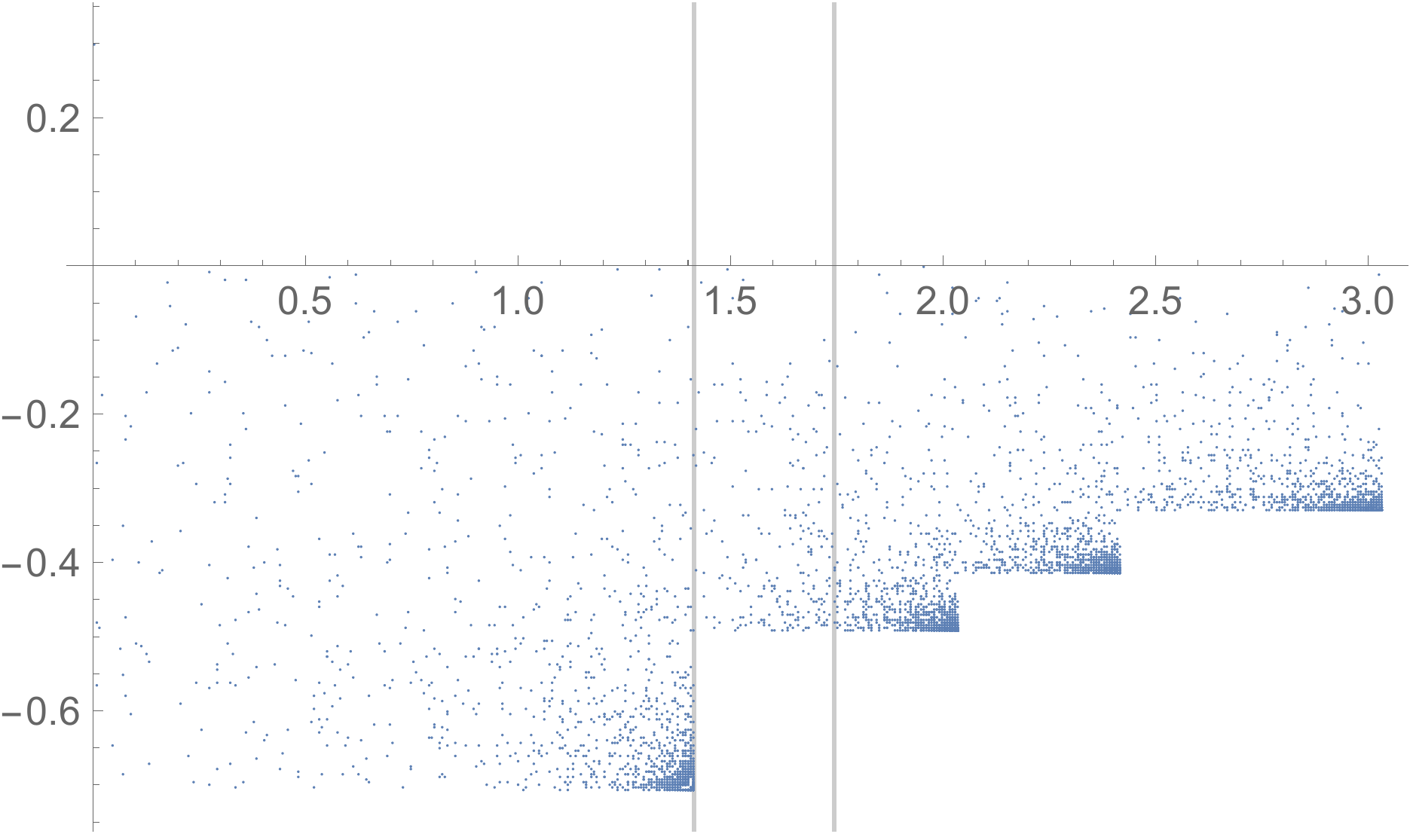}}
\end{minipage}%
\caption{Plots showing the domains of the natural extensions for $(m,n) = (3,3)$ and $(m,n)= (4,5)$ when $\alpha = 1$. Vertical lines marked at $x = \mu$ and $x= \mu + 1/t$ in both cases.}%
\label{f:Alp1NatExtPlots}%
\end{figure}

  This case is dominated by 
\begin{equation}\label{eq:Uee}
U = A C^{m-2} (AC^{-1})^{n-2}\,.
\end{equation}
We now let the interval be $\mathbb I := \mathbb I_{m,n,\alpha=1} = [0, t)$.  
\bigskip

\begin{Prop}\label{p:veeOnTheRight}   For all $m,n\ge 3$,  the map $T_{m,n,1}$ has
\begin{enumerate}
\item $U$ as the admissible word for the  orbit of $t = r_0(t)$;
\item exactly one non-full cylinder, $\Delta(1, n-1) = [\mu + 1/t, t]$;
\item full cylinders of the form $\Delta(k, l)$ for all $l \in \{1, \dots , m-2\}$ and  $k \in \mathbb N$,  as well as      $\Delta(k, m-1)$ for    $k>1$.
\end{enumerate}
\end{Prop}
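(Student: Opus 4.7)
The plan proceeds in three stages, paralleling the $\alpha=0$ analysis of Section~\ref{s:AlpZero}.

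\textbf{Step 1 (algebraic reduction; $U\cdot t = t$).}  First I would simplify $U$ to a short word in $A, B$.  Using $A = CB^{-1}$ one has $AC^{-1} = CB^{-1}C^{-1}$, a conjugate of $B^{-1}$, so $(AC^{-1})^{n-2} = CB^{-(n-2)}C^{-1} = CB^{2}C^{-1}$ via $B^n = \mathrm{id}$ projectively.  Then $U = AC^{m-2}\cdot CB^{2}C^{-1} = AC^{-1}B^{2}C^{-1}$, and substituting $C^{-1}=B^{-1}A^{-1}$ collapses this to $U = AB^{-1}A^{-1}BA^{-1}$.  Verify $U\cdot t = t$ step by step: $A^{-1}\cdot t = 0$, $B\cdot 0 = \infty$, $A^{-1}\cdot\infty = \infty$, $B^{-1}\cdot\infty = 0$, $A\cdot 0 = t$, so pending admissibility the orbit of $r_0$ is $(n-1)$-periodic.

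\textbf{Step 2 (admissibility of $U$).}  Write $r_j = (AC^{-1})^{j}\cdot t = CB^{-j}\cdot(-1/\nu)$ and set $y_j := B^{-j}\cdot(-1/\nu)$, so $r_j = \mu - 1/y_j$.  The digit claim $r_j \in \Delta(1,m-1) = [\mu+1/t,\,t]$ for $0\le j\le n-3$ is equivalent, via $C^{-1}$, to $y_j \in [-t,\,-1/\nu]$.  Iterating $B^{-1}\colon y\mapsto -1/(y+\nu)$ from $y_0 = -1/\nu$ produces a monotonically decreasing sequence that lands at $y_{n-3} = -\nu$ (the preimage of the pole $\infty$); hence $y_{n-2} = \infty$ and $r_{n-2} = C\cdot\infty = \mu$.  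An induction on $j$ exploiting the order-$n$ cyclic structure of $B$ establishes $y_j \in [-\nu,\,-1/\nu] \subset [-t,\,-1/\nu]$ throughout the range.  The terminal iterate $r_{n-2} = \mu$ lies on the boundary of $\Delta(1,m-2)$, with the accumulating family $\{\Delta(k,m-1)\}_{k\ge 2}$ on its right; since the orbit arrives from the left the one-sided digit $(1,m-2)$ applies, and $AC^{m-2}\cdot\mu = A\cdot 0 = t$ closes the cycle.

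\textbf{Step 3 (cylinders and main obstacle).}  For (2) and (3) I would apply \S\ref{sss:cylsSequence}: non-full cylinders can only attach to $\ell_0$ and $r_0$.  Here $\ell_0 = 0$ is the pole of $C$, so the cylinders $\Delta(k,1) = [1/(\mu+kt),\,1/(\mu+(k-1)t))$ for $k\ge 1$ accumulate at $\ell_0$ and each maps onto all of $[0,t)$ under $T_1 = A^kC$ — all full.  At $r_0$, solving $C^{-1}\cdot x = 1/(\mu-x) \in [-t,\,0)$ yields $\Delta(1,m-1) = [\mu+1/t,\,t]$, with image $A\cdot[-t,\,-1/\nu] = [0,\,t-1/\nu] \subsetneq [0,t)$, hence non-full.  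The remaining cylinders with $2\le l\le m-2$ (and with $l=m-1$, $k\ge 2$) are $C^{-(l-1)}$-preimages of already-full $l=1$ cylinders and are themselves full, completing (3).  The main technical obstacle is the induction in Step 2: controlling the $n-2$ iterates $y_j$ so that they remain in the critical interval until the pole is struck at $j=n-2$ requires extracting combinatorial data from the order-$n$ rotation of $B$, in the spirit of the vertex analysis around $W$ in Section~\ref{s:AlpZero}; a subsidiary nuisance is the rigorous handling of the boundary iterates $r_{n-2}=\mu$ and $r_{n-1}=t$, where the half-open codomain of $T_1$ forces one-sided interpretations.
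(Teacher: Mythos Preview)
Your proposal is correct and follows essentially the same approach as the paper. Both arguments reduce the orbit of $t$ under $AC^{m-1}=AC^{-1}$ to a $B^{-1}$-orbit of a vertex of the $n$-gon: you conjugate by $C$ (using $AC^{-1}=CB^{-1}C^{-1}$, tracking $y_j=B^{-j}\cdot(-1/\nu)$), whereas the paper conjugates by $A$ (using $AC^{-1}=AB^{-1}A^{-1}$, tracking $B^{-j}\cdot 0$); these are the same computation up to a shift by one vertex. Your explicit simplification $U=AB^{-1}A^{-1}BA^{-1}$ and direct verification of $U\cdot t=t$ is a nice addition the paper omits. Your flagging of the boundary iterate $r_{n-2}=\mu$ as requiring a one-sided convention is well taken---the paper is in fact imprecise at exactly this point (it asserts $\mu$ is ``the right endpoint of the full cylinder $\Delta(2,m-1)$'', which is not literally correct, since $\mu$ is rather the accumulation point of the family $\{\Delta(k,m-1)\}_{k\ge 2}$ and the right endpoint of $\Delta(1,m-2)$, as you say).
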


\begin{proof} 
 We first claim that all possible exponents of $C$ are seen, thus that for each $l \in \{1, \dots , m-1\}$, there are non-empty cylinders $\Delta(k, l) =\Delta_{1}(k, l)$ and in fact for each $l$ these are indexed by $k \in \mathbb N$.  A first application of $C$ sends all of $(0, 1/\mu)$ to negative values, and in particular outside of  $\mathbb I$.   As well, since $1/\mu$ is sent to $0$,  these images are brought back into  $\mathbb I$ by positive powers of $A$.  Furthermore, this shows that each of the cylinders $\Delta(k, 1)$, $k\in \mathbb N$ is full.   Therefore, also  now the cylinders $\Delta(k, l)$, $k\in \mathbb N$ are full for all $1\le l \le m-2$.  
 
We turn attention to the case of $l= m-1$.   In fact, there are full cylinders $\Delta(k, n-1)$ for all  $k>1$ and the sole non-full cylinder is $\Delta(1, n-1)$, as we now briefly explain.    Since $C A^{-1}\cdot 0 = \mu + 1/t<t$,  one has that $\Delta(1, n-1) = [\mu + 1/t, t]$.   As well,  since of course $A^2C\cdot (\mu + 1/t) = A\cdot 0 = t$, the cylinders $\Delta(k, n-1)$ for all  $k>1$ are all indeed full (and their union is $[\mu, \mu + 1/t]$).    It remains only to consider the orbit of $t$, which we know begins by $t \mapsto A C^{m-1}\cdot t$.   We observe that $A C^{m-1} = A C^{-1} = A B^{-1}A^{-1}$, and is thus clearly an elliptic matrix of order $n$.      We now translate by $A$ so as to consider the 
 the orbit of $0$ under powers of $B^{-1}$.  This elliptic matrix fixes a point of real part $-\nu/2$ and rotates a hyperbolic $n$-gon;  one also easily verifies that $-\nu \mapsto \infty \mapsto 0 \mapsto -1/\nu$ is part of the orbit of the vertices of the $n$-gon.   The predecessor of $-\nu$  is $B\cdot -\nu = -\nu+1/\nu$.  We translate back to $\mathbb I$ by $A$, and thus this predecessor corresponds to $\mu + 1/\nu$, a value that is visibly greater than $\mu + 1/t$.      In conclusion,  the $T_{m,n,1}$-orbit of $t$ begins with $(AC^{m-1})^s\cdot t$ for $1\le s \le n-2$.    But,   $(AC^{m-1})^{n-2}\cdot t = (C A^{-1})^2\cdot t = CA^{-1} C\cdot 0 = C \cdot \infty = \mu$.  That is,  the orbit of $t$ reaches the right endpoint of the (full) cylinder $\Delta(2, m-1)$, and thus thereafter returns to $t$.  
\end{proof}

\section{Orbit synchronization on the interval $\alpha < \gamma_{3,n}, n\ge 3$} 
We define $\gamma_{3,n}$ as the value of $\alpha$ such that 
\[C^{-1}\cdot \ell_0(\gamma_{3,n}) = r_0(\gamma_{3,n}).\]    In particular, for all $\alpha \le \gamma_{3,n}$,  the point $\mathfrak b_{\alpha}$ lies outside of $\mathbb I_{\alpha}$.  Since  $\ell_0(\alpha)<0$ for all $\alpha<1$,  and $m=3$,  it follows that $0<r_0(\gamma_{3,n})<1$.    We define $\epsilon_{3,n}$ such that 
\[A^{-1}C\cdot \ell_0(\epsilon_{3,n}) = r_0(\epsilon_{3,n}).\]    One finds that $\ell_1(\alpha) = A^{-1}C\cdot \ell_0(\alpha)$ holds for all $0\le \alpha < \epsilon_{3,n}$.    Elementary calculations show that for all $n\ge 3$, $r_0(\epsilon_{3,n}) \ge (1+\sqrt{5})/2$, and thus this equation holds in particular for all $\alpha < \gamma_{3,n}$.

\begin{figure}[h]
\scalebox{.5}{
{\includegraphics{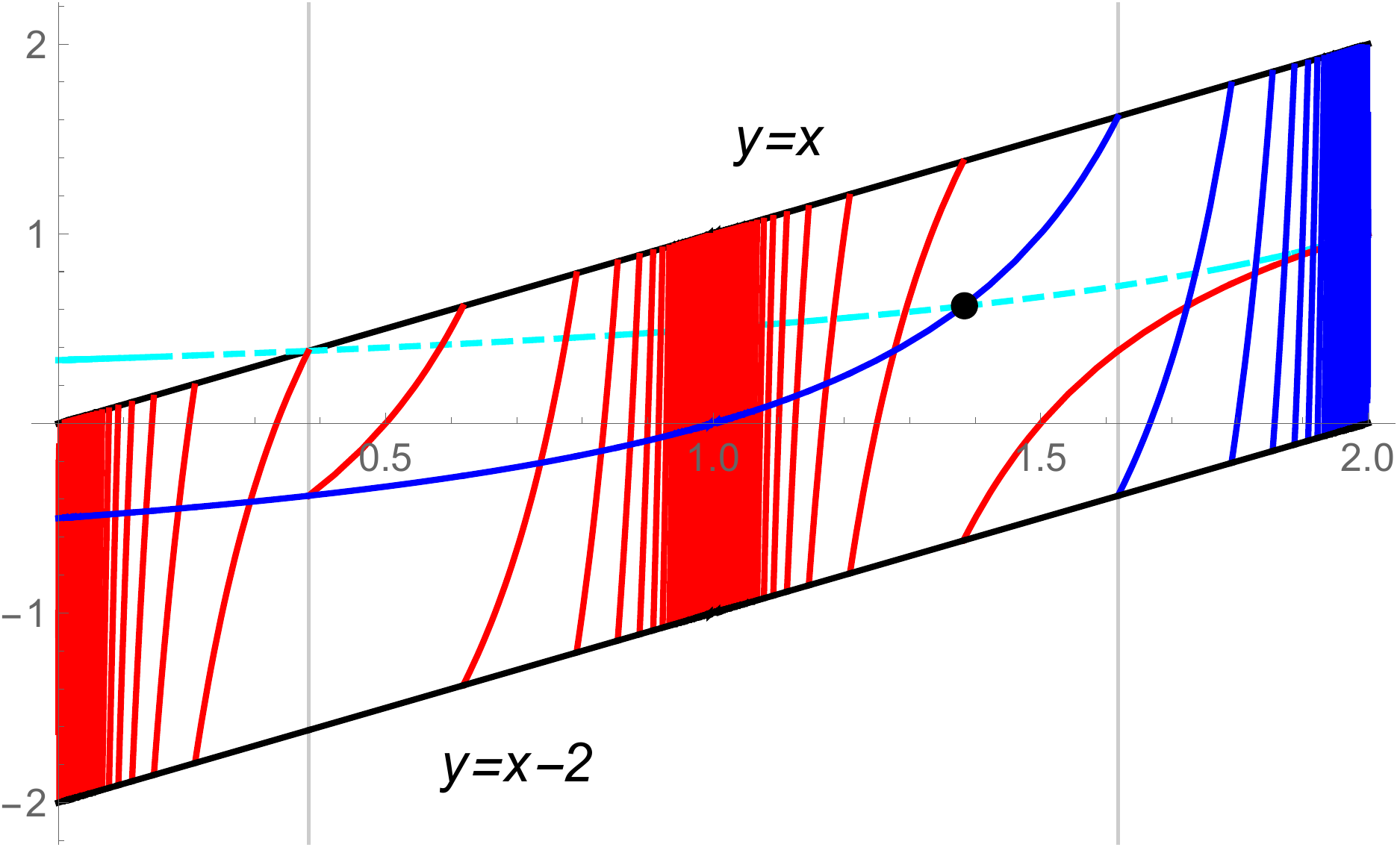}}}
\caption{The first image of the endpoints.    In blue is the graph of $x\mapsto T_{3,3,\alpha}(x-t)$, with $x = \alpha t$; the blue curves thus give the values of $\ell_1(\alpha)$. In red that of $x\mapsto T_{3,3,\alpha}(x)$; the red curves thus give the values of $r_1(\alpha)$.    (Here $t= t_{3,3} = 2$.) 
Gray vertical lines at $x = \gamma_{3,3} t = (G-1)^2, x= \epsilon_{3,3} t = G$,  where $G= (1 +\sqrt{5})/2$.  The dotted cyan curve is the image of the left endpoint under $C^{-1}$ (giving the values of $\mathfrak b_{\alpha}$) and thus  for $\alpha t > \gamma_{3,3}$, above this curve a next digit begins with a $C^2$. Large dot has $x$-coordinate $ \delta_{3,n} t$, see \S\ref{s:LargeAlps}. }
\label{firstBothDigs}
\end{figure}

 In this section we prove the following.
\begin{Thm}\label{t:FullMeasureSmallAlps}   For $m = 3$ and $n\ge m$,  
the set of $\alpha \in (0, \gamma_{3,n})$ such that there exists $i = i_{\alpha}, j = j_{\alpha}$ with 
$T_{3,n,\alpha}^{i}(\,r_0(\alpha)\,) = T_{3,n,\alpha}^{j}(\,\ell_0(\alpha)\,)$ is of full measure.  
\end{Thm}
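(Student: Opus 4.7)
The strategy is to exhibit a countable family of pairwise disjoint open synchronization intervals $\{I_{k,v}\}$ indexed by $(k,v) \in \mathbb{N} \times \mathcal{V}$ (with a suitable restriction when $k$ is small) whose union exhausts $(0, \gamma_{3,n})$ up to a set of Lebesgue measure zero. On each $I_{k,v}$, both endpoint orbits would admit explicit admissible words: the orbit of $r_0(\alpha)$ would start with a finite word prescribed by the pair $(k,v)$, while the orbit of $\ell_0(\alpha)$ would follow a word forced by the synchronization relation identified in Proposition~\ref{p:longWordInWmIsThree}. The two words would be designed so that after finitely many iterations one has $T_{3,n,\alpha}^{i}(r_0(\alpha)) = T_{3,n,\alpha}^{j}(\ell_0(\alpha))$ for explicit $i,j$ depending on $(k,v)$.

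First I would fix $k \ge 1$ and isolate those $\alpha$ with $r_0(\alpha) \in \Delta_\alpha(k,1)$; since $r_0(\alpha) = \alpha t$ and the boundaries between successive cylinders $\Delta_\alpha(k,1)$ are explicitly computable, this carves out an open subinterval of $(0,\gamma_{3,n})$. Inside this subinterval I would refine by successively prescribing the next right-digits of $r_0(\alpha)$, each prescription corresponding to descending one level in the tree $\mathcal{V}$ of Definition~\ref{d:descendents}. At a terminal node (one at which the full-branched property of \S\ref{sss:fullBranched} fails), the synchronization relation $R \cdot r_0(\alpha) = L \cdot \ell_0(\alpha)$, with $R,L$ read off the path to $v$, forces a corresponding left-digit expansion of $\ell_0(\alpha)$ valid throughout $I_{k,v}$; verifying the group identity at the endpoints of $I_{k,v}$ is then a matter of reducing explicit words in $A, C$ using $C^m = B^n = \mathrm{Id}$ (projectively) and $W = A^{-1}C^{-1}ACA$ from \S\ref{s:AlpZero}.

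The main technical obstacle is proving admissibility of the induced $\ell_0$-expansion on each $I_{k,v}$. The right-digit expansion is admissible essentially by construction, as it is the defining set of admissibility constraints for $I_{k,v}$. The $\ell_0$-expansion, obtained only indirectly through the synchronization relation, demands a delicate induction driven by Lemmas~\ref{l:admissBetween2Pts} and \ref{l:admissBetween2PtsRvals}: admissibility would be verified at an endpoint of $I_{k,v}$ by direct computation, and then transported across $I_{k,v}$ via the endpoint-comparison lemmas together with the order criterion of Lemma~\ref{l:theLang}. The full-branched property is expected to serve as a stopping criterion that identifies precisely which higher-rank cylinder the induction exhausts, and hence pins down the correct right-hand endpoint of each $I_{k,v}$.

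To conclude full measure, the complement $K := (0,\gamma_{3,n}) \setminus \bigcup_{k,v} I_{k,v}$ is closed. An $\alpha \in K$ is one whose endpoint orbits never synchronize and whose $r_0$-expansion follows no finite branch of $\mathcal{V}$ to a terminal node; by continuity of the endpoint dynamics in $\alpha$ (and the fact that interior points of $\mathbb{I}_\alpha$ are iterated by $T_{3,n,\alpha}$ in the same way as under $T_{3,n,0}$ away from the boundary), the map $\alpha \mapsto r_0(\alpha)$ sends $K$ into a $T_{3,n,0}$-invariant set of generic orbits that must avoid the forward $T_{3,n,0}$-orbit of $\ell_0(0) = -t$. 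By the ergodicity result of \cite{CaltaSchmidt}, this target set has measure zero for the absolutely continuous invariant measure, so $K$ itself has Lebesgue measure zero (and a standard contraction-of-cylinders argument will in fact show $K$ is a thin Cantor set), which yields the theorem.
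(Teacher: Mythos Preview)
Your overall architecture matches the paper's: synchronization intervals $\mathscr J_{k,v}$ indexed by $\mathbb N\times\mathcal V$, right digits defining the intervals, left digits forced by the group identity of Proposition~\ref{p:longWordInWmIsThree}, admissibility of the left word proved by the two-point interpolation Lemmas~\ref{l:admissBetween2Pts}--\ref{l:admissBetween2PtsRvals}, and full measure via the ergodicity result of \cite{CaltaSchmidt}. Two points, however, are genuinely off.

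First, there are no ``terminal nodes'' in $\mathcal V$. Every $v\in\mathcal V$ carries a synchronization interval $\mathscr J_{k,v}$; the tree is used not to find leaves but to partition the \emph{complement} $\mathscr I_{k,v}\setminus\mathscr J_{k,v}$ into the child cylinders $\mathscr I_{k,\Theta_q(v)}$ (Theorem~\ref{p:cantorSet}). The full-branched prefix $\mathfrak f(v)$ pins down the right endpoint $\omega_{k,v}$ of the \emph{cylinder} $\mathscr I_{k,v}$, not of the synchronization interval, and is not a stopping criterion. Non-synchronizing $\alpha$ are exactly those that descend infinitely through the tree, never landing in any $\mathscr J_{k,v}$.

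Second, and more seriously, your measure-zero argument cannot work as stated. You send $\alpha\mapsto r_0(\alpha)=\alpha t$, but $r_0(\alpha)>0$ is not even in the domain $[-t,0)$ of $T_{3,n,0}$, so there is no $T_{3,n,0}$-orbit to speak of. The paper's argument (\S\ref{ss:nonSynchMeasZero}) goes through the \emph{left} endpoint: for non-synchronizing $\alpha$ one shows $\underline{d}{}^{\alpha}_{[1,\infty)}$ uses only the digits $-1,-2$, which forces the $T_\alpha$-orbit of $\ell_0(\alpha)$ to remain in $[-t,0)$ and to coincide with the $T_{3,n,0}$-orbit of $\beta:=\ell_0(\alpha)$. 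One then invokes ergodicity of the \emph{accelerated} map $f$ of \cite{CaltaSchmidt} (the unaccelerated $g=T_{3,n,0}$ has infinite invariant measure) to conclude that such $\beta$ form a Lebesgue-null set, and since $\alpha\mapsto\ell_0(\alpha)$ is affine, so do the corresponding $\alpha$. Your phrasing about ``avoiding the forward $T_{3,n,0}$-orbit of $-t$'' is also not the mechanism; the null set arises because the orbit is confined to a proper digit-restricted subset, not because it misses a countable set.
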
 

\subsection{Right cylinders and (potential) synchronization intervals}  Basic motivation for our approach to synchronization of the $T_\alpha$-orbits of $r_0(\alpha)$ and $\ell_0(\alpha)$, with $\alpha < \gamma$, comes from the following.   We will eventually show for this range of our parameters that synchronization  depends on right and left digits being related by 
\begin{equation}\label{eq:cInvAc}
 C^{-1}A C = \begin{pmatrix} 1&0\\-t & 1  \end{pmatrix} \,.
\end{equation}

\begin{Lem}\label{l:leftRegionOneBeforeSyn}   Fix $m=3, n \ge 3$,  $\alpha < \gamma$, and $i,j \in \mathbb N$.   Suppose that  $\ell_{i-1} = C^{-1}AC\cdot r_{j-1}$.    
Then 
\begin{enumerate}
\item   $\ell_i = r_j$, 
\item $\ell_{i-1} \ge r_{j-1}$ with equality if and only if both equal zero.
\end{enumerate}
\end{Lem}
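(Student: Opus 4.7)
The plan is to exploit the matrix identity \eqref{eq:cInvAc}, namely $C^{-1}AC \cdot x = x/(1-tx)$, together with the fact that the assumption $\alpha < \gamma_{3,n}$ (i.e., $\mathfrak{b}_\alpha \notin \mathbb{I}_\alpha$) rules out any cylinder of $T_\alpha$ with $l \ge 2$. This last point is a direct consequence of the cylinder analysis of \S\ref{sss:cylsSequence}: existence of an $l = 2$ cylinder requires $C\cdot x \in \mathbb{I}_\alpha$ for some $x \in \mathbb{I}_\alpha$, which is in turn equivalent to $\mathfrak{b}_\alpha \in \mathbb{I}_\alpha$. Consequently, in our setting every $\alpha$-digit is of the form $(k,1)$. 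I will prove (2) first as a direct computation with the M\"obius transformation $f(x) := C^{-1}AC \cdot x$, and then use the rigid digit structure to establish (1).

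For (2), compute $f(x) - x = tx^2/(1-tx)$, which is nonnegative on $(-\infty, 1/t)$ and vanishes only at $x = 0$. To apply this to $r_{j-1}$ I must verify $r_{j-1} < 1/t$: we have $r_{j-1} < \alpha t < \gamma_{3,n} t$, so it suffices that $\gamma_{3,n} t^2 < 1$. The defining equation $\mathfrak{b}_{\gamma_{3,n}} = \gamma_{3,n} t$ yields the quadratic $x^2 - (1+t)x + 1 = 0$ for $x = \gamma_{3,n} t$, and its smaller root satisfies $xt < 1$ by an elementary calculation (after squaring, this reduces to $(t-1)(t+2)^2 < t^2(t+3)$, which expands to $-4 < 0$). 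Hence $\ell_{i-1} = f(r_{j-1}) \ge r_{j-1}$, with equality iff $r_{j-1} = 0$, in which case $\ell_{i-1} = 0$ as well.

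For (1), let $(k', 1)$ denote the $\alpha$-digit of $r_{j-1}$, so that $r_j = A^{k'} C \cdot r_{j-1}$. Applying $C$ to the hypothesis $\ell_{i-1} = C^{-1}AC \cdot r_{j-1}$ gives
\[
C \cdot \ell_{i-1} \;=\; AC \cdot r_{j-1} \;=\; A^{1-k'} \cdot r_j \;=\; r_j + (1-k')\,t.
\]
The main obstacle is to rule out $k' = 1$: were $k' = 1$, then $C \cdot \ell_{i-1} = r_j \in \mathbb{I}_\alpha$, which would force the $\alpha$-digit of $\ell_{i-1}$ to have $l \ge 2$, contradicting our restriction $\alpha < \gamma_{3,n}$. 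Hence $k' \ne 1$, the point $r_j + (1-k')t$ lies in $[(\alpha - k')t, (\alpha - k' + 1)t)$, which is disjoint from $\mathbb{I}_\alpha$, and the $\alpha$-digit of $\ell_{i-1}$ has the form $(k, 1)$ for some $k \ne 0$. Then
\[
\ell_i \;=\; A^k C \cdot \ell_{i-1} \;=\; r_j + (k - k' + 1)\,t,
\]
and since both $\ell_i$ and $r_j$ lie in $\mathbb{I}_\alpha$, the unique integer $k$ for which the shift returns to $\mathbb{I}_\alpha$ is $k = k' - 1$, giving $\ell_i = r_j$ as desired. (The degenerate equality case $r_{j-1} = 0$ of (2) sits at the pole of $C$, where $T_\alpha$ is undefined, and so lies outside the proper domain of (1).)
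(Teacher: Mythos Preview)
Your proof is correct, though it follows a route different from the paper's. The paper proves (1) first: it names the digit of $\ell_{i-1}$ as $(u,1)$, computes $\ell_i = A^{u+1}C\cdot r_{j-1} \in \mathbb I_\alpha$, and concludes this forces $r_j = A^{u+1}C\cdot r_{j-1}$; then (2) is read off from the digit ordering, since $\ell_{i-1}$ sits in $\Delta_\alpha(u,1)$ while $r_{j-1}$ sits in $\Delta_\alpha(u+1,1)$, and the order \eqref{e:theFullOrder} gives $(u,1)\succ (u+1,1)$. You instead prove (2) directly from the M\"obius formula $f(x)-x = tx^2/(1-tx)$, which requires the auxiliary inequality $\gamma_{3,n}t^2<1$; you then prove (1) starting from the digit of $r_{j-1}$ and explicitly ruling out $k'=1$. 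Your approach to (2) is more self-contained---it avoids any appeal to the cylinder ordering---at the cost of the algebraic verification of $\gamma_{3,n}t^2<1$. Conversely your approach to (1) is slightly more roundabout: once you have established that every digit has $l=1$, the case analysis on $k'=1$ and the interval disjointness check are redundant, since you could equally well have named the digit of $\ell_{i-1}$ directly (as the paper does) and read off $k' = k+1$ in one line.

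One minor slip: ``$r_{j-1} < \alpha t$'' should be ``$r_{j-1}\le \alpha t$'' to accommodate $j=1$ (where $r_0=\alpha t$); this is harmless since you only need $r_{j-1} < 1/t$, and $\alpha t < \gamma_{3,n} t < 1/t$ covers it.
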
 
\begin{proof}  There is some 
$u \in \mathbb Z\setminus\{0\}$, that $\ell_i = A^uC \cdot\ell_{i-1} $  (recall that since $\alpha< \gamma_{3,n}$,  the exponent $l$ is always one).  This thus equals  $A^uC \;C^{-1}AC\cdot r_{j-1} = A^{u+1}C\cdot r_{j-1}$.    
In particular  $A^{u+1}C\cdot r_{j-1} \in \mathbb I_{\alpha}$.    We conclude that 
$r_j = A^{u+1}C\cdot r_{j-1}\,$.   From this, we find $\ell_i = r_j$.     
Since $C^{-1}AC$ clearly fixes zero, see~\eqref{eq:cInvAc},  it follows that $\ell_{i-1} = r_{j-1}$ holds if either is zero;  otherwise,  the fact that the exponent of $A$ is greater when passing from $r_{j-1}$ to $r_j$  than from $\ell_{i-1}$ to $\ell_i$ (in light of the ordering of digits, \eqref{e:theFullOrder})   shows that  $\ell_{i-1}> r_{j-1}$.   
\end{proof}

\bigskip

\begin{Def}\label{d:synchInts}   We say {\em synchronization} occurs at $\alpha$ if there exist $i,j$ such that $r_j =  \ell_i$.  A 
{\em synchronization   interval}  is an interval of $\alpha$ values for each which synchronization holds with the same pair of indices $i,j$. (We will assume that at least for one $\alpha$ in the interval, both $i,j$ are minimal.) 
\end{Def}
\bigskip

 \begin{Eg}\label{e:boringEg}  Fix $n=3$.   Let $\zeta = (5 - \sqrt{21})/2$ and $\eta =(-1 + \sqrt{21})/10$.  (Thus $\zeta = 0.20871\cdots$, $\eta = 0.35825\cdots\;$.)  One finds that for all $\alpha \in [\zeta, \eta)$: 
\[
 \overline{d}_{[1, \infty)}^{\alpha}      = 1, \cdots\;\;\;\text{and}\;\;   \underline{d}_{[1, \infty)}^{\alpha}     = -1,-2,-2, -1, \cdots
 \]
 and furthermore, for all such $\alpha$ the group identity given in  Lemma~\ref{l:shortRightId}   implies  $r_2(\alpha) = \ell_5(\alpha)$.  

Note that here $AC\cdot r_0(\zeta) = \ell_0(\zeta)$.  Confer Figure~\ref{firstBothDigs},  in which $ 2 \alpha = 2 \zeta$ gives the intersection point with the line $y = x-2$ of the first red branch  to the left of $x = 2 \, \gamma_{3,3}$.  

   Also in Figure ~\ref{firstBothDigs},  the visible intersection of this red branch with the blue branch (which corresponds to $\ell_1 = A^{-1}C\cdot \ell_0$) marks a point of what one could call ``accidental" synchronization.  That is, for $\alpha =  (2 - \sqrt{2})/2 = 0.29289\cdots\,$, we have $r_1(\alpha) = \ell_1(\alpha)$.    Of course, this implies that each of these is periodic.  In particular, this and indeed any  accidental synchronization occurs  at an algebraic value of $\alpha$. 
 
\end{Eg}

\bigskip 
 We seek  synchronization   intervals of the form $[\zeta, \eta)$, where the endpoints are identified by $R\cdot r_0(\zeta) = r_0(\zeta)$ and $LA  \cdot \ell_0(\eta) = r_0(\eta)$, for certain $R, L \in G_{3,n}$.   
Our synchronization intervals form a subset of full measure; to prove this, it will be very helpful to have the digits of the $\zeta, \eta$.   The following is key to finding these digits. 
    
\begin{Lem}\label{l:etaDigits}   Fix $m=3, n \ge 3$, an interval $[\zeta, \eta] \subseteq (0, \gamma_{3,n})$  and  $i,j \in \mathbb N$.  Suppose  that there are  $R, L, L'\in G_{3,n}$ (none of which is the identity) such that 
\begin{enumerate}
\item[(a)]   $L = C^{-1}AC R$,
\item[(b)]   $R\cdot r_0 = r_{j-1}$ and $L'\cdot \ell_0 = \ell_{i-2}$,  for all $\alpha \in [\zeta, \eta]$, 
\item[(c)]  $L A\cdot \ell_0  = \ell_{i-1}$ for all $\alpha \in [\zeta, \eta)$,  while  $L A\cdot \ell_0(\eta) = r_0(\eta)$, 
\item[(d)] $R \cdot r_0(\zeta) = \ell_0(\zeta)$.
\end{enumerate}

Then 

\begin{enumerate}
\item[(i)]  $\ell_{i-1}(\eta) = A^{-1} L A\cdot \ell_0(\eta) = \ell_0(\eta)$,
\item[(ii)]  $r_{j}(\eta) = A^{k+1}C\cdot r_{j-1}(\eta) = r_1(\eta)$, where $k$ is such that $A^kC \cdot r_0(\eta) = r_1(\eta)$
\item[(iii)]  $A^{-2}C L A\cdot \ell_0(\zeta) = \ell_1(\zeta)$.
\end{enumerate}
\end{Lem}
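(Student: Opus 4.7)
The plan is to treat all three parts as algebraic consequences of the relation $L = C^{-1}ACR$ from (a), together with the endpoint identities $A\cdot \ell_0(\alpha) = r_0(\alpha)$ and the fact that $\alpha < \gamma_{3,n}$ forces the $C$-exponent in every digit to equal $1$ (and in particular gives $\ell_1(\alpha) = A^{-1}C\cdot\ell_0(\alpha)$, as noted in the text).

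For (i), I would first verify the equation $A^{-1}LA\cdot\ell_0(\eta) = \ell_0(\eta)$ directly: by (c), $LA\cdot\ell_0(\eta) = r_0(\eta) = \eta t$, and $A^{-1}\cdot r_0(\eta) = \eta t - t = (\eta-1)t = \ell_0(\eta)$. The substantive content is the identification of this value with the orbit element $\ell_{i-1}(\eta)$. For $\alpha\in[\zeta,\eta)$, comparing (b) and (c) shows that the digit applied at step $i-1$ is the unique $A^{k}C$ with $LA(L')^{-1} = A^{k}C$ (the $C$-exponent is $1$ since $\alpha<\gamma_{3,n}$). At $\alpha = \eta$ this same matrix sends $\ell_{i-2}(\eta) = L'\cdot\ell_0(\eta)$ to the right endpoint $r_0(\eta)$, which is excluded from the half-open $\mathbb I_\eta$; admissibility of $T_\eta$ then forces the $A$-exponent to decrement by one, giving $\ell_{i-1}(\eta) = A^{k-1}C\cdot\ell_{i-2}(\eta) = A^{-1}LA\cdot\ell_0(\eta)$.

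For (ii), combining (a) with (c) at $\alpha=\eta$ yields $C^{-1}ACR\cdot A\ell_0(\eta) = r_0(\eta)$; since $A\ell_0(\eta) = r_0(\eta)$ and $R\cdot r_0(\eta) = r_{j-1}(\eta)$ by (b), this reads $C^{-1}AC\cdot r_{j-1}(\eta) = r_0(\eta)$, equivalently $C\cdot r_{j-1}(\eta) = A^{-1}C\cdot r_0(\eta)$. Multiplying by $A^{k+1}$ gives $A^{k+1}C\cdot r_{j-1}(\eta) = A^{k}C\cdot r_0(\eta) = r_1(\eta)$ by the definition of $k$. Because $\alpha<\gamma_{3,n}$ forces $l=1$ at each step and the $A$-exponent is uniquely determined by which translate lands in $\mathbb I_\eta$, the value $r_j(\eta) = T_\eta(r_{j-1}(\eta))$ must equal $A^{k+1}C\cdot r_{j-1}(\eta) = r_1(\eta)$. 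For (iii), substituting (a) and cancelling gives $A^{-2}C\cdot LA = A^{-2}C\cdot C^{-1}ACRA = A^{-1}CRA$; applying this to $\ell_0(\zeta)$ and using $A\ell_0(\zeta) = r_0(\zeta)$ followed by (d) yields $A^{-1}CR\cdot r_0(\zeta) = A^{-1}C\cdot\ell_0(\zeta)$, which equals $\ell_1(\zeta)$ by the cited first-digit formula valid for $\alpha<\gamma_{3,n}$.

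The only delicate step is the admissibility argument in (i): verifying that at $\alpha=\eta$ exactly, admissibility forces an $A$-exponent decrement (rather than, say, an increase in the $C$-exponent or any more complicated modification). The constraint $\eta<\gamma_{3,n}$ keeps $l=1$ and makes this decrement unambiguous. Everything else is straightforward group-theoretic bookkeeping using the identity $L = C^{-1}ACR$ together with hypotheses (b), (c), (d) and the endpoint translation $A\ell_0 = r_0$.
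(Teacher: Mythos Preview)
Your proof is correct and follows essentially the same route as the paper's own argument: both parts rely on the identity $LA\cdot\ell_0 = C^{-1}ACR\cdot r_0$ together with the endpoint relation $A\cdot\ell_0 = r_0$, the fact that $r_0(\alpha)\notin\mathbb I_\alpha$ (which forces the $A$-exponent decrement in (i)), and $\ell_1 = A^{-1}C\cdot\ell_0$ for $\alpha<\gamma_{3,n}$. Your treatment of the admissibility step in (i) is in fact a bit more explicit than the paper's, which simply invokes hypotheses (b), (c) and $r_0\notin\mathbb I_\alpha$ without spelling out the decrement mechanism.
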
 
\begin{proof}  For any $\alpha$, the identity (a) implies   $LA\cdot \ell_0 = C^{-1}AC R\cdot r_0$.  

  Recall that for all $\alpha$, $r_0(\alpha) \notin \mathbb I_{\alpha}$. Now set $\alpha = \eta$.  Hypotheses  (b,c) imply that $\ell_{i-1}  = A^{-1} L A\cdot \ell_0  = \ell_0$.   Now,  if $r_1  = A^kC \cdot r_0 $, then $r_1 = A^kC L A\cdot \ell_0 $, which again by (a)  gives $r_1  = A^kC \; C^{-1}AC R \cdot r_0  = A^{k+1}C R \cdot r_0$.  Now (b) gives $A^{k+1}C  \cdot r_{j-1} = r_1$.    

Finally, (d) with $\ell_1 = A^{-1}C \cdot \ell_0$  yield  $\ell_1(\zeta) = A^{-1} CR\cdot r_0(\zeta)$.  Hypothesis (a) now yields that $A^{-2}C LA\cdot \ell_0(\zeta) = \ell_1(\zeta)$.
\end{proof}

\begin{figure}[h]
\scalebox{.4}{
{\includegraphics{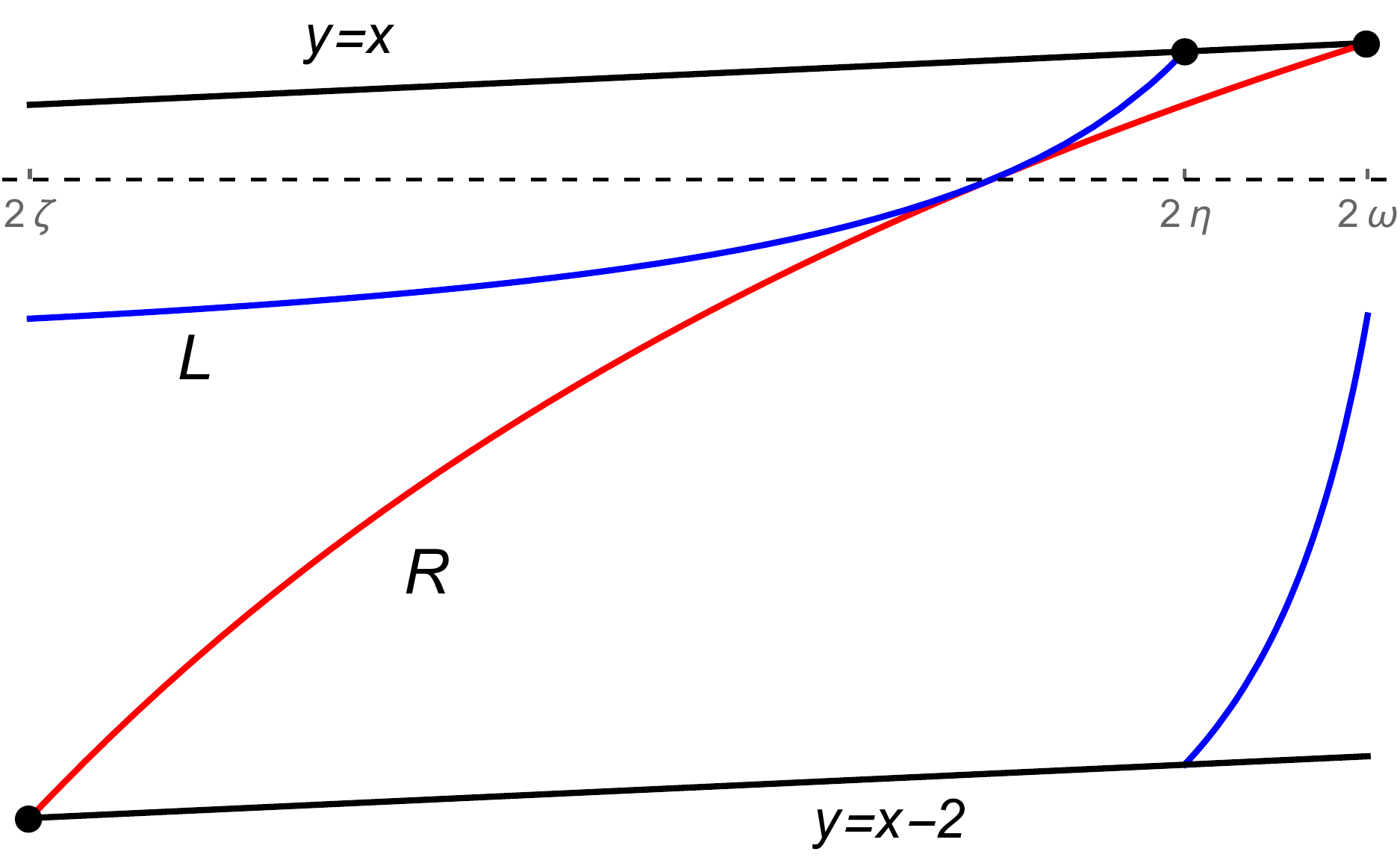}}}
\caption{Determining the synchronization interval $[\zeta, \eta) = [\zeta_{k,v}, \eta_{k,v})$.  
Here,   $m=3, n=3$,  and $k=1$, $v = 1$.  The labels $L, R$ mark respectively the curves $y = L_{1,1}\cdot r_0(\alpha), y= R_{1, 1}\cdot r_0(\alpha)$ where $\alpha = x/2 = x/t_{3,3}$.  (See Definitions ~\ref{d:upperDigsCylsMats}  and ~\ref{d:derivedWords} for $R_{k,v}$  and $L_{k,v}$ in general.)   Red gives the single branch of $y = r_1(\alpha)$ while blue colors the two  branches of  $y = \ell_4(\alpha)$ for  $(5 - \sqrt{21})/2 < x <  g^2$.   The $x$-axis is shown as a dotted line.}
\label{smallAlpLRpair}
\end{figure}
 
\bigskip

We will be describing synchronization subintervals of $\alpha \in [0,1]$ in terms of common initial portions of the digits of $r_0(\alpha)$. 
\begin{Def}\label{d:digitsWithKandA} 

\begin{enumerate}
\item  If the $\alpha$-digits for some $x$ are all of the form $(k_i, 1)$, it is convenient to suppress the notation indicating that the exponent of $C$ is simply one. 
We refer then to {\em simplified digits}, and uniformly use a $d$ instead of a $b$ in notation referring to simplified digits.  Thus the statement $\underline{d}_{[1, \infty)}^{\alpha} =   k_1,  k_2,\dots$ is equivalent to 
$\underline{b}_{[1, \infty)}^{\alpha} =  (k_1, 1)(k_2,1)\dots \,$ and similarly for expressions involving  $\overline{d}$.  Of course, sequences of simplified digits are ordered by way of the order \eqref{e:theFullOrder}.

 \item Given $s\in \mathbb N$ and integers $c_1, c_2, \dots, c_s$ and
$d_1, d_2, \dots, d_{s-1}$, let $v = c_1d_1\cdots d_{s-1} c_s$  and for any $k \in \mathbb N$, define 
the upper (simplified)  {\em digit sequence} of  $k$ and $v$ as   
\[ \overline{d}(k,v) = k^{c_1}, (k+1)^{d_1},\cdots,  (k+1)^{d_{s-1}},k ^{c_s}\,. \]
Let now  $\sigma$ be the left-shift.  (For example,   $\sigma(v) =  d_1 c_2 d_2\cdots d_{s-1} c_s$.)

\end{enumerate}
\end{Def}
 
\bigskip
 
For any $s\in \mathbb N$, let $\underline{b}_{[1, s]}$ denote the prefix of length $s$ of $\underline{b}_{[1, \infty)}^{\alpha}$, and similarly for $\overline{b}_{[1, s]}$.

\bigskip
\begin{Def}\label{d:upperDigsCylsMats}   
  For each $k \in \mathbb N$ and each  $v = c_1d_1\cdots d_{s-1} c_s$,  we define the following.  
\begin{enumerate}


\medskip 
\item  The  length of $\overline{d}(k,v)$ is $\overline{S}(v) := |\,\overline{d}(k,v)\,|  =  c_s+\sum_{i=1}^{s-1}\, (c_i +d_i)$.
  Notice that $\overline{S}(v)$  is indeed independent of $k$.

\medskip 
\item  The   {\em $\alpha$-cylinder} of  $k, v$ is 
\[ \mathscr I_{k,v} = \{ \alpha \,|\, \overline{d}{}^{\alpha}_{[1, \overline{S}(k,v)]} = \overline{d}(k,v)\}\,.\]
That is, $\mathscr I_{k,v}$ is the set of all $\alpha$, such that the initial simplified digits of $r_0(\alpha)$ are 
$k^{c_1}, (k+1)^{d_1}\cdots (k+1)^{d_{s-1}}, k^{c_s}$.

\medskip 
\item The {\em right matrix} of  $k, v$ is
\[ 
R_{k,v}  =    (A^kC)^{ c_s}\; (A^{k+1}C)^{d_{s-1}}(A^kC)^{c_{s-1}}\cdots (A^{k+1}C)^{d_1} (A^kC)^{c_1}\,.
\]

\item The {\em potential synchronization interval} associated to $k, v$ is $\mathscr J_{k,v} = [\zeta, \eta)$ where 
 $\zeta = \zeta_{k, v}$ and $\eta = \eta_{k,v}$ are such that 
\[ R_{k,v}\cdot r_0(\zeta) = \ell_0(\zeta) \;\;\; \text{and} \;\;\;   r_0(\eta)= C^{-1}ACR_v\cdot r_0(\eta)\,.\]   
 
 \end{enumerate}
  
\end{Def}
 
 \bigskip 
 \noindent
 Note that if $\alpha \in \mathscr J_{k,v}$, then $R_{k,v} \cdot r_0(\alpha) = r_{\overline{S}(v)}(\alpha)$.

\bigskip

\subsection{Tree of words and  a partition} From Lemma~\ref{l:etaDigits}, we have that 
\[\overline{d}_{[1, \infty)}^{\eta_{k,v}} = k^{c_1}, (k+1)^{d_1},\cdots,  (k+1)^{d_{s-1}},k ^{c_s},k+1,\cdots\]
and furthermore,  this sequence continues with the digits of $r_1(\eta_{k,v})$.  Thus, this sequence is periodic with pre-period $\overline{d}(k,v)$ and period $k+1, k^{c_1-1}, (k+1)^{d_1},\cdots,  (k+1)^{d_{s-1}},k ^{c_s}$.  This  period is expressible in terms of the word $v'$, which we now define.
\begin{Def}\label{d:periodFromEtaAsWord}   
For each $s > 1$ and each   word $v = c_1d_1\cdots c_{s-1}d_{s-1} c_s$, define 

\[ v' =  \begin{cases} 1 (c_1 - 1)d_1c_2 \cdots c_{s-1}d_{s-1} c_s &\text{if}\;\;\; c_1\neq 1\,,\\
                               (d_1+1) c_2 \cdots c_{s-1}d_{s-1} c_s &\text{otherwise}\,.
                              \end{cases}
 \]
We interpret this also to mean that when $v = c$ with $c>1$ then   $v' = 1 (c -1)$, and when $v=1$ then $v' = 1$.
\end{Def}

As necessary, we extend the notion $\overline{d}(k, v)$ in the natural manner to include the setting of infinite  words,  and also extend the notion of $R_{k, v}$ to include more general words.  
\begin{Lem}\label{l:etaRightDigs}  Let $k \in \mathbb N$ and  $v = c_1d_1\cdots c_{s-1}d_{s-1} c_s$.   If
$\eta_{k,v}\in \mathscr I_{k,v}$, then $\overline{d}{}^{\eta_{k,v} }_{[1,\infty)} = \overline{d}(k, v (v')^\infty)$.    
\end{Lem}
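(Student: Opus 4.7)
The plan is to prove the lemma in two stages: first establish that the orbit of $r_0(\eta)$ is eventually periodic from the second digit onward with period $\overline{S}(v)$, and then match this periodic structure to the formula $\overline{d}(k, v(v')^\infty)$ by a short case analysis on whether $c_1=1$.

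First I would unpack the defining relation. Since $\eta := \eta_{k,v} \in \mathscr{I}_{k,v}$, admissibility of the prefix $\overline{d}(k,v)$ gives $R_{k,v}\cdot r_0(\eta) = r_{\overline{S}(v)}(\eta)$. Substituting into $r_0(\eta) = C^{-1}ACR_{k,v}\cdot r_0(\eta)$ and rearranging yields $C\cdot r_0(\eta) = AC\cdot r_{\overline{S}(v)}(\eta)$. Since the first simplified digit of $r_0(\eta)$ is $k$, we have $r_1(\eta)=A^kC\cdot r_0(\eta)$, and thus
\[
A^{k+1}C\cdot r_{\overline{S}(v)}(\eta) = A\cdot (AC\cdot r_{\overline{S}(v)}(\eta)) = A\cdot C\cdot r_0(\eta)\cdot A^{-k}\cdot A^k = r_1(\eta).
\]
A one-line check that $A^kC\cdot r_{\overline{S}(v)}(\eta) = r_1(\eta)-t \notin \mathbb I_\eta$ shows that $k+1$ is the correct exponent, so $\overline{d}^{\eta}_{\overline{S}(v)+1}=k+1$ and $r_{\overline{S}(v)+1}(\eta)=r_1(\eta)$.

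The equality $r_{\overline{S}(v)+1}(\eta)=r_1(\eta)$ forces $\overline{d}^{\eta}_{i+\overline{S}(v)} = \overline{d}^{\eta}_i$ for every $i\geq 2$, so the sequence has the form $k\cdot P^\infty$ where $P := \overline{d}^{\eta}_{2}\overline{d}^{\eta}_{3}\cdots \overline{d}^{\eta}_{\overline{S}(v)+1}$ is a word of length $\overline{S}(v)$. Using the explicit knowledge of $\overline{d}(k,v)$ together with $\overline{d}^{\eta}_{\overline{S}(v)+1}=k+1$, I read off $P$ in each case: if $c_1\geq 2$ then $P = k^{c_1-1},(k+1)^{d_1},k^{c_2},\ldots,k^{c_s},k+1$; if $c_1=1$ then $P = (k+1)^{d_1},k^{c_2},\ldots,k^{c_s},k+1$.

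The remaining, mildly finicky, step is to check that $\overline{d}(k,v(v')^\infty)$ produces exactly $k\cdot P^\infty$. The key observation is that $v$ has odd length $2s-1$ (starting and ending in a ``$c$''-slot), so within $v(v')^\infty$ the first letter of the initial copy of $v'$ occupies position $2s$, an even (``$d$''-)slot, and since $v'$ has even length in the first case and odd length in the second (but with $c_1=1$ making the first $v$-block collapse appropriately), every subsequent copy of $v'$ begins at an even position too. Threading the definition of $v'$ through this parity bookkeeping: in the case $c_1\geq 2$, $v' = 1(c_1-1)d_1c_2\cdots c_s$ read at an even starting slot contributes the digits $(k+1)^1,k^{c_1-1},(k+1)^{d_1},k^{c_2},\ldots,k^{c_s}$, which glues to the trailing $k^{c_s}$ of the previous block to realize $P$; in the case $c_1=1$, $v'=(d_1+1)c_2\cdots c_s$ contributes $(k+1)^{d_1+1},k^{c_2},\ldots,k^{c_s}$, and the extra $k+1$ likewise merges with the adjacent period boundary to realize $P$. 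The main obstacle is this last bookkeeping step—making sure the off-by-one shift in position parity is handled uniformly across iterations of $v'$—but once one confirms that $v'$'s length has the correct parity to preserve the starting-slot convention in every iteration, the identity $\overline{d}^{\eta}_{[1,\infty)}=\overline{d}(k,v(v')^\infty)$ follows by direct comparison.
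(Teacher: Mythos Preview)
Your argument is correct and matches the paper's approach: establish that the $(\overline{S}(v){+}1)$-th simplified digit is $k+1$ with $r_{\overline{S}(v)+1}(\eta)=r_1(\eta)$ (the paper invokes Lemma~\ref{l:etaDigits}(ii) for this, while you re-derive it directly from the defining relation), then regroup the resulting periodic tail via the same case split on whether $c_1=1$. Two cosmetic slips that do not affect the logic: your displayed chain should read $A^{k+1}C\cdot r_{\overline{S}(v)}(\eta)=A^k\cdot\bigl(AC\cdot r_{\overline{S}(v)}(\eta)\bigr)=A^kC\cdot r_0(\eta)=r_1(\eta)$, and when $c_1=1$ with $s>1$ the word $v'$ has even length $2s-2$, not odd, so your parity bookkeeping in fact works uniformly without the parenthetical caveat.
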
 
\begin{proof}   
For simplicity,  let $\eta = \eta_{k,v}$.   From Lemma~\ref{l:etaDigits} (ii),  the simplified digit of $r_0(\eta)$ following $\overline{d}(k, v)$ is $k+1$, and thereafter the simplified digits begin with the sequence of $r_1(\eta)$.   
This can be expressed as $\overline{d}{}^{\eta}_{[1,\infty)} = \overline{d}(k, v)\, \overline{ (k+1)\, \overline{d}(k, v)_{[2, \overline{S}]}}$.   

When $v = c_1 = c$ we have $\overline{S} = c$ and we find $\overline{d}{}^{\eta }_{[1,\infty)} =  k^c,\, \overline{ (k+1), k^{c-1}}$ (when $c=1$, we take this to mean  $k, \overline{k+1}$.)
For longer $v$,  we must  group the new occurrence of $k+1$.   This grouping depends on whether $c_1 = 1$ or not.   In either case,  one indeed finds that $\overline{d}{}^{\eta_{k,v} }_{[1,\infty)} = \overline{d}(k, v (v')^{\infty})$.   
\end{proof}

\bigskip

\begin{Def}\label{d:thetaQ}   Set $\Theta_{-1}(c_1) = c_1+1$ and $\Theta_q(1) = 1q1 $ for $q \ge 1$.  For  $c>1$,  set  $\Theta_q(c) = c [ 1 (c-1)]^q 1c$ for any $q \ge 0$.  (To avoid double labeling and also to stay within our desired language,   $\Theta_0(1)$ is undefined;   note that $\Theta_1(1) = 111$, compare with $\Theta_0(c) = c 1 c$ for $c>1$.)

We now recursively define values of the operators  $\Theta_q$.  Suppose $v = \Theta_p(u) = u v''$ for some $p\ge 0$ and some suffix $v''$.   Then   define for any $q\ge 0$ 
\[ \Theta_q(v) = v (v')^q v'' \,.\]


\end{Def}

\bigskip
\begin{Def}\label{d:descendents}   Let $\mathcal V$ denote the set of all words obtainable from $v=1$ by finite sequences of applications of the various $\Theta_q$.   We call $v$ the {\em parent} of each $\Theta_q(v)$, and also refer to $\Theta_q(v)$ as a {\em child} of $v$.   See Figure~\ref{f:theTree} for a portion of this directed tree.  
 \end{Def}

\bigskip
 \begin{figure} 
 \begin{tikzpicture}[baseline= (a).base]
 \node[scale=.5] (a) at (0,0){
\begin{tikzcd}[column sep=2pc,row sep=2pc]
    &&1\ar[pos=0.5]{rr}{\Theta_{-1}}  \ar[pos=0.5]{ld}{\Theta_q}\ar[pos=0.5, swap]{lld}{\Theta_s}\ar[pos=0.5]{rd}{\Theta_1}
                                              & & 2\ar[pos=0.5]{r}{\Theta_{-1}} \ar[pos=0.7]{rd}{\Theta_{u-1}}\ar[dotted,  -]{d}& 3\ar[pos=0.5]{r}{\Theta_{-1}} \ar[pos=0.7]{rd}{\Theta_0} &\cdots\\%
1s1\ar[pos=0.5]{d}{\Theta_t}     & 1q1 \ar[pos=0.5]{d}{\Theta_s}  & \cdots&111\ar[pos=0.5]{dr}{\Theta_u}  &\cdots&2 (11)^{u-1}12&313\ar[pos=0.5]{d}{\Theta_u} \\
 1s1[(s+1) 1]^t s 1&1q1[(q+1) 1]^s q 1\ar[pos=0.5]{d}{\Theta_t}\ar[bend left, dotted,swap]{ul}{\mathscr D}\ar[pos=0.5]{r}{\Theta_0}&1q1[(q+1) 1]^sq1[(q+1) 1]^s  q 1&            &111(21)^u11   &&   313 (1213)^u13\ar[bend left, dotted]{ul}{\mathscr D}\\
                             &\makebox[\widthof{reallyverylong so moves over to the right please do moveIdont seeit moving at all}]{$1q1[(q+1) 1]^s q 1 \{[(q+1) 1]^{s+1} q 1\}^t[(q+1) 1]^s q 1$}\ar[bend left, dotted]{ul}{\mathscr D}
\end{tikzcd}
};
\end{tikzpicture}
\caption{Each vertex of the directed tree $\mathcal V$, see Definition~\ref{d:descendents},    has countably infinite valency.   A small portion of $\mathcal V$, with some values of  $\mathscr D$ (of  Definition~\ref{d:derivedWords}), is indicated.}
\label{f:theTree}
\end{figure}
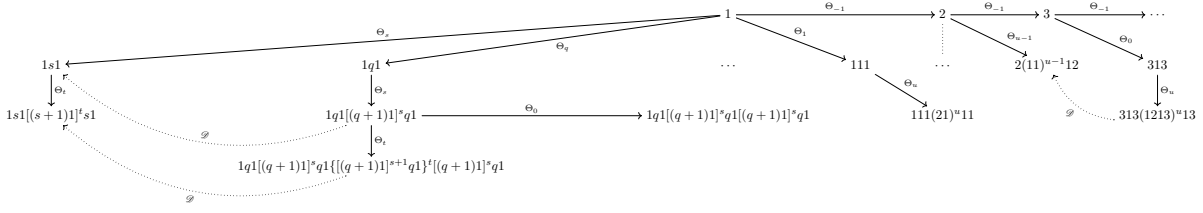

The following result gives the basic structure of the collection of potential synchronization intervals. 
\begin{Thm}\label{p:cantorSet}  We have the following partition
\[(0, \gamma_{3,n}) = \bigcup_{k=1}^{\infty}\, \mathscr I_{k,1}\,.\]

Furthermore, for each $k \in \mathbb N$ and each $v \in \mathcal V$,  the following is a partition: 
\[ \mathscr I_{k,v} = \mathscr J_{k,v} \cup \, \bigcup_{q=q'}^{\infty}\, \mathscr I_{k, \Theta_q(v)}\,,\]
where $q' = 0$ unless $v= c_1$, in which case $q' = -1$.
\end{Thm}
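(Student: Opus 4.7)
The plan is to handle the two partition claims separately, starting with the coarse one. For $(0, \gamma_{3,n}) = \bigcup_{k \ge 1} \mathscr{I}_{k,1}$, observe that $r_0(\alpha) = \alpha t > 0$ throughout the range, and that $\mathfrak{b}_\alpha \notin \mathbb{I}_\alpha$ by the defining condition $\alpha < \gamma_{3,n}$; consequently the first $\alpha$-digit of $r_0$ is of the form $(k,1)$ with $k \ge 1$. Continuity and monotonicity of $\alpha \mapsto r_0(\alpha)$, together with the explicit interval condition $(k-\alpha)t + \mu < 1/(\alpha t) \le (k+1-\alpha)t + \mu$ characterizing the digit $k$, identify each $\mathscr{I}_{k,1}$ as a subinterval of $(0, \gamma_{3,n})$, and as $k$ ranges over $\mathbb{N}$ these subintervals exhaust the parameter range.

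For the refined partition, the plan is to establish containment, disjointness, and exhaustion in turn. The inclusion $\mathscr{J}_{k,v} \subseteq \mathscr{I}_{k,v}$ follows because $R_{k,v} \cdot r_0(\zeta_{k,v}) = \ell_0(\zeta_{k,v})$ and Lemma~\ref{l:etaRightDigs} together force the first $\overline{S}(v)$ digits of $r_0$ to equal $\overline{d}(k,v)$ at both endpoints of $\mathscr{J}_{k,v}$; Lemma~\ref{l:admissBetween2PtsRvals} then propagates this agreement across the whole interval. The inclusions $\mathscr{I}_{k, \Theta_q(v)} \subseteq \mathscr{I}_{k,v}$ reduce to the combinatorial fact, readily extracted from Definition~\ref{d:thetaQ}, that $\overline{d}(k, \Theta_q(v))$ has $\overline{d}(k, v)$ as an initial segment (with the tacit convention that undefined terms like $\Theta_0(1)$ are omitted from the union). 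Pairwise disjointness is automatic from prefix distinctness, since Lemma~\ref{l:etaRightDigs} pins down the digit sequence of $r_0$ on $\mathscr{J}_{k,v}$ as the periodic pattern $\overline{d}(k, v(v')^\infty)$, which does not coincide with any strict $\Theta_q(v)$ extension.

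The heart of the argument is exhaustion. The strategy is to track the position $r_{\overline{S}(v)}(\alpha) = R_{k,v} \cdot r_0(\alpha)$ as $\alpha$ varies over $\mathscr{I}_{k,v}$. At $\alpha = \zeta_{k,v}$ this point equals $\ell_0$, triggering synchronization via Lemma~\ref{l:leftRegionOneBeforeSyn}; across the whole interval $\mathscr{J}_{k,v}$ the $T_\alpha$-orbit of $r_0$ has merged with the orbit of $\ell_0$, so no further digit pattern is forced beyond position $\overline{S}(v)$. For $\alpha \in \mathscr{I}_{k,v} \setminus \mathscr{J}_{k,v}$, admissibility compels $r_{\overline{S}(v)}$ to execute one or more additional cycles of the recurrence pattern $v'$ before potentially synchronizing, and each admissible number $q$ of such cycles produces the prefix $\overline{d}(k, v(v')^q v'')$ prescribed by $\Theta_q(v)$. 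The natural approach is induction on the depth of $v$ in the tree $\mathcal{V}$, with base cases $v = 1$ and $v = c$ (for $c > 1$) treated separately to accommodate the special child $\Theta_{-1}(v) = c+1$ appearing when $q' = -1$, which reflects the alternative of lengthening the initial block of $k$'s rather than introducing the larger digit $k+1$.

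The main obstacle will be the bookkeeping for this exhaustion step: showing that the recursive construction of $\Theta_q(v)$ via $v = \Theta_p(u) = u v''$ precisely enumerates the admissible continuations past $\overline{d}(k,v)$ that avoid synchronization within $\mathscr{J}_{k,v}$, with no missing and no spurious patterns. This will require a careful case analysis on whether $c_1 = 1$ (which toggles the formula for $v'$ in Definition~\ref{d:periodFromEtaAsWord}) and on whether $v$ is a single letter, combined with repeated invocation of Lemma~\ref{l:theLang} to rule out forbidden continuations and to confirm that each candidate pattern $\Theta_q(v)$ is in fact realized by some nonempty subinterval of $\mathscr{I}_{k,v}$.
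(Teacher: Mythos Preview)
Your first partition claim and its argument match the paper's. For the refined partition, however, the paper takes a substantially different and more efficient route that hinges on a technical device you do not mention: the \emph{full-branched prefix} $\mathfrak f(v)$ of Definition~\ref{d:fullBranchedAsWord} and the associated right endpoint $\omega_{k,v}$. The paper first proves (Proposition~\ref{p:Admissible}) that each $\mathscr I_{k,v}$ is exactly the explicit interval $[\zeta_{k,v}, \omega_{k,v})$; once this is known, exhaustion and disjointness reduce to pure endpoint arithmetic, namely the identities $\omega_{k,\Theta_0(v)} = \omega_{k,v}$ (Corollary~\ref{c:vD(v)InConeGotEndpt}), $\omega_{k,\Theta_q(v)} = \zeta_{k,\Theta_{q-1}(v)}$ for $q \ge 1$ (Lemma~\ref{l:cylsOfConsecuChildrenAbut}), and $\lim_{q\to\infty}\zeta_{k,\Theta_q(v)} = \eta_{k,v}$. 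These say precisely that $\mathscr J_{k,v}$ and the $\mathscr I_{k,\Theta_q(v)}$ tile $[\zeta_{k,v}, \omega_{k,v})$ with abutting endpoints.

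Your proposal instead attempts exhaustion by directly classifying which continuations of $\overline{d}(k,v)$ are admissible, and this is where the real gap lies. The claim that ``admissibility compels $r_{\overline{S}(v)}$ to execute one or more additional cycles of the recurrence pattern $v'$'' is exactly what requires proof, and establishing it amounts to knowing the right endpoint of $\mathscr I_{k,v}$ --- which is governed not by $v$ itself but by $\mathfrak f(v)$ (see Lemma~\ref{l:potentialCylinders} and Proposition~\ref{p:frakFofChildren}). Without identifying $\omega_{k,v}$, you have no clean mechanism to rule out that some $\alpha \in \mathscr I_{k,v} \setminus \mathscr J_{k,v}$ has a continuation falling outside the family $\{\Theta_q(v)\}$, nor to show that consecutive child cylinders actually abut rather than leaving gaps. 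The inductive case analysis you sketch would, if carried through, end up re-deriving the content of Proposition~\ref{p:Admissible} in a less structured form.
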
 
We postpone the proof of this theorem until page \pageref{proofOfTheorem}.

\bigskip

When $n=3$, the first statement of the theorem describes the partition given by the intervals of definition of the leftmost red branches of Figure~\ref{firstBothDigs}.  See also Figure~\ref{f:PartI}.

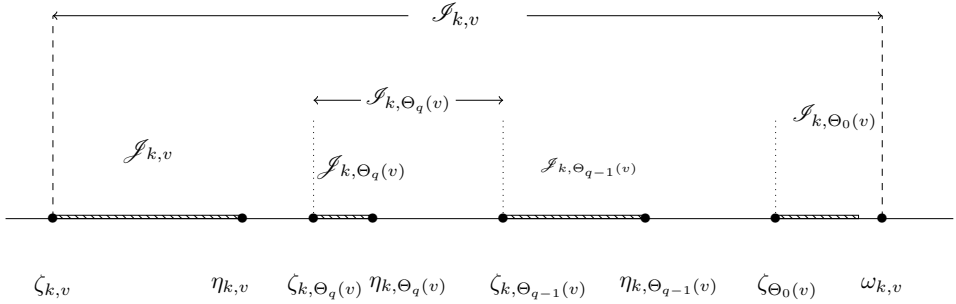
\begin{figure}[h] \scalebox{.9}{
\begin{tikzpicture}[x=3.5cm,y=5cm] 
\draw  (-0.2, 0)--(3.8, 0); 
 
\draw[thin,dashed] (0,0)--(0, 0.6); 
\draw[thin,dashed] (3.5,0)--(3.5, 0.6); 
 
\node at (0.4, 0.2) {$\mathscr J_{k,v}$};  
\node at (1.3, 0.15) {$\mathscr J_{k,\Theta_q(v)}$};          
\node at (2.26, 0.15){\tiny{$\mathscr J_{k,\Theta_{q-1}(v)}$}};  
\node at (3.3, 0.3){$\mathscr I_{k,\Theta_{0}(v)}$};

\node at (0, -0.2) {$\zeta_{k,v}$};     
\node at (0.75, -0.2) {$\eta_{k,v}$};         
\node at (1.15, -0.2) {$\zeta_{k,\Theta_{q}(v)}$};   
\node at (1.5, -0.2) {$\eta_{k,\Theta_{q}(v)}$}; 
\node at (2.05, -0.2) {$\zeta_{k,\Theta_{q-1}(v)}$}; 
\node at (2.6, -0.2) {$\eta_{k,\Theta_{q-1}(v)}$}; 
\node at (3.1, -0.2) {$\zeta_{\Theta_{0}(v)}$}; 
\node at (3.5, -0.2)  {$\omega_{k,v}$}; 
\foreach \x/\y in {0/0, 0.8/0, 1.1/0,
1.35/0,  1.9/0, 2.5/0, 3.05/0, 3.5/0%
} { \node at (\x,\y) {$\bullet$}; }

\draw[<-] (0,0.6)--(1.5, 0.6); 
\node at (1.7, 0.6)  {\large{$\mathscr I_{k,v}$}} ; 
\draw[->] (2, 0.6)--(3.5, 0.6);
\draw[<-] (1.1, 0.35)--(1.3, 0.35); 
\node at (1.5, 0.35)  {$\mathscr I_{k,\Theta_q(v)}$} ; 
\draw[->] (1.7, 0.35)--(1.9, 0.35);
\draw[thin,dotted] (1.1,0)--(1.1 , 0.3); 
\draw[thin,dotted] (1.9,0)--(1.9, 0.3); 
\draw[thin,dotted] ( 3.05,0)--( 3.05, 0.3); 

\draw[pattern=north west lines] (0,0) rectangle (0.8,0.01);   
\draw[pattern=north west lines] (1.1,0) rectangle (1.35,0.01);  
\draw[pattern=north west lines] (1.9,0) rectangle (2.5,0.01);
\draw[pattern=north west lines] (3.05,0) rectangle (3.4,0.01);
\end{tikzpicture}
}
 \caption{A hint of the partition of a general $\alpha$-cylinder $\mathscr I_{k,v}$. That $\omega_{k, \Theta_q(v)} = \zeta_{k, \Theta_{q-1}(v)}$ holds  for appropriate $v$ is part of Lemma~\ref{l:cylsOfConsecuChildrenAbut}.}\label{f:PartI}
 \end{figure}
 \bigskip

\subsubsection{Palindromes}   The notation $\overleftarrow{v}$ denotes the word formed by taking the letters of $v$ in reverse order.   Thus, $v$ is a palindrome if and only if $v = \overleftarrow{v}$.

\bigskip 
\begin{Prop}\label{p:theWordsAreNice}  Suppose $v \in \mathcal V$.   Then:

\begin{enumerate}
\item   $v$ is a word in at most three letters;
\item    $v$ is a palindrome; 
\item   if $v = \Theta_q(u)$ for some $q\ge 1$, then  $v = u x u$ for some palindrome $x$.  %
\item   if $u$ is the parent of $v$, then there is a palindrome $y$ such that $v' = yu$;
\item  \qquad with this same $y$, one has $v' v'' = y v$;
\item  \qquad  if further $v\neq c_1$, then there are palindromes $a, z$
such that $v'' = az,  y = z' a$.
\end{enumerate}
 \end{Prop}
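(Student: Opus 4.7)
The plan is to prove claims (1)--(6) simultaneously by induction on the depth of $v$ in the tree $\mathcal V$, with base case $v = 1$ (where (1) and (2) are immediate and (3)--(6) are vacuous) and inductive step treating each child $w = \Theta_q(v)$. For a single-letter parent $v = c$, the children are either single letters (via $\Theta_{-1}$) or the explicit palindromes $c[1(c-1)]^q 1c$ and $1q1$, for which all applicable claims can be verified by direct computation. The substantive inductive step handles multi-letter $v$ and uses the recursive formula $\Theta_q(v) = v(v')^q v''$.

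The computational engine is: for multi-letter $v$, a short case analysis on whether $c_1(v) = 1$ shows that the definitions of $\Theta_q$ and of $\cdot'$ together yield
\[
w' = (v')^{q+1} v'' \quad\text{and}\quad w'' = (v')^{q} v''.
\]
Substituting $v' = yu$ (IH (4)) and $v'v'' = yv$ (IH (5)) into $w'$ gives $w' = (v')^q\cdot v'v'' = (yu)^q y\cdot v = y_w\cdot v$ with $y_w := (yu)^q y$; using the palindromicity of $y$ and $u$ (IH (2) and (4)) together with the cyclic identity $(yu)^q y = y(uy)^q$, one checks $y_w$ is itself a palindrome, establishing (4) for $w$. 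Claim (5) at $w$ is then immediate: $w' w'' = y_w v \cdot (v')^q v'' = y_w w$. For $q\ge 1$ the parallel manipulation writes $w = v\cdot [(v')^{q-1}y]\cdot v$, and the same cyclic trick shows the middle factor is a palindrome, giving (3); claim (2) follows from (3) when $q\ge 1$ and from a direct palindromic inspection of $w = vv''$ when $q = 0$. For (1), I would track the letter set $L(v')$: a case split on $c_1(v)$ gives $L(v')\subseteq L(v)\cup\{1, c_1(v)-1\}$ or $L(v)\cup\{d_1(v)+1\}$, and inspection of the base-case outputs $\Theta_q(c) = c[1(c-1)]^q 1c$ and $\Theta_q(1) = 1q1$ confirms that new letters are introduced only into one of the three-letter windows $\{1, c-1, c\}$ or $\{1, q, q+1\}$.

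The main obstacle is (6). For $q\ge 1$ the natural choice is $z = v$ and $a = (v')^{q-1}y$: the identity $w'' = (v')^{q-1}(v'v'') = (v')^{q-1}y v = az$ exhibits the factorization with both factors palindromes by the paragraph above, and $z'a = v'\cdot(v')^{q-1}y = (v')^q y = y_w$ matches the required formula. The case $q = 0$ is subtler, since then $w = vv''$ does not admit a $v\cdot x\cdot v$ decomposition; here I would transport the inductive decomposition $v'' = a_v z_v$, $y_v = z_v'a_v$ upward, taking $z_w := z_v$ and $a_w := a_v$ and checking that the required relations survive using $y_w = y_v$ in this degenerate case. The anticipated pain point is the treatment of direct children of the root --- the words $v = 1q1$ with $u = 1$ --- where the convention $1' = 1$ forces a special verification, the naïve choice $z = u = 1$ giving $z'a = 1q\neq q+1 = y$. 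The cleanest remedy is likely to absorb one extra layer into the base case, so that in the recursive step the parent $u$ is guaranteed to be either multi-letter or a single letter $c\ge 2$ (for which the decomposition $z = u$, $a = x$ from $v = uxu$ together with $z' = 1(c-1)$ yields $z'a = y$ uniformly).
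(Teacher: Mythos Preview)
Your approach is essentially the paper's: both induct on depth in $\mathcal V$, and both rest on the recursions $w' = (v')^{q+1}v''$, $w'' = (v')^q v''$ together with the inductive data $v' = yu$ and $v'v'' = yv$. The paper splits the induction into three cases ($v = \Theta_0^h(c)$; $v = \Theta_q(u)$ with $q\ge 1$; $v = \Theta_0(u)$ with $u$ not a single letter) and carries along the explicit decomposition $u = \mathfrak Z\mathfrak a\mathfrak z$, whereas you treat all $q$ at once via the single formula $y_w = (yu)^q y$ and then specialize $q=0$ at the end. This is a mild reorganization, not a different argument; your choices $z=v$, $a=(v')^{q-1}y$ for $q\ge 1$ and $(a_w,z_w)=(a_v,z_v)$ for $q=0$ are exactly the paper's, in different notation.

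Your one flagged obstruction --- the word $v = 1q1$ with parent $u=1$, where you compute $z'a = 1\cdot q \ne q+1 = y$ --- is not a genuine gap but a misreading of the concatenation convention. In this paper, the prime operation always outputs a word whose first letter sits in a $d$-position (it contributes an exponent of $k+1$ in $\overline d(k,\cdot)$; compare the period in Lemma~\ref{l:etaRightDigs}). In particular $1' = 1$ is a single $d$-type letter. Since $a = q$ (the middle letter of $1q1$) is also $d$-type, the concatenation $z'a$ \emph{adds} the adjacent same-type letters, exactly as in the $c$-type convention of Definition~\ref{d:fullBranchedAsWord}(1), giving $z'a = 1+q = q+1 = y$. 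So the natural choice $z=u$, $a=x$ already works and no extra base-case layer is needed; the paper disposes of this with the terse ``If $u = c_1$, all of (2), (4) and (6) are easily verified,'' which hides precisely this point.
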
 
\begin{proof}    The first statement naturally has two cases:   if $c_1 =1$ then we claim that all $c_i =1$ and all $d_j$ are contained in $\{d_1+1, d_1\}$;  if $c_1>1$ then all $d_j = 1$ and $c_i \in \{c_1, c_1-1\}$.    We prove this by induction.   Our bases cases are: $\Theta_{-1}(c_1) = c_1+1$ and $\Theta_q(1) = 1q1 $ for $q \ge 1$;  for  $c>1$,  set  $\Theta_q(c) = c [ 1 (c-1)]^q 1c$ for any $q \ge 0$.   The statement clearly holds here.   Thereafter,  $v, v'$ are words in these small alphabets,  and $v''$ is a subword of $v$ hence every $\Theta_q(v)$ has the desired property.

\bigskip 
Note that Statement (3) follows from (2).   Also Statement (4)  implies (5), as $(yu) v'' =  y (u v'') = y v$.
It remains to prove Statements (2),  (4) and (6).

\bigskip

 Beyond easily handled cases of short $v$, there are naturally three cases to consider.

\noindent
{\bf Case 1.}  Suppose   $v = \Theta_{0}^{h}(c)$ for some $h\ge 1$ and some $c> 1$.  Induction gives $v = \Theta_{0}^{h}(c) = c (1\,c)^h$. This is obviously a palindrome, that is (2) holds.   We find
\[v' = 1 (c-1)   (1\,c)^{h} = 1 (c-1) 1 \;\;  c (1\,c)^{h-1} = 1 (c-1) 1 \;  \Theta_{0}^{h-1}(c) = y u,\]
where $y = 1 (c-1) 1$ and $u =  \Theta_{0}^{h-1}(c)$.  But, here $v = \Theta_{0}(u)$  is the parent of $v$, and hence Statement (4) also holds in this case.

  Set $z = c$ and $a = 1$.  Then  $y=z'a$ and $v'' = 1\,c = a z$.  Therefore, (6) also holds in this case. 
\bigskip 
 
\noindent
{\bf Case 2.}  Suppose   $v = \Theta_{q}(u)$ for some $q\ge 1$.

If $u = c_1$,  all of (2), (4) and (6) are easily verified. 

Assume now that (2), (4) and (6) hold for $u$, in the sense that   $u = \mathfrak Z \mathfrak   a \mathfrak   z,  u'' =  \mathfrak   a \mathfrak   z$ with $u, \mathfrak Z,  \mathfrak   a,  \mathfrak   z$ and  $\mathfrak z' \mathfrak a$ all palindromes.   Since $u = \overleftarrow{u}$, we have   $u' = ( \mathfrak z \mathfrak   a \mathfrak  Z)' =  \mathfrak z' \mathfrak   a \mathfrak  Z$.
Therefore, 

 \[ v = \Theta_q(u) = u (u')^q u'' =    u (u')^{q-1} u'   (\mathfrak   a \mathfrak   z) = u (u')^{q-1}  \mathfrak z' \mathfrak   a \mathfrak  Z (\mathfrak   a \mathfrak   z) = u (u')^{q-1}  \mathfrak z' \mathfrak   a  \, u = u ( \mathfrak z' \mathfrak   a \mathfrak  Z)^{q-1}  \mathfrak z' \mathfrak   a  \, u  .  \]

\medskip
\noindent 
Since $\mathfrak z' \mathfrak a$ and $\mathfrak Z$ are palindromes, we find that  $v$ is a palindrome; that is (2) holds.

Set  $y =  ( \mathfrak z' \mathfrak   a \mathfrak  Z)^q  \mathfrak z' \mathfrak   a $.  This is  clearly also a palindrome. Since 
\[v' = ( \mathfrak z' \mathfrak   a \mathfrak  Z)^q  \mathfrak z' \mathfrak   a \, u\,\]
 (4) also holds. 

\bigskip 
Now,  $v = u v''$ gives $v'' = ( \mathfrak z' \mathfrak   a \mathfrak  Z)^{q-1}  \mathfrak z' \mathfrak   a  \, u$.   Let $a = ( \mathfrak z' \mathfrak   a \mathfrak  Z)^{q-1}  \mathfrak z' \mathfrak   a$ and $z = u$.  Then $v'' = az$ and $y = z'a$.  That is, (6) holds.     
 
\bigskip 
 
\noindent
{\bf Case 3.} Suppose that $v = \Theta_0(u)$ and $v \neq \Theta_{0}^{h}(c)$ for any $h\ge 1$ and any $c=c_1$.     Assume that (2), (4) and (6) all hold for $u$ in the sense described in the proof of the previous case.

We find 
\[ v  = u u'' = (\mathfrak Z \mathfrak   a \mathfrak   z) \mathfrak   a \mathfrak   z\] and thus 
\[ \overleftarrow{v} =  \overleftarrow{u''}\, \overleftarrow{u} =  \overleftarrow{\mathfrak   a \mathfrak   z} \,u = \mathfrak   z \mathfrak   a \mathfrak Z \mathfrak   a \mathfrak   z.\]
 Thus (2) holds in this case. 
 
 Now, 
 \[v' = u' u'' = ( \mathfrak z' \mathfrak   a \mathfrak  Z) \mathfrak   a \mathfrak   z = ( \mathfrak z' \mathfrak   a) u.\]
 Thus (4) holds; since $v'' = u''$,  (6) also holds. 
\end{proof} 

\bigskip 
\begin{Rmk}\label{r:looksLikePalindromicSplitting}   We thus have for $v\in \mathcal V$, other than those $v$ of the form $v = c_1$, 
\[\begin{cases} v&= uv'' = uaz\\
                         v' &= yu = z'au.
\end{cases}
\]
In the case of $v = \Theta_q(u)$ with $q \ge 1$,  we have $z=u$.   (Thus in the previous proposition, $x$ is $a$.)
 \end{Rmk} 
\bigskip

The child $\Theta_q(v)$ has length less than twice the length of $v$ only when $q \in \{-1, 0\}$.  The following addresses the setting of $q=0$.
\begin{Lem}\label{l:characterizeThetaZero}  Suppose that $u$ is a child of   $Z \in \mathcal V$.   Then the palindrome $\Theta_0(u)$ is characterized by the property of $u$ being both a  prefix and a suffix, and these subwords  having exactly  the subword $Z$ in common.

 \end{Lem}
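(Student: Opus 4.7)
The plan is to prove the two directions of the characterization separately: first that $\Theta_0(u)$ itself does have the stated property, and then that this property uniquely picks out a single word (so it must be $\Theta_0(u)$).

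First I would exploit Proposition~\ref{p:theWordsAreNice} to set up a useful decomposition. Since $u$ is a child of $Z$, we have $u = \Theta_p(Z)$ with (in the main case) $p\ge 0$, so $u = Zu''$; the palindromic statements of Proposition~\ref{p:theWordsAreNice} then force $u$ to also end with $Z$, so I can write $u = ZwZ$ for some (possibly empty) word $w$, with $u'' = wZ$. With this in hand, the forward direction is immediate: by definition $\Theta_0(u) = uu'' = ZwZ \cdot wZ = ZwZwZ$. The prefix of length $|u|$ is clearly $ZwZ = u$, and the last $|u|$ letters also form $ZwZ = u$. The overlap of these two occurrences of $u$ sits at positions $|Z|+|w|+1$ through $2|Z|+|w|$, and spells the middle copy of $Z$; in particular, the overlap is a subword equal to $Z$ of length exactly $|Z|$.

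Next I would handle the characterization direction by a length computation. If $W$ is any word in which $u$ appears as both a prefix and a suffix and the two occurrences share exactly $Z$ as their overlap, then the overlap relation forces
\[ |W| = 2|u| - |Z| = 3|Z| + 2|w| = |\Theta_0(u)|.\]
The prefix condition pins down positions $1$ through $|u|$ as $ZwZ$, the suffix condition pins down positions $|W|-|u|+1$ through $|W|$ as $ZwZ$, and by hypothesis the two copies agree in the middle slot (where both spell $Z$). Assembling these constraints letter-by-letter gives $W = ZwZwZ = \Theta_0(u)$, finishing the uniqueness.

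The main obstacle is not a difficult calculation but rather the careful bookkeeping needed to justify the decomposition $u = ZwZ$ (including the cases where $w$ is empty or very short and the cases distinguished in the proof of Proposition~\ref{p:theWordsAreNice}), and the need to separately address or explicitly exclude the $p = -1$ case, in which $u = Z+1$ is a single letter that neither begins nor ends with $Z$ so that the ``overlap equals $Z$'' condition has to be interpreted in a degenerate way. Once those structural inputs are in place, the combinatorial computation is straightforward.
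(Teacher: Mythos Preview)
Your argument has a genuine gap in the decomposition step. You claim that since $u$ is a child of $Z$ and a palindrome, you can write $u = ZwZ$ with the two copies of $Z$ disjoint (or at worst adjacent, when $w$ is empty). This is true when $u = \Theta_p(Z)$ with $p\ge 1$: by Remark~\ref{r:looksLikePalindromicSplitting} one then has $z = Z$ in the decomposition $u = Zaz$, so $u = ZaZ$ and your $w = a$. But it \emph{fails} when $u = \Theta_0(Z)$. In that case $u'' = Z''$, so $|u| = |Z| + |Z''| < 2|Z|$, and the prefix~$Z$ and suffix~$Z$ of $u$ already overlap inside $u$; there is no word $w$ with $u = ZwZ$. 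A concrete instance: $Z = 111$, $u = \Theta_0(Z) = 11111$, and $\Theta_0(u) = 1111111$; here $|u| = 5 < 2|Z| = 6$.

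The paper sidesteps this by using the finer palindromic decomposition of Proposition~\ref{p:theWordsAreNice}(6): write $u = Zaz$ with $a,z$ palindromes (so $u'' = az$). Then $\Theta_0(u) = u\,u'' = Zaz\,az$, and since $u$ is a palindrome $u = \overleftarrow{u} = zaZ$, giving $\Theta_0(u) = zaZaz$. Now the suffix $u = Zaz$ and the prefix $u = zaZ$ visibly share exactly the central $Z$. Your length-count for uniqueness is fine and is exactly what makes the paper's terse ``thus the property does indeed characterize $\Theta_0(u)$'' work; the issue is solely that your coarser $ZwZ$ decomposition does not cover the $\Theta_0$ children, so you should replace it with the $Zaz$ decomposition.
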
 
\begin{proof} Let $v = \Theta_0(u)$.  Since $v'' = u''$, we can write  $u = Zaz$ and $v = uaz = Zazaz$ with each of $Z, u, a, z$ palindromes.   Now, $v = (Zaz)az = zaZaz$.    We have $v =   za \underbrace{Zaz}_{u} =\underbrace{zaZ}_{u}az$.  Thus the property does indeed characterize $\Theta_0(u)$.
\end{proof}

\medskip 
\begin{Lem}\label{l:orderVeePrimeDoublePrime}  If $v\in \mathcal V$, then  $v' \prec v''$.   
 \end{Lem}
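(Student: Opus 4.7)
The plan is to prove $v' \prec v''$ by induction on the generation of $v$ in the tree $\mathcal V$, using the palindromic decomposition of Remark~\ref{r:looksLikePalindromicSplitting}: for any $v \in \mathcal V$ with $v \ne c_1$ and parent $u$ there are palindromes $a, z$ with $v = uaz$, $v'' = az$, and $v' = z'au$. The base cases cover the children of the root $v = 1$ for which $v''$ is naturally defined. For $v = \Theta_q(1) = 1q1$ with $q \ge 1$ one computes $v' = (q+1)1$ and $v'' = q1$; for $v = \Theta_q(c) = c\,[1(c-1)]^q\,1c$ with $c > 1$ and $q \ge 0$ one computes $v' = 1(c-1)\,[1(c-1)]^q\,1c$ and $v'' = [1(c-1)]^q\,1c$. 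In each base case the first position at which $v'$ and $v''$ disagree yields the inequality directly from the definition of the ordering.

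For the inductive step, suppose $v \in \mathcal V$ has parent $u$ of lower generation and the lemma holds for $u$. If $v = \Theta_q(u)$ with $q \ge 1$, then Case~2 of the proof of Proposition~\ref{p:theWordsAreNice} gives $z = u$ and $a = (\mathfrak z' \mathfrak a \mathfrak Z)^{q-1}\mathfrak z' \mathfrak a$, where $u = \mathfrak Z \mathfrak a \mathfrak z$ is the inner decomposition; hence $v' = u'\,a\,u$ and $v'' = a\,u$. Since $a$ begins with repeated copies of the block $\mathfrak z' \mathfrak a \mathfrak Z = u'$, the two words agree letter by letter through position $q\,|u'|$ and first split at position $q\,|u'|+1$, exhibiting $\mathfrak z'[1]$ in $v'$ versus $\mathfrak a[1]$ in $v''$. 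This is precisely the first point of disagreement of $u'$ and $u''$, so the induction hypothesis $u' \prec u''$ transfers directly to $v' \prec v''$. If instead $v = \Theta_0(u)$, then $v'' = u''$ and $v' = u'\,u''$; as long as the first disagreement of $u'$ and $u''$ lies at an interior position $p \le \min(|u'|,|u''|)$, the same position witnesses $v' \prec v''$ by the induction hypothesis.

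The main obstacle will be the subcase of $v = \Theta_0(u)$ in which $u'$ happens to be a prefix of $u''$: then the first disagreement of $v' = u'u''$ with $v'' = u''$ is pushed past position $|u'|$, and no direct transfer from the induction hypothesis is available. To close this case I plan to unfold $u'' = \mathfrak a \mathfrak z$ via its own palindromic decomposition and invoke Lemma~\ref{l:characterizeThetaZero} characterising $\Theta_0$, noting that the appearance of a second copy of $\mathfrak a$ in $v' = u'u''$ (originating from the $u'$ prefix) is forced to arrive at a position at which $v''$ still lies inside the initial $\mathfrak z$-tail; comparing the two letters at that structural break reduces the verification to the induction hypothesis applied to an appropriate ancestor of $v$ in $\mathcal V$, completing the induction.
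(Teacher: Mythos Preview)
Your inductive approach is genuinely different from the paper's. The paper does \emph{not} induct on generation; instead it writes $v' = a\overleftarrow{u'}u$ (when $v=\Theta_q(u),\,q\ge1$) or $v' = a\overleftarrow{z'}u$ (when $v=\Theta_0(u)$), cancels the common prefix $a$ with $v''=au$ (resp.\ $v''=az$), and then observes directly that $\overleftarrow{u'}$ (resp.\ $\overleftarrow{z'}$) agrees with the palindrome $u$ (resp.\ $z$) until its final one or two letters, where an explicit comparison of the form $d_1c_1 \succ (d_1{+}1)$ or $d_1c_1 \succ d_1(c_1{-}1)1$ settles $v'\prec v''$. No induction hypothesis is needed.

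Your reduction is correct in spirit: in both of your cases one is left comparing $u'u''$ with $u''$, and the induction hypothesis $u'\prec u''$ transfers provided the first disagreement of $u'$ and $u''$ occurs at some position $p\le\min(|u'|,|u''|)$. Two remarks, though. First, your claim that the split occurs literally at position $q|u'|+1$ (i.e.\ $\mathfrak z'[1]$ versus $\mathfrak a[1]$) is not right in general---take $u=212$, where $u'=1112$ and $u''=12$ agree at position~$1$; the split is at $q|u'|+p$ for the relevant $p$, but the conclusion is unaffected. Second, and more importantly, your ``main obstacle'' (the possibility that $u'$ is a prefix of $u''$) \emph{never occurs}, so your final paragraph is unnecessary. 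A one-line length count shows $|u'|\ge|u''|$ for every $u\in\mathcal V$ that is not a single letter: writing $|u|=2s-1$, one has $|u'|=2s$ if $c_1>1$ and $|u'|=2s-2$ if $c_1=1$, while $|u''|=|u|-|\text{parent}|\le 2s-2$. Hence $u'$ cannot be a proper prefix of $u''$; and since $u'\prec u''$ by induction, $u''$ cannot be a prefix of $u'$ either. So $u'$ and $u''$ always split at an interior position $p\le|u''|$, and your inductive step closes cleanly without the extra machinery you propose.
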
 
\begin{proof}    One directly verifies the result  for any $v= c_1$ and for any $v = \Theta_q(c_1)$.    The remaining possibilities can be divided into  two cases. 
\medskip

\noindent
{\bf Case 1.}  When   $v = \Theta_{q}(u)$ for some $q\ge 1$, we can write $v = uau$.  We find  $v' = (u'a)u = a \overleftarrow{u'}u$, and $v'' = au$.    Writing $u = c_1d_1\cdots c_{s-1}d_{s-1} c_s$,    from Definition~\ref{d:periodFromEtaAsWord} and that fact that $u$ is a palindrome show  that $\overleftarrow{u'}$ and $u$ agree until the relationship is determined by $d_1 c_1 \succ (d_1+1)$ or $d_1 c_1 \succ d_1 (c_1-1)\,1$, when $c_1 =1$ or $c_1 >1$ respectively. 
\medskip
 
\noindent
{\bf Case 2.} Suppose that $v = \Theta_0(u)$.  Then as in Lemma~\ref{l:characterizeThetaZero} we can write $v = uaz = zau$, $v'' = az$.   We have $v' = z'au = a \overleftarrow{z'}u$.   As in the previous case, we find that $v' \prec v''$. 
\end{proof} 

\bigskip 
\subsubsection{Derived words} 
We will often argue by induction on the length of $v$.   These arguments rely on the following map,  $\mathscr D$,  giving the {\em derived word} $\mathscr D(v)$ of $v$.

\begin{Def}\label{d:derivedWords}    Let $v = c_1 d_1 \cdots d_{s-1}c_s$.  
\begin{enumerate} 
\item[(i)]  If $c_1 =c >1$,  and $v$ is such that $d_1 = 1$ and 
the set of $c_i, 1\le s$, is contained in the set of two letters $\{a = c_1, b = c_1 -1\}$, 
express 
\[v = (a \,1)^{e_1} (b \,1)^{f_1}\cdots (b \,1)^{f_{g-1}} (a \,1)^{e_g-1} a.\]

\item[(ii)]  if $c_1 = 1$,   and $v$ is such that 
the set of $d_j$, $1\le j < s$ is contained in the set of two letters $\{a= d_1, b= d_1+1\}$, 
express 
\[v = (1 \,a)^{e_1} (1 \,b)^{f_1}\cdots (1 \,b)^{f_{g-1}} ( 1 \,a)^{e_g}  1.\]
\end{enumerate}
 
 In both cases, let 
\[ \mathscr D(v)  = e_1 f_1 \cdots f_{g-1} e_g\,.\]
Note that (the proof of) part (1) of Proposition~\ref{p:theWordsAreNice} shows that $\mathscr D(v)$ is defined for each $v \in \mathcal V$.   We call the various subwords $(c_1 \,1)^{e_i},  (c_1-1 \;1)^{f_j}, (1 \,d_1)^{e_i}, (1 \,d_1+1)^{f_j}$ {\em full blocks} for $v$.  
\end{Def}
\bigskip
\begin{figure}
 \begin{tikzpicture}[baseline= (a).base]
 \node[scale=.8] (a) at (0,0){
\begin{tikzcd}[column sep=2pc,row sep=2pc]
c \ar[pos=0.5]{d}{\Theta_{0}^{h-1}}&\\
c(1\,c)^{h-1}\ar[pos=0.5]{r}{\mathscr D} \ar[pos=0.5]{d}{\Theta_{0}}&h-1\ar[pos=0.5]{d}{\Theta_{-1}}\\
c(1\,c)^h \ar{r}{\mathscr D}&h
\end{tikzcd}
};
 \node[scale=.8] (a) at (4,0){
\begin{tikzcd}[column sep=2pc,row sep=2pc]
u  \ar{d} &\\
v\ar{r}\ar[pos=0.5]{d}{\Theta_{q\ge 1}}& \mathscr D(v)\ar[pos=0.5]{d}{\Theta_{q\ge 1}}\\
v(v')^q v'' \ar{r}{\mathscr D}& \Theta_{q}(\,\mathscr D(v)\,)
\end{tikzcd}
}; 
\node[scale=.8] (a) at (8,0){
\begin{tikzcd}[column sep=2pc,row sep=2pc]
u\neq c_1 \ar{d} &\\
v\ar{r}\ar[pos=0.5]{d}{\Theta_{0}}& \mathscr D(v)\ar[pos=0.5]{d}{\Theta_{0}}\\
vv'' \ar{r}{\mathscr D}& \Theta_{0}(\,\mathscr D(v)\,)
\end{tikzcd}
}; 
\end{tikzpicture}
\caption{Taking derived words,  $v \mapsto \mathscr D(v)$, respects parent-child relations. See  Lemma~\ref{l:dRespectsRelationships}.}
\label{f:commRelationsForScriptD3}
\end{figure}

\begin{Lem}\label{l:dRespectsRelationships}   The map $\mathscr D$ sends $\mathcal V$ to itself, preserving the parent-child relationship.  That is, if $u \in \mathcal V$ and $v = \Theta_q(u)$, then there is a $q'$ such that $\mathscr D(v) = \Theta_{q'}(\, \mathscr D(u)\,)$.   Moreover, $q'=q$ unless $v = \Theta_{0}^{h}(c)$ for some $h\ge 1$.
\end{Lem}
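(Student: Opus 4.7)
Plan: I would argue by induction on the depth of $v$ in the tree $\mathcal V$ (Definition~\ref{d:descendents}), tracking how the full-block decomposition of Definition~\ref{d:derivedWords} transforms under each $\Theta_q$. Proposition~\ref{p:theWordsAreNice}(1) guarantees that every $v \in \mathcal V$ lies in the two- or three-letter alphabet needed for $\mathscr D(v)$ to be defined, so asserting that $\mathscr D$ maps $\mathcal V$ into itself is meaningful at each step.

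For base cases I would handle $v = c_1$ and the first generation $\Theta_q(c_1)$ directly. Checking each, $\mathscr D(c_1)$ is the trivial one-block word and the explicit formulas $\Theta_{-1}(c_1) = c_1 + 1$, $\Theta_q(1) = 1q1$, and $\Theta_q(c) = c[1(c-1)]^q 1 c$ (for $c>1$) produce decompositions whose derived word is the corresponding $\Theta_q(\mathscr D(c_1))$. The exceptional clause first appears here and persists under iterated $\Theta_0$: the word $v = \Theta_0^h(c) = c(1c)^h$ is a single-run word with $\mathscr D(v) = h$, and applying $\Theta_0$ once more extends it to $c(1c)^{h+1}$, incrementing the derived word by one. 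This increment is realized by $\Theta_{-1}$, not $\Theta_0$, which matches the first commutative square in Figure~\ref{f:commRelationsForScriptD3}.

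For the inductive step in the generic branch, I would assume the claim for $u \in \mathcal V$ and take $v = \Theta_q(u)$ with either $q \geq 1$, or $q = 0$ and the parent of $u$ not equal to $c_1$. Using Remark~\ref{r:looksLikePalindromicSplitting}, write $v = uaz$ and $v' = z'au$, so
\begin{equation*}
\Theta_q(v) \;=\; v(v')^q v'' \;=\; (uaz)(z'au)^q(az).
\end{equation*}
The key computation is to read off the full-block decomposition of the right-hand side. At each concatenation seam---between a copy of $z$ and the following $z'$, between $u$ and the following $a$, and between $v$ and the appended $v''$---one determines whether block boundaries are preserved or whether two adjacent blocks of the same type merge. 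The palindromic identities of Proposition~\ref{p:theWordsAreNice}(4,5,6) together with the alphabet constraint of part~(1) pin down exactly what happens at each seam; accumulating the resulting block counts and comparing with the identity $\Theta_q(w) = w(w')^q w''$ applied to $w = \mathscr D(v)$ identifies $\mathscr D(\Theta_q(v)) = \Theta_q(\mathscr D(v))$, giving $q' = q$.

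The main obstacle will be distinguishing the exceptional branch from the generic one and verifying that iterations remain where they should. Specifically, I would need to check that inserting $(v')^q$ with $q \geq 1$ into a single-run $v = \Theta_0^h(c_1)$ genuinely breaks the single-run pattern, placing $\Theta_q(v)$ in the generic case with $q' = q$, whereas $\Theta_0(v) = \Theta_0^{h+1}(c_1)$ stays exceptional and corresponds to $\Theta_{-1}$ at the derived level. This seam-by-seam block bookkeeping, together with the careful case split just indicated, is the delicate combinatorial step of the proof.
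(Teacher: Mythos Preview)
Your approach is essentially the same as the paper's: induction on depth in $\mathcal V$ with the same three-way case split (the exceptional line $\Theta_0^h(c)$, the case $q\ge 1$, and $\Theta_0$ applied to a non-exceptional word). Two small points are worth flagging. First, your computation in the exceptional case is off by one: writing $c(1c)^h = (c\,1)^h c$ in the form $(a\,1)^{e_1-1}a$ forces $e_1 = h+1$, so $\mathscr D(\Theta_0^h(c)) = h+1$ (not $h$); the conclusion that $\Theta_0$ here corresponds to $\Theta_{-1}$ on the derived side is unaffected. Second, where you leave the inductive step as ``seam-by-seam block bookkeeping,'' the paper packages the same computation as an explicit chain of equalities showing $\mathscr D(u(u')^q u'') = \mathscr D(u)\{[\mathscr D(u)]'\}^q [\mathscr D(u)]''$, using that $\mathscr D$ sends palindromes to palindromes and the identity $[\mathscr D(u)]' = Y\,\mathscr D(Z)$ from Proposition~\ref{p:theWordsAreNice}; you would in effect be rederiving this chain when you carry out your bookkeeping.
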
 
\begin{proof}  Just as in the proof of Proposition~\ref{p:theWordsAreNice}, there are easily verified base cases which we leave to the reader.  We treat three main cases, see Figure~\ref{f:commRelationsForScriptD3}.

\bigskip

\noindent
{\bf Case 1.}  Suppose   $v = \Theta_{0}^{h}(c)$ for some $h\ge 1$ and some $c> 1$.  We have $v =c (1\,c)^h$.   Certainly,  $\mathscr D(v) = h+1$, and $\mathscr D(\,  \Theta_{0}^{h-1}(c)\,) = h$.   We note that $\Theta_{-1}(h) = h+1$, and of course that $v = \Theta_{0}(\,\Theta_{0}^{h-1}(c)\,)$.  That is, the result holds in this case.

\medskip

\noindent
{\bf Case 2.}  Suppose $v = \Theta_q(u)$, with  $q\ge 1$ for some  $u\in \mathcal V$.    Although this part of the proof is fairly straightforward, it has perforce a panoply of variables representing words; the reader may wish to consult \eqref{e:DandTheta}, below, as a guide.
 
 We can write $\mathscr D(v )  = \mathscr D(\, u (u')^q u''\, ) = \mathscr D(\, u (u')^{q-1}y\, u \, )$.  Calculation, simply using the definition of $\mathscr D$,  shows that $\mathscr D(\, u (u')^q u''\, )$ has prefix $\mathscr D(u)$.   The definition also yields that the derived word of any palindrome is also a palindrome, thus here $\mathscr D(v )$ also has suffix $\mathscr D(u)$.    Direct calculation shows that for any words $u, x$ and any $p\ge1$,   $\mathscr D( u (u')^p x) = \mathscr D( u)  \, \{\,[\mathscr D(u)]'\,\}^p  X$ for some $X$.  By Proposition~\ref{p:theWordsAreNice}, there is some palindrome $Y$ such that $[\mathscr D(u)]' = Y \mathscr D(Z)$ where $\mathscr D(Z)$ is the parent of   $\mathscr D(v)$.   Therefore, 
  $\mathscr D(v)   = \mathscr D(\, u (u')^{q-1}y\, u \, )  =  \mathscr D( u)  \, [Y \mathscr D(Z)]^{q-1} X   \mathscr D(u)$ for some (new) $X$.    Since  $\mathscr D(v)$ is a palindrome,  we have $X = Y$.    Again by Proposition~\ref{p:theWordsAreNice}, $Y \mathscr D(u) =  Y \mathscr D(Z) [\mathscr D(u)]'' =  [\mathscr D(u)]'\,  [\mathscr D(u)]'' $.  Therefore,    $\mathscr D(v )  = \mathscr D( u)  \, \{\,[\mathscr D(u)]'\,\}^{q-1}     \; [\mathscr D(u)]'\,  [\mathscr D(u)]''  =   \Theta_q(\,  \mathscr D(u)\,)$. 
 
As a summary, we have 
\begin{equation}\label{e:DandTheta}
\begin{aligned}
\mathscr D( u (u')^q u'') &= \mathscr D( u (u')^{q-1} u' u'') \\
                                       &= \mathscr D( u (u')^{q-1} y u) \\
                                       &=  \mathscr D( u)  \, \{[ \mathscr D(u)]'\}^{q-1} X   \mathscr D(u)\\
                                       &=  \mathscr D( u)  \, [Y \mathscr D(Z)]^{q-1} X   \mathscr D(u)\\
                                       &=  \mathscr D( u)  \, [Y \mathscr D(Z)]^{q-1} Y   \mathscr D(u)\\
                                       &=  \mathscr D( u)  \, \{[ \mathscr D(u)]'\}^{q-1} Y \mathscr D(Z) [\mathscr D(u)]''\\
                                       &=  \mathscr D( u)  \, \{[ \mathscr D(u)]'\}^{q}  [\mathscr D(u)]''\\
                                      &= \Theta_q(\, \mathscr D(u)\,).    
\end{aligned}
\end{equation}

\medskip 
 
\noindent
{\bf Case 3.} Suppose that $v = \Theta_0(u)$ and $v \neq \Theta_{0}^{h}(c)$ for any $h\ge 1$ and any $c=c_1$. 
Lemma~\ref{l:characterizeThetaZero} and the definition of $\mathscr D$ yield the result in this case.
\end{proof}

\bigskip
\subsubsection{Fullness of branches}\label{sss:fullBranched}   We aim to describe symbolically $T_{3,n,\alpha}$-orbits,  and in particular to determine intervals in the parameter $\alpha$ where initial segments of such orbits share common digits.  For any word determining  sequences of digits, we must determine the endpoints of the parameter interval along which the word does describe {\em admissible} sequences of digits, see Figure~\ref{smallAlpNonFullBranch}.   The following notion is key to this.

\begin{Def}\label{d:fullBranchedAsWord}       Let $u$ a  word with alternating letters $c_i, d_j$.      (We allow prefixes of words $v$ including those that end with some $d_j$.) 

\begin{enumerate} 
\item  If $u$ begins with say $c_1$ and ends in some $d_j$, then powers of $u$ are again alternating words in the letters $c_i, d_j$.  In the case that $u = c_1 \cdots c_j$ begins with $c_1$ and ends in some $c_j$, then we define $u^2 = c_1 \cdots (c_j +c_1)\cdots c_j$ and similarly for higher powers.

\item We say that $u$ is {\em full branched} if for any prefix $u_{[1, \ell]}$ of $u$, the inequality $u^{\infty} \preceq u_{[1, \ell]}^{\infty}$ holds.   
We denote the longest prefix of $u$ that is full branched by $\mathfrak f (u)$.    

Note that an equivalent definition is:      $\mathfrak f (u)$ is the longest prefix of $u$ satisfying $(\,\mathfrak f (u)\,)^{\infty} = \min\{ (\, u_{[1,\ell]}\,)^{\infty}\,:\,u_{[1,\ell]} \, \text{is a prefix of}\; u \}$.  

\item We  define   $\omega_{k,u}$ as the $\alpha$-value such that $R_{k,\mathfrak f (u)}\cdot r_0(\omega_{k,u}) =r_0(\omega_{k,u})$.       That is,  $r_0(\omega_{k,u})$ has the ($T_{\alpha}$-inadmissible) simplified digit expansion 
$(\,\overline{d}(k, \mathfrak f(u)\,)\,)^{\infty}$.
\end{enumerate}
 \end{Def}

\bigskip

 \begin{Eg}\label{e:computingFrakF}

 \begin{enumerate} 
 \item  Recall that the leftmost red branches in Figure~\ref{firstBothDigs} are branches of $r_1(\alpha)$ as a function of $r_0(\alpha)$, with $\alpha < \gamma_{3,3}$.  These branches agree with portions of the graphs $y = A^kC\cdot x$ and $x = r_0(\alpha)$.  For each $k$, the corresponding branch intersects with $y=x-t$ on the left, and  $y = x$ on right.  The leftmost of these two points is $x = r_0(\zeta_{k,1})$.    Certainly for every $\alpha$ between such a set of intersection points, we have $r_0(\alpha) = k, \cdots$.     On the other hand, by definition $\mathfrak f(1) = 1$,  and thus   $r_0(\omega_{k,1})$ is the fixed point of $A^kC\cdot x$.  That is,   $r_0(\omega_{k,1})$ is our right intersection point.    Furthermore,  for all $\alpha \in [\zeta_{k,1}, \omega_{k,1})$ we have $y = r_1(\alpha)$ is given by $r_1(\alpha) = A^k C\cdot r_0(\alpha)$.   That is, the first simplified digit of $r_0(\alpha)$ is $k$.  In other words,  $\mathscr I_{k,1}= [\zeta_{k,1}, \omega_{k,1})$. 
 
Being a fixed point,  $r_0(\omega_{k,1}) = k, k, \ldots\,$, is purely periodic with period $k$.   Note however that this is {\em not} a $T_{\alpha}$-admissible expansion, as were it so then $r_1$ for this value of $\alpha$ would equal $r_0$.   But,  $r_0(\alpha) \notin \mathbb I_{\alpha}$!

 \item  Ê Now suppose $c>1$.  By definition,  $\mathfrak f(c)= c$, that is  $v=c_1$ is full branched.    Therefore,   for each $k$,  $\omega_{k,c}$ is such that  $r_0(\omega_{k,c})$ is the fixed point of $(A^kC)^c$.  Of course,  $(A^kC)^c$ has the same fixed point as $A^kC$.     That is,    $\omega_{k,c} = \omega_{k,1}$.     Related to this,  there are values of $\alpha$ sufficiently close to $\omega_{k,1}$ so that for each of these $\alpha$,   the first $c$ simplified digits of $r_0(\alpha)$ are all equal to $k$.     Equivalently,  $r_0(\alpha), r_1(\alpha), \dots, r_c(\alpha)$ are all in $\Delta_{\alpha}(k,1)$.  And, this is also to say that  $\overline{d}(k,c)$ gives the first $c$ simplified digits of $r_0(\alpha)$.    The reader should easily find $\alpha$ in the complement of $\mathscr I_{k,c}$ inside of $\mathscr I_{k,1}$.
 
 \item  Consider $v = 111$.    We consider each prefix in turn.   Of course $u = 1$ is full-branched.   We next compare  $(11)^{\infty} = 11\cdot 11\cdot 11 \cdot \,\cdots$ with  $1^{\infty}$; by our convention for powers, we certainly find that $(11)^{\infty} \prec 1^{\infty}$.    We next compare $(11)^{\infty}$ with $(111)^{\infty} = 111\cdot 111\cdot 111 \cdot \,\cdots = 121212\cdots$; certainly  $(11)^{\infty}\prec (111)^{\infty}$.   Therefore,  $\mathfrak f(111) = 11$.   Compare this with Figure~\ref{smallAlpNonFullBranch}.

 \item  Consider $v = \Theta_1(313) = 313 (1213)13$.    Arguing just as for the previous case, we find that $(31)^{\infty}$ is the minimal element of $\{3^{\infty}, (31)^{\infty}, (313)^{\infty}\}$.   Since $3131 = (31)^2$,  we certainly have that $(31)^{\infty} = (3131)^{\infty}$, and thus this latter is our current candidate for the maximal length  full branched prefix of $v$.  We compare it with $(313 12)^{\infty}$. We find
 that these infinite words agree in their first four letters, but in the fifth (a ``$c_j$''-position) they differ. Confer Figure~\ref{f:calculationsOfFrakF}.
Since  $31313\prec 31315$, we find that $(313 1)^{\infty} \prec (31315)^{\infty}$.   We thus now compare $(313 1)^{\infty}$ with $(313 121)^{\infty}$.  Confer Figure~\ref{f:calculationsOfFrakF}.
we find that  $(313 121)^{\infty}$ is the smaller.  One easily sees that is it also smaller than $(313 1213)^{\infty}$.  We now compare it with $(313 12131)^{\infty}$.   See  Figure~\ref{f:calculationsOfFrakF}.
Again,    $(313 121)^{\infty}$ is the smaller.  One easily sees that also  $(313 121)^{\infty} \prec v^{\infty}$.   Therefore,  $\mathfrak f(v) = (313 121)^{\infty}$.
 \end{enumerate}

\bigskip
\begin{figure}
\begin{tabular}{lcr}
\scalebox{.7}{
\begin{tabular}{cccc|c|cccccc} 
 3&1&$\cdot$ 3&1&$\cdot$ 3&1&$\cdot$ 3&1&$\cdots$\\
  3&1&3&1&2+3&1&3&1&3&
 \end{tabular} 
}
&
\scalebox{.7}{
\begin{tabular}{cccc|c|cccccc} 
 3&1&3&1&$\cdot$ 3&1&3&1&$\cdots$\\
 3&1&3&1&2&1&$\cdot$3&1&3&
 \end{tabular}  
}
&
\scalebox{.7}{
\begin{tabular}{cccccccccc|c|ccccc} 
 3&1&3&1&2&1&$\cdot$3&1&3&1&2&1&\\
 3&1&3&1&2&1&3&1&$\cdot$ 3&1&3&$\cdots$
 \end{tabular}
}
\end{tabular}
\caption{Naive calculations for $\mathfrak f (\, \Theta_1(313)\,)$. From left to right, finding:  $(3 1)^{\infty}\prec (31312)^{\infty}$; $(3 1 3 1)^{\infty}\succ (313121)^{\infty}$; $(313121)^{\infty}  \prec (31312131)^{\infty}$.   The beginning of second copies of words are marked with a dot.}
\label{f:calculationsOfFrakF}
\end{figure}

\end{Eg}

The following shows that certain phenomena illustrated in the above examples hold in general.
\begin{Lem}\label{l:frakFatDjOnly}   If $v\in \mathcal V$ is of length greater than one, then $\mathfrak f(v)$ has even length.  In this case,  for each $k\in \mathbb N$,   a simplified digit expansion  for  $r_0(\omega_{k,v})$  is $\overline{d}(k, (\,\mathfrak f(v)\,)^{\infty}\,)$.    
\end{Lem}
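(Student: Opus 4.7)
The lemma has two parts: (1) $\mathfrak f(v)$ has even length, and (2) $r_0(\omega_{k,v})$ admits the simplified digit expansion $\overline{d}(k, (\mathfrak f(v))^\infty)$. My plan is to prove (1) via the equivalent characterization that $\mathfrak f(v)$ is the longest prefix of $v$ whose $^\infty$ achieves the minimum over all prefixes; then (2) will follow directly from the fixed-point definition of $\omega_{k,v}$.

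For (1), I would establish two complementary facts. First, every odd-length proper prefix $u = v_{[1, 2j-1]}$ of $v$ (with $j \le s-1$, so that the extending letter $d_j$ exists) is strictly beaten by $u' = u\,d_j = v_{[1, 2j]}$: setting $S = |\overline{d}(k, u)|$, the sequences $u^\infty$ and $(u')^\infty$ agree through digit $S$, and at position $S + 1$ the sequence $u^\infty$ shows $k$ (the seam in $u^\infty$ merges the trailing $k^{c_j}$ run with the starting $k^{c_1}$ run into a single run of $k$'s) while $(u')^\infty$ shows $k + 1$ (the first digit of the $(k+1)^{d_j}$ block newly appended by $d_j$). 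Since $k+1 \prec k$ in the full order, $(u')^\infty \prec u^\infty$, so $u$ cannot be $\mathfrak f(v)$.

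Second, I would show that $v$ itself is not full-branched, by comparing $v^\infty$ to $w^\infty$ where $w = v_{[1, |v| - 1]}$ is the drop-the-last-letter prefix, of even length $2s - 2$. The palindromic structure of $v \in \mathcal V$ from Proposition~\ref{p:theWordsAreNice}(2) forces $c_s = c_1$. The words $v^\infty$ and $w^\infty$ agree through position $|\overline{d}(k, v)|$; at position $|\overline{d}(k, v)| + 1$ the sequence $v^\infty$ still shows a $k$ (that position is the $(c_1 + 1)$-th entry of the merged $k^{c_s + c_1} = k^{2 c_1}$ run formed at the seam), whereas $w^\infty$ shows a $k + 1$ (the first digit of the $(k+1)^{d_1}$ block at the start of its second copy, positioned correctly precisely because $c_s = c_1$). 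Hence $w^\infty \prec v^\infty$, so $v$ does not minimize.

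Combining these two facts, the longest minimizing prefix has some length $L < |v|$, and $L$ must be even --- were $L$ odd, the first fact would produce a longer prefix with strictly smaller $^\infty$, contradicting minimality. This establishes (1). For (2), the defining equation $R_{k, \mathfrak f(v)} \cdot r_0(\omega_{k,v}) = r_0(\omega_{k,v})$ identifies $r_0(\omega_{k,v})$ as a fixed point of the M\"obius transformation encoded by the digit block $\overline{d}(k, \mathfrak f(v))$; iterating yields the formal periodic expansion $\overline{d}(k, (\mathfrak f(v))^\infty)$. The even length ensured by (1) is exactly what guarantees that the infinite concatenation $(\mathfrak f(v))^\infty$ remains in the alternating form of Definition~\ref{d:digitsWithKandA} without collapsing $c$-runs at period boundaries, so that the expansion is genuinely a simplified one. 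The hardest step should be the seam-combining comparison in the second fact, which is conceptually clean but requires careful bookkeeping of how $c$-position letters interact with digit runs across a period boundary, and where the palindrome identity $c_s = c_1$ plays a subtle but essential role; the odd-prefix improvement and the fixed-point argument are comparatively routine.
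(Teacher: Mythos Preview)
Your proof is correct and follows the same underlying idea as the paper's --- the seam merge at an odd-length boundary produces a $k$ where the neighbouring even-length prefix shows $k+1$ --- but your execution is sharper. The paper compares every odd-length prefix $u$ of length greater than one directly to $(c_1 d_1)^{\infty}$, asserting that $u^{\infty} \succ (c_1 d_1)^{\infty}$. You instead compare each odd proper prefix $u=v_{[1,2j-1]}$ to its immediate even extension $u'=v_{[1,2j]}$, and separately compare $v$ to $w=v_{[1,|v|-1]}$ using the palindrome identity $c_s=c_1$.

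This local comparison is in fact the more reliable route. The paper's global comparison to $(c_1 d_1)^{\infty}$ can fail for prefixes of words in $\mathcal V$: take $v=\Theta_1(111)=1112111$ and the odd prefix $u=11121$; then $\overline{d}(k,u^{\infty})$ and $\overline{d}(k,(11)^{\infty})$ first differ at position~$5$, where $u^{\infty}$ shows $k+1$ and $(11)^{\infty}$ shows $k$, so $u^{\infty}\prec(11)^{\infty}$ rather than $\succ$. Your inequality $(u')^{\infty}\prec u^{\infty}$, by contrast, holds unconditionally at position $|\overline{d}(k,u)|+1$, and this is exactly what is needed to exclude $u$ as the longest minimiser. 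Your treatment of part~(2) matches the paper's.
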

\begin{proof} Our convention for powers of words shows that any prefix $u$ of odd length greater than one, thus having initial letter $c_1$  and final letter some $c_i$,   has its second power including the letter $c_j+c_1$.  Already $u^2$ is larger than the prefix $(c_1d_1)^{\infty}$.  The first statement thus holds.

 For any word $u = c_1 d_1 \cdots d_{j}$,  we have  
 \[\overline{d}(k, u^{\infty}\,) = [k^{c_1}, (k+1)^{d_1}, \cdots, (k+1)^{d_{j}}]^{\infty} =  \overline{d}(k, u\,)^{\infty}.\]
Thus, since $\omega_{k, v}$ is defined to be $(\,\overline{d}(k, \mathfrak f(u)\,)\,)^{\infty}$, the result holds. 
\end{proof} 
 
 \bigskip

\begin{figure}[h]
\noindent
\begin{tabular}{cc}
\scalebox{.3}{
{\includegraphics{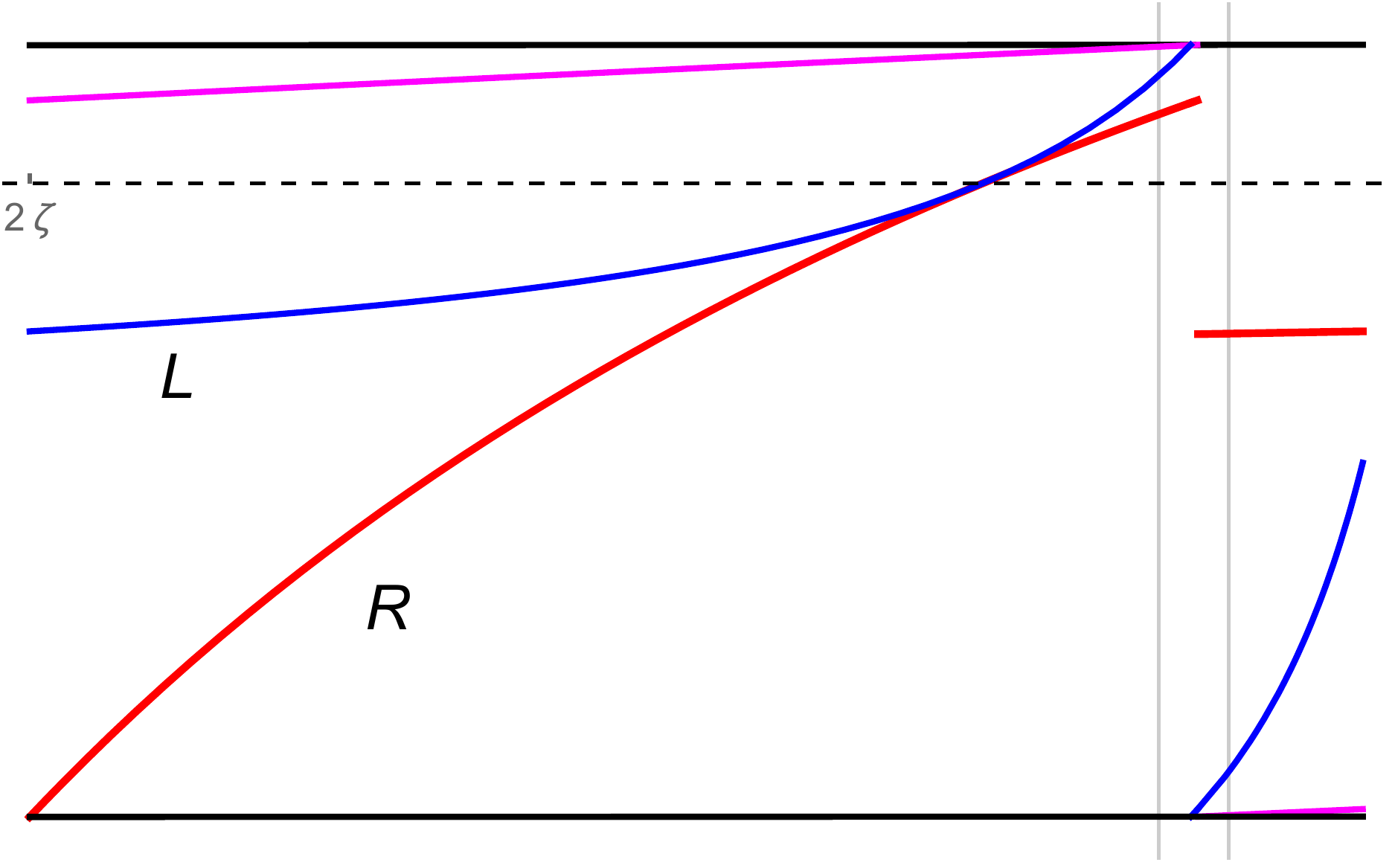}}}\quad\quad
\scalebox{.3}{
{\includegraphics{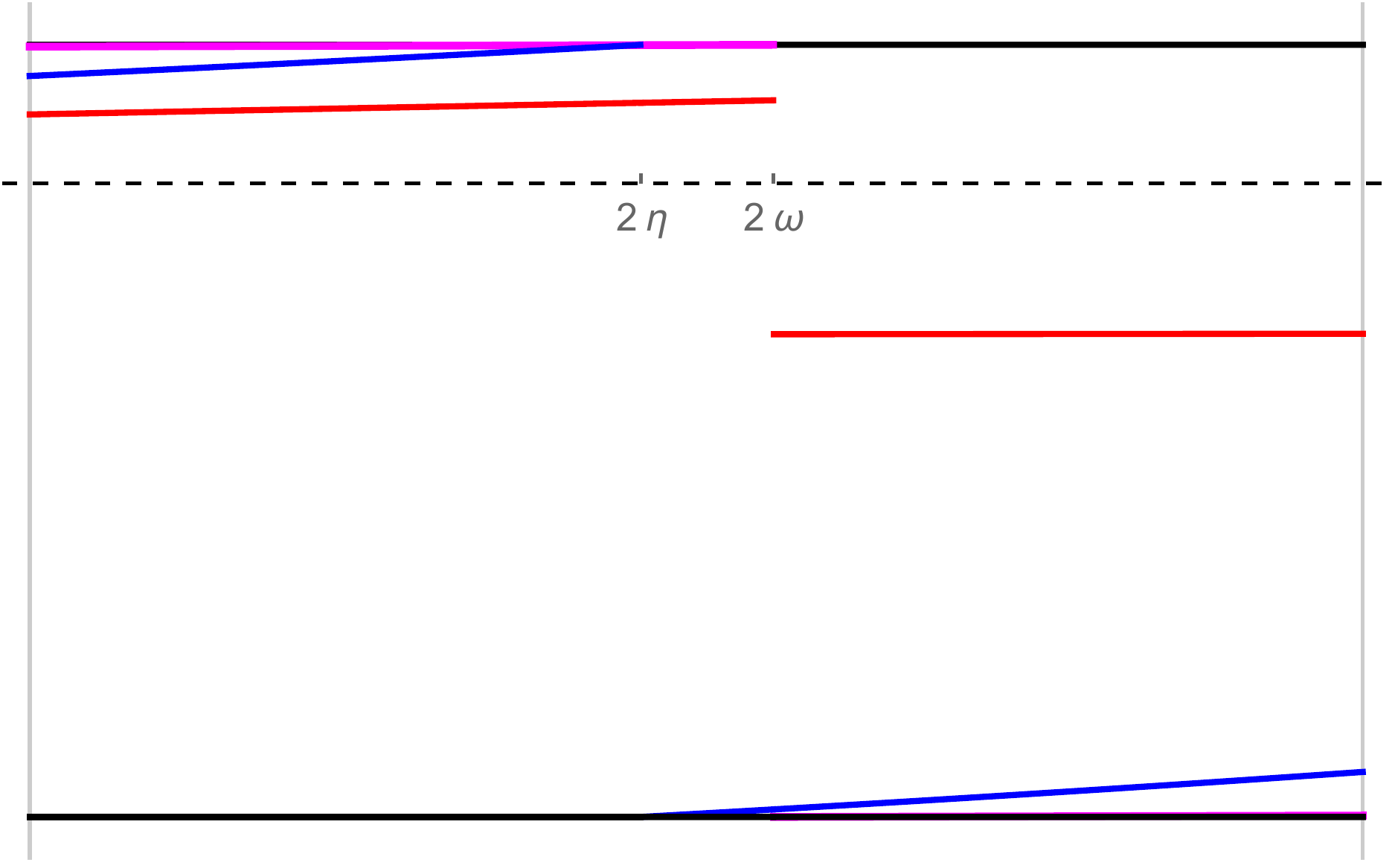}}}
\end{tabular}
\caption{A non-full branch.  Here $n=m=3$, $v= 111$ and $k=1$; we have that  $\omega_{1, 111}$ is determined by the fixed point of $R_{1, 11}$.    The labels $L, R$ mark respectively the curves $y = L_{1,111}\cdot r_0(\alpha), y= R_{1, 111}\cdot r_0(\alpha)$ where $\alpha = x/2 = x/t_{3,3}$.  Red gives branches of $y = r_3(\alpha)$, while blue colors the two  branches of  $y = \ell_9(\alpha)$;  Magenta gives the branches of   $y = r_2(\alpha)$.     The left portion has $2 (9 - \sqrt{15})/7 < x <  (11- 6 \sqrt{2})/7$. The right ``zooms in" to $0.35910 < x < 0.35915$. (This interval lies between the vertical gray lines in both portions.)  The $x$-axis is shown as a dotted line. Black curves give $y = x$ and $y = x-2$.   (This example is in fact the longest interval $\mathscr I_{k,v}$ where $R_{k,v}$ is not full for any $n\ge 3, k\in \mathbb N, v \in \mathcal V$.)}
\label{smallAlpNonFullBranch}
\end{figure}
 
\bigskip
The next result indicates the utility of the notion of full branchedness.
\begin{Lem}\label{l:potentialCylinders}   Let $v \in \mathcal V$ and fix $k\in \mathbb N$.   The $\alpha$-cylinder set $\mathscr I_{k,v}$ is a subset of   $[\zeta_{k,v}, \omega_{k,v})$. 
\end{Lem}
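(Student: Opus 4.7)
The plan is to show $\zeta_{k,v} \le \alpha < \omega_{k,v}$ for every $\alpha \in \mathscr I_{k,v}$, using that $\mathscr I_{k,v}$ is an interval by Lemma~\ref{l:admissBetween2PtsRvals}, and then establishing the two endpoint bounds separately.

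For the lower bound, the admissibility of $\overline{d}(k,v)$ as the initial digit sequence of $r_0(\alpha)$ forces in particular the iterate $r_{\overline{S}(v)}(\alpha) = R_{k,v}\cdot r_0(\alpha)$ to lie in $\mathbb I_\alpha$, whence $R_{k,v}\cdot r_0(\alpha) \ge \ell_0(\alpha)$. At $\alpha = \zeta_{k,v}$ this holds with equality by definition, and since $\mathbb I_\alpha$ is closed on the left, $\zeta_{k,v}$ itself lies in $\mathscr I_{k,v}$. A monotonicity check on the function $\alpha \mapsto R_{k,v}\cdot r_0(\alpha) - \ell_0(\alpha)$—an increasing fractional linear image of $\alpha t$ minus the line $(\alpha-1)t$, crossing zero transversally at $\zeta_{k,v}$—then rules out $\alpha < \zeta_{k,v}$.

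For the upper bound, the key point is that $\mathfrak f(v)$ is a prefix of $v$, so the first $\overline{S}(\mathfrak f(v))$ entries of $\overline{d}(k,v)$ coincide with $\overline{d}(k, \mathfrak f(v))$. Thus for $\alpha \in \mathscr I_{k,v}$ the intermediate iterate $r_{\overline{S}(\mathfrak f(v))}(\alpha) = R_{k, \mathfrak f(v)}\cdot r_0(\alpha)$ must lie in $\mathbb I_\alpha = [\ell_0(\alpha), r_0(\alpha))$; in particular $R_{k, \mathfrak f(v)}\cdot r_0(\alpha) < r_0(\alpha)$. The defining equation $R_{k, \mathfrak f(v)}\cdot r_0(\omega_{k,v}) = r_0(\omega_{k,v})$ forces equality at $\omega_{k,v}$, and combined with the half-openness of $\mathbb I_\alpha$ on the right, this gives $\omega_{k,v} \notin \mathscr I_{k,v}$. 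A local analysis of the action of $R_{k, \mathfrak f(v)}$ near its fixed point $r_0(\omega_{k,v})$ shows this fixed point is hyperbolic and repelling (its derivative exceeds $1$, which is the geometric meaning of the branch $y = R_{k, \mathfrak f(v)}\cdot x$ crossing $y = x$ from below, as illustrated in Figure~\ref{smallAlpNonFullBranch}), so for $\alpha$ slightly exceeding $\omega_{k,v}$ one has $R_{k, \mathfrak f(v)}\cdot r_0(\alpha) > r_0(\alpha)$, ruling out $\alpha \in \mathscr I_{k,v}$. Together with the interval structure of $\mathscr I_{k,v}$, this excludes every $\alpha \ge \omega_{k,v}$.

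The main obstacle will be verifying the claimed transversality: that the product matrix $R_{k, \mathfrak f(v)}$ is hyperbolic with derivative strictly greater than $1$ at the fixed point $r_0(\omega_{k,v})$, rather than parabolic with derivative exactly $1$. This amounts to showing that the trace of $R_{k, \mathfrak f(v)}$ has absolute value exceeding $2$, a computation with the generators $A^k C$ and $A^{k+1}C$ that is routine in any specific case but requires a uniform argument across all words $\mathfrak f(v)$ arising from $v \in \mathcal V$.
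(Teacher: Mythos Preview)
Your approach is essentially the paper's, with one structural difference: you work with the single prefix $\mathfrak f(v)$, while the paper bounds the right endpoint of $\mathscr I_{k,u}$ for \emph{every} prefix $u$ of $v$ by the fixed point of $R_{k,u}$, intersects, and then observes that the least of these fixed points is $\omega_{k,v}$ by the very definition of $\mathfrak f(v)$ as the prefix yielding the minimal periodic expansion $(\overline{d}(k,u))^\infty$. Both routes need the same basic fact---that admissibility of a prefix $u$ forces $R_{k,u}\cdot r_0(\alpha) < r_0(\alpha)$, hence $\alpha$ lies below the relevant fixed point---and the paper's justification is only the brief phrase ``implied by the connected nature of each of the $\Delta(k,l)$.''

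Where you diverge is in flagging hyperbolicity as the main obstacle. This is more explicit than anything the paper writes, but it is also heavier than necessary. Once $\mathscr I_{k,v}$ is known to be an interval (Lemma~\ref{l:admissBetween2PtsRvals}) not containing $\omega_{k,v}$, you only need to place it on the correct side. A uniform trace estimate is overkill: for the length-one prefix, the two fixed points of $A^kC$ satisfy $x_-x_+ = 1$, so $x_- < 1 < x_+$, while the ambient constraint $\alpha < \gamma_{3,n}$ forces $r_0(\alpha) < 1$; thus $A^kC\cdot r_0(\alpha) < r_0(\alpha)$ already pins $r_0(\alpha) < x_-$. This handles $\mathscr I_{k,1}$ explicitly, and then $\mathscr I_{k,v} \subset \mathscr I_{k,1}$ confines everything to a bounded interval on which each $R_{k,u}$ is an increasing M\"obius map without poles, so the single-crossing behaviour follows by nesting through successive prefixes rather than by a direct spectral computation on $R_{k,\mathfrak f(v)}$. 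The paper's intersection-over-all-prefixes phrasing is set up precisely to exploit this nesting.
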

\begin{proof}   The set of $\alpha$ such that $\overline{d}{}^{\alpha}_{[1, \overline{S}(k,v)]} = \overline{d}(k,v)$ is contained in the interval $[\zeta, \omega)$ such that $R_{k,v}\cdot r_0(\zeta) = \ell_0(\zeta)$ and  $R_{k,v}\cdot r_0(\omega) = r_0(\omega)$.   (Note that this is implied by the connected nature of each of the $\Delta(k,l)$, confer Figure~\ref{allDigs}.) 
The left endpoint here is exactly $\zeta= \zeta_{k,v}$.      

Now,  the definition of the $\alpha$-cylinder $\mathscr I_{k,v}$ as the set of those $\alpha$ such that the digit sequence determined by $k$ and $v$ are $\alpha$-admissible  implies that $\mathscr I_{k,v}$  is contained in the intersection of the corresponding $\alpha$-cylinders for $k$ and the prefixes $u$ of $v$.    In particular,  the least right endpoint of these cylinders gives an upper bound of the right endpoint of $\mathscr I_{k,v}$.  But, each of these cylinders has its right endpoint bounded above by its own corresponding fixed point,  $\overline{d}(k,  u^\infty)$.   Hence,  we find that the right endpoint of $\mathscr I_{k,v}$ is less than or equal to the least of these $\overline{d}(k,  u^\infty)$.  Since this least point is  $ \overline{d}(k, (\, \mathfrak f (v)\,)^\infty)$, we are done. 
 \end{proof} 
 
\begin{Lem}\label{l:startsAtOmegaOneOne}   Fix $m=3$, and $n \in \mathbb N$.   Then 
$\gamma_{3,n} = \omega_{1,1}$. 
\end{Lem}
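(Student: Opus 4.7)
The plan is to show that $\gamma_{3,n}$ and $\omega_{1,1}$ are characterized by the same fixed-point equation for the matrix $AC$, and that the interval in which $r_0(\alpha)$ must lie pins down a unique fixed point.

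First I would unpack $\omega_{1,1}$. The word $v = 1$ is trivially full branched, so $\mathfrak f(1) = 1$. Applying Definition~\ref{d:upperDigsCylsMats}(3) with $k=1$ and $v = c_1 = 1$ (no $d_j$'s present), the product defining $R_{1,1}$ collapses to $(A^1 C)^{c_1} = AC$. Hence by Definition~\ref{d:fullBranchedAsWord}(3), $\omega_{1,1}$ is the parameter satisfying
\[ AC \cdot r_0(\omega_{1,1}) = r_0(\omega_{1,1}). \]

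Next I would unpack $\gamma_{3,n}$. By definition, $C^{-1}\cdot \ell_0(\gamma_{3,n}) = r_0(\gamma_{3,n})$. Since $\ell_0(\alpha) = (\alpha-1)t = \alpha t - t = A^{-1}\cdot r_0(\alpha)$, the defining relation becomes $C^{-1}A^{-1}\cdot r_0(\gamma_{3,n}) = r_0(\gamma_{3,n})$, i.e.,
\[ (AC)^{-1}\cdot r_0(\gamma_{3,n}) = r_0(\gamma_{3,n}), \]
which is equivalent to $AC \cdot r_0(\gamma_{3,n}) = r_0(\gamma_{3,n})$.

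Thus both $r_0(\gamma_{3,n})$ and $r_0(\omega_{1,1})$ are fixed points of $AC$. It remains to see they are the same fixed point. Since $m=3$ gives $\mu = 1$, the matrix $AC$ has determinant $1$ and trace $-(1+t)$, so its fixed points satisfy $x^2 - (1+t)x + 1 = 0$; their product is $1$ and their sum is $1+t > 2$. In particular one fixed point is less than $1$ and the other exceeds $1$, and the larger one exceeds $t$ (equivalent to $(1+t)^2 - 4 > (t-1)^2$, i.e.\ $4t > 4$, which holds since $t > 1$). The admissible values $r_0(\alpha) = \alpha t$ for $\alpha \in (0,1)$ lie in $(0,t)$, so the only candidate is the smaller fixed point. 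Both $r_0(\gamma_{3,n})$ and $r_0(\omega_{1,1})$ therefore coincide with this root, and injectivity of $\alpha \mapsto \alpha t$ yields $\gamma_{3,n} = \omega_{1,1}$.

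There is no serious obstacle here; the argument is a direct calculation. The only point requiring care is the unwinding of $R_{1,\mathfrak f(1)}$ from the general formula in Definition~\ref{d:upperDigsCylsMats}, and the check that the ``wrong'' fixed point of $AC$ falls outside $(0,t)$.
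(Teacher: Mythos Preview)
Your proof is correct and follows essentially the same approach as the paper: both arguments reduce the defining relations of $\gamma_{3,n}$ and $\omega_{1,1}$ to the fixed-point equation $AC\cdot r_0 = r_0$. The paper simply applies $AC$ to both sides of $C^{-1}\cdot\ell_0(\gamma)=r_0(\gamma)$ and uses $A\cdot\ell_0=r_0$, while you rewrite $\ell_0 = A^{-1}\cdot r_0$ first; these are the same manipulation. Your extra care in checking that only one fixed point of $AC$ lies in $(0,t)$ is a detail the paper leaves implicit, so your version is if anything slightly more complete.
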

\begin{proof} Let $\gamma = \gamma_{3,n}$  By definition,  $C^{-1} \cdot \ell_0(\gamma)= r_0(\gamma) $.  Applying $AC$ to both sides of this equality yields $r_0(\gamma) = AC\cdot r_0(\gamma)$.   Since $\mathfrak f(1) = 1$,  we have that $r_0(\omega_{1,1}) = AC\cdot r_0(\omega_{1,1})$ and thus $\gamma_{3,n} = \omega_{1,1}$. 
\end{proof}

 \medskip 
 Recall that the full blocks of $v$ are defined in Definition~\ref{d:derivedWords}. 
\begin{Lem}\label{l:frakFandD}  Suppose $v \in \mathcal V$ is of length greater than one.  Then $\mathfrak f(v)$ ends with a full block of $v$. 
\end{Lem}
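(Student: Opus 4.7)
My plan is to argue by contradiction. Assume $\mathfrak{f}(v)=P$ does not end at a full-block boundary. By Lemma~\ref{l:frakFatDjOnly}, $P$ has even length, so I can write $P = QZ^{j}$ where $Q$ is a (possibly empty) concatenation of complete full blocks of $v$, $Z$ is the basic unit of the next full block $Z^{k}$ of $v$, and $1\le j<k$. Let $X$ denote the basic unit of the first full block of $v$ (so $X=(1\,a)$ in Case~(ii), $X=(a\,1)$ in Case~(i)) and $Y$ the other type. Using the full order~\eqref{e:theFullOrder}, a quick digit-level calculation shows $X^{\infty} \succ Y^{\infty}$: the first disagreement between the two digit sequences occurs once the basic unit with the smaller exponent of $k+1$ in Case~(ii) (or of $k$ in Case~(i)) has finished and the next unit contributes a $k$, while the other is still emitting $k+1$'s.

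I then split into sub-cases depending on $Z$ and whether $Q$ is empty. When $Z = Y$, I set $P' = QY^{j+1}$: the words $(P')^{\infty}$ and $P^{\infty}$ agree on the first $|Q|+j$ basic units, and at basic unit position $|Q|+j+1$ the former has $Y$ (still inside the $Y^{j+1}$ run) while the latter begins its second copy with $U_{1}=X$. Since $Y\prec X$, I get $(P')^{\infty}\prec P^{\infty}$, and full-branchedness of $P$ upgrades this to $(P')^{\infty}\preceq w^{\infty}$ for every prefix $w$ of $P'$, contradicting maximality of $\mathfrak{f}(v) = P$. When $Z = X$ and $Q$ is empty, the trivial equality $(X^{j+1})^{\infty} = X^{\infty} = (X^{j})^{\infty}$ yields a longer full-branched prefix directly.

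The delicate case is $Z = X$ with $Q$ non-empty: here direct extension fails, because $(QX^{j+1})^{\infty}$ in fact exceeds $(QX^{j})^{\infty}$ (as one checks by tracking the basic unit appearing at position $|Q|+j+2$). Instead, my plan is to show that $P$ was never full-branched. Since $Q$ must end with some $Y^{f_{i}}$, it begins with $X^{e_{1}}$ and contains at least one $Y$. In the sequence $P^{\infty} = (QX^{j})^{\infty}$, after the initial copy of $Q$ the run $X^{j}$ is immediately followed by the next copy of $Q$, which itself begins with $X^{e_{1}}$; so the first $Y$ of $P^{\infty}$ past position $|Q|$ occurs at basic unit position $|Q|+j+e_{1}+1$, whereas $Q^{\infty}$ has its first $Y$ past position $|Q|$ at $|Q|+e_{1}+1$. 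Since $j\ge 1$, at basic unit position $|Q|+e_{1}+1$ the word $P^{\infty}$ still has an $X$ while $Q^{\infty}$ has $Y$. The inequality $X\succ Y$ then gives $P^{\infty}\succ Q^{\infty}$, contradicting the full-branchedness of $P$ when one takes the prefix $w=Q$. Case~(i) follows from the above by symmetry, exchanging the roles of the two coordinates in each basic unit.

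The main technical hurdle is the recognition that in the last sub-case a naive extension argument fails (exactly when $e_{1}=1$ the na\"\i ve extension flips the inequality the wrong way), so one must retreat to comparing $P$ against the shorter prefix $Q$. With the ordering $X\succ Y$ on basic units fixed at the start, each of the three comparisons reduces to locating the first basic unit position at which two explicit eventually-periodic sequences diverge.
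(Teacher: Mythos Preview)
Your proof is correct and follows the paper's approach: establish $X\succ Y$ for the two basic units, then rule out any $P=\mathfrak f(v)$ ending strictly inside a full block by case analysis on which unit $Z$ the incomplete block consists of. Your explicit comparison $P^{\infty}\succ Q^{\infty}$ in the case $Z=X$, $Q$ nonempty, is in fact a cleaner justification of what the paper only gestures at with ``the argument of Lemma~\ref{l:frakFatDjOnly} \ldots gives here that $\mathfrak f(v)$ ends with some power of $[1\,(d_1+1)]$''. One small point: in the $Z=Y$ case your claim that full-branchedness of $P$ alone yields $(P')^{\infty}\preceq w^{\infty}$ for \emph{every} prefix $w$ of $P'$ skips the new odd-length prefix $w=P\cdot c_{1}$ lying strictly between $P$ and $P'$; this is harmless, since the argument of Lemma~\ref{l:frakFatDjOnly} (extending an odd-length prefix by its next $d$-letter strictly decreases $(\cdot)^{\infty}$) covers it.
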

\begin{proof}  When $c_1 = 1$, we have  $1 \,d_1 \succ 1 \,(d_1+1)$.  When $c_1 > 1$, we have $c_1 \,1 \succ (c_1-1) \, 1$.    

We treat the case of $c_1 = 1$, the other case being similar.   First, if $\mathscr D(v) = e_1$ then one easily verifies that $\mathfrak f(v) = (1 \,d_1)^{e_1}$.  Otherwise, the argument of Lemma~\ref{l:frakFatDjOnly} showing that  $v$ of length greater than one have $\mathfrak f(v)$ of even length, gives here that $\mathfrak f(v)$ ends with some power of $[1 \,(d_1+1)]$.      Suppose we have a prefix of $v$ which ends with a non-full block of this type, say  $w = (1 \,d_1)^{e_1} [1 \, (d_1+1)]^{f_1}\cdots [1 \, (d_1+1)]^{f_i-j}$.   Then the square of $w$ has the intermediate term $[1 (d_1+1)]^{f_i-j}(1\, d_1)^{e_1}$, whereas the prefix that completes $w$ to the end of the block $[1 \,(d_1+1)]^{f_i}$ agrees with $w^2$ up to a replacement of $(1\, d_1)$ by $[1 \,(d_1+1)]$.  That is, this new prefix is smaller than $w^2$, and thus certainly its infinite power is smaller than $w^2$.   The result thus holds in this case.   
\end{proof} 
 
\bigskip
\begin{Prop}\label{p:frakFofChildren} 
 Suppose $v \in \mathcal V$.  
Then  
 
\[ \mathfrak f(\, \Theta_q(v)\,) = \begin{cases} (c\,1)^h&\text{if}\; q=0, v = \Theta_{0}^{h}(c),\\
                                                                         v (v')^{q-1}y&\text{if}\; q\ge 1,  \, \text{and}\; y\; \text{as in Proposition~\ref{p:theWordsAreNice}},\\
                                                                         (ua)^{h+1}&\text{if}\; q=0,  v =  \Theta_{0}^{h}\circ\Theta_{p}(u), p\ge 1, \,\text{and}\;\Theta_{p}(u)=uau.
                                               \end{cases}
\]

\medskip
In particular,   for all $v \in \mathcal V$,   the word $v$ is a prefix of $(\, \mathfrak f(v)\,)^2$.
\end{Prop}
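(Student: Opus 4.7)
The plan is to prove the case-by-case formula for $\mathfrak f(\Theta_q(v))$ by induction on the depth of $v$ in $\mathcal V$, from which the assertion ``$v$ is a prefix of $(\mathfrak f(v))^2$'' follows by direct inspection in each case. The base case $v = c_1$ has $\mathfrak f(c_1) = c_1$ trivially, and its simplified-digit sequence $k^{c_1}$ is plainly a prefix of $k^{2c_1}$. For the inductive step I would use the palindromic decomposition of Proposition~\ref{p:theWordsAreNice} (writing $v = u a z$, $v' = z' a u$ with $u$ the parent and $a, z$ palindromes, and $y = z' a$) together with $\Theta_q(v) = v(v')^q v''$, which presents every candidate prefix of the child as a truncation at a block boundary; Lemma~\ref{l:frakFandD} further restricts the viable cutoffs to the ends of full blocks of $\Theta_q(v)$.

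In Case~1 ($q=0$, $v = \Theta_0^h(c)$) the child is the alternating word $c(1c)^{h+1}$, and a block-by-block lexicographic comparison of $u^\infty$ for each candidate prefix $u$ shows the claimed cutoff is optimal: extending past it reaches a $c_j$-position whose second copy merges into a block $k^{2c}$ that is unfavorable in $\prec$, while shortening leaves an initial $k^\infty$ tail too large. Case~3 ($q=0$, $v = \Theta_0^h\circ\Theta_p(u)$ with $p\ge 1$ and $\Theta_p(u) = uau$) runs in parallel, with $u$ and $a$ playing the roles $c$ and $1$ played in Case~1, producing the claimed $(ua)^{h+1}$. The main obstacle is Case~2 ($q\ge 1$), where the claim is $\mathfrak f(\Theta_q(v)) = v(v')^{q-1}y$. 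Maximality---that no strictly longer prefix is full-branched---I would prove via the identity $v' v'' = y v$ of Proposition~\ref{p:theWordsAreNice}: any extension of the candidate begins a fresh copy of $v^\infty$, and Lemma~\ref{l:orderVeePrimeDoublePrime}'s $v' \prec v''$ then forces the extension's infinite power to exceed the candidate's at the first position where the next block of $v''$ differs from the next block of $v'$. For full-branchedness (no shorter block-boundary cutoff does better), I would apply the derived-word map $\mathscr D$ of Lemma~\ref{l:dRespectsRelationships} to reduce to the inductive hypothesis on the parent, then substitute back the palindromic blocks.

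The delicate point throughout, and the bulk of the technical burden, is respecting the merge convention for powers of words ending in a $c_j$ letter, so that the lexicographic comparisons really are carried out in the simplified-digit sequence rather than in the word-level $c, d$-form; managing this uniformly in $q$ is where Case~2 becomes genuinely involved. Once the three case formulas for $\mathfrak f(\Theta_q(v))$ are established, the prefix-of-squared claim is immediate: in every case $\mathfrak f(\Theta_q(v))$ is a block-boundary truncation that begins $\Theta_q(v)$ and has simplified-digit length at least half that of $\Theta_q(v)$, so the first copy of $\mathfrak f(\Theta_q(v))$ recovers an initial portion of $\Theta_q(v)$ and the second copy extends the recovery to cover all of $\Theta_q(v)$, thanks to the palindromic symmetry built into each formula. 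The root $v = 1$ is handled separately as an additional base case.
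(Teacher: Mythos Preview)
Your overall architecture matches the paper's: induction on depth in $\mathcal V$, reduction via the derived-word map $\mathscr D$ (Lemma~\ref{l:dRespectsRelationships}), and restriction to full-block boundaries via Lemma~\ref{l:frakFandD}. Cases~1 and~3 are handled in the paper essentially as you describe.

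The point of divergence is your maximality argument in Case~2. You propose to show that no prefix strictly longer than $p := v(v')^{q-1}y$ achieves the minimum by invoking $v' \prec v''$ directly. This is not what the paper does, and as stated your route is incomplete: writing $\Theta_q(v) = pv$, you must show $(pw)^\infty \succ p^\infty$ for \emph{every} nonempty prefix $w$ of $v$ ending at a full-block boundary, not just $w = v$. After the common prefix $pw$, the comparison becomes $p^\infty$'s continuation $v_{[|w|+1,\,]}(v')^{q-1}y\cdots$ against $(pw)^\infty$'s restart $v(v')^{q-1}y\cdots$, and it is not clear how the single inequality $v' \prec v''$ controls all of these intermediate comparisons uniformly in $w$ and $q$. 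The paper instead argues both bounds through $\mathscr D$: since $\mathscr D(\Theta_q(v)) = \Theta_q(\mathscr D(v))$ and, by induction, $\mathfrak f(\Theta_q(\mathscr D(v)))$ is known, and since at the $v$-level the $e_i$-type full blocks dominate the $f_i$-type blocks in the order, the full-block-boundary prefix of $\Theta_q(v)$ minimizing $(\cdot)^\infty$ is exactly the one whose $\mathscr D$-image is $\mathfrak f(\Theta_q(\mathscr D(v)))$, namely $v(v')^{q-1}y$. This pins down both ``no longer'' and ``no shorter'' at once, avoiding the case-by-case extension analysis your sketch would require.
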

\begin{proof} Direct evaluation, as in Example~\ref{e:computingFrakF} shows that $\mathfrak f(c\,1\,c) = c\, 1$.   Now suppose the result holds for some $h-1\ge 1$;  the only remaining (even length) candidate prefixes that could be $\mathfrak f(v)$ are 
$ (c\,1)^{h-1}$ and $(c\,1 )^h$. Since these words have the same infinite powers,  by definition the longer of these, that is $(c\,1)^h$, is $\mathfrak f(v) $.

\medskip 
For $q\ge 1$,  base cases can be directly verified.  We now use induction on the length of $v$, and thus assume   $\mathfrak f(\, \Theta_q(\,\mathscr D(v)\,)\,) = \mathscr D( u)  \, \{[ \mathscr D(u)]'\}^{q-1} Y$, with $Y$ as in \eqref{e:DandTheta}.  From  \eqref{e:DandTheta}, we then have 
\[\mathscr D( \,  v (v')^{q-1} yv\,) = \mathfrak f(\, \Theta_q(\,\mathscr D(v)\,)\,)\; \mathscr D( u).\]   Since the blocks of $v$ of exponent $e_i$ are larger than the blocks of exponent $f_i$,  one finds that $\mathfrak f(\, \Theta_q(v)\,)$ can be no longer than $v (v')^{q-1}y$.    Since $\mathfrak f(\, \Theta_q(v)\,)$ ends with a full block, $\mathfrak f(\, \Theta_q(v)\,)$ can also be no shorter than $v (v')^{q-1}y$.   Thus,  the result holds.

We can indeed assume that $\Theta_p(u) = uau$, see Remark~\ref{r:looksLikePalindromicSplitting}.  From the previous case, $\mathfrak f(\,\Theta_p(u)\,) = ua$.   One easily finds that $\Theta_{0}^{h}\circ\Theta_{p}(u) = u (au)^{h+1}$.   Thus, we seek to prove that $\mathfrak f(\,u (au)^{h+1}\,)$ is formed by dropping the suffix $u$.   Here also, we can apply $\mathscr D$, as the verification for bases cases is straightforward.

That $(\, \mathfrak f(v)\,)^2$ has prefix $v$ is easily checked in each case.
\end{proof}

 \bigskip

The following result could well be placed earlier, but is not used until directly hereafter.  
\begin{Lem}\label{l:zetaRightDigs}  Let $k \in \mathbb N$ and $v = c_1d_1\cdots c_{s-1}d_{s-1} c_s$.    If  $\zeta_{k,v}  \in \mathscr I_{k,v}$ then 
$\overline{d}{}^{\zeta_{k,v} }_{[1,\infty)}  = \overline{d}(k,v), \,   \underline{d}{}^{\zeta_{k,v}}_{[1,\infty)}$.  
Furthermore,  if $v = \overleftarrow{v}$, then 
$r_0(\zeta_{k,v})$ is the fixed point of $R_{k, \overleftarrow{v'}}$.    
\end{Lem}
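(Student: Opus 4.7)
The plan is to handle the two claims separately: the first is essentially a direct unwinding of definitions, while the second reduces to a matrix identity in $G_{3,n}$.

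For the first claim, I would invoke the hypothesis $\zeta := \zeta_{k,v} \in \mathscr I_{k,v}$, which by Definition~\ref{d:upperDigsCylsMats} says precisely that the initial $\overline S(v)$ simplified digits of $r_0(\zeta)$ agree with $\overline d(k,v)$.   In particular the word $R_{k,v}$ is admissible at $r_0(\zeta)$, so $r_{\overline S(v)}(\zeta) = R_{k,v}\cdot r_0(\zeta)$.  By the defining equation of $\zeta_{k,v}$, the right-hand side equals $\ell_0(\zeta)$, so from step $\overline S(v)$ onward the $T_\zeta$-orbit of $r_0(\zeta)$ coincides with the $T_\zeta$-orbit of $\ell_0(\zeta)$. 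Reading off digits then yields $\overline d{}^\zeta_{[1,\infty)} = \overline d(k,v),\; \underline d{}^\zeta_{[1,\infty)}$.

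For the palindromic case, I would first rewrite $\ell_0(\alpha) = A^{-1}\cdot r_0(\alpha)$ (which follows from $r_0(\alpha) - \ell_0(\alpha) = t$ and the formula $A^{-1}\cdot x = x - t$) to convert the equation $R_{k,v}\cdot r_0(\zeta) = \ell_0(\zeta)$ into $A R_{k,v}\cdot r_0(\zeta) = r_0(\zeta)$. The claim is then equivalent to the matrix identity
\[
A R_{k,v} = R_{k, \overleftarrow{v'}},
\]
where $R_{k,\cdot}$ is extended in the natural positional way to alternating words possibly ending in a $d$-letter: letters at odd positions are exponents of $A^k C$ and those at even positions are exponents of $A^{k+1}C$, multiplied in reverse order.

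The matrix identity is where the real work lies, though it is not especially subtle. I would split on the value $c_1$ (equal to $c_s$ by palindromy). When $c_1 \ge 2$, I would use $A (A^k C)^{c_s} = (A^{k+1}C)(A^k C)^{c_s - 1}$ and compare the resulting product term-by-term with $R_{k, \overleftarrow{v'}}$ using Definition~\ref{d:periodFromEtaAsWord}'s formula $v' = 1(c_1 - 1) d_1 c_2 \cdots c_s$ together with palindromy ($c_i = c_{s-i+1}$, $d_j = d_{s-j}$). When $c_1 = 1$, I would absorb $A \cdot A^k C = A^{k+1}C$ into the adjacent $(A^{k+1}C)^{d_{s-1}}$ factor, producing an exponent $d_{s-1} + 1 = d_1 + 1$ that matches the leading letter of $v' = (d_1 + 1) c_2 \cdots c_s$, hence the trailing letter of $\overleftarrow{v'}$. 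The main obstacle is purely bookkeeping: keeping straight the linear order of letters in $\overleftarrow{v'}$ versus the reversed product convention for $R_{k,\cdot}$; once this convention is fixed at the outset, both cases collapse to short letter-by-letter verifications.
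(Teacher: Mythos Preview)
Your proposal is correct and follows essentially the same route as the paper's proof: both derive the digit sequence from $R_{k,v}\cdot r_0(\zeta)=\ell_0(\zeta)$ together with $\zeta\in\mathscr I_{k,v}$, and both reduce the palindromic claim to the identity $A R_{k,v}=R_{k,\overleftarrow{v'}}$ via $\ell_0=A^{-1}\cdot r_0$. You supply more detail than the paper does---your case split on $c_1\ge 2$ versus $c_1=1$ spells out explicitly why $A R_{k,v}=R_{k,\overleftarrow{v'}}$, whereas the paper simply writes out $A R_{k,v}$ and asserts the equality---but the underlying argument is the same.
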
 
\begin{proof}  
By definition,  $\zeta_{k,v}$ is such that $R_{k,v}\cdot r_0(\zeta_{k,v}) = \ell_0(\zeta_{k,v})$ and hence $ r_0(\zeta_{k,v})$ is the fixed point of $A R_{k,v} = A^{k+1}  C(A^kC)^{ c_s-1}\; (A^{k+1}C)^{d_{s-1}}(A^kC)^{c_{s-1}}\cdots (A^{k+1}C)^{d_1} (A^kC)^{c_1}$.  When $v$ is a palindrome, that is when $v = \overleftarrow{v}$, this matrix is indeed $R_{k, \overleftarrow{v'}}$.  (Note that when $v=1$ this matrix must be interpreted as $A^{k+1}C$.) The definition of  $\zeta_{k,v}$ also shows that the sequence of upper simplified digits of $\zeta_{k,v}$ is formed by $ \overline{d}(k,v)$ followed by the digits of $\ell_0(\zeta_{k,v})$.  That is,   $\overline{d}{}^{\zeta_{k,v} }_{[1,\infty)}  = \overline{d}(k,v), \,   \underline{d}{}^{\zeta_{k,v}}_{[1,\infty)}$.
\end{proof}

\bigskip
 
\begin{Cor}\label{c:vD(v)InConeGotEndpt} Fix $k \in \mathbb N$.  Suppose that $v  = \Theta_p(u)$ for some $u \in  \mathcal V$ and some $p\ge 1$.       Then  for $h \ge 0$,  we have $\omega_{k, \Theta_{0}^{h}(v)} = \omega_{k,v}$.  Furthermore, 
$\lim_{h \to \infty}\, \zeta_{k, \Theta_{0}^{h}(v)} = \omega_{k,v}$.
\end{Cor}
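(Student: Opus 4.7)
By Proposition~\ref{p:theWordsAreNice}(3), we may write $v = \Theta_p(u) = uau$ for some palindrome $a$; since $a$ is sandwiched between two copies of $u$ in the alternating $c,d$-structure of $v$, it begins and ends at $d$-positions, so $ua$ ends with a $d$-position letter while $u$ begins with a $c$-position letter. Setting $w_h := \Theta_0^h(v)$, a short induction via Definition~\ref{d:thetaQ} yields $w_h = u(au)^{h+1}$ and $(w_h)'' = au$ for every $h \ge 1$.

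The first step is to compute $\mathfrak{f}(w_h)$ uniformly in $h$. For $h \ge 1$, Case~3 of Proposition~\ref{p:frakFofChildren} applied to $\Theta_0(w_{h-1}) = w_h$ (with the proposition's $v$ equal to $w_{h-1} = \Theta_0^{h-1}\circ \Theta_p(u)$) gives $\mathfrak{f}(w_h) = (ua)^h$.  For $h = 0$, I claim $\mathfrak{f}(v) = ua$: when $u = 1$, the word $\Theta_p(1) = 1p1$ has $ua = 1p$ as its only even-length prefix and Lemma~\ref{l:frakFatDjOnly} closes the matter; when $u \ne 1$, Case~2 of Proposition~\ref{p:frakFofChildren} gives $\mathfrak{f}(v) = u(u')^{p-1}y$, and expanding both sides of $v = u(u')^p u'' = uau$ together with the identity $u' = y\cdot \mathrm{parent}(u)$ from Proposition~\ref{p:theWordsAreNice}(4) allows one to cancel the common suffix and obtain $(u')^{p-1}y = a$, whence $\mathfrak{f}(v) = ua$. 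Because consecutive copies of $ua$ concatenate without any merging, $((ua)^h)^\infty = (ua)^\infty$ as infinite words for every $h \ge 1$, and so $\mathfrak{f}(w_h)^\infty = (ua)^\infty$ uniformly in $h \ge 0$.

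A M\"obius transformation shares its fixed points with all of its positive integer powers, so the matrices $R_{k,\mathfrak{f}(w_h)}$ all share the common attracting fixed point of $R_{k,ua}$.  By Definition~\ref{d:fullBranchedAsWord}(3) this fixed point is precisely $r_0(\omega_{k,w_h})$, and since $r_0(\alpha) = \alpha t$ is affine we conclude $\omega_{k,\Theta_0^h(v)} = \omega_{k,v}$ for every $h \ge 0$.  For the limit, Lemma~\ref{l:zetaRightDigs} says the digits of $r_0(\zeta_{k,w_h})$ begin with $\overline{d}(k,w_h)$; as $h \to \infty$ the finite prefix $w_h = u(au)^{h+1}$ exhausts the infinite word $(ua)^\infty$, so the digit expansion of $r_0(\zeta_{k,w_h})$ agrees with $\overline{d}(k,(ua)^\infty)$, the digit expansion of $r_0(\omega_{k,v})$, on arbitrarily long initial segments.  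Continuity of the coding then gives $r_0(\zeta_{k,w_h}) \to r_0(\omega_{k,v})$, hence $\zeta_{k,w_h} \to \omega_{k,v}$.

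The main difficulty will be in justifying the identification $\mathfrak{f}(v) = ua$ in Step~2: matching Case~2 of Proposition~\ref{p:frakFofChildren} against the palindromic decomposition $v = uau$ requires careful tracking of how the palindrome $y$ and the suffix $u''$ fit together, and the edge case $u = 1$ must be handled directly since the root has no parent for Proposition~\ref{p:theWordsAreNice}(4) to apply.
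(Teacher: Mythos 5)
Your argument is correct and follows essentially the same route as the paper: you compute $\mathfrak f(\,\Theta_0^h(v)\,)$ as a power of $ua$ via Proposition~\ref{p:frakFofChildren}, deduce $\omega_{k,\Theta_0^h(v)}=\omega_{k,v}$ from the common fixed point of $R_{k,ua}$ and its powers (the paper phrases this through Lemma~\ref{l:frakFatDjOnly}, i.e.\ the shared expansion $\overline{d}(k,(ua)^{\infty})$, which is the same observation), and obtain the limit from Lemma~\ref{l:zetaRightDigs} exactly as the paper does. The only quibble is an immaterial off-by-one: by the proof of Case~3 of Proposition~\ref{p:frakFofChildren} one has $\mathfrak f(\,u(au)^{h+1}\,)=(ua)^{h+1}$ rather than $(ua)^{h}$, but since your argument only uses that $\mathfrak f(\,\Theta_0^h(v)\,)$ is some positive power of $ua$, so that its infinite power is $(ua)^{\infty}$, nothing is affected.
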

\begin{proof}    Proposition~\ref{p:frakFofChildren} implies that $(\, \mathfrak f(\,  \Theta_{0}^{h}(v)\,)\,)^{\infty} =  (ua)^{\infty}=  (\,\mathfrak f(v)\,)^{\infty}$.   Lemma~\ref{l:frakFatDjOnly} now implies that $\omega_{k, \Theta_{0}^{h}(v)} = \omega_{k,v}$.    For each $h$, Lemma~\ref{l:zetaRightDigs} shows that   $\zeta_{k, \Theta_{0}^{h}(v)}$ has prefix $\overline{d}(k, \Theta_{0}^{h}(v)\,) = \overline{d}(k, u(au)^{h+1}\,)$. Therefore, the second statement holds as well.
\end{proof} 

\bigskip
\begin{Lem}\label{l:cylsOfConsecuChildrenAbut}   Suppose that $v  \in  \mathcal V$.  Then for  $q\in \mathbb N$ (except when $(v,q) = (1,1)\,)\;$), 
\[\mathfrak f(\Theta_q(v)) = \overleftarrow{(\Theta_{q-1}(v)\,)'}.\]

Furthermore, for each $k \in \mathbb N$, $\omega_{k, \Theta_q(v)} = \zeta_{k, \Theta_{q-1}(v)}$.
\end{Lem}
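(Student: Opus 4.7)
The plan is to establish the first identity $\mathfrak{f}(\Theta_q(v)) = \overleftarrow{(\Theta_{q-1}(v))'}$ and then to deduce $\omega_{k,\Theta_q(v)} = \zeta_{k,\Theta_{q-1}(v)}$ from it via Lemma~\ref{l:zetaRightDigs}. The exclusion of the pair $(v,q) = (1,1)$ is natural: $\Theta_0(1)$ is undefined, so the right-hand side of the first identity does not exist.

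The starting point is to apply Proposition~\ref{p:theWordsAreNice}(4) to $V := \Theta_{q-1}(v)$, whose parent is $v$, to obtain $V' = Yv$ for some palindrome $Y$, and hence $\overleftarrow{V'} = vY$ since $v$ is itself a palindrome. It then remains to pin down $Y$ and match it with the formula for $\mathfrak{f}(\Theta_q(v))$ given by Proposition~\ref{p:frakFofChildren}. For $v \neq c_1$, I would write $V = v(v')^{q-1} v''$; the initial letters of $V$ agree with those of $v$, so $(\cdot)'$ acts on them exactly as in $v'$, yielding $V' = v'(v')^{q-1} v'' = (v')^q v''$. Using Proposition~\ref{p:theWordsAreNice}(5), $v' v'' = y v$, I rewrite this as $V' = (v')^{q-1} y v$, so that $Y = (v')^{q-1} y = (yu)^{q-1} y$. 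A short check, exploiting that $u$ and $y$ are palindromes, shows $(yu)^{q-1} y = y(uy)^{q-1}$ (both representing the alternating string $y u y u \cdots u y$ with $q$ copies of $y$), confirming that $Y$ is a palindrome, and $vY = v(v')^{q-1} y$ matches $\mathfrak{f}(\Theta_q(v))$ by Proposition~\ref{p:frakFofChildren}.

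The base case $v = c_1$ needs separate attention, since Proposition~\ref{p:theWordsAreNice}(5) does not apply there. For $c > 1$, $\Theta_q(c) = c [1(c-1)]^q 1 c$ and $\Theta_{q-1}(c) = c [1(c-1)]^{q-1} 1 c$; a direct computation gives $(\Theta_{q-1}(c))' = [1(c-1)]^q 1 c$, whose reverse is $c [1(c-1)]^q 1$. This word is $\Theta_q(c)$ with its final letter deleted, has even length $2q+2$, and ends with the full block $[1(c-1)]^q$ of $\Theta_q(c)$; Lemma~\ref{l:frakFatDjOnly} rules out prefixes of odd length, and Lemma~\ref{l:frakFandD} together with an explicit comparison of periodic digit sequences pins it down as $\mathfrak{f}(\Theta_q(c))$. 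The remaining subcase $v = 1$ with $q \ge 2$ reduces to the even simpler check $\mathfrak{f}(1q1) = 1q = \overleftarrow{q\,1} = \overleftarrow{(1(q-1)1)'}$.

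For the second statement, I would invoke Lemma~\ref{l:zetaRightDigs}: since $\Theta_{q-1}(v)$ is a palindrome by Proposition~\ref{p:theWordsAreNice}(2), $r_0(\zeta_{k,\Theta_{q-1}(v)})$ is the fixed point of $R_{k,\overleftarrow{(\Theta_{q-1}(v))'}}$, while by definition $r_0(\omega_{k,\Theta_q(v)})$ is the fixed point of $R_{k,\mathfrak{f}(\Theta_q(v))}$. The first identity equates the two M\"obius transformations, and both fixed points of interest lie in the common $\alpha$-cylinder $\mathscr{I}_{k,v}$, so they must coincide; since $r_0(\alpha) = \alpha t$ is monotone in $\alpha$, the corresponding $\alpha$-values agree. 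The main obstacle I expect is the bookkeeping for the palindrome identity $(yu)^{q-1} y = y(uy)^{q-1}$ and the separate base-case verification when $v = c_1$, where $v$ has no non-trivial suffix $v''$ with $v = u v''$ and Proposition~\ref{p:theWordsAreNice}(5) is unavailable.
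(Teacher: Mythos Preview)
Your proof is correct and follows essentially the same route as the paper: compute $(\Theta_{q-1}(v))' = (v')^q v'' = (v')^{q-1}yv$ via Proposition~\ref{p:theWordsAreNice}(5), reverse using that $v$ and $Y=(v')^{q-1}y$ are palindromes, and match against the formula $\mathfrak f(\Theta_q(v)) = v(v')^{q-1}y$ from Proposition~\ref{p:frakFofChildren}; then invoke Lemma~\ref{l:zetaRightDigs} for the fixed-point identification. Your separate treatment of the base case $v=c_1$ is a genuine improvement in care: for $v=c$ the decomposition $v'=yu$ with $u$ the parent gives $y=1$, and plugging this into the formula $v(v')^{q-1}y$ yields $c[1(c-1)]^{q-1}1$ rather than the correct $c[1(c-1)]^q 1$, so the general formula from Proposition~\ref{p:frakFofChildren} does not apply verbatim when $v=c_1$; the paper's proof glosses over this, implicitly relying on the phrase ``base cases can be directly verified'' in the proof of that proposition, whereas you carry out the verification explicitly.
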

\begin{proof} Proposition~\ref{p:frakFofChildren} gives   $\mathfrak f(\Theta_q(v)) = v (v')^{q-1} y$, with notation as there.   Now $(\,\Theta_{q-1}(v)\,)' =  v'(v')^{q-1}v'' =   (v')^{q-1} yv$.  Since both $\Theta_{q-1}(v)$ and $y$ are palindromes,  $\overleftarrow{(\Theta_{q-1}(v)\,)'} = v (v')^{q-1} y$.   Therefore,  $ \overleftarrow{(\Theta_{q-1}(v)\,)'} = \mathfrak f(\Theta_q(v))$.   

 By definition,   $r_0(\omega_{k, \Theta_q(v)} )$ is the fixed point of $R_{k, \mathfrak f (\Theta_q(v))}$.   
 By Lemma~\ref{l:zetaRightDigs},  $r_0( \zeta_{k, \Theta_{q-1}(v)})$ is fixed by $R_{k, \overleftarrow{w}}$, where $w =(\Theta_{q-1}(v)\,)'$.     Therefore,   $\zeta_{k, \Theta_{q-1}(v)} = \omega_{k, \Theta_q(v)}$.
\end{proof}

\bigskip
The following illustrates how $\Theta_1(1)$ plays a role  similar to the $\Theta_0(c), c>1$. 
\begin{Lem}\label{l:consecSpecialCase}   For each $k,c \in \mathbb N$, 
\[ \omega_{k, c\,1\,c} = \zeta_{k, c+1}\,.\]
\end{Lem}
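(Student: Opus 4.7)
The plan is to mirror the proof of Lemma~\ref{l:cylsOfConsecuChildrenAbut}: identify $r_0(\omega_{k,c\,1\,c})$ and $r_0(\zeta_{k,c+1})$ as the \emph{same} fixed point of the M\"obius transformation $R_{k,c\,1} = (A^{k+1}C)(A^kC)^c$. Since $r_0(\alpha) = \alpha t$ is a linear (and in particular injective) function of $\alpha$, this identification will force $\omega_{k,c\,1\,c} = \zeta_{k,c+1}$.

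For the $\zeta$-side, the single-letter word $c+1$ is trivially a palindrome, so Lemma~\ref{l:zetaRightDigs} identifies $r_0(\zeta_{k,c+1})$ as the fixed point of $R_{k,\overleftarrow{(c+1)'}}$. Since $c+1>1$, the convention of Definition~\ref{d:periodFromEtaAsWord} gives $(c+1)' = 1\,c$; reversing yields $\overleftarrow{(c+1)'} = c\,1$, so the relevant matrix is indeed $R_{k,c\,1}$. For the $\omega$-side, Definition~\ref{d:fullBranchedAsWord}(3) reduces the task to verifying $\mathfrak{f}(c\,1\,c) = c\,1$ for every $c\ge 1$. Rather than invoking Proposition~\ref{p:frakFofChildren}, which is cleanest when $c>1$ (treating $c\,1\,c = \Theta_0(c)$) but slightly awkward when $c=1$ since $1\,1\,1 = \Theta_1(1)$ is a child of the root and lies outside the ``$\Theta_0^{h}(c)$'' template, I would verify the identity uniformly by a direct three-way comparison in the full order~\eqref{e:theFullOrder}. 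The only prefixes of $c\,1\,c$ are $c$, $c\,1$, and $c\,1\,c$; their infinite powers yield the digit sequences $k,k,k,\ldots\,$; $\;(k^c, k+1)^\infty$; and $k^c, k+1, (k^{2c}, k+1)^\infty$, respectively. At position $c+1$ one sees $(c\,1)^\infty \prec c^\infty$ (since $k+1 \prec k$), and at position $2c+2$ one sees $(c\,1)^\infty \prec (c\,1\,c)^\infty$ for the same reason; hence $(c\,1)^\infty$ is the minimum, and $c\,1$ is the longest prefix achieving it, giving $\mathfrak{f}(c\,1\,c) = c\,1$.

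Combining the two halves, both $r_0(\omega_{k,c\,1\,c})$ and $r_0(\zeta_{k,c+1})$ are identified as the fixed point of $R_{k,c\,1}$ carrying the purely periodic simplified expansion $(k^c,k+1)^\infty$. They therefore agree, and $\omega_{k,c\,1\,c} = \zeta_{k,c+1}$ follows. The only genuine obstacle is the computation of $\mathfrak{f}(c\,1\,c)$ in the $c=1$ edge case, which the direct ordering comparison above handles uniformly alongside $c>1$.
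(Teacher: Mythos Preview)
Your argument is correct and follows essentially the same line as the paper's. The paper's one-line proof directly observes that $\mathfrak f(c\,1\,c)=c\,1$ and that the fixed-point equation $A^{k+1}C(A^kC)^c\cdot r_0=r_0$ is equivalent to $(A^kC)^{c+1}\cdot r_0=\ell_0$ (via $A\cdot\ell_0=r_0$); you reach the same fixed-point identification by routing through Lemma~\ref{l:zetaRightDigs} and the computation $\overleftarrow{(c+1)'}=c\,1$, and you supply a clean uniform verification of $\mathfrak f(c\,1\,c)=c\,1$ that covers the $c=1$ edge case explicitly.
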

\begin{proof} Since $\mathfrak f(c\,1\,c) = c\,1$,  this follows since $A^{k+1}C (\, A^kC)^c\cdot r_0(\alpha) = r_0(\alpha)$ is equivalent to  $(\, A^kC)^{c+1}\cdot r_0(\alpha) = \ell_0(\alpha)$.
\end{proof}

\bigskip
\begin{Lem}\label{l:etaLessThanOmega}   Suppose that $v  \in  \mathcal V$.  Then 
$v\,(v')^{\infty} \prec (\, \mathfrak f(v)\,)^{\infty}$.
\end{Lem}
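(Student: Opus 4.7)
The plan is to establish the stated inequality by comparing the infinite decoded simplified-digit sequences corresponding to $v(v')^\infty$ and $(\mathfrak f(v))^\infty$, proceeding by induction on the depth of $v$ in the tree $\mathcal V$ and making essential use of the explicit formulas from Propositions~\ref{p:theWordsAreNice} and~\ref{p:frakFofChildren}.

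First I note from the final claim of Proposition~\ref{p:frakFofChildren} that $v$ is a prefix of $(\mathfrak f(v))^2$, and hence of $(\mathfrak f(v))^\infty$. Writing $v = \mathfrak f(v)\cdot w$ with $w$ a (possibly empty) proper prefix of $\mathfrak f(v)$, both $v(v')^\infty$ and $(\mathfrak f(v))^\infty$ begin with the same prefix $v$, and the comparison reduces to the first position at which they differ. Because $v\in\mathcal V$ is a palindrome of odd length and $\mathfrak f(v)$ has even length when $|v|>1$ (by Lemma~\ref{l:frakFatDjOnly}), position $|v|+1$ falls at a $d$-letter of the alternating $c_i,d_j$ pattern. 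In the simplified digit order (in which $1 \succ 2 \succ 3 \succ \cdots$), a smaller exponent at a $c$-position yields a smaller decoded sequence while a smaller exponent at a $d$-position yields a larger decoded sequence; this parity-dependent rule governs all subsequent comparisons.

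For the base cases $v=c$ with $c\ge 1$, I appeal directly to Lemma~\ref{l:etaRightDigs}: the decoded $v(v')^\infty$ is $k^c,(k+1,k^{c-1})^\infty$ (reducing to $k,(k+1)^\infty$ when $c=1$), while $(\mathfrak f(v))^\infty$, via the convention for powers of odd-length single-letter words, decodes to the constant sequence $k,k,k,\ldots$. These first differ at decoded position $c+1$, where $k+1$ appears in the first and $k$ in the second; since $k+1\prec k$, we obtain $v(v')^\infty\prec (\mathfrak f(v))^\infty$.

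For the inductive step, write $v=\Theta_q(u)$ for some parent $u\in\mathcal V$, and treat the three cases of Proposition~\ref{p:frakFofChildren}. In the case $v=\Theta_0^h(c)$ with $h\ge 1$, the identities $\mathfrak f(v)=(c\,1)^h$, $v=\mathfrak f(v)\cdot c$, and $v'=1(c-1)(1c)^h$ force agreement at position $|v|+1$ (both have $d$-exponent $1$) and make the first difference appear at position $|v|+2$, a $c$-position, with exponent $c-1$ in $v(v')^\infty$ against $c$ in $(\mathfrak f(v))^\infty$; the smaller $c$-exponent yields $v(v')^\infty\prec(\mathfrak f(v))^\infty$. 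In the case $v=\Theta_q(u)$ with $q\ge 1$, the palindromic decomposition $v=u\,x\,u$ of Proposition~\ref{p:theWordsAreNice}(3) together with $\mathfrak f(v)=u\cdot P$ (where $P$ is the explicit palindrome derived from $u'$ and the palindrome $y$ of Proposition~\ref{p:theWordsAreNice}(4)-(6)) and $v'=y\,u$ shows the two sequences share their next letter, after which the comparison passes to a subsequent $c$-position where the exponent disagreement has the same sign as above, again giving $v(v')^\infty\prec(\mathfrak f(v))^\infty$. The remaining composed case $v=\Theta_0^h\circ\Theta_p(u)$ with $p\ge 1$ is handled similarly, using $\mathfrak f(v)=(ua)^{h+1}$ together with the already-established inequality for the intermediate word $\Theta_p(u)$.

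The main obstacle will be the careful bookkeeping of the parity-dependent comparison rule (smaller $c$-exponent decoding to smaller, but smaller $d$-exponent decoding to larger) across the three cases of Proposition~\ref{p:frakFofChildren}, while using the intricate palindromic structures of Proposition~\ref{p:theWordsAreNice}(3)-(6) to identify the letters at each position; verifying the inequality is strict (rather than $\preceq$) likewise requires that the full-branched maximal prefix $\mathfrak f(v)$ end with a full block of $v$ as provided by Lemma~\ref{l:frakFandD}, so that the first differing position genuinely lies within the guaranteed reach of the argument.
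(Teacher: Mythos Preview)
Your overall structure (base case $v=c_1$, then the three cases from Proposition~\ref{p:frakFofChildren}) matches the paper's, and your treatment of the base case and of Case~1 ($v=\Theta_0^h(c)$) is essentially correct and equivalent to the paper's concise observation that $v(v')^\infty \prec (c\,1)^{h+2}$.

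However, your arguments for Cases~2 and~3 have a genuine gap. You assert that after the common prefix $v$ the two sequences ``share their next letter, after which the comparison passes to a subsequent $c$-position where the exponent disagreement has the same sign as above,'' but you do not justify this. In Case~2 (write $v=uau$, so $\mathfrak f(v)=ua$ and $v''=au$) the paper's proof is a single clean step: $(\mathfrak f(v))^3 = uau\cdot au\cdot a = v\,v''\,a$, so after the shared prefix $v$ one is comparing $v''a\cdots$ with $(v')^\infty$, and the conclusion follows immediately from Lemma~\ref{l:orderVeePrimeDoublePrime}, which gives $v' \prec v''$. Case~3 is handled analogously via $(\mathfrak f(v))^2 = v\,v''\,a(ua)^{h-1}$ and again $v' \prec v''$. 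You never invoke Lemma~\ref{l:orderVeePrimeDoublePrime}, and your ``parity-dependent bookkeeping'' claim is not a substitute: determining where $v'$ and $v''$ first differ, and that the difference has the correct sign, is precisely the content of that lemma (its proof tracks $\overleftarrow{u'}$ against $u$ down to the level of individual $c_i,d_j$ letters). Your announced ``induction on the depth of $v$'' is also not actually used in these cases; in Case~3 you allude to ``the already-established inequality for the intermediate word $\Theta_p(u)$,'' but that statement concerns $\Theta_p(u)\cdot(\Theta_p(u)')^\infty$ versus $(\mathfrak f(\Theta_p(u)))^\infty$, which is not what you need to compare here.

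To repair the argument, either cite Lemma~\ref{l:orderVeePrimeDoublePrime} and proceed as the paper does, or carry out the explicit letter-by-letter comparison of $v'$ against $v''$ in each case --- which amounts to reproving that lemma.
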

\begin{proof}  The result is immediate for $v=c_1$.  We treat our usual remaining cases. 
\bigskip

\noindent
{\bf Case 1.}  Suppose   $v = \Theta_{0}^{h}(c)$ for some $h\ge 1$ and some $c> 1$.  We have $v =c (1\,c)^h$, and $\mathfrak f(v) = (c\, 1)^h$.   
Since $v\,(v')^{\infty} = c (1\,c)^h\, [\, 1\, (c-1)\, 1\, 1]^{\infty}\prec (c\, 1)^{h+2}$, the result holds in this case. 

\bigskip
\noindent
{\bf Case 2.} $v = \Theta_p(u), p\ge 1$.    Write $v = uau$ in our usual decomposition.   Then $v'' = au$ and $\mathfrak f(v)= ua$.  We find
\[(\,\mathfrak f(v)\,)^3 = uau\, au\, a = v v'' a \succ v v',\]
thus certainly   $(\,\mathfrak f(v)\,)^{\infty} \succ v\,(v')^{\infty}$.

\bigskip
\noindent
{\bf Case 3.}    Finally,  suppose $v = \Theta_{0}^{h}(uau)$.  Thus,  $v = u(au)^{h+1}, v'' = au$ and $\mathfrak f(v) = (ua)^{h+1}$.   Hence,  $(\,\mathfrak f(v) \,)^2  
=  u(au)^{h+1}\cdot au\cdot a (ua)^{h-1}= v v''  a (ua)^{h-1}$.  Since $v''\succ v'$,  it follows that   $(\,\mathfrak f(v) \,)^2 \succ v v'$, and thus  $(\,\mathfrak f(v)\,)^{\infty} \succ v\,(v')^{\infty}$.
\end{proof}

\bigskip

\subsection{The complement of the potential synchronization intervals is a Cantor set}
\bigskip 
\begin{Prop}\label{p:Admissible}   For all $v \in \mathcal V$, and all $k \in \mathbb N$, both
\[ \mathscr I_{k,v} = [\zeta_{k,v}, \omega_{k,v})\;\; \text{and} \;\;  \mathscr I_{k,v} \supset \mathscr J_{k,v}\,.\]
\end{Prop}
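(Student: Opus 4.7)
The plan is to prove both parts simultaneously by induction on the depth of $v$ in the tree $\mathcal V$, with base case $v = 1$ and inductive step moving along a parent--child edge $u \to v = \Theta_q(u)$. The equality $\mathscr I_{k,v} = [\zeta_{k,v}, \omega_{k,v})$ is the heart of the matter; the containment $\mathscr J_{k,v} \subseteq \mathscr I_{k,v}$ will then be automatic, since Lemma~\ref{l:etaLessThanOmega} (comparing $v(v')^\infty$ with $(\mathfrak f(v))^\infty$) combined with Lemmas~\ref{l:etaRightDigs} and~\ref{l:frakFatDjOnly} yields $\eta_{k,v} \le \omega_{k,v}$, whence $\mathscr J_{k,v} = [\zeta_{k,v}, \eta_{k,v}) \subseteq [\zeta_{k,v}, \omega_{k,v})$.

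For the equality, the inclusion $\mathscr I_{k,v} \subseteq [\zeta_{k,v}, \omega_{k,v})$ is Lemma~\ref{l:potentialCylinders}. For the reverse inclusion I would invoke Lemma~\ref{l:admissBetween2PtsRvals}: once the digit prefix $\overline{d}(k,v)$ is shown to be realised at two values $\alpha' < \alpha''$, it is automatically realised at every intermediate $\alpha$. Thus it suffices to verify the admissibility of $\overline{d}(k,v)$ at $\alpha = \zeta_{k,v}$ and at values approaching $\omega_{k,v}$ from below.  In the base case $v=1$, $\mathscr I_{k,1}$ is the fibre of the cylinder $\Delta_\alpha(k,1)$, whose two $\alpha$-endpoints are precisely the collisions of the branch $\alpha \mapsto A^kC\cdot r_0(\alpha)$ with $\ell_0(\alpha)$ (giving $\zeta_{k,1}$) and with $r_0(\alpha)$ (giving $\omega_{k,1}$), which is directly read off Figure~\ref{firstBothDigs}.

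For the inductive step with $v = \Theta_q(u)$, the digit word $\overline{d}(k,v)$ extends $\overline{d}(k,u)$, so one first observes that $[\zeta_{k,v}, \omega_{k,v})$ lies inside $\mathscr I_{k,u} = [\zeta_{k,u}, \omega_{k,u})$ (the last equality by induction). On this sub-interval the first $\overline{S}(k,u)$ simplified digits of $r_0(\alpha)$ are exactly $\overline{d}(k,u)$, so the orbit reaches $R_{k,u}\cdot r_0(\alpha)$, and it remains to track the suffix of $v$ prescribed by $\Theta_q$. At $\alpha = \zeta_{k,v}$ the defining relation $R_{k,v}\cdot r_0(\zeta_{k,v}) = \ell_0(\zeta_{k,v})$, combined with the palindromicity of $v$ and the structural decomposition of Proposition~\ref{p:theWordsAreNice}, forces each intermediate product $R_{k, v_{[1,j]}}\cdot r_0(\zeta_{k,v})$ to lie in the expected cylinder; approaching $\omega_{k,v}$ from below, the same conclusion follows from $(\mathfrak f(v))^2$ having $v$ as prefix (Proposition~\ref{p:frakFofChildren}), so that orbit points hug the $R_{k,\mathfrak f(v)}$-fixed point long enough to witness $\overline{d}(k,v)$ before diverging. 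Lemma~\ref{l:admissBetween2PtsRvals} then propagates admissibility across the whole interval.

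The main obstacle is exactly the verification at $\zeta_{k,v}$ that the formal matrix identity $R_{k,v}\cdot r_0(\zeta_{k,v}) = \ell_0(\zeta_{k,v})$ is actually realised by a bona fide $T_\alpha$-orbit (rather than merely as a group-theoretic coincidence). Showing that each intermediate point sits in the correct $\Delta_\alpha(\cdot,\cdot)$ requires a case analysis aligned with the parent-to-child cases in Proposition~\ref{p:theWordsAreNice} (the three cases $v = \Theta_0^h(c)$, $v = \Theta_q(u)$ with $q\ge 1$, and $v = \Theta_0(u)$ with $u$ not of the previous shape), and is where the palindromic and derived-word structure of $\mathcal V$ (in particular Proposition~\ref{p:frakFofChildren} and the derived-word compatibility of Lemma~\ref{l:dRespectsRelationships}) is doing the real work.
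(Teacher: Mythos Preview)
Your overall strategy---induction along the tree, one containment from Lemma~\ref{l:potentialCylinders}, the other by exhibiting admissibility at the ends and interpolating via Lemma~\ref{l:admissBetween2PtsRvals}---is sound and close to the paper's. The treatment near $\omega_{k,v}$ (using that $v$ is a prefix of $(\mathfrak f(v))^2$, so that periodicity of the orbit at $\omega$ yields admissibility of $v$ just below $\omega$) is exactly the mechanism the paper uses.

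The real difference is at the left endpoint. You propose to verify directly that at $\alpha=\zeta_{k,v}$ the group identity $R_{k,v}\cdot r_0(\zeta_{k,v})=\ell_0(\zeta_{k,v})$ is realised as an honest $T_\alpha$-orbit, invoking palindromicity and the structural decomposition of Proposition~\ref{p:theWordsAreNice}. You correctly flag this as the obstacle, but you do not actually supply an argument, and it is not clear how ``palindromicity forces the intermediate points into the right cylinders'' would go: the matrix equation alone does not preclude the orbit leaving $\mathbb I_\alpha$ at some intermediate step and returning. The paper avoids this difficulty entirely. It never verifies admissibility at $\zeta_{k,v}$ directly; instead it works purely from the right. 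Given (by induction) that $\mathscr I_{k,u}=[\zeta_{k,u},\omega_{k,u})$ for the parent $u$, it partitions $\mathscr I_{k,u}\setminus\mathscr J_{k,u}$ according to the maximal power of $\mathfrak f(v)$ that is admissible (these powers are unbounded near $\omega$ by periodicity), and then uses Lemma~\ref{l:cylsOfConsecuChildrenAbut} to see that as one moves left past each $\zeta_{k,\Theta_{q-1}(u)}$, the shortest admissible extension switches to $\mathfrak f(\Theta_q(u))$. The left endpoint of each child cylinder then falls out as $\zeta_{k,\Theta_q(u)}$ automatically, since by definition $\zeta_{k,v}$ is exactly the $\alpha$ at which the final digit of $\overline d(k,v)$ first fails (i.e., $R_{k,v}\cdot r_0=\ell_0$).

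In short: replace your two-endpoint verification by the paper's one-sided argument. Establish admissibility of $v$ only on a right neighbourhood of $\omega_{k,v}$ (which you already do correctly), observe that the set of $\alpha$ for which $\overline d(k,v)$ is admissible is an interval, and identify its left endpoint as $\zeta_{k,v}$ by definition. This removes the need for the unproved claim at $\zeta_{k,v}$.
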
 
\begin{proof} 

Fix $k \in \mathbb N$.   We argue by induction on the length of the word $v$.     Recall that by Lemma~\ref{l:etaRightDigs}, $\eta_{k,v} \in \mathscr I_{k,v}$ implies that $\overline{d}(k, v (v')^\infty)$ gives the sequence of simplified digits of $r_0(\eta_{k,v})$.   

The base cases, given by $v = c_1$, are easily verified.   
  
\medskip

\noindent
{\bf Case 1.}  Suppose   $v = \Theta_{0}^{h}(c)$ for some $h\ge 1$ and some $c> 1$.  We have $v =c (1\,c)^h$. 
We begin by assuming the result for both words $c, c+1$.   We then have $\eta_{k,c}< \zeta_{k,c+1}<\omega_{k,c+1} = \omega_{k,c}$.  Therefore, there must be proper extensions of the word $c_1 = c$ that are admissible on $[\eta_{k,c}, \zeta_{k,c+1})$.  By Lemma~\ref{l:etaRightDigs},   $\overline{d}{}^{\eta_{k,c} }_{[1,\infty)} =  k^c,\, \overline{ (k+1), k^{c-1}}$;  that is, $c\, 1$ is admissible at this value of $\alpha$.    Now, $(A^kC)^{c+1} \cdot r_0(\alpha) = \ell_0(\alpha)$ determines the value $\alpha = \zeta_{k,c+1}$.   Equivalently,  $A^{k+1}C (A^kC)^c \cdot r_0(\alpha) = r_0(\alpha)$.     

From this last, we conclude that  $\mathfrak f(c\,1\,c) = c\,1$ is admissible throughout $[\eta_{k,c}, \zeta_{k,c+1})$, with arbitrarily high powers of $c\,1$ admissible for $\alpha$ sufficiently close but smaller than $\zeta_{k,c+1}$.  (Recall that  $\omega_{k, c\,1\,c}= \zeta_{k,c+1}$.)    Since $\Theta_0(c) = c\,1\,c$ is a prefix of   $(c\,1)^2$,  we find that there is an interval with right endpoint $\omega_{k, c\,1\,c}$ on which the word $\Theta_0(c)$ is admissible.   The left endpoint of this interval is characterized as the leftmost $\alpha$ such that the third letter is remains admissible; it must hence be $\zeta_{k, c\,1\,c}$.  Thus,  our result holds when $h=1$.  We now partition the interval $[\eta_{k,c}, \zeta_{k,c+1})$ according to the highest power of $c\,1$ that is admissible for $\alpha$, from which the result follows.  

\medskip
 
\noindent
{\bf Case 2.} $v = \Theta_p(u), p\ge 1$.    Write $v = uau$ in our usual decomposition.   By hypothesis,  $\overline{d}(k,v)$ gives a prefix of  $\overline{d}{}^{\alpha }_{[1,\infty)}$ for all $\alpha \in [\zeta_{k,v}, \omega_{k,v})$.        

We have $\mathfrak f(v) = ua$, and the admissibility of $v = uau$ implies the admissibility of $ua$ on $[\zeta_{k,v}, \omega_{k,v})$.    Thus for each  $\alpha \in [\zeta_{k,v}, \omega_{k,v})$, there exists a maximal $N=N(\alpha) \in \mathbb N$ such that $u(au)^N$ gives a prefix of $\overline{d}{}^{\alpha }_{[1,\infty)}$.
Since $\omega_{k,v}$ corresponds to the fixed point of $R_{ua}$, the values $N(\alpha)$ are unbounded.     Furthermore, if $N(\alpha) = \alpha$, then for all $\alpha' \in [\alpha, \omega_{k,v})$ we have $N(\alpha') \ge N(\alpha)$.    Thus, (since the hypothesis implies that $N=1$ is realized) each $N\in \mathbb N$ is realized as $N(\alpha)$ for some $\alpha \in \mathscr I_{k,v}$, and we can partition $\mathscr I_{k,v}$ by subintervals identified  from the value of $N(\alpha)$.  

Now, $\Theta_0(v) = vv'' = u(au)^2$, and hence $\Theta_0(v)$ is admissible on $[\zeta_{k,\Theta_0(v)}, \omega_{k,v})$.   Similarly, each $\Theta_{0}^{h}(v)$ is admissible on $[\zeta_{k,\Theta_{0}^{h}(v)}, \omega_{k,v})$.  By considering our ordering on words, it is clear that   $\zeta_{k,\Theta_{0}^{h}(v)}< \eta_{k, \Theta_{0}^{h}(v)} <  \omega_{k,v}$.   Therefore, the result holds for all $\Theta_{0}^{h}(v)$. 

The admissibility $v$ on all of $\mathscr I_{k,v}$ and the admissibility of $\Theta_0(v)$ on exactly $[\zeta_{k,\Theta_0(v)}, \omega_{k,v})$ implies that   there must be a shortest extension of $v = uau$ which is admissible for those $\alpha$  immediately to the left of $\zeta_{k,\Theta_0(v)}$.      Lemma~\ref{l:cylsOfConsecuChildrenAbut} shows that  $\mathfrak f(\, \Theta_1(v)\,)$ is this extension.     The fixed point of $R_{k,\mathfrak f(\, \Theta_1(v)\,)}$ is $r_0(\omega_{k,\Theta_1(v)})$,  and we again argue that arbitrarily high powers of this word, $\mathfrak f(\, \Theta_1(v)\,)$, must be admissible just to the left of the corresponding value $\alpha$, that is of $\omega_{k,\Theta_1(v)}$.    

Since {\em any} $v = uau$ is always a prefix of the square of the corresponding $\mathfrak f(v) = ua$, we find that  all of $\Theta_1(v)$ is admissible on an interval ending at 
$\omega_{k,\Theta_1(v)}$.  By definition of $\zeta_{k,\Theta_1(v)}$ 
it follows that this interval is all of 
$[\zeta_{k,\Theta_1(v)}, \omega_{k,\Theta_1(v)})$.     We iterate this argument for increasing $q$, to give that for each $q$,  $\Theta_q(v)$ is admissible on  exactly  $[\zeta_{k,\Theta_q(v)}, \omega_{k,\Theta_q(v)})$.     The definition of $\eta_{k,\Theta_q(v)}$ shows that it lies strictly between $\zeta_{k,\Theta_q(v)}$ and $\omega_{k,\Theta_q(v)})$.
 
\medskip

\noindent
{\bf Case 3.}   Suppose that $v =  \Theta_{0}^{h}(uau)$ for some $uau$ of Case 2.  Thus, $v = u(au)^{h+1}$.   

By the proof of Case 2, we can also assume our result for $\Theta_{0}^{h+1}(uau) = \Theta_0(v)$.   In particular, the left boundary of $\mathscr I_{k, \Theta_0(v)}$ does occur at $\zeta_{k, \Theta_0(v)}$.  By Lemma~\ref{l:cylsOfConsecuChildrenAbut},  $\omega_{k, \Theta_1(v)} = \zeta_{k, \Theta_0(v)}$.  Arguments as in the previous case yield that all of $\Theta_1(v)$ is admissible on an interval ending at 
$\omega_{k,\Theta_1(v)}$, and that this interval is indeed $[\zeta_{k,\Theta_1(v)}, \omega_{k,\Theta_1(v)})$.   In this case also,  induction on $q$ is successful.   That $\mathscr J_{\Theta_q(v)}\subset \mathscr I_{\Theta_q(v)}$ is here also straightforward.
\end{proof}

\bigskip
We are now ready for the following. \label{proofOfTheorem}
\begin{proof}[Proof of Theorem~\ref{p:cantorSet}]   That $(0, \gamma_{3,n}) = \cup_{k=1}^{\infty}\, \mathscr I_{k,1}$ follows simply from the fact that for $\alpha \in (0, \gamma_{3,n})$ and $x \in \mathbb I_{\alpha}$, 
$T_{\alpha}(x) = A^k C\cdot x$ for some $k$.       Proposition~\ref{p:Admissible} shows that for all $v \in \mathcal V$, 
$\mathscr I_{k,v} = [\zeta_{k,v}, \omega_{k,v})$ can be partitioned by $\mathscr J_{k,v} = [\zeta_{k,v}, \eta_{k,v})$ and its complement.        
By Corollary~\ref{c:vD(v)InConeGotEndpt}  (and the complementary results proven in cases 1 and 3 of the proof of Proposition~\ref{p:Admissible}), we have have that $\omega_{k, \Theta_0(v)} = \omega_{k,v}$.   By Lemma~\ref{l:cylsOfConsecuChildrenAbut},   for all $q\in \mathbb N$, $\omega_{k, \Theta_q(v)} = \zeta_{k, \Theta_{q-1}(v)}$.  Therefore, 
$\cup_{q=0}^{\infty}\, \mathscr I_{k, \Theta_q(v)}$ is a subinterval of $\mathscr I_{k,v}\setminus \mathscr J_{k,v}$ which has  $\omega_{k,v}$ as its right endpoint.   Finally, the definition of $\Theta_q(v)$ combined with Lemma~\ref{l:zetaRightDigs} shows that $\lim_{q \to \infty}\, \zeta_{k, \Theta_q(v)} = \eta_{k,v}$.   Therefore,  the left endpoint of the union is in fact the right endpoint of $\mathscr J_{k,v}$. 
\end{proof} 
 
\bigskip
\subsection{Potential  synchronization intervals  are intervals of synchronization} 

We now define $L_{k,v}$  exactly so that the group identity of Proposition~\ref{p:longWordInWmIsThree}  gives that 
$L_{k,v}=  C^{-1}AC R_{k,v}$, and thus  the main hypothesis of Lemma~\ref{l:etaDigits} will be satisfied.     
That synchronization does occur along $\mathscr J_{k,v}$ is then only a matter of showing that $L_{k,v}A\cdot \ell_0(\alpha)$ is admissible at all $\alpha\in \mathscr J_{k,v}$.

 For further ease, we  set 
 \[ w = w_{3,n} = (-1)^{n-2}, -2,  (-1)^{n-3},-2.\]  Note that the length of $w$ is $|w| = 2n - 3$. One of our first goals is to show that as $\alpha$ tends to zero,  $\underline{d}{}^{\alpha}_{[1,\infty)}$ begins with ever higher powers of $w$.  Recall from \eqref{e:WiD} that (for  any $m,n$)  the element 
$W = A^{-2}C \, (A^{-1}C)^{n-3} \, \big[A^{-2}C (A^{-1}C)^{n-2}\big]^{m-2}$,  
equals $W = A^{-1} C^{-1}   A C A$. Just as this is fundamental to understanding the case of $\alpha = 0$, so is it key to the study of left-orbits for small values of $\alpha$.     For ease of reference, the case of $m=3$ is  
\[ W =  A^{-2}C \, (A^{-1}C)^{n-3} \, A^{-2}C (A^{-1}C)^{n-2}\,.\]
  In the particular cases of $m=3$, \cite{CaltaSchmidt} show that for all $n$  the $T_{3,n, \alpha = 0}$-orbit of $\ell_0(\alpha)$ is purely periodic of period $w$.     

\subsubsection{Left digits are admissible}\label{sss:leftAdmiss}

For typographic ease,   let 
\begin{equation} \label{e:CandD} \mathcal C = \mathcal C_k =(-1)^{n-3}, -2, w^{k-1}\;\;\;\text{and}\;\;\; \mathcal D = \mathcal D_k =(-1)^{n-3}, -2, w^k\,.
\end{equation} 
Accordingly, we let 
\[ \tilde{\mathcal C} = \tilde{\mathcal C}_k = W^{k-1}A^{-2}C(A^{-1}C)^{n-3}\;\; \text{and}\;\; \tilde{\mathcal D} = \tilde{\mathcal D}_k = W^k A^{-2}C(A^{-1}C)^{n-3}.\]

\begin{Def}\label{d:lowDegSeq}     Suppose that $v \in \mathcal V$  and $k\in \mathbb N$. 

\begin{enumerate}
 \item  The lower (simplified)   digit sequence of  $k,v$  is 
\[\underline{d}(k,v) = w^k, \mathcal C^{c_1-1} \mathcal D^{d_1}\cdots \mathcal D^{d_{s-1}}\mathcal C^{c_s}, (-1)^{n-2}=  (-1)^{n-2}, -2,  \mathcal C^{c_1} \mathcal D^{d_1}\cdots \mathcal D^{d_{s-1}}\mathcal C^{c_s}, (-1)^{n-2} \,,\]
of length

\medskip
\item  $\underline{S}(k,v) = |\,\underline{d}(k,v)\,| = n + [(k-1)(2n-3)+n-2] \sum_{i=1}^s\, c_i +  [k(2n-3)+n-2]\sum_{j=1}^{s-1}\, d_j$.

\smallskip
\item The {\em left matrix} of  $k, v$ is
 \[ 
  \begin{aligned} 
L_{k,v} &=  (A^{-1} C)^{n-2} \;\tilde{\mathcal C}^{c_s}\tilde{\mathcal D}^{d_{s-1}} \cdots  \tilde{\mathcal D}^{d_1}  \tilde{\mathcal C}^{c_1-1} W^k A^{-1}\\
 &= (A^{-1} C)^{n-2} \; \tilde{\mathcal C}^{c_s}\tilde{\mathcal D}^{d_{s-1}} \cdots  \tilde{\mathcal D}^{d_1}  \tilde{\mathcal C}^{c_1}  A^{-2}C  (A^{-1}C)^{n-2}A^{-1}.
  \end{aligned}
\]
 \end{enumerate}
\end{Def}
\noindent 
Note that Proposition~\ref{p:longWordInWmIsThree}, below,  implies that $L_{k,v} = C^{-1}A C\, R_{k,v}$.

\bigskip
Our aim is to show the admissibility of $\underline{d}(k,v)$ on $\mathscr J_{k,v}$.  
In the following, we give both the left and right simplified digit sequence for each of the endpoints of $\mathscr J_{k,v}$.  The right sequences follow from the results above.  We present them here for ease of comparison.   
\begin{Lem}\label{l:leftExpansionsAtEndpoints}   Let $v = c_1 d_1\cdots c_s \in \mathcal V$ and 
$k \in \mathbb N$.    Suppose that $\underline{d}(k,v)$ is admissible on $\mathscr J_{k,v}$.   Set $\zeta = \zeta_{k,v}$ and $\eta = \eta_{k,v}$.
Then 
\[
\begin{aligned} \underline{d}{}^{\zeta}_{[1,\infty)} &=   \underline{d}(k, (\,\overleftarrow{v'}\,)^{\infty}\,)\,;\\
\\
\overline{d}{}^{\zeta}_{[1,\infty)} &= \overline{d}(k,v), \,   \underline{d}{}^{\zeta}_{[1,\infty)}\,;\\
\\
\underline{d}{}^{\eta}_{[1,\infty)} &=  \overline{ w^k, \mathcal C^{c_1-1} \mathcal D^{d_1}\cdots \mathcal D^{d_{s-1}}\mathcal C^{c_s}, (-1)^{n-3},-2}\,;\\
\\
\overline{d}{}^{\eta}_{[1,\infty)} &=  \overline{d}(k,v (v')^{\infty})\,.                  
\end{aligned}
\]  
\end{Lem}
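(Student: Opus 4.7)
Plan. Two of the four claims, the right expansions, are immediate: $\overline{d}^\eta_{[1,\infty)} = \overline{d}(k, v(v')^\infty)$ is the statement of Lemma~\ref{l:etaRightDigs}, while $\overline{d}^\zeta_{[1,\infty)} = \overline{d}(k,v), \underline{d}^\zeta_{[1,\infty)}$ follows from the defining identity $R_{k,v}\cdot r_0(\zeta) = \ell_0(\zeta)$ applied to the $T_\alpha$-orbit: since $r_{\overline{S}(k,v)}(\zeta) = \ell_0(\zeta)$, the right orbit of $r_0(\zeta)$ coincides with the left orbit of $\ell_0(\zeta)$ past position $\overline{S}(k,v)$.

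For the left expansion at $\eta$, I would expand $L_{k,v}A$ into a product of elementary matrices $A^{k_j}C$ read right-to-left; direct computation shows this yields exactly the digit sequence $\underline{d}(k,v)$.  Multiplying on the left by $A^{-1}$ absorbs into the leading $(A^{-1}C)^{n-2}$ block of $L_{k,v}$ to yield $A^{-2}C\,(A^{-1}C)^{n-3}$ in its place; at the simplified-digit level this is exactly the replacement of the terminal $-1$ of $\underline{d}(k,v)$ by $-2$.  Since Lemma~\ref{l:etaDigits}(i) supplies $A^{-1}L_{k,v}A\cdot \ell_0(\eta) = \ell_0(\eta)$, this modified word is the period of the purely periodic left expansion of $\ell_0(\eta)$, exactly as stated.

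For the left expansion at $\zeta$, the admissibility hypothesis makes $\underline{d}(k,v)$ an initial segment of $\underline{d}^\zeta$.  The continuation is identified by combining the fixed-point property $R_{k, \overleftarrow{v'}}\cdot r_0(\zeta) = r_0(\zeta)$ of Lemma~\ref{l:zetaRightDigs} with the relation $R_{k, \overleftarrow{v'}} = A R_{k,v}$ derived in its proof, which together yield $\ell_0(\zeta) = R_{k,v}A\cdot \ell_0(\zeta)$.  Applying the group identity $L_{k,v} = C^{-1}AC\,R_{k,v}$ of Proposition~\ref{p:longWordInWmIsThree} then gives $L_{k,v}A\cdot \ell_0(\zeta) = C^{-1}AC\cdot \ell_0(\zeta)$, so $\ell_{\underline{S}(k,v)}(\zeta) = C^{-1}AC\cdot \ell_0(\zeta)$; iterating this recursion identifies successive blocks of length $\underline{S}(k,v)$ in $\underline{d}^\zeta$ as transformed by powers of the parabolic element $C^{-1}AC$.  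The remaining step is to match this algebraically defined continuation with the symbolic expression $\underline{d}(k, (\overleftarrow{v'})^\infty)$, which is done by induction on the tree $\mathcal V$, with base case $v=c_1$ handled directly and the inductive step branching along the three $\Theta_q$-cases of Proposition~\ref{p:theWordsAreNice}, using its palindromic decompositions $v=uv''=uaz$ and $v'=yu=z'au$.

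The main obstacle is this last symbolic matching.  One must track how a single period of $(\overleftarrow{v'})^\infty$ in the left alphabet corresponds to the action of $C^{-1}AC$ in a way compatible with the recursion through the parent word of $v$; the palindromic identities give a clean enough framework for the inductive step, but each of the three $\Theta_q$-cases requires separate bookkeeping to align the conjugation $R_{k,v} R_{k,\overleftarrow{v'}} R_{k,v}^{-1}$ with the correct block of the purported period.
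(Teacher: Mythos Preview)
Your treatment of the two right expansions and of the left expansion at $\eta$ is correct and matches the paper's argument essentially line for line.

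For the left expansion at $\zeta$, however, you have overcomplicated matters and introduced a shaky step.  Your relation $L_{k,v}A\cdot\ell_0(\zeta)=C^{-1}AC\cdot\ell_0(\zeta)$ is correct, but it does \emph{not} iterate in the way you suggest: there is no reason for $(L_{k,v}A)^j\cdot\ell_0(\zeta)$ to equal $(C^{-1}AC)^j\cdot\ell_0(\zeta)$, so the picture of ``successive blocks transformed by powers of $C^{-1}AC$'' is not well-founded.  The induction on $\mathcal V$ you then propose, with its three $\Theta_q$-cases, is unnecessary machinery built on this unclear foundation.

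The paper's route is much more direct.  From Lemma~\ref{l:etaDigits}(iii) one has $A^{-2}C\,L_{k,v}A\cdot\ell_0(\zeta)=\ell_1(\zeta)$, which immediately says the left expansion of $\ell_0(\zeta)$ is eventually periodic with pre-period of length one.  The period, read off from the matrix word $A^{-2}C\,L_{k,v}A$, is then rewritten symbolically in a handful of lines using only the identities $\mathcal D=\mathcal C\,w$ and $w=(-1)^{n-2},-2,(-1)^{n-3},-2$, together with the fact that $v$ is a palindrome, to obtain the form $\underline{d}(k,(\overleftarrow{v'})^{\infty})$.  No induction on the tree is required; the palindrome property of $v$ alone suffices to convert the explicit period into the stated one.
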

\begin{proof}  Lemma~\ref{l:zetaRightDigs} yields the expressions for $\overline{d}{}^{\zeta}_{[1,\infty)}$ and  $\overline{d}{}^{\eta}_{[1,\infty)}$.

We first show that the expansions for $\ell_0(\zeta), \ell_0(\eta)$ are correct assuming admissibility of $\underline{d}(k, v)$ for all $\alpha \in [\zeta, \eta)$.

 Letting $L = L_{k,v}$,  Lemma~\ref{l:etaDigits} gives $A^{-2}C L A\cdot \ell_0(\zeta) = \ell_1(\zeta)$.  Thus, the digits of $\ell_0(\zeta)$ are periodic, with preperiod of length one.   Since $ w = (-1)^{n-2}, -2,  (-1)^{n-3},-2$, we find

\[
\begin{aligned} \underline{d}{}^{\zeta}_{[1,\infty)} &= -1, \overline{(-1)^{n-3}, -2,  (-1)^{n-3},-2, w^{k-1}, 
\mathcal C^{c_1-1} \mathcal D^{d_1}\cdots \mathcal D^{d_{s-1}}\mathcal C^{c_s},  (-1)^{n-2}, -2}\\
                                                                          &= w^{k}, \overline{
\mathcal C^{c_1-1} \mathcal D^{d_1}\cdots \mathcal D^{d_{s-1}}\mathcal C^{c_s},  (-1)^{n-2}, -2,  (-1)^{n-3}, -2,  (-1)^{n-3},-2, w^{k-1}}\\
                                                                             &= w^{k}, \overline{
\mathcal C^{c_1-1} \mathcal D^{d_1}\cdots \mathcal D^{d_{s-1}}\mathcal C^{c_s},  w,  (-1)^{n-3},-2, w^{k-1}}\\
                                                               &= w^{k}, \overline{
\mathcal C^{c_1-1} \mathcal D^{d_1}\cdots \mathcal D^{d_{s-1}}\mathcal C^{c_s-1},  \mathcal D,  \mathcal C}\\
                                                               &= w^{k},
\mathcal C^{c_1-1} \mathcal D^{d_1}\cdots \mathcal D^{d_{s-1}}\mathcal C^{c_s-1},  \mathcal D, \overline{ \mathcal C^{c_1} \mathcal D^{d_1}\cdots \mathcal D^{d_{s-1}}\mathcal C^{c_s-1}\mathcal D}\,.
 \end{aligned}
 \]
 Since $v$ is a palindrome,  this last is indeed the infinite sequence $ \underline{d}(k, (\,\overleftarrow{v'}\,)^{\infty}\,)$.    
 
Note that in the special case that $v=c_1, c>1$, we find $\underline{d}{}^{\zeta_{k,c}}_{[1,\infty)} = w^{k}, \overline{\mathcal C^{c-1}, w, \mathcal C} =  w^{k}, \overline{\mathcal C^{c-2}, \mathcal D, \mathcal C}$ and  $\underline{d}{}^{\zeta_{k,1}}_{[1,\infty)} = w^{k+1}, \overline{(-1)^{n-3}, -2, w^k} =  w^{k+1}, \overline{\mathcal D} $.
 
 Since $LA\cdot \ell_0(\eta) = r_0(\eta)$, we have that $A^{-1}LA$ fixes $\ell_0(\eta)$ and hence $\underline{d}{}^{\eta}_{[1,\infty)}$ is indeed purely periodic, with the indicated period.   
\end{proof}

\begin{Lem}\label{l:leftWAdmiss}   Fix $j\in \mathbb N$ and $0\le i< |w|$. If there is some $\alpha \le \gamma_{3,n}$ such that 
$\underline{d}{}^{\alpha}_{[1, j |w|+i ]\,} = w^j\,w_{[1, i]}$, then $\underline{d}{}^{\alpha'}_{1, j |w|+i \,} = w^j\,w_{[1, i]}$ for all $\alpha'<\alpha$.
\end{Lem}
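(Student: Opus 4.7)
The plan is to invoke Lemma~\ref{l:admissBetween2Pts} anchored at $\alpha = 0$. The key input is the description of the orbit at $\alpha = 0$ supplied by Section~\ref{s:AlpZero}: for $m = 3$, the word $W = A^{-2}C(A^{-1}C)^{n-3}A^{-2}C(A^{-1}C)^{n-2}$ is admissible for $\ell_0 = -t$ and satisfies $W\cdot \ell_0 = \ell_0$. Reading the $A$-exponents of this word from right to left (i.e.\ in the order in which the factors are applied to generate the orbit) produces the simplified digit sequence $(-1)^{n-2}, -2, (-1)^{n-3}, -2 = w$ for the first $|w| = 2n-3$ steps, and periodicity then gives $\underline{d}{}^{0}_{[1,\infty)} = w^{\infty}$. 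Hence for every $j\in\mathbb N$ and every $0 \le i < |w|$ one has $\underline{d}{}^{0}_{[1,\,j|w|+i]} = w^{j}\,w_{[1,i]}$, which is precisely the prefix prescribed by the hypothesis.

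Next, given an $\alpha \le \gamma_{3,n}$ with $\underline{d}{}^{\alpha}_{[1,\,j|w|+i]} = w^{j}\,w_{[1,i]}$, both $\alpha = 0$ and the given $\alpha$ agree on the first $N := j|w|+i$ lower simplified digits. Because for every parameter in $[0,\gamma_{3,n}]$ each lower-orbit digit has $C$-exponent $l = 1$ (as noted in the proof of Lemma~\ref{l:leftRegionOneBeforeSyn}), the simplified equality is automatically an equality of the full digit tuples $\underline{b}{}^{\cdot}_{[1,N]}$. The case $\alpha = 0$ is vacuous; otherwise I apply Lemma~\ref{l:admissBetween2Pts} with $\alpha' = 0$, $\alpha'' = \alpha$, and this $N$, concluding that $\underline{b}{}^{\tilde\alpha}_{[1,N]} = w^{j}\,w_{[1,i]}$ for every $\tilde\alpha \in [0,\alpha]$. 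Since this interval contains every $\alpha' < \alpha$, the lemma is established.

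No genuine obstacle appears, since the argument is just an interpolation between the purely periodic orbit at $\alpha = 0$ and the given admissible prefix at $\alpha$. The one subtle point worth noting is that the hypothesis $\alpha \le \gamma_{3,n}$ is used precisely to guarantee that every parameter in $[0,\alpha]$ lies in the regime where all lower digits have $l = 1$; this is what legitimizes passing between $\underline{d}$ and $\underline{b}$ so that Lemma~\ref{l:admissBetween2Pts} can be applied with the length $N = j|w|+i$.
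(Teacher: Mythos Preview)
Your proof is correct and follows essentially the same approach as the paper: both anchor at $\alpha = 0$, where Section~\ref{s:AlpZero} (equivalently \cite{CaltaSchmidt}) shows $\underline{d}{}^{0}_{[1,\infty)} = w^{\infty}$, and then invoke Lemma~\ref{l:admissBetween2Pts} to interpolate between $0$ and the given $\alpha$. Your explicit remark that simplified and full digits coincide on $[0,\gamma_{3,n}]$ (so that Lemma~\ref{l:admissBetween2Pts}, stated for $\underline{b}$, applies to $\underline{d}$) is a useful clarification that the paper leaves implicit.
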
 
\begin{proof}  (Of course, if $i=0$, then $w_{[1, i]}$ is the empty word.) Since it is shown in \cite{CaltaSchmidt} that all powers of $W$ are admissible when $\alpha = 0$,  there are thus branches of digits corresponding to each $W^k\cdot \ell_0, A^{-1}CW^k\cdot \ell_0$, $(A^{-1}C)^2W^k\cdot \ell_0, \dots, (A^{-1}C)^{n-3}W^k\cdot \ell_0$ and $A^{-2}C (A^{-1}C)^{n-3}W^k\cdot \ell_0\,$ that continue to the right from $\alpha = 0$.   For each, by Lemma~\ref{l:admissBetween2Pts}, admissibility at $\alpha$  thus guarantees admissibility at each $\alpha' \le \alpha$. 
\end{proof}

\begin{Lem}\label{l:leftExpansionsAtOmegaK}   Fix 
$k \in \mathbb N$.    We have  

\[
\underline{d}{}^{\omega_{k,1}}_{[1,\infty)} =    w^k, \overline{(-1)^{n-3}, -2, w^{k-1}} = w^k, \overline{\mathcal C}\,.\]  

The digits 
$\underline{d}{}^{\alpha}_{[1, n-3 + k |w|\,]} = w^k, (-1)^{n-3}$ are admissible for all $\alpha \le \omega_{k,1}$. 
\end{Lem}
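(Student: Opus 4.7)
The plan is to first establish the full expansion $\underline{d}{}^{\omega_{k,1}}_{[1,\infty)}=w^k,\overline{\mathcal C}$, and then to deduce the admissibility statement from Lemma~\ref{l:leftWAdmiss}. The crucial structural observation is that $\omega_{k,c}=\omega_{k,1}$ for every $c\ge 1$, as noted in Example~\ref{e:computingFrakF}(2): the matrix $(A^kC)^c$ shares the positive fixed point $r_0(\omega_{k,1})$ with $A^kC$. Consequently the cylinders $\mathscr I_{k,c}=[\zeta_{k,c},\omega_{k,1})$ form a nested decreasing sequence in $c$, with left endpoints satisfying $\zeta_{k,c}\nearrow\omega_{k,1}$ as $c\to\infty$.

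Using (the instance, proven inductively as the base case for Proposition~\ref{p:longWordInWmIsThree}, of) Lemma~\ref{l:leftExpansionsAtEndpoints} for $v=c$, the lower expansion at $\zeta_{k,c}$ is $w^{k+1},\overline{\mathcal D}$ when $c=1$ and $w^k,\overline{\mathcal C^{c-2},\mathcal D,\mathcal C}$ when $c\ge 2$; either way the prefix $w^k,\mathcal C^{c-1}$ is admissible there, and in particular so is the shorter prefix $w^k,(-1)^{n-3}$, since $\mathcal C$ begins with $(-1)^{n-3}$ and $w$ begins with $(-1)^{n-2}$. I would then apply Lemma~\ref{l:admissBetween2Pts} to each consecutive pair $(\zeta_{k,c},\zeta_{k,c+1})$, which share the prefix $w^k,\mathcal C^{c-1}$, extending admissibility throughout $[\zeta_{k,c},\zeta_{k,c+1}]$; chaining over $c$ gives admissibility of arbitrarily long initial segments of $w^k,\overline{\mathcal C}$ on $[\zeta_{k,1},\omega_{k,1})$. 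A continuity argument passes this to the endpoint $\omega_{k,1}$: the orbit points $\ell_i(\zeta_{k,c})$ converge to $\ell_i(\omega_{k,1})$, and passing to the limit in the periodicity identity $\tilde{\mathcal C}_k^{c-1}\tilde{\mathcal D}_k\cdot\ell_{k|w|}(\zeta_{k,c})=\ell_{k|w|}(\zeta_{k,c})$ identifies $\ell_{k|w|}(\omega_{k,1})$ as a fixed point of $\tilde{\mathcal C}_k$, which secures the purely periodic tail.

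For the admissibility assertion, note that $w^k,(-1)^{n-3}$ coincides with $w^kw_{[1,n-3]}$, since $w$ begins with $(-1)^{n-2}$. Having established admissibility at $\omega_{k,1}$, and noting $\omega_{k,1}\le\omega_{1,1}=\gamma_{3,n}$ (equality at $k=1$ by Lemma~\ref{l:startsAtOmegaOneOne}, strict for $k>1$ by the partition in Theorem~\ref{p:cantorSet}), Lemma~\ref{l:leftWAdmiss} yields admissibility for every $\alpha<\omega_{k,1}$. Combined with the endpoint $\alpha=\omega_{k,1}$ itself, this gives the full range $\alpha\le\omega_{k,1}$.

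The main obstacle is the continuity argument at $\omega_{k,1}$: one must verify that no limiting orbit element $\ell_i(\omega_{k,1})$ falls on a cylinder boundary, which would corrupt the digits — in particular that no limit approaches $r_0(\omega_{k,1})$. The fixed-point identification for $\tilde{\mathcal C}_k$ resolves this by placing the relevant accumulated orbit points strictly inside $\mathbb I_{\omega_{k,1}}$, bounded away from that problematic boundary; elementary linear-fractional estimates on the attractor of $\tilde{\mathcal C}_k$ then complete the verification.
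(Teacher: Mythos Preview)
Your approach has a circularity problem. You invoke Lemma~\ref{l:leftExpansionsAtEndpoints} for $v=c$ to obtain the lower expansion at $\zeta_{k,c}$, but that lemma carries the hypothesis ``Suppose that $\underline{d}(k,v)$ is admissible on $\mathscr J_{k,v}$.'' In the paper's logical structure, this hypothesis is established only in Proposition~\ref{p:leftDigitsAreGood}, whose base case $v=1$ explicitly appeals to the present Lemma~\ref{l:leftExpansionsAtOmegaK} (see Table~\ref{t:casesForGoodPtsShowingAdmissibility}, where $\alpha''=\omega_{k,1}$). Your parenthetical points to Proposition~\ref{p:longWordInWmIsThree}, but that is a pure group identity and says nothing about admissibility. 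Moreover, your anchor point $\zeta_{k,1}$ equals $\omega_{k+1,1}$ (Remark~\ref{r:Omk+1EqZetaKwhenVis1}), so even the $c=1$ input you need is literally the statement being proved with $k$ shifted by one.

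The paper avoids this by working \emph{directly} at $\omega_{k,1}$: from $A^kC\cdot r_0(\omega_{k,1})=r_0(\omega_{k,1})$ one gets $A^{k-1}CA\cdot\ell_0(\omega_{k,1})=\ell_0(\omega_{k,1})$, and the group identity of Lemma~\ref{l:shortRightId} rewrites $A^{k-1}CA$ as $C^{-1}A^{-1}C(A^{-1}C)^{n-2}W^{k-1}$, which immediately exhibits the eventual periodicity $\ell_1=A^{-2}C(A^{-1}C)^{n-2}W^{k-1}\cdot\ell_0$ in admissible form. Admissibility is then checked by a one-step argument using the fullness of $\Delta(-2,1)$ together with Lemma~\ref{l:leftWAdmiss}. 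This delivers the expansion and its admissibility at $\omega_{k,1}$ without any appeal to $\zeta_{k,c}$, so it can legitimately serve as the seed for Proposition~\ref{p:leftDigitsAreGood}. Your limiting argument, even if the continuity step could be made rigorous (the fixed-point matrices $\tilde{\mathcal C}_k^{c-1}\tilde{\mathcal D}_k$ vary with $c$, so the passage to a $\tilde{\mathcal C}_k$-fixed point needs more than stated), cannot be used here because its inputs are downstream of the lemma you are proving.
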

\begin{proof} 

Since $\mathfrak f(1) = 1$, the definition of $\omega_{k,v}$ yields $A^{k-1}C A\cdot \ell_0(\omega_{k,1}) = \ell_0(\omega_{k,1})$.  Lemma~\ref{l:shortRightId}, below,  shows that $A^{k-1}C A = C^{-1}A^{-1}C(A^{-1}C)^{n-2}W^{k-1}$.   For $\alpha \le \gamma_{3,n}$, we certainly have that $\ell_1(\alpha) = A^{-1}C\cdot \ell_0(\alpha)$, thus $\ell_1(\omega_{k,1}) =  A^{-2}C(A^{-1}C)^{n-2}W^{k-1}\cdot \ell_0(\omega_{k,1})$.  Thus,  the $T_{\alpha}$-orbit of $\ell_0(\omega_{k,1})$ is periodic, with minimal preperiod of length one.  Elementary manipulations give the claimed expression for the simplified digits,  assuming   admissibility.   

We have that the graph of the function $x \mapsto A^{-2}C(A^{-1}C)^{n-2}W^{k-1}A^{-1}\cdot x$ meets the vertical line $x = r_0(\omega_{k,1})$ at $y = \ell_1(\omega_{k,1})$.  Since the $T_{\omega_{k,1}}$-cylinder $\Delta(-2, 1)$ is full, there is also a point $y \in \mathbb I_{\omega_{k,1}}$ where the graph of the function $x \mapsto  (A^{-1}C)^{n-2}W^{k-1}A^{-1}\cdot x$ meets $x = r_0(\omega_{k,1})$.  By Lemma~\ref{l:leftWAdmiss}, this implies that  $ (A^{-1}C)^{n-2}W^{k-1}\cdot  \ell_0(\omega_{k,1})$ is admissible.   It follows that $A^{-2}C(A^{-1}C)^{n-2}W^{k-1}\cdot \ell_0(\omega_{k,1})$ is also admissible.  The rest of $w^k, \overline{(-1)^{n-3}, -2, w^{k-1}}$ is determined by periodicity and is thus also admissible.  

The second statement now follows immediately from Lemma~\ref{l:leftWAdmiss}.
\end{proof}

\begin{Rmk}\label{r:Omk+1EqZetaKwhenVis1}   Note that the above yields $\underline{d}{}^{\omega_{k+1,1}}_{[1,\infty)} = \underline{d}{}^{\zeta_{k,1}}_{[1,\infty)}$, in accordance with the fact that $\mathfrak f(1) = 1$ implies $\omega_{k+1,1} = \zeta_{k,1}$.
\end{Rmk}

 In the following proof and occasionally thereafter, we will have need of the following. 
\begin{Def}\label{d:notationForDroppingFinalDigits} 
For any word $z$,   let $z_{[-1]}, z_{[-2]}$ denote  the excision of the last two letters, or last letter, from $z$ respectively.  

\end{Def}
\bigskip

\begin{Prop}\label{p:leftDigitsAreGood}     Suppose that $v \in \mathcal V$ and $k\in \mathbb N$.  Then for 
all $\alpha \in \mathscr J_{k,v}$,  
\[  \underline{d}{}^{\alpha}_{_{[1,\underline{S})}}  =  \underline{d}(k,v)\,.\]
\end{Prop}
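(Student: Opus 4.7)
The plan is to prove admissibility of $\underline{d}(k,v)$ throughout $\mathscr J_{k,v}$ by induction on the length of the word $v \in \mathcal V$, mirroring the case analysis used in the proof of Proposition~\ref{p:Admissible}. The principal tools are the endpoint identity $R_{k,v}\cdot r_0(\zeta_{k,v}) = \ell_0(\zeta_{k,v})$, the uniform admissibility of $w^{k}$-prefixes from Lemma~\ref{l:leftWAdmiss}, and the interval-extension result Lemma~\ref{l:admissBetween2Pts}.

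First I would establish admissibility at the left endpoint $\zeta_{k,v}$. In the base case $v = 1$, the relation $A^{k}C\cdot r_{0} = \ell_{0}$ at $\alpha = \zeta_{k,1}$ means $\ell_{0}(\zeta_{k,1}) = r_{1}(\zeta_{k,1})$, so the $T_\alpha$-orbit of $\ell_{0}$ coincides with the orbit of $r_0$ shifted by one step; combined with the purely periodic expansion $\overline{w}$ of $\ell_0$ at $\alpha = 0$ from \cite{CaltaSchmidt} and Lemma~\ref{l:leftWAdmiss} propagating $w^{k+1}$-admissibility leftward from $\omega_{k,1}$, this yields $\underline{d}^{\zeta_{k,1}}$ beginning with $w^{k+1}$, whose initial $\underline{S}(k,1)$ letters are exactly $\underline{d}(k,1) = w^{k}, (-1)^{n-2}$. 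For the inductive step, with $v = \Theta_{q}(u)$, I would exploit the palindromic decomposition of Proposition~\ref{p:theWordsAreNice} to express $\underline{d}(k,v)$ as a concatenation built from $\underline{d}(k,u)$ and copies of $\underline{d}(k,\mathscr D(v))$, then apply the inductive hypothesis to these shorter words and assemble them into the full left expansion via the group-theoretic factorization of $L_{k,v}$ given in Definition~\ref{d:lowDegSeq}, using the commutative behavior of $\mathscr D$ under parent-child passage (Lemma~\ref{l:dRespectsRelationships}).

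With admissibility at $\zeta_{k,v}$ in hand, I would extend to all of $\mathscr J_{k,v}$ via Lemma~\ref{l:admissBetween2Pts}, applied to the pair $(\zeta_{k,v}, \alpha_{N})$ for an appropriate sequence $\alpha_{N}\uparrow\eta_{k,v}$; at each $\alpha_N$ one inherits the matching left-prefix by iterating the comparison argument on progressively finer sub-windows, using the inductive hypothesis for the words governing intermediate cylinder boundaries. The main obstacle will be verifying that no intermediate orbit point $\ell_{j}(\alpha)$ for $1 \le j < \underline{S}(k,v)-1$ crosses a cylinder boundary prematurely as $\alpha$ varies across $\mathscr J_{k,v}$. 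Any such crossing would introduce a finer synchronization event strictly inside $\mathscr J_{k,v}$, contradicting the combinatorial characterization of $\eta_{k,v}$ as the smallest $\alpha > \zeta_{k,v}$ at which $L_{k,v}A\cdot \ell_{0}(\alpha) = r_{0}(\alpha)$ (encoded by the identity $L_{k,v} = C^{-1}AC\, R_{k,v}$ of Proposition~\ref{p:longWordInWmIsThree}, which also places the relevant boundary crossing at the separator of the cylinders $\Delta_\alpha(-1,1)$ and $\Delta_\alpha(-2,1)$). Ruling out these intermediate crossings requires matching each alternating letter of $v$ with a corresponding factor of $L_{k,v}$ and invoking the inductive admissibility on the shorter synchronization interval $\mathscr J_{k,u}$ to justify the absence of any finer boundary-crossing structure.
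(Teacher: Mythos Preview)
Your proposal has a genuine structural gap. You aim to establish admissibility at $\zeta_{k,v}$ and then extend rightward via Lemma~\ref{l:admissBetween2Pts} applied to pairs $(\zeta_{k,v},\alpha_N)$ with $\alpha_N\uparrow\eta_{k,v}$. But that lemma requires the left digit sequence to agree at \emph{both} endpoints before you may conclude agreement in between; you have no independent mechanism for knowing $\underline{d}^{\alpha_N}_{[1,\underline{S}]}$ in advance, so the step ``at each $\alpha_N$ one inherits the matching left-prefix by iterating the comparison argument'' is circular. Your fallback argument---that a premature cylinder crossing would create a finer synchronization event contradicting the definition of $\eta_{k,v}$---presupposes exactly what you are trying to prove: $\eta_{k,v}$ is defined by the algebraic equation $L_{k,v}A\cdot\ell_0=r_0$, and one only knows this corresponds to the first cylinder crossing of $\ell_{\underline{S}-1}$ \emph{after} admissibility of $\underline{d}(k,v)$ is established. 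Also, your proposed decomposition of $\underline{d}(k,v)$ in terms of $\underline{d}(k,\mathscr D(v))$ does not make sense as stated: $\mathscr D$ acts on the abstract words $v$, not on left digit sequences, and the left sequence $\underline{d}(k,v)$ is built from blocks $\mathcal C,\mathcal D$ indexed by the letters of $v$ itself, not of $\mathscr D(v)$.

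The paper sidesteps the circularity by choosing, for each $v$, explicit points $\alpha'<\zeta_{k,v}$ and $\alpha''>\eta_{k,v}$ at which the left expansion is already known \emph{by the inductive hypothesis} on shorter words (typically $\alpha'=\eta_{k,u}$ for the parent $u$ and $\alpha''=\zeta_{k,\Theta_{q-1}(u)}$ or similar). Lemma~\ref{l:leftExpansionsAtEndpoints} gives closed-form expressions for $\underline{d}^{\eta_{k,u}}$ and $\underline{d}^{\zeta_{k,w}}$, and one checks combinatorially that each has $\underline{d}(k,v)$ as a prefix up to the penultimate letter. Lemma~\ref{l:admissBetween2Pts} then gives the first $\underline{S}-1$ digits on all of $[\alpha',\alpha'']\supset\mathscr J_{k,v}$, and the final digit is handled separately by Lemma~\ref{l:alsoFinalDigitIsAdmissible}, which uses that $C^{-1}AC$ fixes $0$. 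The key idea you are missing is to anchor the sandwich at points \emph{outside} $\mathscr J_{k,v}$ where admissibility is inherited from the parent, rather than trying to bootstrap from within.
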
 

\begin{table} 
\[\arraycolsep=3pt\def\arraystretch{1.2}
\begin{array}{c|ccc} 
                            &v&\alpha'&\alpha''\\[2pt]
  \hline
                            &1&0&\omega_{k,1}\\
                            &c>1&\eta_{k, c-1}&\omega_{k,c} =\omega_{k,1} \\[2pt]
\text{Base cases}&c\,1\,c, c\ge 1  &\eta_{k,c}&\omega_{k, c\,1\,c} = \zeta_{k, c+1}\\[2pt]
                            &\Theta_q(c), q\ge 2&\eta_{k,c}&\zeta_{k,\Theta_{q-1}(c)}\\[2pt]
                            &\Theta_1(c), c>1&\eta_{k,c}&\zeta_{k,\Theta_{0}(c)}\\[2pt]                   
  \hline \rule{0pt}{\dimexpr.7\normalbaselineskip+1mm}
\text{Case 1}&\Theta_{0}^{h}(c)&\eta_{k,\Theta_{0}^{h-1}(c)}&\zeta_{k,c+1}\\[5pt]
\hline \rule{0pt}{\dimexpr.7\normalbaselineskip+1mm}
\text{Case 2}&\Theta_q\circ \Theta_{p}(u), \; p,q\ge 1&\eta_{k, \Theta_{p}(u)}&\zeta_{k,\Theta_{q-1}\circ \Theta_{p}(u)}\\[5pt]
\hline \rule{0pt}{\dimexpr.7\normalbaselineskip+1mm}
\text{Case 3}&\Theta_{0}^{h}\circ \Theta_{p}(u), \, p\ge 1&\eta_{k, \Theta_{0}^{h-1}\circ \Theta_{p}(u)}&\zeta_{k,\Theta_{p-1}(u)}\\[2pt]
 \end{array}
\]
\bigskip
\caption{Admissibility of $\underline{d}(k,v)$ on $\mathscr J_{k,v}$ is shown by finding $\alpha'< \zeta_{k,v} < \eta_{k,v}< \alpha''$ such that  $\underline{d}_{[1, \infty)}^{\alpha}$  agrees with $\underline{d}(k,v)$ through to its penultimate digit for both $\alpha = \alpha', \alpha''$. See the proof of Proposition~\ref{p:leftDigitsAreGood}.}\label{t:casesForGoodPtsShowingAdmissibility}
\end{table}

\begin{proof}   For all $v$, we will exhibit  $\alpha' < \zeta_{k, v}$ and $\alpha'' > \eta_{k, v}$  such that both $\overline{d}{}^{\alpha'}_{[1,\infty)}$ and $\overline{d}{}^{\alpha''}_{[1,\infty)}$ are known to be admissible (by induction), and  share as a common prefix all but the final letter of $\underline{d}(k,v\,)$.   By Lemma~\ref{l:admissBetween2Pts}, the admissibility of these first $\underline{S}(k,v)-1$ digits then holds on $[\alpha', \alpha'']$.  Since we already know the admissibility of our right digits,   Lemma~\ref{l:alsoFinalDigitIsAdmissible},  below, applies and we can conclude admissibility of all of $\underline{d}(k,v\,)$ on $\mathscr J_{k,v}$.    See Table~\ref{t:casesForGoodPtsShowingAdmissibility} for a summary of the pairs $\alpha', \alpha''$ used for the various cases of $v$.
\bigskip 

\noindent
{\bf Base cases.} 
Consider $v=c_1 = c$.   The lower simplified digit sequence  $\underline{d}{}^{\omega_{k,1}}_{[1,\infty)}  = w^k, \overline{\mathcal C}$ agrees with  $\underline{d}(k, c) = w^k, \mathcal C^{c-1}, (-1)^{n-2}$ through to its penultimate digit. 
When $v=1$, Lemma~\ref{l:leftExpansionsAtOmegaK} 
shows that these shared digits are admissible for all $\alpha \le \omega_{k,1}\,$; therefore our proof template succeeds in this case.   
For $c>1$, by induction  $\underline{d}{}^{\eta_{k,c-1}}_{[1,\infty)} = \overline{ w^k, \mathcal C^{c-2} , (-1)^{n-3},-2}$ is admissible, and has $\underline{d}(k, c)$ as a prefix.   
\medskip 

\noindent
Suppose $v=c\,1\,c$.   Here we use $\alpha' = \eta_{k,c}$, and $\alpha'' =  \omega_{k,c\,1\,c}$.   Since $\underline{d}{}^{\eta_{k,c}}_{[1,\infty)} = w^k, \overline{\mathcal C^{c-1}\mathcal D}$, it has  the prefix $w^k, \mathcal C^{c-1}\mathcal D \mathcal C^c, w$.  This in turn has $\underline d(k,c\,1\,c)$ as a prefix. 
 Lemma~\ref{l:consecSpecialCase} and Lemma~\ref{l:leftExpansionsAtEndpoints} give that 
$\underline{d}{}^{\omega_{k,c\,1\,c}}_{[1,\infty)} =  \underline{d}{}^{\zeta_{k,c+1}}_{[1,\infty)} =  w^{k}, \overline{\mathcal C^{c-1}, \mathcal D, \mathcal C}$.  This agrees with  $\underline d(k,c\,1\,c)$  through to the penultimate digit of this latter.   Thus the admissibility holds on $\mathscr J_{k, c\,1\,c}$.

\medskip 

\noindent
Suppose $v=\Theta_q(c), q\ge 1$ and $c>1$.  We again use  $\alpha' = \eta_{k,c}$.  The sequence $\underline{d}{}^{\eta_{k,c}}_{[1,\infty)}$ and $\mathcal D = \mathcal C, w$,  here $\underline{d}{}^{\alpha'}_{[1,\infty)}$  has  the prefix $w^k, (\mathcal C^{c-1}\mathcal D)^{q+1} \mathcal C^c, w$.  This in turn has $\underline d(k,\Theta_q(c))$ as a prefix.      Let $\alpha'' = \zeta_{k, \Theta_{q-1}(c)}$.  Then 
$\underline{d}{}^{\alpha''}_{[1,\infty)} =    w^k, \overline{(\mathcal C^{c-1}\mathcal D)^{q+1}\mathcal C}$,  which  has the prefix $w^k, (\mathcal C^{c-1}\mathcal D)^{q+1} \mathcal C^c, (-1)^{n-3}, -2$.  It thus agrees with  $\underline d(k,\Theta_q(c))$   through to the penultimate digit of this latter.   Thus the admissibility holds.   One checks that the same form of $\alpha', \alpha''$ works for $v=\Theta_q(c), q\ge 2$ and $c=1$.

\medskip  

\noindent
{\bf Case 1.}  Suppose   $v = \Theta_{0}^{h}(c), h\ge 2$ for some $c> 1$.  We have $v =c (1\,c)^h$.   We induce on $h$, setting $\alpha' = \eta_{k,  \Theta_{0}^{h-1}(c)}$.   Then,  $\underline{d}{}^{\alpha'}_{[1,\infty)}$  has  the prefix $w^k, \mathcal C^{c-1}, (\mathcal D\mathcal C^c)^h, (-1)^{n-3}, -2$, which agrees through to the  penultimate digit of $\underline d(k, v)$.  One easily checks that setting $\alpha'' = \zeta_{k,c+1}$ yields $\underline{d}{}^{\alpha''}_{[1,\infty)}$ of prefix $\underline d(k, v)$.

\medskip  
\noindent
{\bf Case 2.}  
 Suppose that the result holds for $v = uau = \Theta_p(u)$ with $u \in \mathcal V$ and $p \ge 1$.  We prove that the result holds for   $\Theta_q(v), q\ge 1$.    We take $\alpha' = \eta_{k, v}$ and  $\alpha'' = \zeta_{k, \Theta_{q-1}(v)}$ if $q\ge 1$. 
To appropriately restrict the use of $\underline d(k,v)$  to prefixes $u$ of $v\in \mathcal V$, note that we must in particular suppress the final digit of $-1$;  we denote this by  $\underline d'(k,u)$.   Recall that $v' =  a \overleftarrow{u'} u$, $v'' = au$ and that $u$ both begins and ends with the letter $c_1$.

  When $c_1 > 1$, from $\overleftarrow{u'} = u_{[-1]} (c_1-1)\,1$ we find
 \[
 \begin{aligned}
 \underline{d}{}^{\eta_{k,v}}_{[1,\infty)} &= \underline{d}(k,v (v')^{\infty}) = \underline{d}(k,v (v')^q a \overleftarrow{u'} u\, (v')^{\infty})\\
          & = \underline{d}'(k,v (v')^q a u_{[-1]}) \,\mathcal C^{c_1-1}  \mathcal D  \,\mathcal C^{c_1} \cdots \,.
\end{aligned}
 \]
 On the other hand, still with $c_1 >1$, we have 
 \[\underline{d}(k,\Theta_q(v)\,) = \underline{d}'(k,v (v')^q a u_{[-1]}) \, \mathcal C^{c_1}\,.\]
Recall that $\mathcal D = \mathcal C, w$,  thus $\mathcal C^{c_1-1}  \mathcal D = \mathcal C^{c_1}, w$;   we see that 
$\underline{d}(k,\Theta_q(v)\,)$ is indeed a prefix of $\underline{d}{}^{\eta_{k,v}}_{[1,\infty)}$.  

 When $c_1 = 1$,   $\underline{d}(k,\Theta_q(v)\,) = \underline{d}'(k,v (v')^q a u_{[-1]})  \mathcal D^{d_1}\mathcal C$ while $ \underline{d}{}^{\eta_{k,v}}_{[1,\infty)} = \underline{d}'(k,v (v')^q a u_{[-2]})  \mathcal D^{d_1+1}  \,\mathcal C \cdots \,$.   Thus, since $\mathcal C$ is a prefix of $\mathcal D$,  we find that here also 
 $\underline{d}(k,\Theta_q(v)\,)$ is a prefix of $\underline{d}{}^{\eta_{k,v}}_{[1,\infty)}$. 

 Since $\Theta_{q-1}(v)= v(v')^{q-1} v''$ is a palindrome,    $\overleftarrow{(\Theta_{q-1}(v)\,)' } = v(v')^{q-1}a \overleftarrow{u'}$.  By Proposition~\ref{p:theWordsAreNice}, $v'v'' = a \overleftarrow{u'} v$, we find
\begin{equation}\label{eq:zetaWordCalc}
\begin{aligned}  
(\, \overleftarrow{(\Theta_{q-1}(v)\,)' }  \,)^{\infty} &=  v\; [ (v')^{q-1} v' v''\,]^{\infty}\\
                                                                   &=  v \;[\, (v')^q v''\,]^{\infty}.
\end{aligned}
\end{equation}  
Thus
 Lemma~\ref{l:leftExpansionsAtEndpoints}   yields 
$\underline{d}{}^{\zeta_{k,\Theta_{q-1}(v)} }_{[1,\infty)} =\underline{d}(k, v \,[\, (v')^q v''\,]^{\infty} \,)$.
Therefore, $\underline{d}'(k, v (v')^{q} v''), (-1)^{n-3}, -2$ is a prefix of $\underline{d}{}^{\zeta_{k,\Theta_{q-1}(v)} }_{[1,\infty)}$.  That is, $\underline{d}{}^{\zeta_{k,\Theta_{q-1}(v)} }_{[1,\infty)}$  agrees with $\underline{d}(k, \Theta_{q}(v))$ exactly through to its penultimate digit.    

\medskip   
\noindent
{\bf Case 3.}  Again suppose that $v = \Theta_p(u) = uau$ in our usual notation.   Then $v'' = (u')^{p} u''$, and one finds that $\Theta_{0}^{h}\circ \Theta_p(u) = \Theta_{p}(u)\, [(u')^{p} u'']^{h}$.   For these words, we use   $\alpha'' = \zeta_{k, \Theta_{p-1}(u)}$.  
Indeed,  from \eqref{eq:zetaWordCalc}, $\underline{d}{}^{\zeta_{k,\Theta_{p-1}(u)} }_{[1,\infty)}$   agrees with $\underline{d}(k, \Theta_{0}^{h}\circ \Theta_p(u)\,)$ exactly through to its penultimate digit.

 Since $(\,\Theta_{0}^{h-1}\circ \Theta_p(u) \,)' = u'\, [(u')^{p} u'']^{h}$, we find that 
\begin{equation}\label{eq:thetaZeroNeighborsAndPrimes}
\Theta_{0}^{h-1}(v) \cdot (\,\Theta_{0}^{h-1}(v) \,)' = \Theta_{0}^{h}(v)\cdot u' [(u')^{p} u'']^{h}.
\end{equation}
 Therefore, Lemma~\ref{l:leftExpansionsAtEndpoints} yields that $\alpha' = \eta_{k, \Theta_{0}^{h-1}(v)}$ allows the proof to succeed.  
 \end{proof}

\medskip 
\begin{Lem}\label{l:alsoFinalDigitIsAdmissible}    With notation as above,  
suppose that  for all $\alpha \in [\zeta_{k,v}, \eta_{k,v}]$, both  
\begin{enumerate}
\item[(a)] $\overline{d}{}^{\alpha}_{[1,\overline{S}]} =  \overline{d}(k,v)$; 

\smallskip
\item[(b)] $\underline{d}{}^{\alpha}_{[1,\underline{S}-1]}$ agrees with the initial subword of length $\underline{S}-1$ of $\underline{d}(k,v)$.  
\end{enumerate}

\medskip 
\noindent
Then for all $\alpha \in [\zeta_{k,v}, \eta_{k,v})$ we have $\underline{d}{}^{\alpha}_{[1,\underline{S}]} =  \underline{d}(k,v)$.  
\end{Lem}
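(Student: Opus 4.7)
The plan is to reduce the problem to a continuity argument on a single iterate. Define $p_\alpha := T_\alpha^{\underline S - 1}(\ell_0(\alpha))$ for $\alpha \in [\zeta_{k,v}, \eta_{k,v}]$. By hypothesis (b), $p_\alpha$ is obtained by applying a single fixed word (the same for all $\alpha$ in the interval) to $\ell_0(\alpha) = (\alpha-1)t$, so $p_\alpha$ lies in $\mathbb I_\alpha$ and depends rationally, hence continuously, on $\alpha$. The $\underline S$-th digit of $\ell_0(\alpha)$ is the digit of $p_\alpha$, and the final letter of $\underline d(k,v)$ is $(-1,1)$ (the last $-1$ of the leading $(-1)^{n-2}$-block of $L_{k,v}$); so it suffices to prove $p_\alpha \in \Delta_\alpha(-1,1)$ for every $\alpha \in [\zeta_{k,v}, \eta_{k,v})$.

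The key step is to invoke Proposition~\ref{p:longWordInWmIsThree}, which asserts $L_{k,v} = C^{-1}AC\, R_{k,v}$. Because the leading block of $L_{k,v}$ is $(A^{-1}C)^{n-2}$ and its trailing factor $A^{-1}$ cancels the $A$ that sends $\ell_0(\alpha)$ to $r_0(\alpha)$, stripping the leftmost $A^{-1}C$ off $L_{k,v}A$ leaves exactly the word computing $p_\alpha$ from $\ell_0(\alpha)$. Thus $A^{-1}C\cdot p_\alpha = L_{k,v} A\cdot \ell_0(\alpha)$; using $A\cdot \ell_0(\alpha)=r_0(\alpha)$ and hypothesis (a) then yields the identity
\[ A^{-1}C\cdot p_\alpha \;=\; C^{-1}AC\cdot r_{\overline S}(\alpha). \]

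Let $E(\alpha)$ denote the right endpoint of $\Delta_\alpha(-1,1)$, characterized by $A^{-1}C\cdot E(\alpha) = r_0(\alpha)$. Then the equation $p_\alpha = E(\alpha)$ is equivalent to $C^{-1}AC\cdot r_{\overline S}(\alpha) = r_0(\alpha)$, which is precisely the defining relation of $\eta_{k,v}$ in Definition~\ref{d:upperDigsCylsMats}. In particular $p_{\eta_{k,v}} = E(\eta_{k,v})$. At the left endpoint, the defining relation $R_{k,v}\cdot r_0(\zeta_{k,v}) = \ell_0(\zeta_{k,v})$ gives instead $A^{-1}C\cdot p_{\zeta_{k,v}} = C^{-1}AC\cdot \ell_0(\zeta_{k,v}) = \ell_0(\zeta_{k,v})/(1 - t\ell_0(\zeta_{k,v}))$. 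Since $\ell_0(\zeta_{k,v}) < 0 < r_0(\zeta_{k,v})$, this expression is negative and strictly less than $r_0(\zeta_{k,v}) = A^{-1}C\cdot E(\zeta_{k,v})$, so $p_{\zeta_{k,v}} < E(\zeta_{k,v})$.

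With the endpoint behavior of $p_\alpha$ versus $E(\alpha)$ pinned down---strict inequality at $\zeta_{k,v}$ and equality at $\eta_{k,v}$---continuity reduces the lemma to showing $p_\alpha \neq E(\alpha)$ for every $\alpha \in (\zeta_{k,v}, \eta_{k,v})$, which is the main obstacle. The function $L_{k,v}A\cdot\ell_0(\alpha) - r_0(\alpha)$ is rational in $\alpha$ with numerator of degree at most two, so at most two values of $\alpha$ satisfy $p_\alpha = E(\alpha)$; one of them is $\eta_{k,v}$, and one must rule out that the second lies in $(\zeta_{k,v}, \eta_{k,v})$. I expect this to follow from the orientation-preserving nature of all the involved Möbius transformations (each of determinant $1$) combined with the inductive localization of $\mathscr I_{k,v}$ established by Proposition~\ref{p:Admissible}: a second crossing inside $(\zeta_{k,v}, \eta_{k,v})$ would produce a further fixed-point relation that, propagated back through $L_{k,v} = C^{-1}AC R_{k,v}$, contradicts the cylinder structure identified there. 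With this obstruction removed, $p_\alpha < E(\alpha)$ throughout $[\zeta_{k,v}, \eta_{k,v})$, so $p_\alpha \in \Delta_\alpha(-1,1)$ and the $\underline S$-th digit is $(-1,1)$, completing the proof.
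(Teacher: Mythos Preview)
Your reduction is correct and matches the paper's: by hypothesis~(b) the point $p_\alpha=\ell_{\underline S-1}(\alpha)$ is given by a fixed M\"obius word in $\ell_0(\alpha)$, and membership in $\Delta_\alpha(-1,1)$ is equivalent to $L_{k,v}A\cdot\ell_0(\alpha)<r_0(\alpha)$, which via $L_{k,v}=C^{-1}AC\,R_{k,v}$ and hypothesis~(a) becomes $C^{-1}AC\cdot r_{\overline S}(\alpha)<r_0(\alpha)$. The endpoint checks at $\zeta_{k,v}$ and $\eta_{k,v}$ are also right.

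The final paragraph, however, is not a proof but an expression of hope. You correctly observe that $L_{k,v}\cdot r_0(\alpha)=r_0(\alpha)$ has at most two solutions and that $\eta_{k,v}$ is one of them, but nothing you write locates the second. ``Orientation-preserving'' only tells you $L_{k,v}$ is increasing where defined, which does not prevent its graph from crossing the diagonal twice on $(r_0(\zeta_{k,v}),r_0(\eta_{k,v}))$; and Proposition~\ref{p:Admissible} concerns admissibility of the \emph{right} digits and says nothing about fixed points of $L_{k,v}$. The sentence ``a second crossing \ldots\ would produce a further fixed-point relation that \ldots\ contradicts the cylinder structure'' has no content as written.

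The paper closes this gap differently. Rather than arguing abstractly about the two roots, it produces a concrete interior point $\chi\in(\zeta_{k,v},\eta_{k,v})$ at which the final digit is verified directly: since $r_{\overline S}(\zeta_{k,v})=\ell_0(\zeta_{k,v})<0$ while $r_{\overline S}(\eta_{k,v})=(C^{-1}AC)^{-1}\cdot r_0(\eta_{k,v})>0$, there is $\chi$ with $R_{k,v}\cdot r_0(\chi)=0$; because $C^{-1}AC$ fixes $0$, this forces $L_{k,v}A\cdot\ell_0(\chi)=0\in\mathbb I_\chi$, so $\underline d^{\chi}_{[1,\underline S]}=\underline d(k,v)$. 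From there the paper invokes the second conclusion of Lemma~\ref{l:leftRegionOneBeforeSyn} (which compares $\ell_{i-1}$ and $r_{j-1}$ once the identity $\ell_{i-1}=C^{-1}AC\cdot r_{j-1}$ is in force) together with monotonicity to push the admissibility from $\chi$ outward, stopping precisely where $L_{k,v}A\cdot\ell_0(\alpha)=r_0(\alpha)$, i.e.\ at $\eta_{k,v}$. The point $\chi$ is the ingredient you are missing; without it your degree-two count leaves the location of the second root entirely undetermined.
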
 
\begin{proof}   By the first hypothesis,  there is some $\chi = \chi_{k,v}$ such that $R_v\cdot r_0(\chi_v) = 0$.   Since $C^{-1}ACR_{k,v} = L_{k,v}$ and it is trivially verified that $C^{-1}AC$ fixes zero,  we find that $L_{k,v}\cdot \ell_0(\chi) = 0$.   In particular, we find that  $\underline{d}{}^{\chi}_{[1,\underline{S}]} =   \underline{d}(k,v)$.   

Now by continuity and the fact that M\"obius functions are increasing functions, we can invoke the second conclusion of Lemma~\ref{l:leftRegionOneBeforeSyn} on an interval around $\chi$ to find in particular that  $L_{k,v}\cdot \ell_0(\alpha)> R_v\cdot r_0(\alpha)$ holds from $\alpha = \zeta_{k,v}$ until $L_{k,v}\cdot \ell_0(\alpha) = r_0(\alpha)$.  But, this describes exactly the interval $[\zeta_{k,v}, \eta_{k,v})$.   
\end{proof}

\subsection{There are no other points of synchronization}  Suppose that $\alpha < \gamma_{3,n}$ is not in any $\mathscr J_{k,v}$ and is also not equal to any $\eta_{k,v}$.   There is some $k$ such that $\alpha \in \mathscr I_{k,1}$,  but of course $\alpha \notin \mathscr J_{k,1}$;  there is thus a unique $q_1$ such that $\alpha \in \mathscr I_{k,\Theta_{q_{1}}(1)}$.   Again,  $\alpha \notin \mathscr J_{k,\Theta_{q_{1}}(1)}$ and thus there is a unique $q_2$ with $\alpha \in \mathscr I_{k,\Theta_{q_{2}}\circ\Theta_{q_{1}}(1)}$.  Clearly this process iterates, and we find that there is an infinite sequence of $q_i$ such that $\alpha \in \cap_{j=1}^{\infty}\, \mathscr I_{k, \Theta_{q_{j}}\circ \cdots \circ\Theta_{q_{1}}(1)}$.  Recall that  for any $v \in\mathcal V$  and any $q$, $\Theta_q(v)$ has $v$ as a prefix.  Therefore,  the sequence of the $q_i$ uniquely determines both $\underline{d}{}^{\alpha}_{[1,\infty)}$ and $\overline{d}{}^{\alpha}_{[1,\infty)}$.   In particular,  $\overline{d}{}^{\alpha}_{[1,\infty)}$ has digits only in $\{k, k+1\}$,  while  $\underline{d}{}^{\alpha}_{[1,\infty)}$ has digits only in $\{-1, -2\}$.  Therefore, the two orbits cannot synchronize.

Note that when $\alpha$ is some $\eta_{k,v}$ then again   $\overline{d}{}^{\alpha}_{[1,\infty)}$ has digits only in $\{k, k+1\}$,  while  $\underline{d}{}^{\alpha}_{[1,\infty)}$ has digits only in $\{-1, -2\}$. 

\subsection{The non-synchronization set is of measure zero}\label{ss:nonSynchMeasZero}    
Although we have proven that the complement of the $\mathscr J_{k,v}$ is a Cantor set, it is then still possible that it could be a so-called fat Cantor set thus one of positive measure.    We will easily show that the non-synchronization points have left expansions that involve only $-1$ and $-2$ as simplified digits.   We will argue by way of the maps of \cite{CaltaSchmidt} that the set of such $\alpha$ is of measure zero.

The $T_{\alpha}$-orbit of $\ell_0(\alpha)$ is always in the set of points whose simplified digits are $-1$ or  $-2$.  In particular, this orbit certainly remains in $[-t,0)$.  But $[-t,0)$ is the interval of definition of the map $g = g_{3,n}$ studied in \cite{CaltaSchmidt}, where $g(x) = A^k C\cdot x$ with $k$ defined exactly so that this image lies in $[-t,0)$.  Therefore,  for any $\alpha$ whose $T_{\alpha}$ orbit of $\ell_0(\alpha)$  remains in the set with simplified digits $-1$ or  $-2$ (note that every point in such an orbit is less than zero), this $T_{\alpha}$-orbit is the $g$-orbit of $\beta := \ell_0(\alpha)$.    Furthermore, \cite{CaltaSchmidt}  shows that $g$ is ergodic with respect to what is naturally called a Gauss measure, although this   measure is infinite.    A so-called acceleration of $g$, a map $f$ on $[-t,0)$ is then shown in \cite{CaltaSchmidt} to be ergodic with respect to a {\em finite}  measure (which is equivalent to Lebesgue).   The process of acceleration involves taking well-defined subsequences of $g$-orbits.      We thus find that the $f$-orbit of $\beta$ remains of small digits, and due to the Ergodic Theorem,  $\beta$ lies in a measure zero subset.

\section{The group element identities for the setting $\alpha < \gamma_{3,n}$}\label{s:groupIdsSmallAlp}

We gather key group identities, used above,  in this section.

\begin{Lem}\label{l:shortRightId}   Fix $m=3$.   For integers $a, k \ge 1$ and $u \in \mathbb Z$, we have   
\[ A^u C (A^{k}C)^a =  A^{u-1}C \, (A^{-1}C)^{n-2}\, [W^{k-1} A^{-2}C (A^{-1}C)^{n-3}]^{a-1} W^{k}A^{-1}\;.
              \]
\end{Lem}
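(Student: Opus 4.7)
My plan is to induct on $a$, arranging matters so that the induction step reduces (after cancellation from the left) to a single ``lifting'' identity that does not involve $a$. Two preparatory closed-form computations will then make each step mechanical.

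First I will record that $W = A^{-1}C^{-1}ACA$ from \eqref{e:WiD} may be written as $W = PAP^{-1}$ with $P = A^{-1}C^{-1}$, so that
\[
W^k \;=\; A^{-1}C^{-1}A^kCA, \qquad W^kA^{-1} \;=\; A^{-1}C^{-1}A^kC.
\]
Then I will simplify $X := A^{-2}C(A^{-1}C)^{n-3}$, which is the ``middle block'' that is repeated $a-1$ times on the right-hand side. Since $B = A^{-1}C$ has order $n$ projectively, $(A^{-1}C)^{n-3} = (C^{-1}A)^3$, and a direct multiplication gives $X = A^{-1}C^{-1}AC^{-1}A$.

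For the base case $a = 1$, the identity to verify is $A^uCA^kC = A^{u-1}C(A^{-1}C)^{n-2}W^kA^{-1}$. I will substitute the closed form for $W^kA^{-1}$ and use the simplification
\[
C(A^{-1}C)^{n-2}A^{-1} \;=\; C(A^{-1}C)^{n-1}C^{-1} \;=\; C\cdot C^{-1}A\cdot C^{-1} \;=\; AC^{-1},
\]
collapsing the right-hand side to $A^uC^{-2}A^kC$; since $m = 3$ forces $C^{-2} = C$ projectively, this equals the left-hand side. For the induction step, I will multiply the formula for $a$ on the right by $A^kC$. The left-hand side becomes the left-hand side for $a+1$; on the right, the suffix $W^kA^{-1}\cdot A^kC = W^kA^{k-1}C$ appears. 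After cancelling the common left-hand prefix $A^{u-1}C(A^{-1}C)^{n-2}(W^{k-1}X)^{a-1}$ with the right-hand side for $a+1$, the step reduces to the key identity
\[
W^kA^{k-1}C \;=\; W^{k-1}X\,W^kA^{-1},
\]
and cancelling $W^{k-1}$ on the left reduces this further to $WA^{k-1}C = XW^kA^{-1}$. Substituting the closed forms of $W$ and $X$ and once more invoking $C^{-2} = C$, both sides expand to $A^{-1}C^{-1}ACA^kC$, completing the induction.

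I do not anticipate any conceptual obstacle: the argument is purely symbolic, and every reduction uses precisely one of the two projective relations $B^n = \text{Id}$ and $C^3 = \text{Id}$. The main discipline is simply to keep track of where $C^{-2}$ may be replaced by $C$ and where $(A^{-1}C)^{n-1}$ may be replaced by $C^{-1}A$; once the two closed forms for $W^k$ and $X$ are in hand, the base case and the key identity are each three-line calculations.
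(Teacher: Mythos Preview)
Your proof is correct and follows essentially the same overall scheme as the paper's---induction on $a$---but executes both the base case and the induction step more economically.

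The paper first establishes the case $a=1$ by a separate induction on $k$ (base $k=1$, then $k\to k+1$ via $C^{-1}AC$), and in the induction step $a\to a+1$ it expands $W$ as the long admissible word $A^{-2}C(A^{-1}C)^{n-3}A^{-2}C(A^{-1}C)^{n-2}$ and then re-invokes the $a=1$ identity with $u=-1$, $k'=k-1$ to collapse the tail. Your approach instead front-loads two closed forms: the conjugation $W=PAP^{-1}$ with $P=A^{-1}C^{-1}$, giving $W^kA^{-1}=A^{-1}C^{-1}A^kC$ for all $k$ at once, and the simplification $X=A^{-2}C(A^{-1}C)^{n-3}=A^{-1}C^{-1}AC^{-1}A$. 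With these in hand, the base case holds for all $k$ by a single three-line reduction (no induction on $k$), and the induction step cancels down to the bare identity $WA^{k-1}C=XW^kA^{-1}$, which is verified by direct substitution and one use of $C^{-2}=C$. What you gain is brevity and a cleaner separation of the two relations $B^n=\mathrm{Id}$ and $C^3=\mathrm{Id}$; what the paper's version retains is a closer tie to the admissible-word form of $W$, which is the form actually used downstream in the synchronization arguments.
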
 
\begin{proof}   
For $a=1$, we  give an induction proof.  We use repeatedly \eqref{e:WiD}:  $W = A^{-1} C^{-1}   A C A$.  First, 
\[A^uCACA = A^u C\; CAW = A^u C^3\; (C^{-1} A) \; W = A^u C^3 (A^{-1}C)^{n-1} W\,,\]
giving the case of $k=1$, as $m=3$. 
Now,  we complete the proof in the case of $a=1$ by induction: 
\[ 
\begin{aligned}
 A^u CA^{k+1}C &= (A^u C A^kC) \; C^{-1}AC \\
                          &= A^{u-1} C  (A^{-1}C)^{n-2} W^k\; (A^{-1} C^{-1}AC) \\
                          &= A^{u-1} C  (A^{-1}C)^{n-2} W^{k+1}A^{-1}\,.
\end{aligned}
\]

Recalling that with $m=3$, we have $W = A^{-2}C \, (A^{-1}C)^{n-3} \,  A^{-2}C (A^{-1}C)^{n-2} $, the induction step for increasing values of $a$ is given by  
\[ 
\begin{aligned}
 A^u C (A^{k}C)^{a+1} &= A^{u-1} C  (A^{-1}C)^{n-2} [W^{k-1}A^{-2}C(A^{-1}C)^{n-3}]^{a-1} W^kA^{-1}\; A^k C \\
                          &= A^{u-1} C  (A^{-1}C)^{n-2} [W^{k-1}A^{-2}C(A^{-1}C)^{n-3}]^{a}\,A^{-2}C(A^{-1}C)^{n-3} \, A^{-1}C \; A^{k-1} C \\
                          &= A^{u-1} C  (A^{-1}C)^{n-2} [W^{k-1}A^{-2}C(A^{-1}C)^{n-3}]^{a}\\
                          & \;\;\;\;\;\; \;\;\;\;\;\;\times A^{-2}C(A^{-1}C)^{n-3}  \, A^{-2}C (A^{-1}C)^{n-2} W^{k-1} A^{-1} \\
                          &= A^{u-1} C  (A^{-1}C)^{n-2} [W^{k-1}A^{-2}C(A^{-1}C)^{n-3}]^{a}\,  W^{k} A^{-1} \,,
\end{aligned}
\]
where in the passage from the second to the third line we have applied the identity in the case of $a=1$.            
\end{proof}

\begin{Lem}\label{l:wInTheMiddleMis3}   With $m= 3$, for any integer $k \ge 1$   we have   
\[W A^{-1}\,\cdot  A^k C A^{-1}C =   A^{-2}C \,(A^{-1}C\,)^{n-3}  W^{k} \,A^{-2}C \;.
              \]
\end{Lem}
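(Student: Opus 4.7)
The plan is to reduce the identity to Lemma~\ref{l:shortRightId} by using the compact form $W = A^{-1}C^{-1}ACA$ established in \eqref{e:WiD}. Substituting this into the left-hand side and collapsing $A\cdot A^{-1}\cdot A^k = A^k$ gives
\[
WA^{-1}\cdot A^k C A^{-1}C \;=\; A^{-1}C^{-1}\cdot (ACA^k C)\cdot A^{-1}C.
\]
The middle factor $ACA^kC$ is precisely the expression to which Lemma~\ref{l:shortRightId} applies, with parameter choices $u=1$ and $a=1$: the lemma then yields
\[
AC A^k C \;=\; C(A^{-1}C)^{n-2}W^{k}A^{-1}.
\]

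Inserting this substitution, the neighboring $C^{-1}\cdot C$ collapses and I am left with
\[
WA^{-1}\cdot A^k C A^{-1}C \;=\; A^{-1}(A^{-1}C)^{n-2}\,W^{k}\,A^{-2}C.
\]
To finish, I would rewrite the prefix $A^{-1}(A^{-1}C)^{n-2} = A^{-1}\cdot A^{-1}C\cdot (A^{-1}C)^{n-3} = A^{-2}C(A^{-1}C)^{n-3}$, which is exactly the first half of $W$ in its long form (recall $W = A^{-2}C(A^{-1}C)^{n-3}\cdot A^{-2}C(A^{-1}C)^{n-2}$ when $m=3$). Matching factors against the claimed right-hand side then completes the proof.

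There is no substantive obstacle: Lemma~\ref{l:shortRightId} carries all of the combinatorial weight, and the rest is bookkeeping between the two forms of $W$. The only thing to keep track of is the cancellation across the junction $C^{-1}\cdot C$, which is precisely what motivates pre-factoring the chosen form of $W$ out on the left before invoking the lemma.
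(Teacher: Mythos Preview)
Your proof is correct and in fact cleaner than the paper's. The paper proves the lemma by a fresh induction on $k$: it verifies the base case $k=1$ by direct manipulation of the relations $W=A^{-1}C^{-1}ACA$, $(A^{-1}C)^n=\text{Id}$, and $C^3=\text{Id}$, and then carries out a rather long inductive step, never invoking Lemma~\ref{l:shortRightId}. You instead observe that after factoring out $A^{-1}C^{-1}$ on the left and $A^{-1}C$ on the right, the middle block is exactly $AC\cdot A^kC$, which is the $u=1$, $a=1$ instance of Lemma~\ref{l:shortRightId}; the identity then falls out after the two trivial cancellations and the regrouping $A^{-1}(A^{-1}C)^{n-2}=A^{-2}C(A^{-1}C)^{n-3}$. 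Since the induction on $k$ is already packaged inside Lemma~\ref{l:shortRightId}, your route avoids re-doing that work and is strictly shorter. The paper's self-contained induction has the minor virtue of not creating a logical dependency between the two lemmas, but since both are proved before they are used, your approach is a genuine simplification.
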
 
\begin{proof} We show this by induction.  
 
Recall \eqref{e:WiD}: $W=  A^{-1} C^{-1}   A C A$.   Our base case is  $k=1$, where we find 
\[
\begin{aligned}
W A^{-1}\,\cdot  A^{1} C A^{-1}C &=   A^{-1} C^{-1}   A C  A C A^{-1}C \\
         & = A^{-1} C^{-1}   A C  A CA  A^{-2}C\\
         &= A^{-1} C^{-1}   A C ( CA A^{-1} C^{-1} ) A CA  A^{-2}C\\
         & = A^{-1} (C^{-1}   A)  C^2   A W  A^{-2}C \\
         &= A^{-1} (A^{-1}C)^{n-1}C^2 A W  A^{-2}C\\
         &= A^{-2} C (A^{-1}C)^{n-3} A^{-1} C C^2 A W  A^{-2}C\\
         &=  A^{-2} C (A^{-1}C)^{n-3} W  A^{-2}C         
  \end{aligned}
  \]       
We have used that $C = AB$ gives $A^{-1}C$ is of order $n$, and that $C^3$ is the projective identity.

Now assume that the identity holds for $k$.  We find 
\[
\begin{aligned}
W A^{-1}\,\cdot  W^{k+1}A^{1} C A^{-1}C &=   (WA^{-1}A) \; A^k C A^{-1}C \\
         &= W \; A W^{-1} A^{-2} C (A^{-1}C)^{n-3} W^k  A^{-2}C  \\
         &= W \; A W^{-1}  A^{-1} \; (A^{-1}C)^{n-2}W^k  A^{-2}C  \\
         &= W \; A W^{-1}  A^{-1} \; (C^{-1}A)^2 W^k  A^{-2}C  \\
         &= W \; A W^{-1}  A^{-1}\;(C^{-1}A)^2 (W^{-1} W) W^k  A^{-2}C  \\
         &= W \; A W^{-1}  A^{-1}\;  C^{-1}A(C^{-1}A A^{-1} C^{-1})A^{-1} CA \;  W^{k+1}  A^{-2}C  \\
         &= W \; A (W^{-1})  A^{-1}\;  C^{-1}AC^{-2}A^{-1} CA \;  W^{k+1}  A^{-2}C  \\
         &= W \; (A A^{-1}) C^{-1}(A^{-1} CA  A^{-1}\;  C^{-1}A)C^{-2}A^{-1} CA \;  W^{k+1}  A^{-2}C  \\
         &= W \;   (C^{-3}) A^{-1} CA \;  W^{k+1}  A^{-2}C  \\
         &= (W) \; A^{-1} CA \;  W^{k+1}  A^{-2}C  \\
         &=  A^{-1} C^{-1}   A C (A A^{-1}) CA \;  W^{k+1}  A^{-2}C  \\
         &=  A^{-1} C^{-1}   A (C^2) A \;  W^{k+1}  A^{-2}C  \\
         &=  A^{-1} (C^{-1}   A)^2  \;  W^{k+1}  A^{-2}C  \\
         &=  A^{-1} (A^{-1}   C)^{n-2}  \;  W^{k+1}  A^{-2}C  \\
         &=  A^{-2} C (A^{-1}   C)^{n-3}  \;  W^{k+1}  A^{-2}C.
\end{aligned}         
  \]          
\end{proof}

\begin{Prop}\label{p:longWordInWmIsThree}   With $m=3$, for any integer $u$    we have   
\[
\begin{aligned} 
 &A^uC (A^kC)^{a_s}\, (A^{k+1}C)^{b_{s-1}}\, (A^kC)^{a_{s-1}}\,\cdots (A^{k+1}C)^{b_1}\, (A^kC)^{a_1}\\
&=\;A^{u-1} C A^{-1} C   [(A^{-1}C)^{n-3}W^{k-1}A^{-2}C]^{a_s} [(A^{-1}C)^{n-3}W^k A^{-2}C]^{b_{s-1}}\,  [(A^{-1}C)^{n-3}W^{k-1}A^{-2}C]^{a_{s-1}}\,\cdots\\
&\;\;\; \cdots   [(A^{-1}C)^{n-3}W^k A^{-2}C]^{b_{1}}\,  [(A^{-1}C)^{n-3}W^{k-1}A^{-2}C]^{a_1-1 } (A^{-1}C)^{n-3} W^k A^{-1}\,.
\end{aligned} 
\]

\end{Prop}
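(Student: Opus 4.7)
The plan is to reduce the identity to iterated applications of Lemmas~\ref{l:shortRightId} and~\ref{l:wInTheMiddleMis3}, stitched together by two ``transfer'' identities that handle the alternation between $(A^kC)$- and $(A^{k+1}C)$-blocks. Abbreviate $P = (A^{-1}C)^{n-3}W^{k-1}A^{-2}C$ and $Q = (A^{-1}C)^{n-3}W^kA^{-2}C$, so that the claimed right-hand side reads
\[ A^{u-1}CA^{-1}C\cdot P^{a_s}Q^{b_{s-1}}P^{a_{s-1}}\cdots Q^{b_1}P^{a_1-1}\cdot(A^{-1}C)^{n-3}W^kA^{-1}. \]
First I would extract from Lemma~\ref{l:wInTheMiddleMis3}, by right-multiplying both sides by $(A^{-1}C)^{-1} = C^{-1}A$ and re-indexing, the auxiliary formula
\[ W\cdot A^j C \;=\; A^{-2}C(A^{-1}C)^{n-3}W^{j+1}A^{-1}, \qquad j\ge 0, \]
where the Lemma's hypothesis $k\ge 1$ corresponds precisely to $j\ge 0$.

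From this auxiliary formula I would establish by induction on the exponent the two transfer identities
\[ (A^{-1}C)^{n-3}W^kA^{-1}\cdot(A^{k+1}C)^b \;=\; P\cdot Q^{b-1}\cdot(A^{-1}C)^{n-3}W^{k+1}A^{-1}, \]
\[ (A^{-1}C)^{n-3}W^{k+1}A^{-1}\cdot(A^kC)^a \;=\; Q\cdot P^{a-1}\cdot(A^{-1}C)^{n-3}W^kA^{-1}, \]
for $b,a\ge 1$. Each base case rewrites the leading $W^k$ (or $W^{k+1}$) as $W^{k-1}\cdot W$ and invokes the auxiliary formula on the newly exposed $W\cdot A^kC$ (or $W\cdot A^{k-1}C$) to produce the leading $P$ (or $Q$); each inductive step appends one more block and again applies the auxiliary formula to a freshly exposed $W\cdot A^jC$. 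The uniform validity of the auxiliary formula down to $j=0$ is exactly what lets these identities go through for all $k\ge 1$ with no special treatment at $k=1$.

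With these tools in place, the proposition follows by peeling blocks off the left. Lemma~\ref{l:shortRightId} rewrites the opening $A^uC(A^kC)^{a_s}$ as $A^{u-1}CA^{-1}C\cdot P^{a_s-1}\cdot(A^{-1}C)^{n-3}W^kA^{-1}$ (a short rearrangement using $(A^{-1}C)^{n-2} = A^{-1}C(A^{-1}C)^{n-3}$). The trailing factor matches the left side of the first transfer identity; applying it with $b=b_{s-1}$ and merging $P\cdot P^{a_s-1} = P^{a_s}$ promotes the prefix to $A^{u-1}CA^{-1}C\cdot P^{a_s}Q^{b_{s-1}-1}\cdot(A^{-1}C)^{n-3}W^{k+1}A^{-1}$. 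The new trailing factor then matches the second transfer identity; apply it with $a=a_{s-1}$, merge $Q\cdot Q^{b_{s-1}-1} = Q^{b_{s-1}}$, and iterate. After alternating the two transfer identities once for each remaining block, the final application (the second identity applied to the last block $(A^kC)^{a_1}$) leaves exactly the tail $P^{a_1-1}(A^{-1}C)^{n-3}W^kA^{-1}$ required by the proposition.

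The main hurdle is really just organizing the bookkeeping so that the alternation of the transfer identities is correctly synchronized with the alternating pattern of $(A^kC)^{a_i}$- and $(A^{k+1}C)^{b_j}$-blocks in $V_s$, including the slight asymmetry that $P^{a_s}$ appears at full power on the left end while $P^{a_1-1}$ appears at reduced power on the right end. Once one has the transfer identities in the form above, each individual verification is routine; the uniformity of the auxiliary formula at $j=0$ ensures that no parallel $k=1$ argument is required.
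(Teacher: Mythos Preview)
Your proof is correct and uses the same two lemmas as the paper, but your organization is cleaner. The paper first derives a closed form for a pair of blocks $(A^{k+1}C)^b(A^kC)^a$ from Lemma~\ref{l:shortRightId}, then computes the product of two consecutive such pairs by a direct application of Lemma~\ref{l:wInTheMiddleMis3} in its original form, and iterates this two-block-at-a-time computation before finally left-multiplying by $A^uC(A^kC)^{a_s}$. You instead extract from Lemma~\ref{l:wInTheMiddleMis3} the simpler auxiliary $W\cdot A^jC = A^{-2}C(A^{-1}C)^{n-3}W^{j+1}A^{-1}$, use it to prove two symmetric one-block transfer identities, and then peel one block at a time from left to right after the initial application of Lemma~\ref{l:shortRightId}. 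Your approach makes the bookkeeping more transparent---in particular the deficit/merge mechanism that explains why $P^{a_s}$ appears at full power while $P^{a_1-1}$ appears at reduced power is visible at every step---whereas the paper's pairwise computation hides this inside a longer display. Both arguments are routine once the transfer mechanism is isolated; yours just isolates it earlier.
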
 
\begin{proof}   Lemma~\ref{l:shortRightId} yields that each 
\[
 (A^{k+1}C)^b(A^kC)^a  =  (A^{k+1}C)^{b-1} A^kC (A^{-1}C)^{n-2} (W^{k-1}A^{-2}C(A^{-1}C)^{n-3} )^{a-1} W^k A^{-1}.
\]
Thus, if $b>1$ we have 
\[
\begin{aligned} 
 (A^{k+1}C)^b(A^kC)^a  &=  (A^{k+1}C)^{b-2}  A^kC (A^{-1}C)^{n-2} W^k A^{-2}C (A^{-1}C)^{n-3}(W^{k-1}CA^{-2}C)^{a-1} W^k A^{-1}\\
 &\; \vdots\\
                                       &= A^k C  (A^{-1}C)^{n-2} \; [W^k A^{-2}C (A^{-1}C)^{n-3}]^{b-1} [W^{k-1}CA^{-2}C(A^{-1}C)^{n-3}]^{a-1} W^k A^{-1}\,,
\end{aligned} 
\]
giving a formula that is also valid when $b=1$.

Therefore, 
\[
\begin{aligned} 
&(A^{k+1}C)^{b_{j}}\, (A^kC)^{a_{j}}\;(A^{k+1}C)^{b_{j-1}}\, (A^kC)^{a_{j-1}}  \\
&=\; A^k C  (A^{-1}C)^{n-2} \; [W^k A^{-2}C (A^{-1}C)^{n-3}]^{b_{j}-1} [W^{k-1}CA^{-2}C (A^{-1}C)^{n-3}]^{a_{j}-1} W^k A^{-1} \\
&\;\;\;\;\;\;\;\; \times A^k C  (A^{-1}C)^{n-2} \; [W^k A^{-2}C (A^{-1}C)^{n-3}]^{b_{j-1}-1} [W^{k-1}CA^{-2}C (A^{-1}C)^{n-3}]^{a_{j-1}-1} W^k A^{-1}\\
&=\; A^k C  (A^{-1}C)^{n-2}  [W^k A^{-2}C (A^{-1}C)^{n-3}]^{b_{j}-1} [W^{k-1}CA^{-2}C (A^{-1}C)^{n-3}]^{a_{j}-1} W^{k-1}\;  \\
&\;\;\;\;\;\;\;\; \times W A^{-1} A^k C A^{-1}C\\
&\;\;\;\;\;\;\;\;\;\;\;\;\;\times \;  [ (A^{-1}C)^{n-3}W^k A^{-2}C]^{b_{j-1}-1}  (A^{-1}C)^{n-3} [W^{k-1}CA^{-2}C (A^{-1}C)^{n-3}]^{a_{j-1}-1} W^k A^{-1}\\
&=\; A^k C  (A^{-1}C)^{n-2} \; [W^k A^{-2}C (A^{-1}C)^{n-3}]^{b_{j}-1} [W^{k-1}A^{-2}C (A^{-1}C)^{n-3}]^{a_{j}-1} W^{k-1}\;  \\
&\;\;\;\;\;\;\;\; \times A^{-2} C (A^{-1}C)^{n-3}W^k  A^{-2}C\\
&\;\;\;\;\;\;\;\;\;\;\;\;\;\times \;  [(A^{-1}C)^{n-3}W^k A^{-2}C ]^{b_{j-1}-1} [(A^{-1}C)^{n-3}W^{k-1}A^{-2}C]^{a_{j-1}-1} (A^{-1}C)^{n-3}W^k A^{-1}\\ 
&=\; A^k C  A^{-1}C\; [(A^{-1}C)^{n-3} W^k A^{-2}C]^{b_{j}-1} [(A^{-1}C)^{n-3}W^{k-1}A^{-2}C ]^{a_{j}}   \\
&\;\;\;\;\;\;\;\; \times \;  [(A^{-1}C)^{n-3}W^k A^{-2}C]^{b_{j-1}}(A^{-1}C)^{n-3} [(A^{-1}C)^{n-3}W^{k-1}CA^{-2}C]^{a_{j-1}-1} (A^{-1}C)^{n-3}W^k A^{-1}\,,
\end{aligned} 
\]
where for the penultimate line we apply Lemma~\ref{l:wInTheMiddleMis3}.

We now repeatedly use  Lemma~\ref{l:wInTheMiddleMis3} so as to find that  \[
\begin{aligned} 
&(A^{k+1}C)^{b_{s-1}}\, (A^kC)^{a_{s-1}}\,\cdots (A^{k+1}C)^{b_1}\, (A^kC)^{a_1} \\
&=A^k C A^{-1}C\; [(A^{-1}C)^{n-3}W^k A^{-2}C]^{b_{s-1}-1}\,  [(A^{-1}C)^{n-3}W^{k-1}A^{-2}C]^{a_{s-1}}\,\cdots\\
&\;\;\; \cdots   [(A^{-1}C)^{n-3}W^k A^{-2}C]^{b_{1}}\,  [(A^{-1}C)^{n-3}W^{k-1}A^{-2}C]^{a_1-1 } (A^{-1}C)^{n-3} W^k A^{-1}\,.
\end{aligned} \]

Now multiply both sides on the left by    $A^u C (A^kC)^{a_s}$.  On the right hand side, we collect an $A^kC$ and thus apply   Lemma~\ref{l:shortRightId}   to replace   
$A^u C (A^kC)^{a_s + 1}$ by \newline 
$A^{u-1} C A^{-1}C \, [(A^{-1}C)^{n-3}W^{k-1} A^{-2}C]^{a_s} (A^{-1}C)^{n-3}W^{k}A^{-1}$.   The final $A^{-1}$ of this substitution is now adjacent to the leftmost $A^{-1}C$ in the above display, and thus we can combine and regroup to find the desired expression.
\end{proof} 


\section{Synchronization for  $\alpha >\epsilon_{3,n}, n \ge m$}\label{s:LargeAlps}

Fix $m=3$ and $n \ge 3$.   Let $\epsilon = \epsilon_{3,n}$ be such that $A^{-1}C\cdot \ell_0(\epsilon) = r_0(\epsilon)$.    Then the parameter subinterval $[\epsilon, 1)$ is partitioned by subintervals indexed by $k\ge 2$ and characterized by  $\ell_1(\alpha) = A^{-k} C \cdot \ell_0(\alpha)$.   (When $n=3$,  one finds that $\epsilon_{3,3} = G/2$, see Figure~\ref{firstBothDigs}.)
\begin{Thm}\label{t:FullMeasureLargeAlps}   For $m = 3$ and $n\ge m$,  let $\epsilon = \epsilon_{3,n}$.
The set of $\alpha \in (\epsilon,1)$ such that there exists $i = i_{\alpha}, j = j_{\alpha}$ with 
$T_{3,n,\alpha}^{i}(\,r_0(\alpha)\,) = T_{3,n,\alpha}^{j}(\,\ell_0(\alpha)\,)$ is of full measure.  
\end{Thm}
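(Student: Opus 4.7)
The plan is to mirror the strategy of the proof of Theorem~\ref{t:FullMeasureSmallAlps}, now exchanging the roles of the left and right orbits and replacing $W$ with the element $U = AC(AC^{-1})^{n-2}$ of \eqref{eq:Uee}. For $\alpha>\epsilon_{3,n}$ the first simplified digit of $\ell_0(\alpha)$ is $-k$ for some $k\ge 2$, and $(\epsilon_{3,n},1)$ is naturally partitioned by this value. I would first identify the group identity dual to \eqref{eq:cInvAc}: an element $M\in G_{3,n}$ fixing $\infty$ such that whenever $r_{j-1}=M\cdot\ell_{i-1}$, with a suitable compatibility of subsequent digits, one obtains $r_j=\ell_i$ and an ordering relation between $r_{j-1}$ and $\ell_{i-1}$. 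The routine verification, parallel to Lemma~\ref{l:leftRegionOneBeforeSyn}, would supply the synchronization criterion on which the entire argument rests.

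With that in hand, for each $k\ge 2$ and each $v\in\mathcal V$ I would introduce dual analogs of the objects of Definition~\ref{d:upperDigsCylsMats}: lower simplified digit sequences $\underline{d}^{*}(k,v) = (-k)^{c_1}(-(k{+}1))^{d_1}\cdots (-k)^{c_s}$, cylinders $\mathscr I^{*}_{k,v} = \{\alpha : \underline{d}{}^{\alpha}_{[1,\underline{S}]} = \underline{d}^{*}(k,v)\}$, left matrices $L^{*}_{k,v}$ built from products of $A^{-k}C$ and $A^{-(k+1)}C$, potential synchronization intervals $\mathscr J^{*}_{k,v} = [\zeta^{*}_{k,v},\eta^{*}_{k,v})$, and the corresponding $\omega^{*}_{k,v}$. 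Since the combinatorics of the tree $\mathcal V$, the derived-word map $\mathscr D$, and the notion of full-branchedness are parameter-free, Propositions~\ref{p:theWordsAreNice}--\ref{p:frakFofChildren} and Lemmas~\ref{l:frakFandD}--\ref{l:etaLessThanOmega} transfer verbatim. The partition result
\[ (\epsilon_{3,n},1) = \bigsqcup_{k\ge 2}\mathscr I^{*}_{k,1},\qquad \mathscr I^{*}_{k,v} = \mathscr J^{*}_{k,v}\,\sqcup\,\bigsqcup_{q\ge q'}\mathscr I^{*}_{k,\Theta_q(v)} \]
then follows by the identical inductive reduction used for Theorem~\ref{p:cantorSet}.

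The heart of the argument is the admissibility, on each $\mathscr J^{*}_{k,v}$, of both $\underline{d}^{*}(k,v)$ and its companion upper sequence $\overline{d}^{*}(k,v)$. This requires a long-word identity in $G_{3,n}$ dual to Proposition~\ref{p:longWordInWmIsThree}, rewriting $(A^{-k}C)^{c_s}\cdots(A^{-k}C)^{c_1}$ as an expression in $U$, $A$, and $C$ so that $R^{*}_{k,v} = M\cdot L^{*}_{k,v}$. I would derive this by induction on the length of $v$, using base identities coming from the formula $U = AC(AC^{-1})^{n-2}$ and Proposition~\ref{p:veeOnTheRight}, in exact parallel to Section~\ref{s:groupIdsSmallAlp}. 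Once available, the admissibility proofs of \S\ref{sss:leftAdmiss}, the boundary-digit lemmas at $\zeta^{*}$, $\eta^{*}$, and $\omega^{*}$, and the dual of Lemma~\ref{l:alsoFinalDigitIsAdmissible} transfer with only notational relabeling, now invoking Lemma~\ref{l:admissBetween2PtsRvals} in place of Lemma~\ref{l:admissBetween2Pts}.

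Finally, to conclude that the complement of the $\mathscr J^{*}_{k,v}$ in $(\epsilon_{3,n},1)$ has Lebesgue measure zero, I would argue in parallel with \S\ref{ss:nonSynchMeasZero}. Any non-synchronizing $\alpha$ must have lower simplified digits drawn from $\{-k,-(k{+}1)\}$ for a single $k\ge 2$, pinning the $T_\alpha$-orbit of $\ell_0(\alpha)$ to a region on which the dynamics factor through an interval map modeled on $T_{3,n,1}$ of Section~\ref{s:alpOne}. Establishing ergodicity for (an acceleration of) this $\alpha=1$ map---the analog at $\alpha=1$ of the theorem of \cite{CaltaSchmidt} used at $\alpha=0$---then forces the non-synchronizing set to be null. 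The main obstacles will be precisely these two items: (i) producing the correct long-word identity in $G_{3,n}$ that packages the admissibility data for $L^{*}_{k,v}$ and $R^{*}_{k,v}$, and (ii) proving the ergodicity needed for the measure-zero step. Both are combinatorially heavy but structurally identical to their small-$\alpha$ counterparts.
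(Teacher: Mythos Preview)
Your overall strategy is correct and matches the paper's approach in Section~\ref{s:LargeAlps}: one dualizes the small-$\alpha$ argument by tracking left digits in $\{-k,-k-1\}$ for $k\ge 2$, uses the same tree $\mathcal V$, and proves a group identity (Proposition~\ref{l:longWordInU}) rewriting products of $(A^{-k}C)$ and $(A^{-(k+1)}C)$ in terms of $U$. Two structural points you have slightly off: the relating element is $C^{-1}AC^{-1}$ (equivalently one works with $L_{-k,v}=C^{-1}AC^2\,R_{-k,v}$), not an element fixing $\infty$; and because the order on words is reversed when passing to negative indices (Lemma~\ref{l:orderReversed}), each $\mathscr J_{-k,v}$ has the form $[\eta,\zeta)$ rather than $[\zeta,\eta)$. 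More substantively, the synchronization criterion here genuinely has two cases (Lemma~\ref{l:rightRegionOneBeforeSyn}): depending on whether $r_j = AC^2\cdot r_{j-1}$, one obtains $\ell_i = r_{j+1}$ or $\ell_i = r_j$, so each potential synchronization interval splits into two pieces on which the right orbit needs an extra step or not. This does not change the global structure but does affect the admissibility bookkeeping; see the auxiliary points $\beta_{-k,v,N}$ of Lemma~\ref{l:leftExpansionsAtEndpointsLargeAlp}, which replace the role of the $\eta$-values in the induction of Proposition~\ref{p:rightDigitsAreGood}.

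Your one genuine misstep is item~(ii), the measure-zero step. No new ergodicity result at $\alpha=1$ is needed. For non-synchronizing $\alpha>\epsilon_{3,n}$, the $T_\alpha$-orbit of $\ell_0(\alpha)$ has simplified digits only in $\{-k,-k-1\}$; since those cylinders lie in the negative half of $\mathbb I_\alpha$, the orbit stays in $[-t,0)$, where the $T_\alpha$-dynamics coincide with those of the $\alpha=0$ map $g$ of \cite{CaltaSchmidt}. The argument of Subsection~\ref{ss:nonSynchMeasZero} then applies verbatim. What you flagged as a main obstacle is in fact a non-issue; the only genuinely new work is the group identity, carried out in Lemma~\ref{l:oneStepId} and Proposition~\ref{l:longWordInU}.
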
 

Synchronization for these large values of $\alpha$ holds in a manner closely analogous to that for small $\alpha$.  We will find that the intervals indexed by exactly the same set of  words $\mathcal V$, although the indexing will depend on negative integers and be given in terms of left digits.   There are differences: in particular,  each potential synchronization interval is the union of what could fairly be called two distinct synchronization intervals.   The right orbit requires an extra step before synchronization on one of the two subintervals.  

It will naturally be important to know the initial digits of $r_0(\alpha)$ in this range.   For this,  define 
$\delta = \delta_{3,n}$ as the value of $\alpha$ such that $C^{-1}\cdot \ell_0(\delta) = A^{-1}C\cdot \ell_0(\delta)$.   (Note that $ \delta_{3,n} <  \epsilon_{3,n}$, see Figure~\ref{firstBothDigs}.)    Thus, $C A^{-1}C A^{-1} \cdot r_0(\delta) = \ell_0(\delta)$.   That is,   $(AC^2)^{n-2}\cdot r_0(\delta) = \ell_0(\delta)$.  By  Proposition~\ref{p:veeOnTheRight}, $(AC^2)^{n-2}\cdot r_0(\alpha)$ is admissible for $\alpha = 1$  and we conclude   that $(AC^2)^{n-2}\cdot r_0(\alpha)$ is admissible for all $\alpha \in (\, \delta, 1]$ and in particular for all $\alpha> \epsilon_{3,n}$.

\subsection{Synchronization intervals have Cantor set complement}
For these large $\alpha$,   synchronization is signaled by  left and right digits being related by 
$C^{-1}A C^{-1}$.

\begin{Lem}\label{l:rightRegionOneBeforeSyn}  Fix $m=3$.   Suppose that $\alpha$ is such for all $x\in \mathbb I_{\alpha}$,  $d^{\alpha}(x) = (k, \ell)$ with $\ell \in \{1,2\}$.     Fix $i,j \in \mathbb N$.   Suppose that  $\ell_{i-1} = C^{-1}AC^{-1} \cdot r_{j-1}$ and $C \cdot r_{j-1} \in  \mathbb I_{\alpha}$.    
Then 
\begin{enumerate}
\item[(i)]   If   $r_{j} = AC^2\cdot r_{j-1}$ then  $\ell_i = r_{j+1}$;
\item[(ii)] otherwise,   $\ell_i = r_j$
\end{enumerate}
\end{Lem}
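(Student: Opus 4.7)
The plan is to exploit the projective identity $C^3=\mathrm{Id}$ (which for $m=3$ in fact holds as a matrix identity, so $C^{-1}=C^2$) to collapse the word $C^{-1}AC^{-1}$ into a form whose action on $r_j$ is transparent. First I would use the hypothesis $C\cdot r_{j-1}\in\mathbb I_\alpha$ together with the standing restriction $\ell\in\{1,2\}$ to conclude that $C^2\cdot r_{j-1}\notin\mathbb I_\alpha$, so the digit of $r_{j-1}$ is $(u',2)$ for some $u'\in\mathbb Z$, i.e.\ $r_j=A^{u'}C^2\cdot r_{j-1}$. Solving for $C^2\cdot r_{j-1}=A^{-u'}\cdot r_j$ and substituting into $\ell_{i-1}=C^{-1}AC^{-1}\cdot r_{j-1}=C^2AC^2\cdot r_{j-1}$ yields the key identity
\[
\ell_{i-1}=C^2A^{1-u'}\cdot r_j.
\]
The dichotomy of the lemma corresponds exactly to $u'=1$ versus $u'\neq 1$.

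In case (i), where $u'=1$ (equivalently $r_j=AC^2\cdot r_{j-1}$), the identity collapses to $\ell_{i-1}=C^{-1}\cdot r_j$. Then $C\cdot\ell_{i-1}=r_j\in\mathbb I_\alpha$ forces the $C$-exponent in the digit of $\ell_{i-1}$ to be $2$, so $\ell_i=A^uC^2\cdot\ell_{i-1}=A^uC\cdot r_j$ for some $u$. On the other side, $C\cdot r_j=C^2\cdot\ell_{i-1}\notin\mathbb I_\alpha$, so the $C$-exponent in the digit of $r_j$ is $1$, and the $A$-exponent is determined by whichever integer brings $C\cdot r_j$ back to $\mathbb I_\alpha$. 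Since $A^uC\cdot r_j=\ell_i\in\mathbb I_\alpha$, this integer is exactly $u$, and hence $r_{j+1}=A^uC\cdot r_j=\ell_i$.

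In case (ii), where $u'\neq 1$, applying $C$ to the identity above gives $C\cdot\ell_{i-1}=A^{1-u'}\cdot r_j$. Because distinct $A$-translates of the length-$t$ interval $\mathbb I_\alpha$ are pairwise disjoint and $r_j\in\mathbb I_\alpha$, the point $A^{1-u'}\cdot r_j$ lies outside $\mathbb I_\alpha$; hence the digit of $\ell_{i-1}$ has $C$-exponent $1$, and its $A$-exponent $u$ is determined by $A^uC\cdot\ell_{i-1}\in\mathbb I_\alpha$. Since $A^{u'-1}\cdot(A^{1-u'}\cdot r_j)=r_j\in\mathbb I_\alpha$, we must have $u=u'-1$, and therefore $\ell_i=r_j$, as required.

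The main obstacle is not any single calculation but rather spotting the correct substitution: one must read the hypothesis $C\cdot r_{j-1}\in\mathbb I_\alpha$ as forcing the $C$-exponent of $r_{j-1}$ to be $2$, and then use $C^{-1}=C^2$ to rewrite $C^{-1}AC^{-1}\cdot r_{j-1}$ as $C^2A^{1-u'}\cdot r_j$. Once this pivot is made, both conclusions are immediate consequences of the definition of a digit in terms of the minimal $v$ with $C^v\cdot x\notin\mathbb I_\alpha$ and the unique power of $A$ that returns this image to $\mathbb I_\alpha$.
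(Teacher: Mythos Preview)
Your argument is correct and follows essentially the same route as the paper's proof: both begin by using $C\cdot r_{j-1}\in\mathbb I_\alpha$ (together with $m=3$, so the $C$-exponent is at most $2$) to write $r_j=A^{u'}C^2\cdot r_{j-1}$, then compute $C\cdot\ell_{i-1}=AC^2\cdot r_{j-1}$ and split on whether this lies in $\mathbb I_\alpha$ (equivalently, whether $u'=1$). Your packaging of this as the single identity $\ell_{i-1}=C^2A^{1-u'}\cdot r_j$ is a clean way to phrase it, but the individual deductions in each case match the paper's line for line.
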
 
\begin{proof} Since $C \cdot r_{j-1} \in \mathbb I_{\alpha}$, there is some $u$ such that $r_{j} = A^uC^2\cdot r_{j-1}$.   

 If  $r_{j} = AC^2\cdot r_{j-1}$ then $C\cdot \ell_{i-1} = C \cdot (C^{-1}AC^{-1} \cdot r_{j-1}) \in \mathbb I_{\alpha}$.  Therefore, there is some $s$ such that $\ell_i = A^sC^2\cdot \ell_{i-1}$.   But then $\ell_i = A^sC^2\cdot (C^{-1}AC^{-1} \cdot r_{j-1}) = A^sCAC^2\cdot r_{j-1} = A^sC \cdot r_j$.   By definition, $A^{-s}\cdot \ell_i \notin \mathbb I_{\alpha}$, it follows that $C \cdot r_j \notin \mathbb I_{\alpha}$ and therefore we conclude that $r_{j+1} = A^sC \cdot r_j = \ell_i$.   

If $r_{j} =  A^uC^2\cdot r_{j-1}$ with $u \neq 1$, then $C\cdot \ell_{i-1}= AC^2\cdot r_{j-1} \notin \mathbb I_{\alpha}$.  Therefore, there is some $s$ such that $\ell_i = A^s C \cdot \ell_{i-1}$.  We find that  $\ell_i = A^s C \cdot  (C^{-1}AC^{-1} \cdot r_{j-1}) = A^{s+1} C^2 \cdot r_{j-1}$.   We conclude that $u= s+1$ and $\ell_i = r_j$. 
\end{proof} 
\bigskip 

\begin{figure}[h]
\scalebox{.4}{
{\includegraphics{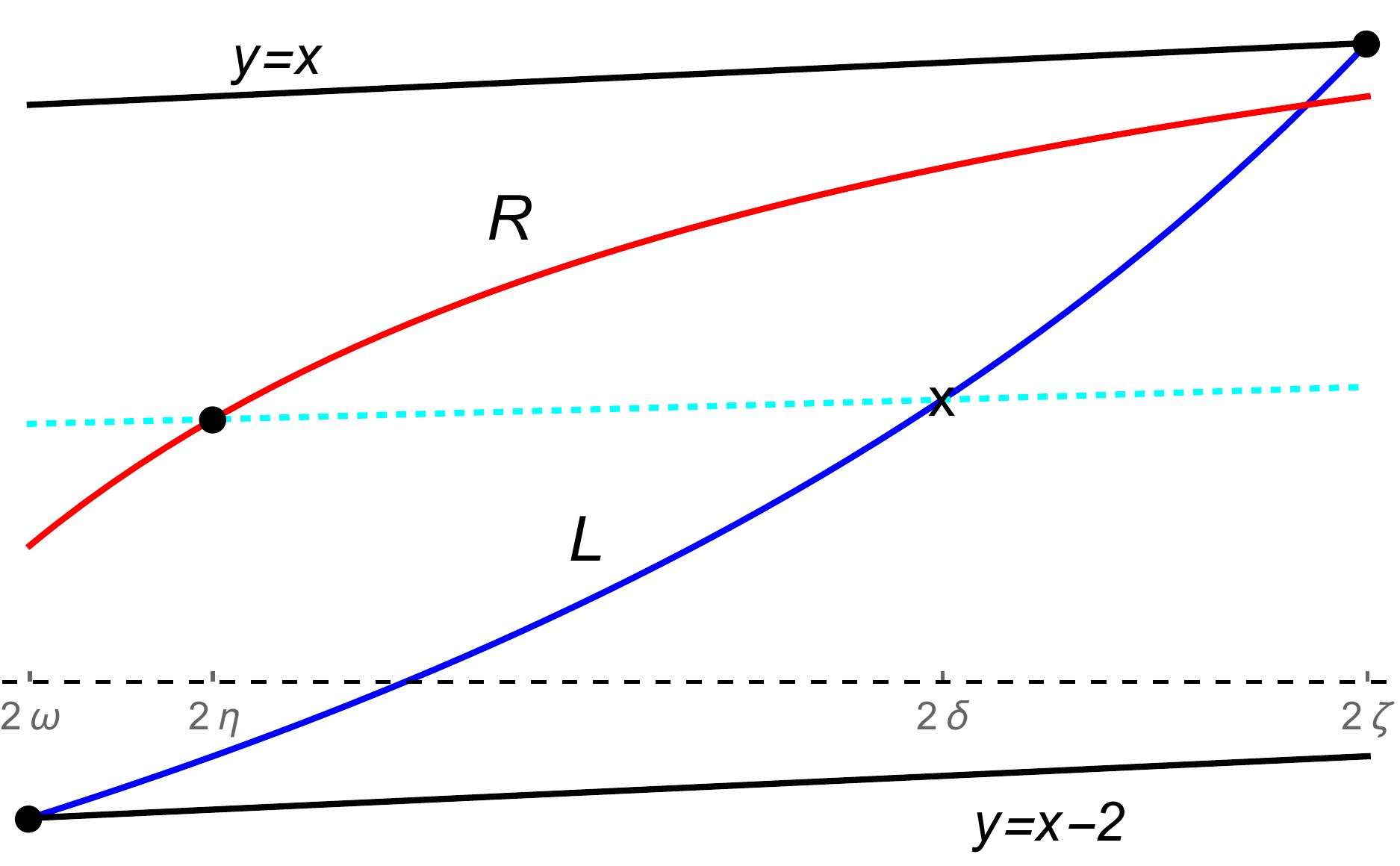}}}
\caption{Determining the synchronization interval $[\eta, \zeta)$, when $\alpha > \epsilon_{3,n}$.   Here,   $m=3, n=3$,  and $k=2$, $v = 1$.  The labels $L, R$ mark respectively the curves $y = L_{-2,1}\cdot r_0(\alpha), y= R_{-2, 1}\cdot r_0(\alpha)$ where $\alpha = x/2 = x/t_{3,3}$.   These are  $y = \ell_1(\alpha)$ (in blue) and $y = r_2(\alpha)$ (in red) for   $G < x <    (-1 + \sqrt{21}/2)$.   The dotted cyan curve is the image of the left endpoint under $C^{-1}$ (giving the values of $\mathfrak b_{\alpha}$).   The $x$-axis is shown as a dotted line.  For $\alpha >\delta = \delta_{-2,1}$, synchronization occurs after an extra step in the orbit of $r_0$.}
\label{largeAlpLRpair}
\end{figure}
 
\bigskip
    The result (ii) below leads to the conclusion that synchronization intervals for large $\alpha$ can be described by  the same set of words  as for small $\alpha$.

\begin{Lem}\label{l:LargeAlpsEtaDigits}    Fix $m=3$, an interval $[\eta, \zeta]$ of $\alpha$  such that  $r_1 = AC^2\cdot r_0$ holds on this interval,   and  $i,j \in \mathbb N$.  Suppose  that there are matrices $R, L, L'$ (none of which is the identity) such that 
\begin{enumerate}
\item[(a)]   $L = C^{-1}AC^2R$,
\item[(b)]   $R\cdot r_0 = r_{j-1}$ and $L'\cdot \ell_0 = \ell_{i-2}$,  for all $\alpha \in [\eta, \zeta]$, 
\item[(c)]  $L A\cdot \ell_0  = \ell_{i-1}$ for all $\alpha \in [\eta, \zeta)$,  while  $\ell_{i-1}(\zeta) = A^{-1}L A\cdot \ell_0(\zeta) = \ell_0(\zeta)$, 
\item[(d)] $R \cdot r_0(\eta) = C^{-1}\cdot \ell_0(\eta)$.
\end{enumerate}

Suppose further that $A^{-k}C \cdot \ell_0(\eta) = \ell_1(\eta)$.  Then 

\begin{enumerate}
\item[(i)]  $r_{j}(\eta) = A^{-k}C^2\cdot r_{j-1}(\eta) = \ell_1(\eta)$;
\item[(ii)]   $\ell_i(\eta) = A^{-(k+1)}C\cdot \ell_{i-1}(\eta) = \ell_1(\eta)$; 
\item[(iii)]   $r_j(\zeta) = AC^2\cdot r_{j-1}(\zeta)$ and $r_{j+1}(\zeta) = AC\cdot r_j(\zeta) = r_1(\zeta)$.
\end{enumerate}
\end{Lem}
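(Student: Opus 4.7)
The plan is to verify each of the three conclusions by algebraic manipulation with the hypotheses, exploiting the trivial identity $A\cdot\ell_0(\alpha) = r_0(\alpha)$, which follows from $\ell_0 = (\alpha-1)t$, $r_0 = \alpha t$, and $A\cdot x = x+t$. This collapses $LA\cdot\ell_0 = L\cdot r_0$, and combined with~(a) and~(b) yields the central identity
\[ \ell_{i-1}(\alpha) = C^{-1}AC^{2}\cdot r_{j-1}(\alpha) \qquad (\alpha\in[\eta,\zeta)), \]
together with its analogue $A^{-1}C^{-1}AC^{2}\cdot r_{j-1}(\zeta) = \ell_0(\zeta)$ at $\alpha=\zeta$. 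All three conclusions should flow from this identity together with the digit information supplied by the extra hypothesis.

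For conclusion~(i), I would combine~(b) and~(d) at $\alpha=\eta$ to get $r_{j-1}(\eta) = C^{-1}\cdot\ell_0(\eta)$, and then observe that $C\cdot r_{j-1}(\eta) = \ell_0(\eta)\in\mathbb I_\eta$ whereas $C^{2}\cdot r_{j-1}(\eta) = C\cdot\ell_0(\eta)\notin\mathbb I_\eta$; the extra hypothesis forces the digit of $\ell_0(\eta)$ to be $(-k,1)$, so $C\cdot\ell_0(\eta)$ lies in $\mathbb I_\eta + kt$. Hence the returning $A$-shift must be $-k$, giving $r_j(\eta) = A^{-k}C^{2}\cdot r_{j-1}(\eta) = A^{-k}C\cdot\ell_0(\eta) = \ell_1(\eta)$. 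For conclusion~(ii), substituting into the central identity yields $\ell_{i-1}(\eta) = C^{-1}AC\cdot\ell_0(\eta)$, so $C\cdot\ell_{i-1}(\eta) = AC\cdot\ell_0(\eta)$ sits in $\mathbb I_\eta + (k+1)t$, one $A$-translate further right than $C\cdot\ell_0(\eta)$; the returning shift is therefore $-(k+1)$, and I obtain $\ell_i(\eta) = A^{-(k+1)}AC\cdot\ell_0(\eta) = A^{-k}C\cdot\ell_0(\eta) = \ell_1(\eta)$.

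For conclusion~(iii), I would rewrite the degeneracy in~(c) at $\zeta$ as $LA\cdot\ell_0(\zeta) = A\cdot\ell_0(\zeta) = r_0(\zeta)$, which via~(a), (b) produces the algebraic identity $AC^{2}\cdot r_{j-1}(\zeta) = C\cdot r_0(\zeta)$. The ambient hypothesis $r_1 = AC^{2}\cdot r_0$ on $[\eta,\zeta]$ pins the digit of $r_0(\zeta)$ to $(1,2)$, so $C\cdot r_0(\zeta)\in\mathbb I_\zeta$, $C^{2}\cdot r_0(\zeta)\notin\mathbb I_\zeta$, with returning $A$-shift $+1$. Granting $r_j(\zeta) = AC^{2}\cdot r_{j-1}(\zeta) = C\cdot r_0(\zeta)$, a single further $T_\zeta$-iteration of $C\cdot r_0(\zeta)\in\mathbb I_\zeta$ will give $AC\cdot(C\cdot r_0(\zeta)) = AC^{2}\cdot r_0(\zeta) = r_1(\zeta)$, proving $r_{j+1}(\zeta) = AC\cdot r_j(\zeta) = r_1(\zeta)$.

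The main obstacle will be the passage from the algebraic identity $AC^{2}\cdot r_{j-1}(\zeta) = C\cdot r_0(\zeta)$ to the statement that this genuinely realises a single $T_\zeta$-step, that is, that the digit of $r_{j-1}(\zeta)$ is exactly $(1,2)$ rather than some $(k',l')$ that happens to give the same image under the matrix action. To handle this I would invoke the admissibility of $R$ throughout the closed interval $[\eta,\zeta]$ implicit in hypothesis~(b), and transport the digit structure at $r_0(\zeta)$ down the orbit to $r_{j-1}(\zeta)$ via a left-continuity/monotonicity argument in the spirit of Lemma~\ref{l:admissBetween2PtsRvals}. Everything else reduces to routine substitution once the dichotomy in~(c) between the interior formula $LA\cdot\ell_0 = \ell_{i-1}$ and the boundary formula $A^{-1}LA\cdot\ell_0(\zeta) = \ell_0(\zeta)$ is unpacked.
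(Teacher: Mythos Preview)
Your argument for (i) and (ii) is correct and matches the paper's route almost exactly (the paper derives (ii) from (i) via the identity $A^{-k}C^{2}\cdot r_{j-1}=A^{-(k+1)}C\cdot\ell_{i-1}$, using $C^{3}=\mathrm{Id}$, whereas you substitute $r_{j-1}(\eta)=C^{-1}\cdot\ell_0(\eta)$ directly into the central identity; both are equivalent).

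For (iii) your algebraic identity $AC^{2}\cdot r_{j-1}(\zeta)=C\cdot r_0(\zeta)$ and your identification of the obstacle are correct, but your proposed resolution via ``transporting the digit structure at $r_0(\zeta)$ down the orbit'' in the spirit of Lemma~\ref{l:admissBetween2PtsRvals} is not the right tool: that lemma concerns constancy of a fixed digit block across an interval of $\alpha$, not transfer of a single digit from one orbit point to another.  The paper instead argues directly that $r_{j-1}(\zeta)>C^{-1}\cdot\ell_0(\zeta)=\mathfrak b_{\zeta}$, which immediately forces the $C$-exponent of the digit of $r_{j-1}(\zeta)$ to equal $2$ (since $\mathfrak b_{\alpha}$ is precisely the boundary between the $l=1$ and $l=2$ cylinders).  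Once $l=2$ is known, your own algebraic identity shows $A^{s}C^{2}\cdot r_{j-1}(\zeta)=A^{s-1}\cdot(C\cdot r_0(\zeta))$, and since $C\cdot r_0(\zeta)\in\mathbb I_{\zeta}$ this forces $s=1$.  The inequality $r_{j-1}(\zeta)>\mathfrak b_{\zeta}$ itself is the continuity statement you were reaching for: by~(d) one has equality $r_{j-1}(\eta)=\mathfrak b_{\eta}$ at the left endpoint, and strict inequality to the right follows from the relative positions of the two M\"obius branches (this is what defines $\eta$ as the left end of the interval in the first place).
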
 
\begin{proof}  The equality $r_{j-1}(\eta) = C^{-1}\cdot \ell_0(\eta)$ implies that $r_j(\eta) = A^{-k}C^2\cdot r_{j-1}(\eta) = \ell_1(\eta)$. 
At $\alpha = \eta$ we also have $r_{j-1} = CA^{-1}C \cdot \ell_{i-1}$, therefore $A^{-k}C^2 \cdot r_{j-1} = A^{-(k+1)}C\cdot \ell_{i-1}$ allows one to easily confirm the admissibility of this expression for $\ell_i(\eta)$, as well as that $\ell_i(\eta) = \ell_1(\eta)$.  

Finally, $r_{j-1}(\zeta)> C^{-1}\cdot\ell_0(\zeta)$ and hence  $r_j(\zeta) = A^s C^2 \cdot r_{j-1}(\zeta)$ for some $s$.  This gives 
$r_j(\zeta) = A^s C^2 \cdot (CA^{-1}C \cdot r_0(\zeta)\,) = A^{s-1}C \cdot r_0(\zeta)$.  But, $r_1 = AC^2\cdot r_0$ holds throughout this region of large $\alpha$;  in particular, $C\cdot r_0(\alpha) \in \mathbb I_{\alpha}$ here.  Therefore, $s=1$ and $r_j(\zeta) = C\cdot r_0(\zeta)$.   It then follows that $r_{j+1}(\zeta) = AC \cdot (C\cdot r_0(\zeta)\,) = r_1(\zeta)$. 
\end{proof}

 
\bigskip
\begin{Def}\label{d:upperDigsCylsMatsLargeAlps}   Let $\mathcal V$ be as in the treatment of $\alpha < \gamma_{3,n}$.    
  For each $k \in \mathbb N$ and $v = c_1d_1\cdots d_{s-1} c_s \in \mathcal V$,  we define the following.  
\begin{enumerate}

\item  The lower (simplified)  {\em digit sequence} of  $-k,v$ is  
\[ \underline{d}(-k,v) = (-k)^{c_1}, (-k-1)^{d_1},\cdots,  (-k-1)^{d_{s-1}},(-k) ^{c_s}\,, \]


\medskip 
\item  The   {\em $\alpha$-cylinder} of  $-k, v$ is 
\[ \mathscr I_{-k,v} = \{ \alpha \,|\, \underline{d}{}^{^{\alpha}}_{[1, |v|\,]} = \underline{d}(-k,v)\}\,.\]

\medskip 
\item The {\em left matrix} of  $-k, v$ is
\[ 
L_{-k,v}  =    (A^{-k}C)^{ c_s}\; (A^{-k-1}C)^{d_{s-1}}(A^{-k}C)^{c_{s-1}}\cdots (A^{-k-1}C)^{d_1} (A^{-k}C)^{c_1}A^{-1}\,.
 \]

\item The {\em synchronization interval} associated to $-k, v$ is $\mathscr J_{-k,v} = [\eta, \zeta)$ where 
 $\eta = \eta_{k, v}$ and $\zeta = \zeta_{k,v}$ are such that 
\[ L_{-k,v}A\cdot \ell_0(\zeta) = r_0(\zeta) \;\;\; \text{and} \;\;\; CA^{-1}CL_{-k,v}\cdot r_0(\eta)= C^{-1}\cdot\ell_0(\eta)\,.\]   
 
 \end{enumerate}
  
\end{Def}
\bigskip

The following implies that the complement of the union of the $\mathscr J_{k,v}$ is a  Cantor set.  This is the main result of this subsection.  
\begin{Thm}\label{t:cantorSetLargeAlps}  We have the following partition
\[ [\epsilon_{3,n}, 1) = \bigcup_{k=2}^{\infty}\, \mathscr I_{-k,1}\,.\]

Furthermore, for each $k \ge 2$ and each $v \in \mathcal V$,  the following is a partition: 
\[ \mathscr I_{-k,v} = \mathscr J_{-k,v} \cup \, \bigcup_{q=q'}^{\infty}\, \mathscr I_{-k, \Theta_q(v)}\,,\]
where $q' = 0$ unless $v= c_1$, in which case $q' = -1$.
\end{Thm}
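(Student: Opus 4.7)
The plan is to run the same argument as the proof of Theorem~\ref{p:cantorSet}, with the roles of the $\ell$-orbit and the $r$-orbit interchanged, indices made negative, and synchronization now signalled by the identity $L_{-k,v} = C^{-1}AC^2 R$ of Lemma~\ref{l:LargeAlpsEtaDigits} rather than by \eqref{eq:cInvAc}. The first assertion, $[\epsilon_{3,n},1) = \bigcup_{k\ge 2} \mathscr I_{-k,1}$, is immediate from the definition of $\epsilon_{3,n}$: for each $\alpha \in [\epsilon_{3,n},1)$ the point $\mathfrak b_\alpha$ lies inside $\mathbb I_\alpha$, so $\ell_1(\alpha)=A^{-k}C\cdot \ell_0(\alpha)$ for a unique $k\ge 2$, and the $\alpha$-cylinder $\mathscr I_{-k,1}$ is exactly the set of such $\alpha$.

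For the main partition of each $\mathscr I_{-k,v}$ I would first prove the analogue of Proposition~\ref{p:Admissible}: namely, introduce $\omega_{-k,v}$ via the full-branched prefix $\mathfrak f(v)$ (with $L_{-k,\mathfrak f(v)}$ in place of $R_{k,\mathfrak f(v)}$), and show by induction on the length of $v$ that $\underline{d}(-k,v)$ is admissible exactly on $[\eta_{-k,v},\omega_{-k,v})$. The induction would use the same three-case decomposition (words of the form $\Theta_0^h(c)$, $\Theta_q\circ \Theta_p(u)$ with $q,p\ge 1$, and $\Theta_0^h\circ \Theta_p(u)$) as in the proof of Proposition~\ref{p:Admissible}, because the abstract word combinatorics of $\mathcal V$, $\mathfrak f$, and $\mathscr D$ developed in Proposition~\ref{p:theWordsAreNice}, Lemma~\ref{l:dRespectsRelationships}, Proposition~\ref{p:frakFofChildren}, and Lemma~\ref{l:etaLessThanOmega} are purely combinatorial and carry over verbatim. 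That $\mathscr J_{-k,v} \subset \mathscr I_{-k,v}$ then follows once I verify a group identity analogous to Proposition~\ref{p:longWordInWmIsThree}, giving $CA^{-1}C L_{-k,v}$ as a product that has the word $C^{-1}\ell_0(\eta)$ as its image of $r_0(\eta)$.

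For the second partition I would then establish the endpoint-matching results that force the children $\mathscr I_{-k,\Theta_q(v)}$ to tile $\mathscr I_{-k,v}\setminus \mathscr J_{-k,v}$ from the right: the analogue of Corollary~\ref{c:vD(v)InConeGotEndpt}, giving $\omega_{-k,\Theta_0^h(v)} = \omega_{-k,v}$ and $\lim_{h\to\infty}\eta_{-k,\Theta_0^h(v)} = \omega_{-k,v}$; the analogue of Lemma~\ref{l:cylsOfConsecuChildrenAbut}, giving $\omega_{-k,\Theta_q(v)} = \eta_{-k,\Theta_{q-1}(v)}$ for $q\ge 1$; and the analogue of Lemma~\ref{l:consecSpecialCase} for the special bridge $\omega_{-k,c\,1\,c}=\eta_{-k,c+1}$. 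The limit $\lim_{q\to\infty}\eta_{-k,\Theta_q(v)} = \zeta_{-k,v}$ follows from Lemma~\ref{l:zetaRightDigs} applied to the lower side, since $\Theta_q(v)$ has $v$ as prefix for every $q$.

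The main obstacle, and the reason this is not a mechanical translation of the small-$\alpha$ proof, is the \emph{extra-step} phenomenon displayed in Figure~\ref{largeAlpLRpair} and codified in Lemma~\ref{l:LargeAlpsEtaDigits}: on the portion of $\mathscr J_{-k,v}$ with $\alpha>\delta_{-k,v}$ the $r$-orbit synchronizes with the $\ell$-orbit only after an additional application of $AC$, whereas on the portion $\alpha\le\delta_{-k,v}$ it does not. In the admissibility induction this means that for each $v$ I must carry along two candidate prefixes of $\overline{b}{}^\alpha_{[1,\infty)}$ simultaneously (one with the leading pattern $(AC^2)^{n-2}$ and one without) and verify that both are consistent with the common lower expansion $\underline{d}(-k,v)$ through its penultimate digit before invoking the analogues of Lemmas~\ref{l:admissBetween2Pts}, \ref{l:admissBetween2PtsRvals}, and \ref{l:alsoFinalDigitIsAdmissible}. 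Once that extra bookkeeping is in place, the induction closes and the partition assertion follows exactly as on page~\pageref{proofOfTheorem}.
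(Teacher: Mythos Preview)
Your overall plan is sound and matches the paper's route: the theorem is proved by rerunning the small-$\alpha$ argument (Proposition~\ref{p:Admissible} $\Rightarrow$ Theorem~\ref{p:cantorSet}) with the $\ell$- and $r$-orbits swapped, using the order-reversing Lemma~\ref{l:orderReversed} so that all the word combinatorics of $\mathcal V$, $\mathfrak f$, $v'$, $v''$ transfer verbatim.  Two points need correction, however.

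First, you have systematically swapped $\eta_{-k,v}$ and $\zeta_{-k,v}$.  Under Definition~\ref{d:upperDigsCylsMatsLargeAlps} one has $\mathscr J_{-k,v}=[\eta_{-k,v},\zeta_{-k,v})$ and the real ordering is $\omega_{-k,v}<\eta_{-k,v}<\zeta_{-k,v}$; Proposition~\ref{p:AdmissibleLargeAlps} gives $\mathscr I_{-k,v}=(\omega_{-k,v},\zeta_{-k,v}]$, not your $[\eta_{-k,v},\omega_{-k,v})$ (which is empty).  The correct abutment (Lemma~\ref{l:cylsOfConsecuChildrenAbutLargeAlps}) is $\omega_{-k,\Theta_q(v)}=\zeta_{-k,\Theta_{q-1}(v)}$; the children tile $(\omega_{-k,v},\eta_{-k,v}]$ on the \emph{left} of $\mathscr J_{-k,v}$ with $\Theta_0(v)$ leftmost, and the limit required is $\lim_{q\to\infty}\zeta_{-k,\Theta_q(v)}=\eta_{-k,v}$.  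This is an orientation bookkeeping error rather than a structural one, but it propagates through everything you wrote.

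Second, and more substantively, your ``main obstacle'' is misplaced.  The extra-step phenomenon of Lemma~\ref{l:LargeAlpsEtaDigits} concerns the $r$-orbit and is needed only when proving that $\mathscr J_{-k,v}$ is genuinely a synchronization interval, i.e.\ in the admissibility of $\overline{b}(-k,v)$ (Proposition~\ref{p:rightDigitsAreGood}).  Theorem~\ref{t:cantorSetLargeAlps} is purely about the partition by the \emph{lower} cylinders $\mathscr I_{-k,v}$, which are defined via the $\ell$-orbit digits $\underline d(-k,v)$ alone; no $r$-orbit admissibility, hence no split at $\delta_{-k,v}$, enters.  The proof is therefore a mechanical translation of Theorem~\ref{p:cantorSet}, using Lemmas~\ref{l:etaLargeAlps}, \ref{l:potentialCylindersLargeAlps}, \ref{l:cylsOfConsecuChildrenAbutLargeAlps}, \ref{l:omegaZerosAreOmega} and Proposition~\ref{p:AdmissibleLargeAlps}.  (Where the extra step does eventually matter, the paper handles it not by carrying two prefixes but by introducing auxiliary points $\beta_{-k,v,N}<\eta_{-k,v}$; see Lemma~\ref{l:leftExpansionsAtEndpointsLargeAlp}.)
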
 
We defer the proof of this result until page \pageref{proofTheoremLargeAlps}.

\bigskip
Note that one extends the definition of $\underline{d}(-k,v)$ to infinite words in the obvious fashion. 

\begin{Lem}\label{l:etaLargeAlps}  Let $k \in \mathbb N$ and $v \in  \mathcal V$. Assume that 
$\eta_{-k,v}, \zeta_{-k,v} \in \mathscr I_{-k,v}$. Then $\underline{d}{}^{\eta_{-k,v} }_{[1,\infty)} = \underline{d}(-k, v (v')^\infty)$ and 
$\underline{d}{}^{\zeta_{-k,v} }_{[1,\infty)}$ is purely periodic of period $\underline{d}(-k,  \overleftarrow{v'}\,)$.
\end{Lem}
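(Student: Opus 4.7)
The plan is to establish the two assertions separately, mirroring the proofs of Lemma~\ref{l:etaRightDigs} and Lemma~\ref{l:zetaRightDigs} from the small-$\alpha$ setting, but transferring the arguments to the left-digit sequence of $\ell_0$ at the large-$\alpha$ endpoints.

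For the assertion about $\eta = \eta_{-k,v}$: I would invoke Lemma~\ref{l:LargeAlpsEtaDigits}, whose hypotheses (a)--(d) are arranged to match precisely the endpoint-relation data of Definition~\ref{d:upperDigsCylsMatsLargeAlps}(4). Conclusion (ii) of that lemma yields $\ell_i(\eta) = A^{-(k+1)}C\cdot \ell_{i-1}(\eta) = \ell_1(\eta)$. Unpacking this statement: the $i$-th simplified left digit equals $-k-1$, and from step $i$ onward the orbit repeats the tail of itself starting at $\ell_1$. Combined with the admissibility hypothesis $\eta \in \mathscr I_{-k,v}$ (which supplies the prefix $\underline{d}(-k,v)$ of length $|v|$, with $i = |v|+1$), one obtains the periodic-after-one-step presentation
\[ \underline{d}^{\eta}_{[1,\infty)} = (-k),\; \overline{\underline{d}(-k,v)_{[2,|v|]} \cdot (-k-1)}.\]
A case split on whether $c_1 = 1$ or $c_1 > 1$ then re-groups this into $\underline{d}(-k, v(v')^\infty)$, exactly paralleling the regrouping performed in the proof of Lemma~\ref{l:etaRightDigs} and using the definition of $v'$ from Definition~\ref{d:periodFromEtaAsWord}.

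For the assertion about $\zeta = \zeta_{-k,v}$: I would start from the defining relation $L_{-k,v}A\cdot \ell_0(\zeta) = r_0(\zeta)$ together with the elementary identity $r_0(\alpha) = A\cdot \ell_0(\alpha)$ (from $\alpha t = (\alpha-1)t + t$ and $A\cdot x = x + t$). These combine to give $A^{-1}L_{-k,v}A\cdot \ell_0(\zeta) = \ell_0(\zeta)$, so that $\ell_0(\zeta)$ is the M\"obius fixed point of $A^{-1}L_{-k,v}A$. The crux is then to identify this matrix with the one encoding the digit sequence $\underline{d}(-k, \overleftarrow{v'})$, i.e., to establish the matrix identity
\[ A^{-1}L_{-k,v}A = L_{-k,\overleftarrow{v'}}A,\]
which is the large-$\alpha$ analogue of $AR_{k,v} = R_{k,\overleftarrow{v'}}$ used in the proof of Lemma~\ref{l:zetaRightDigs}. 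The verification proceeds by absorbing the leading $A^{-1}$ into the outermost factor $(A^{-k}C)^{c_s}$ of $L_{-k,v}A$ via $A^{-1}(A^{-k}C)^{c_s} = (A^{-k-1}C)(A^{-k}C)^{c_s-1}$, and then using that $v$ is a palindrome (Proposition~\ref{p:theWordsAreNice}) to recognize the resulting exponent pattern as precisely that prescribed by $\overleftarrow{v'}$.

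Once this matrix identity is secured, pure periodicity of $\underline{d}^{\zeta}_{[1,\infty)}$ with period $\underline{d}(-k,\overleftarrow{v'})$ follows, modulo a boundary-admissibility argument: the terminal digit of the period is $-k-1$, differing from the terminal digit $-k$ of $\underline{d}(-k,v)$ that the hypothesis $\zeta\in\mathscr I_{-k,v}$ provides. The main obstacle I expect is exactly this boundary issue. At $\zeta$ the collapse $\ell_{|v|}(\zeta) = L_{-k,v}A\cdot \ell_0(\zeta) = r_0(\zeta) = A\cdot \ell_0(\zeta)$ identifies the two candidate digits at the common boundary of $\Delta_\zeta(-k,1)$ and $\Delta_\zeta(-k-1,1)$, and one must argue (in the spirit of the discussion of $\omega_{k,1}$ in Example~\ref{e:computingFrakF}) that the $\overleftarrow{v'}$-reading is the natural one consistent with interpreting $r_0(\zeta)$ as $A\cdot \ell_0(\zeta)$ under the translation symmetry. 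The degenerate case $v = 1$, where the formal identity $A^{-1}L_{-k,v}A = L_{-k,\overleftarrow{v'}}A$ breaks down, will be handled by a direct computation matching that of the corresponding degenerate case of Lemma~\ref{l:zetaRightDigs}.
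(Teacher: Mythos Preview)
Your proposal is correct and follows the same route as the paper: for $\eta_{-k,v}$ you invoke Lemma~\ref{l:LargeAlpsEtaDigits}(ii) exactly as the paper does, and for $\zeta_{-k,v}$ your fixed-point argument via $A^{-1}L_{-k,v}A$ together with the palindrome identification of $\overleftarrow{v'}$ is precisely what underlies the paper's one-line appeal to the definition of $\zeta_{-k,v}$. The paper's proof is extremely terse and simply records the explicit form $\underline{d}(-k,\overleftarrow{v'}) = (-k)^{c_1},(-k-1)^{d_1},\ldots,(-k)^{c_s-1},(-k-1)$ without spelling out the boundary-admissibility point you flag; your more careful treatment of that last-digit issue is a genuine elaboration rather than a deviation.
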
 
\begin{proof}    The result for $\eta_{-k,v}$  follows from Lemma~\ref{l:LargeAlpsEtaDigits} (ii).   The definition of $\zeta_{-k,v}$ gives the second result, since 
$\underline{d}(-k,  \overleftarrow{v'}\,) = (-k)^{c_1}, (-k-1)^{d_1},\cdots,  (-k-1)^{d_{s-1}},(-k) ^{c_s-1}, (-k-1)$.   
\end{proof}

\bigskip

The following is an immediate implication of the definition of the ordering \eqref{e:theFullOrder}.
\begin{Lem}\label{l:orderReversed}  Fix $m=3$.   For any $v, w$  words and for any $k \in \mathbb N$, we have 
\[ v \prec w \;\;\; \text{if and only if}\;\;\; \underline{d}(-k,v) \succ \underline{d}(-k,w)\,.\]
\end{Lem}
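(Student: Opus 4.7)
The plan is to prove the equivalence by a direct lex-order calculation, reducing to the first position at which $v$ and $w$ (and hence $\underline{d}(-k,v)$ and $\underline{d}(-k,w)$) disagree.

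First, I would record the relevant letter orderings implied by \eqref{e:theFullOrder}. On the alphabet of positive integer letters (those that constitute $v$ and $w$), one has $a \prec b$ iff $0 < b < a$, so that $\prec$ on positive letters is the reverse of the natural integer order. On the two-letter alphabet $\{-k-1,-k\}$ appearing in the digit expansions, the same criterion gives $-k-1 \prec -k$, which coincides with the natural integer order (more negative is $\prec$-smaller). Recall that the map $\underline{d}(-k,\cdot)$ replaces each letter $c$ at an odd (c-)position by the run $(-k)^c$ and each letter $d$ at an even (d-)position by the run $(-k-1)^d$.

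Now let $i$ be the first position at which $v$ and $w$ disagree. Through the expansion of the common prefix $v_{[1,i-1]} = w_{[1,i-1]}$ the two expansions are identical, and both then enter the expansion of position $i$ beginning with the same block letter. The blocks are of length $v_i$ and $w_i$ respectively, so as soon as the shorter of the two runs exhausts itself the corresponding expansion advances to the next position of its source word --- whose block letter, by alternation, is the \emph{other} element of $\{-k-1,-k\}$ --- while the other expansion continues in the current letter. Comparing the two letters at this first divergent position via $-k-1 \prec -k$ translates the strict inequality between the positive letters $v_i$ and $w_i$ into the asserted strict inequality between the digit expansions, and combined with the reverse-natural convention on positive letters this yields the stated order-reversing equivalence.

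\textbf{Main obstacle.} The technical point is the book-keeping: one has to keep track of the parity of $i$, which determines whether the initial block letter of position $i$ is $-k$ or $-k-1$ and hence which letter appears first once the shorter block ends, as well as the degenerate case in which one of $v,w$ is a prefix of the other. In this latter case the conclusion is read off directly from the first letter of the longer expansion past the shorter one, and the claim again follows from the natural order on $\{-k-1,-k\}$ together with the reversed order on the positive integers.
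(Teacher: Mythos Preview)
Your argument contains a genuine gap, stemming from a misidentification of the order $\prec$ on the abstract words $v, w$. The letters $c_i, d_j$ of $v$ are run-lengths, not simplified digits, so one cannot simply apply \eqref{e:theFullOrder} to them letter-by-letter. The meaning of $v \prec w$ throughout the paper (see Definition~\ref{d:fullBranchedAsWord}, Lemma~\ref{l:etaLessThanOmega}, and most explicitly Example~\ref{e:computingFrakF}(4), where $31313 \prec 31315$ is asserted with the discrepancy at a $c$-position) is that $\overline{d}(k,v) \prec \overline{d}(k,w)$ in the full order on simplified digits. Unwinding this, at a $c$-position the \emph{smaller} integer letter gives the $\prec$-smaller word, while at a $d$-position the \emph{larger} integer letter does --- not the uniform reverse-natural convention you posit. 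If you actually carry out the parity book-keeping you flag as the main obstacle, you will find that under your conventions the block-exhaustion argument yields order-reversal when the first discrepancy lies at a $c$-position but order-\emph{preservation} when it lies at a $d$-position, so the claimed equivalence does not follow.

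A secondary, compounding issue: you read off $-k-1 \prec -k$ from the literal text of \eqref{e:theFullOrder}, but that clause is inconsistent with Table~\ref{t:theOrder} and with the cylinder geometry described just before it; the correct relation is $-k \prec -k-1$. Once both points are corrected the lemma is indeed immediate, as the paper claims: in $\overline{d}(k,\cdot)$ the $d$-block letter $k+1$ lies $\prec$-below the $c$-block letter $k$, whereas in $\underline{d}(-k,\cdot)$ the $d$-block letter $-k-1$ lies $\prec$-\emph{above} the $c$-block letter $-k$; swapping which of the two block letters is the smaller one reverses the lexicographic order on the run-length encodings.
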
 
\medskip 
 
\begin{Def}\label{d:fullBranchedAsWordBigAlps}     Define    $\omega_{-k,v}$ such that 
$\underline{d}{}^{\omega_{-k,v}}_{[1, \infty)}$ is purely periodic of period $\underline{d}(-k, (\, \mathfrak f (v)\,)$,
where $\mathfrak f (v)$ is the full branched prefix of $v$.  
 \end{Def}

 \bigskip

\begin{Lem}\label{l:potentialCylindersLargeAlps}   Let $v \in \mathcal V$ and fix $k\ge 2$.   The $\alpha$-cylinder set $\mathscr I_{-k,v}$ is a subset of   $(\omega_{-k,v}, \zeta_{-k,v}]$. 
\end{Lem}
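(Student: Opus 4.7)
The plan is to mirror the proof of Lemma~\ref{l:potentialCylinders} from the small-$\alpha$ regime, with the roles of left and right interchanged and orderings inverted via Lemma~\ref{l:orderReversed}. Since matching $\underline{d}(-k,v)$ forces matching of every prefix $\underline{d}(-k,u)$, one has $\mathscr{I}_{-k,v}\subseteq\bigcap\mathscr{I}_{-k,u}$, the intersection taken over all prefixes $u$ of $v$. So it suffices to show that each $\mathscr{I}_{-k,u}$ is an interval with endpoints I can identify in closed form, and then compute the intersection.

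First I would argue that each $\mathscr{I}_{-k,u}$ is a connected $\alpha$-interval, using the continuous dependence of $\ell_0(\alpha)$ on $\alpha$ and the connected nature in $\alpha$ of the cylinders $\Delta_\alpha(-k,1)$ and $\Delta_\alpha(-k-1,1)$ (compare Figure~\ref{allDigs}), together with Lemma~\ref{l:admissBetween2Pts}. Its right endpoint is $\zeta_{-k,u}$, defined by $L_{-k,u}A\cdot\ell_0(\alpha)=r_0(\alpha)$, i.e.\ by $\ell_{|u|}(\alpha)$ meeting $r_0(\alpha)$ from below. Its left endpoint $\alpha^{\ast}_u$ is given by the fixed-point condition $L_{-k,u}A\cdot\ell_0(\alpha)=\ell_0(\alpha)$; at such an $\alpha$, the lower simplified expansion of $\ell_0(\alpha)$ is purely periodic with period $\underline{d}(-k,u)$.

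Taking $u=v$ supplies the right bound $\zeta_{-k,v}$. For the left bound I must take the supremum of the $\alpha^{\ast}_u$ over all prefixes $u$ of $v$. Since $\ell_0$ is a strictly increasing function of $\alpha$, and since (within the single $\alpha$-cylinder $\mathscr{I}_{-k,1}$ that contains every $\mathscr{I}_{-k,v}$) the real order on the periodic values of $\ell_0(\alpha)$ coincides with the $\prec$-order on their digit sequences, one has $\alpha^{\ast}_{u_{1}}>\alpha^{\ast}_{u_{2}}$ if and only if $\underline{d}(-k,u_{1}^{\infty})\succ\underline{d}(-k,u_{2}^{\infty})$. By Lemma~\ref{l:orderReversed} this translates to $u_{1}^{\infty}\prec u_{2}^{\infty}$ in the word order. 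Invoking the equivalent characterization of $\mathfrak{f}(v)$ as the longest prefix of $v$ whose infinite power minimizes (Definition~\ref{d:fullBranchedAsWord}(2)), the supremum is attained at $u=\mathfrak{f}(v)$ and equals $\omega_{-k,v}$ by Definition~\ref{d:fullBranchedAsWordBigAlps}. This yields $\mathscr{I}_{-k,v}\subseteq(\omega_{-k,v},\zeta_{-k,v}]$.

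The main potential obstacle is the careful bookkeeping of order reversals: one must keep track of the fact that $\ell_0$ increases with $\alpha$ while passing to negative digits reverses ordering via Lemma~\ref{l:orderReversed}, so that \emph{largest} $\alpha^{\ast}_u$ corresponds to \emph{smallest} $u^{\infty}$ in the word order — precisely the characterization of $\mathfrak{f}(v)$. A secondary subtlety is the inclusion/exclusion of endpoints: the right endpoint $\zeta_{-k,v}$ is included because the digit at position $|v|$ depends only on the cylinder containing $\ell_{|v|-1}$ (not on where $\ell_{|v|}=r_{0}$ lies), whereas $\omega_{-k,v}$ should be excluded for reasons analogous to the non-admissibility discussion at the end of Example~\ref{e:computingFrakF}(1).
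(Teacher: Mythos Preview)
Your proposal is correct and follows essentially the same approach as the paper: both argue by mirroring the proof of Lemma~\ref{l:potentialCylinders} (intersection over prefixes, identification of the extremal fixed point via $\mathfrak f(v)$), with the order reversal of Lemma~\ref{l:orderReversed} handling the passage from $\overline{d}(k,\cdot)$ to $\underline{d}(-k,\cdot)$. Your write-up is more explicit about the endpoint conventions and the bookkeeping of the order reversal, but the underlying argument is the same.
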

\begin{proof}   The set of $\alpha$ such that $\underline{d}{}^{\alpha}_{[1, |v|]} = \underline{d}(k,v)$ is contained in the interval $(\omega,\zeta]$ such that $L_{-k,v}A\cdot \ell_0(\omega) = \ell_0(\omega)$ and  $L_{-k,v}A\cdot \ell_0(\zeta) = r_0(\zeta)$.   The right endpoint here is exactly $\zeta= \zeta_{-k,v}$.       

Due to the order reversing relationship between words and simplified digits with $-k<0$,  the proof of Lemma~\ref{l:potentialCylinders} shows that $\omega_{-k,v}$ is a greatest lower bound for $\mathscr I_{-k,v}$.
\end{proof} 
 
\bigskip
\begin{Lem}\label{l:cylsOfConsecuChildrenAbutLargeAlps}   Suppose that $v  = \Theta_p(u)$ for some $u \in  \mathcal V$ and some $p\ge 1$.    Fix $k \ge 2$.   Then for $q\in \mathbb N$, $\zeta_{-k, \Theta_{q-1}(v)} = \omega_{-k, \Theta_{q}(v)}$.
\end{Lem}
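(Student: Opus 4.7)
The plan is to reduce the statement to the purely combinatorial identity on words that was established for the small-$\alpha$ regime, namely
\[ \mathfrak f(\Theta_q(v)) \;=\; \overleftarrow{(\Theta_{q-1}(v))'}, \]
and then translate it into an equality of left digit expansions, which uniquely pins down each of $\omega_{-k,\Theta_q(v)}$ and $\zeta_{-k,\Theta_{q-1}(v)}$.

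First I would observe that the word identity above was essentially already proven inside Lemma~\ref{l:cylsOfConsecuChildrenAbut}: invoking Proposition~\ref{p:frakFofChildren} gives $\mathfrak f(\Theta_q(v)) = v(v')^{q-1}y$ (with $y$ the palindrome from Proposition~\ref{p:theWordsAreNice}(4)), while a direct expansion combined with Proposition~\ref{p:theWordsAreNice}(5) and the fact that $\Theta_{q-1}(v)$ and $y$ are palindromes yields $\overleftarrow{(\Theta_{q-1}(v))'} = v(v')^{q-1}y$. This step is purely about the tree $\mathcal V$ and carries over verbatim from the small-$\alpha$ setting.

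Next I would translate this word identity into a statement about left expansions. By Definition~\ref{d:fullBranchedAsWordBigAlps}, the value $\omega_{-k,\Theta_q(v)}$ is characterized by
\[ \underline{d}{}^{\omega_{-k,\Theta_q(v)}}_{[1,\infty)} \;=\; \underline{d}\bigl(-k,(\mathfrak f(\Theta_q(v)))^\infty\bigr). \]
By Lemma~\ref{l:etaLargeAlps}, the value $\zeta_{-k,\Theta_{q-1}(v)}$ is characterized by
\[ \underline{d}{}^{\zeta_{-k,\Theta_{q-1}(v)}}_{[1,\infty)} \;=\; \underline{d}\bigl(-k,(\overleftarrow{(\Theta_{q-1}(v))'})^\infty\bigr). \]
By the word identity, these two infinite simplified digit sequences are identical.

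Finally, since the left simplified digit expansion $\underline{d}{}^{\alpha}_{[1,\infty)}$ determines $\ell_0(\alpha)$ via the usual continued fraction evaluation, and $\ell_0(\alpha) = (\alpha-1)t$ is an injective function of $\alpha$, the coincidence of these expansions forces $\omega_{-k,\Theta_q(v)} = \zeta_{-k,\Theta_{q-1}(v)}$. The main obstacle to watch for is ensuring that the order-reversing phenomenon of Lemma~\ref{l:orderReversed} makes $\mathfrak f(v)$ play exactly the same role with respect to left digits (for $\alpha > \epsilon_{3,n}$) as it does with respect to right digits (for $\alpha < \gamma_{3,n}$); this is guaranteed because the choice of $\mathfrak f(v)$ was already arranged so that its infinite power minimizes (in the word order) among infinite powers of prefixes, and after the order reversal this translates to the correct extremality on the left side of $\mathbb I_\alpha$. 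Once this symmetry is noted, no new combinatorics or calculus is required.
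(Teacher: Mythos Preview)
Your proposal is correct and follows essentially the same route as the paper's proof: both invoke the word identity $\mathfrak f(\Theta_q(v)) = \overleftarrow{(\Theta_{q-1}(v))'}$ from Lemma~\ref{l:cylsOfConsecuChildrenAbut}, then combine Definition~\ref{d:fullBranchedAsWordBigAlps} with Lemma~\ref{l:etaLargeAlps} to see that $\omega_{-k,\Theta_q(v)}$ and $\zeta_{-k,\Theta_{q-1}(v)}$ are determined by the same periodic left digit expansion. Your added remarks on injectivity and order reversal are sound but not needed here, since the equality follows already from both points satisfying the same fixed-point equation.
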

\begin{proof} From Lemma~\ref{l:cylsOfConsecuChildrenAbut}, 
$\mathfrak f(\Theta_q(v)) = \overleftarrow{(\Theta_{q-1}(v)\,)'}$.  Now Lemma~\ref{l:etaLargeAlps}  and the definition of $\omega_{-k, \Theta_q(v)}$ yield the result.
\end{proof}

\bigskip
\begin{Lem}\label{l:omegaZerosAreOmega}   For all $v\in \mathcal V$ and all $h\in \mathbb N$,  $\omega_{-k, \Theta_{0}^{h}(v)} = \omega_{-k, v}$. 
\end{Lem}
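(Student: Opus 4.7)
The lemma is the large-$\alpha$ analog of Corollary~\ref{c:vD(v)InConeGotEndpt}, and I plan to prove it by the same two-step template. First, by Definition~\ref{d:fullBranchedAsWordBigAlps}, $\omega_{-k,v}$ is characterized by the requirement that $\underline{d}^{\omega_{-k,v}}_{[1,\infty)}$ be the purely periodic simplified digit sequence of period $\underline{d}(-k,\mathfrak f(v))$. Such a purely periodic admissible expansion of $\ell_0(\alpha)$ uniquely identifies the parameter $\alpha$, since the corresponding orbit point must be the (unique, in the appropriate range) fixed point of the associated word matrix $L_{-k,\mathfrak f(v)}\,A$. Consequently, two words $w_1,w_2\in\mathcal V$ satisfying $(\mathfrak f(w_1))^{\infty}=(\mathfrak f(w_2))^{\infty}$ as infinite simplified words necessarily yield $\omega_{-k,w_1}=\omega_{-k,w_2}$, because the associated periodic digit sequences agree letter by letter under $\underline{d}(-k,\cdot)$.

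The task therefore reduces to the purely combinatorial identity $(\mathfrak f(\Theta_0^h(v)))^{\infty}=(\mathfrak f(v))^{\infty}$. This is a statement about words in $\mathcal V$ that does not depend on $\alpha$, and it is exactly the identity already verified inside the proof of Corollary~\ref{c:vD(v)InConeGotEndpt}: invoking Proposition~\ref{p:frakFofChildren} (cases (2) and (3)) together with the palindromic decomposition $v=u\,a\,u$ from Remark~\ref{r:looksLikePalindromicSplitting}, both infinite words are shown to equal $(ua)^{\infty}$. (This tacitly requires $\Theta_0^h(v)$ to be defined for $h\ge 1$, which restricts attention to $v=\Theta_p(u)$ with $p\ge 1$, in parallel with the hypothesis of Corollary~\ref{c:vD(v)InConeGotEndpt}.)

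The main difficulty is wholly absorbed in Proposition~\ref{p:frakFofChildren}: the check that $u(u')^{p-1}y$ and $(ua)^{h+1}$ determine the same infinite periodic extension, which in turn rests on the interlocking palindromic structure catalogued in Proposition~\ref{p:theWordsAreNice}. No new obstacle arises in the large-$\alpha$ regime: although $\underline{d}(-k,\cdot)$ reverses the order on the $c,d$-alphabet by Lemma~\ref{l:orderReversed}, it respects periodic structure, so equality of infinite powers of words translates directly into equality of the associated purely periodic digit sequences, and thence into equality of the two $\omega$-values.
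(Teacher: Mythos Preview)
Your argument follows the same route as the paper's: both reduce to a combinatorial statement about $\mathfrak f$ and invoke Proposition~\ref{p:frakFofChildren}. The paper's one-line proof asserts that the proposition gives $\mathfrak f(\Theta_0^h(v)) = \mathfrak f(v)$ for all $v\in\mathcal V$; you more accurately phrase the needed conclusion as equality of the \emph{infinite powers} $(\mathfrak f(\Theta_0^h(v)))^{\infty} = (\mathfrak f(v))^{\infty}$, which is what Proposition~\ref{p:frakFofChildren} actually yields (the finite words themselves differ: e.g.\ $\mathfrak f(uau)=ua$ while $\mathfrak f(\Theta_0(uau))=(ua)^2$). This is indeed what determines $\omega_{-k,\cdot}$ via Definition~\ref{d:fullBranchedAsWordBigAlps}.

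Your parenthetical caveat is also well placed, and in fact identifies an over-broad statement in the paper. For $v=c_1>1$ one has $\mathfrak f(c)=c$ but $\mathfrak f(\Theta_0(c))=c\,1$, and these do \emph{not} have the same infinite power; correspondingly $\omega_{-k,\Theta_0(c)}\neq\omega_{-k,c}$ (compare Lemma~\ref{l:consecSpecialCase} in the small-$\alpha$ setting, where $\omega_{k,c1c}=\zeta_{k,c+1}\neq\omega_{k,1}=\omega_{k,c}$). So the lemma really requires $v$ to have length greater than one, matching the hypothesis of Corollary~\ref{c:vD(v)InConeGotEndpt}. Your restriction to $v=\Theta_p(u)$ with $p\ge 1$ is slightly narrower than necessary---cases~1 and~3 of Proposition~\ref{p:frakFofChildren} also handle $v=\Theta_0^{h_0}(c)$ and $v=\Theta_0^{h_0}\circ\Theta_p(u)$---but the excluded case $v=c_1$ is the genuine obstruction. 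None of this affects the application in Theorem~\ref{t:cantorSetLargeAlps}, where the length-one case is handled separately via the $q'=-1$ term.
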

\begin{proof}  Proposition~\ref{p:frakFofChildren} shows that $\mathfrak f(\, \Theta_{0}^{h}(v)\,) = \mathfrak f(v)$ for all $v\in \mathcal V$. The result thus holds. 
\end{proof}

\bigskip 

The following result, and its proof, are completely analogous to Proposition~\ref{p:Admissible} where  the case of small $\alpha$ is treated.  
\begin{Prop}\label{p:AdmissibleLargeAlps}   For all $v \in \mathcal V$, and all $k \ge 2$, both
\[ \mathscr I_{-k,v} = (\omega_{k,v}, \zeta_{-k,v}]\;\; \text{and} \;\;  \mathscr I_{-k,v} \supset \mathscr J_{-k,v}\,.\]
\end{Prop}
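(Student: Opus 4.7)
The plan is to adapt the proof of Proposition~\ref{p:Admissible} to the large-$\alpha$ setting essentially verbatim, using the order-reversal of Lemma~\ref{l:orderReversed} to translate between right-digit arguments (there) and left-digit arguments (here). We proceed by induction on the length of $v \in \mathcal V$, organized into the same four cases used for small $\alpha$: (a) base cases $v = c$; (b) $v = \Theta_0^h(c)$ with $c > 1$ and $h \ge 1$; (c) $v = \Theta_q(u)$ with $q \ge 1$; (d) $v = \Theta_0^h(uau)$ where $uau = \Theta_p(u)$ falls under case (c). At each step, the task is to show that $\underline{d}(-k,v)$ is $\alpha$-admissible on all of $(\omega_{-k,v}, \zeta_{-k,v}]$ and that this interval contains $\mathscr J_{-k,v}$.

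The core mechanism is Lemma~\ref{l:admissBetween2Pts}: to show that $\underline{d}(-k,v)$ is admissible on a subinterval, it suffices to exhibit admissibility at its two endpoints. For the right endpoint $\zeta_{-k,v}$ this is immediate from the definition, since $L_{-k,v}A\cdot\ell_0(\zeta_{-k,v}) = r_0(\zeta_{-k,v})$ forces $\underline{d}(-k,v)$ to be a prefix of $\underline{d}{}^{\zeta_{-k,v}}_{[1,\infty)}$. For the left endpoint, the inductive hypothesis applied to shorter words provides the admissibility. Specifically, in case (b), I would use the inductive hypothesis for $c$ and $c+1$ together with Lemma~\ref{l:cylsOfConsecuChildrenAbutLargeAlps} and Lemma~\ref{l:omegaZerosAreOmega} to show that the extension of $\underline{d}(-k,c)$ to $\underline{d}(-k, \Theta_0^h(c))$ is forced by admissibility in the strip between $\eta_{-k,c}$ and $\zeta_{-k,c+1}$, partitioning this strip by the maximal power of the full-branched prefix that remains admissible at each $\alpha$. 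In cases (c) and (d), the argument proceeds iteratively on $q$ (resp. $h$): once $\Theta_{q}(v)$ (resp. $\Theta_0^{h}(v)$) is known admissible on the claimed interval ending at $\omega_{-k,\Theta_q(v)}$, Lemma~\ref{l:cylsOfConsecuChildrenAbutLargeAlps} identifies this with $\zeta_{-k,\Theta_{q-1}(v)}$, and the fact that $v$ is a prefix of $(\mathfrak f(v))^2$ (the final statement of Proposition~\ref{p:frakFofChildren}) forces the admissibility of $\Theta_{q-1}(v)$ on its full claimed interval.

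The containment $\mathscr J_{-k,v} \subset \mathscr I_{-k,v}$ reduces to checking $\eta_{-k,v} > \omega_{-k,v}$, which follows from the order reversal (Lemma~\ref{l:orderReversed}) applied to Lemma~\ref{l:etaLessThanOmega}: the inequality $v(v')^\infty \prec (\mathfrak f(v))^\infty$ in word ordering becomes $\underline{d}(-k, v(v')^\infty) \succ \underline{d}(-k, (\mathfrak f(v))^\infty)$ in real ordering, so $\eta_{-k,v}$ (whose left-digit expansion is $\underline{d}(-k, v(v')^\infty)$ by Lemma~\ref{l:etaLargeAlps}) lies strictly to the right of $\omega_{-k,v}$.

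The main obstacle I anticipate is bookkeeping: the proof of Proposition~\ref{p:Admissible} for small $\alpha$ contains a subtle interplay between right-digit inductions and the identification of endpoints $\omega$ via the fullness-of-branches analysis, and verifying that all of the supporting lemmas translate cleanly under $k \mapsto -k$ and $\overline{d} \leftrightarrow \underline{d}$ is where errors could creep in. In particular, the analogue of the ``extra step'' observation noted at the start of Section~\ref{s:LargeAlps} (that on part of $\mathscr J_{-k,v}$ the right orbit requires an additional application of $A^uC$ before synchronizing) does not actually enter the admissibility argument for $\underline{d}(-k,v)$ itself, but one must be careful that $\mathscr J_{-k,v}$ as defined via $L_{-k,v}A$ still coincides with the subinterval of $\mathscr I_{-k,v}$ where synchronization can actually be detected; this is ensured by Lemma~\ref{l:LargeAlpsEtaDigits} and the role of $\delta_{3,n}$ in guaranteeing that $(AC^2)^{n-2}\cdot r_0(\alpha)$ is admissible throughout the relevant parameter range.
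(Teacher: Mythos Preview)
Your approach matches the paper's: the same induction on $|v|$ with the same case split, the same key lemmas (Lemmas~\ref{l:cylsOfConsecuChildrenAbutLargeAlps}, \ref{l:omegaZerosAreOmega}, \ref{l:etaLessThanOmega}, \ref{l:orderReversed}, and Proposition~\ref{p:frakFofChildren}), and the containment $\mathscr J_{-k,v}\subset\mathscr I_{-k,v}$ via order reversal applied to Lemma~\ref{l:etaLessThanOmega} is exactly how the paper argues it. One slip worth flagging: in cases~(c) and~(d) you describe the $q$-iteration as passing from $\Theta_q(v)$ to $\Theta_{q-1}(v)$, but that induction has no base (there is no largest $q$); the correct direction, which the paper uses, is \emph{upward} in $q$ --- from the known admissibility of $\Theta_{q-1}(v)$ on its interval and the identification $\omega_{-k,\Theta_q(v)}=\zeta_{-k,\Theta_{q-1}(v)}$, one argues that arbitrarily high powers of $\mathfrak f(\Theta_q(v))$ are admissible for $\alpha$ just to the right of $\omega_{-k,\Theta_q(v)}$, and then the prefix-of-square fact from Proposition~\ref{p:frakFofChildren} forces $\Theta_q(v)$ to be admissible on all of $(\omega_{-k,\Theta_q(v)},\zeta_{-k,\Theta_q(v)}]$.
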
 
\begin{proof} 

Fix $k$.   
 We argue by induction of the length of the word $v$.     We have already seen that $\eta_{-k,v} \in \mathscr I_{-k,v}$ implies $\underline{d}(-k, v (v')^\infty)$ gives the sequence of simplified digits of $\ell_0(\eta_{-k,v})$.   
The base cases, given by $v = c_1$, are easily verified.   
\bigskip

\noindent
{\bf Case 1.}   Suppose   $v = \Theta_{0}^{h}(c)$ for some $h\ge 1$ and some $c> 1$.  We have $v =c (1\,c)^h$.   The argument as for Proposition~\ref{p:Admissible} goes through (compare with the remaining  cases). 

\bigskip

\noindent
{\bf Case 2.} Suppose $v = uau$ in our usual decomposition.  Since $\mathfrak f(v) = ua$, we have  $L_{-k, ua}A\cdot \ell_0(\omega_{-k,v}) = \ell_0(\omega_{-k,v})$.   This equality then implies that given $N$, there are $\alpha$ sufficiently close to, and larger than,  $\omega_{-k,v}$ such that $\underline{d}(-k, u(au)^N)$ is admissible for  $\alpha$.   It follows that we can partition $\mathscr I_{-k,v}$ by subintervals corresponding to the values $N$.   Now,  $\Theta_0(v) = vv'' = uauau$, and hence $\Theta_0(v)$ is admissible on $[\omega_{-k,v}, \zeta_{k,\Theta_0(v)})$.   Similarly, each $\Theta_{0}^{h}(v)$ is admissible on $[\omega_{-k,v}, \zeta_{k,\Theta_{0}^{h}(v)})$.  By considering our ordering on words, it is clear that   $\zeta_{k,\Theta_{0}^{h}(v)}> \eta_{k, \Theta_{0}^{h}(v)} >  \omega_{k,v}$. 

We now  proceed inductively for larger values of $q$.   To begin, the admissibility $v$ on all of $\mathscr I_{-k,v}$ and the admissibility of $\Theta_0(v)$ on exactly $[\omega_{-k,v}, \zeta_{k,\Theta_0(v)})$ implies that   there must be a shortest extension of $v = uau$ admissible for those $\alpha$  immediately to the right of $\zeta_{k,\Theta_0(v)}$.    Lemma~\ref{l:cylsOfConsecuChildrenAbutLargeAlps}  shows that  $\mathfrak f(\, \Theta_1(v)\,)$ is this extension.   Since  $\underline{d}{}^{\omega_{-k, \Theta_1(v)}}_{[1, \infty]}$ is purely periodic of period $\underline{d}(-k, (\, \mathfrak f ( \Theta_1(v))\,)$,  arbitrarily high powers of this period give admissible expansions for $\alpha$ just to the right of $\omega_{-k, \Theta_1(v)}$.   Recall that Proposition~\ref{p:frakFofChildren} shows that  {\em any} $v = uau$ is always a prefix of the square of the corresponding $\mathfrak f(v) = ua$;  we hence find that  all of $\Theta_1(v)$ is admissible on an interval beginning at $\omega_{-k,\Theta_1(v)}$.  We iterate this argument for increasing $q$, to give that for each $q$,  $\Theta_q(v)$ is admissible on  $[\zeta_{k,\Theta_q(v)}, \omega_{k,\Theta_q(v)})$.

 From Lemmas~\ref{l:etaLessThanOmega} and ~\ref{l:orderReversed}   
we can conclude that   $\eta_{-k, \Theta_q(v)} >\omega_{-k, \Theta_q(v)}$.      It follows that $\mathscr J_{-k,\Theta_q(v)} \subset \mathscr I_{-k,\Theta_q(v)}$.

\bigskip

\noindent
{\bf Case 3.}  As in the proof of Proposition~\ref{p:Admissible}, the remaining case is that $v =  \Theta_{0}^{h}(uau)$.  Thus, $v = u(au)^{h+1}$ in our usual decomposition.     The right endpoint of $\mathscr I_{-k, \Theta_{0}^{h+1}(uau)}$ is at $\zeta_{-k, u(au)^{h+2}}$.    Lemma~\ref{l:etaLargeAlps} now yields that  the left endpoint of $\mathscr I_{k, \Theta_{0}^{h}(uau)} \setminus \mathscr I_{k, \Theta_{0}^{h+1}(v)}$ is the point with purely periodic lower simplified digit expansion of  period  $\underline{d}(k,  v u' a)$.    In the proof of Proposition~\ref{p:Admissible} we showed that  $\mathfrak f(\, \Theta_1\circ \Theta_{0}^{h}(u)\,)  = v u' a$.    Therefore,  $\omega_{-k, \Theta_1(v)}= \zeta_{-k, \Theta_{0}^{h+1}(uau)}$ and also since     $\Theta_1(v)$ is a prefix of the square of $vu'a$, it follows that $\overline{d}(-k, \Theta_1(v)\,)$ is admissible on $[\omega_{-k, \Theta_1(v)}, \zeta_{-k,  \Theta_1(v)})$.  That $\eta_{-k, \Theta_1(v)}$ belongs to this interval is easily shown.  

Induction shows the result for $\Theta_q(v)$ when $q\ge 1$.
\end{proof}  

\bigskip

We are now ready for the following.\label{proofTheoremLargeAlps}
\begin{proof}[Proof of Theorem~\ref{t:cantorSetLargeAlps}]  That $[\epsilon_{3,n},1) = \cup_{k=1}^{\infty}\, \mathscr I_{-k,1}$  is simply a consequence of the fact that for each $\alpha$ in this range,  there is some $k$ such that 
$T_{\alpha}(\,\ell_0(\alpha)\, ) = A^{-k} C\cdot x$.

    Proposition~\ref{p:AdmissibleLargeAlps} shows that for all $v \in \mathcal V$, 
$\mathscr I_{-k,v} = [\omega_{-k,v}, \zeta_{-k,v})$ can be partitioned by $\mathscr J_{-k,v} = [\eta_{-k,v}, \zeta_{-k,v} )$ and its complement.    Recall Lemma~\ref{l:omegaZerosAreOmega} states that  for all $h$,  we have $\omega_{-k, \Theta_{0}^{h}(v)} = \omega_{-k,v}$.     By Lemma~\ref{l:cylsOfConsecuChildrenAbutLargeAlps} (and the complementary results in the proof of Proposition~\ref{p:AdmissibleLargeAlps}),   for all $q\in \mathbb N$, $\omega_{k, \Theta_q(v)} = \zeta_{k, \Theta_{q-1}(v)}$.  Therefore, 
$\cup_{q=0}^{\infty}\, \mathscr I_{-k, \Theta_q(v)}$ is a subinterval of $\mathscr I_{-k,v}\setminus \mathscr J_{-k,v}$ which has  $\omega_{-k,v}$ as its left endpoint.   Finally, the definition of $\Theta_q(v)$ combined with Lemma~\ref{l:etaLargeAlps} shows that $\lim_{q \to \infty}\, \zeta_{k, \Theta_q(v)} = \eta_{k,v}$.   Therefore,  the right endpoint of the union is in fact the left endpoint of $\mathscr J_{k,v}$. 
\end{proof} 

\subsection{Synchronization holds on a set of full measure}

\subsubsection{Right digits are admissible; synchronization occurs on each $\mathscr J_{-k,v}$}
 
 We now define $R_{-k,v}$  exactly so that the group identity of Proposition~\ref{l:longWordInU}, below,     gives that 
$L_{-k,v}=  C^{-1}AC^2 R_{-k,v}$, and thus  the main hypothesis of Lemma~\ref{l:LargeAlpsEtaDigits} is satisfied.     
That synchronization does occur along $\mathscr J_{-k,v}$ is then only a matter of showing that $R_{k,v}\cdot r_0(\alpha)$ is admissible at all $\alpha\in \mathscr J_{-k,v}$.  

 For further ease, we  set 
 \[ u =
  u_{3,n} = (1,2)^{n-2}, (1,1).\]
Note that the length of $u$ is $|u| =n - 1$. As $\alpha$ tends to one,  $\overline{b}{}^{\alpha}_{[1,\infty)}$ begins with ever higher powers of $u$, compare with Proposition~\ref{p:veeOnTheRight}.

For typographic ease,   for $k\ge 2$ let 
\[ \mathcal E = \mathcal E_k =(1,1) u^{k-2} (1,2)^{n-3}, \; 
\mathcal F = \mathcal F_k =(1,1) u^{k-1} (1,2)^{n-3}\;\text{and}\;\mathcal G =(1,2)(1,1) (1,2)^{n-3} \,.\]
Note that $\mathcal E \mathcal G = \mathcal F$, and $\mathcal E_{k+1}= \mathcal F_k$.   We also use 
that $u^{k-1} (1,2)^{n-3} =  [(1,2)^{n-2}, (1,1)]^{k-1}(1,2)^{n-3} = (1,2)^{n-2}, (1,1)  [(1,2)^{n-2}, (1,1)]^{k-2}(1,2)^{n-3}  = (1,2)^{n-2} \mathcal E$. 

We accordingly let 
\[\tilde{\mathcal E}= \tilde{\mathcal E}_k = (AC^2)^{n-3}U^{k-2}AC,\;\; \tilde{\mathcal F}= \tilde{\mathcal F}_k = (AC^2)^{n-3}U^{k-1}AC\;\;\text{and}\;\;\tilde{\mathcal G} = (AC^2)^{n-3} ACAC^2.\;\;   \]

\begin{Def}\label{d:derivedWordsLargerAlps}     Suppose that $v = c_1 d_1 \cdots c_s \in \mathcal V$  and $k\ge 2$. 

\begin{enumerate}
 \item  The upper  digit sequence of  $-k,v$  is  
 \[\overline{b}(-k,v) =  (1,2)^{n-2} \mathcal E^{c_1}  \mathcal F^{d_1} \,\mathcal E^{c_2 } \,\mathcal F^{d_2} \cdots \mathcal E^{c_{s-1}} \,\mathcal F^{d_{s-1}} \,\mathcal E^{c_s},
                                      \]
whose length is denoted 

\medskip
\item  $\overline{S}(-k,v) = |\,\overline{b}(-k,v)\,| $

\smallskip
\item The {\em right matrix} of  $-k, v$ is 
 \[ 
 \begin{aligned} 
R_{-k,v} &=  \tilde{\mathcal E}^{c_s}  \tilde{\mathcal F}^{d_{s-1} }\;\tilde{\mathcal E}^{c_{s-1}} \tilde{\mathcal F}^{d_{s-2}}\cdots \tilde{\mathcal E}^{c_{2}}\tilde{\mathcal F}^{d_{1}} \tilde{\mathcal E}^{c_1}(AC^2)^{n-2}. 
\end{aligned}
\]
 \end{enumerate}
\end{Def}
\noindent 
Note that Proposition~\ref{l:longWordInU}, below,  implies that $L_{-k,v} = C^{-1}A C^2\, R_{-k,v}$.   

\bigskip
We will prove admissibility of $\overline{b}(-k,v)$ on $\mathscr J_{-k,v}$ by induction, similar to our proof of admissibility of 
$\underline{d}(k,v)$ on $\mathscr J_{-k,v}$.  However,  the role of the various $\eta_{k,v}$ will now be played by certain points lying to the left of the corresponding $\eta_{-k,v}$, the  $\beta_{-k,v,N}$ introduced in the next statement.
\begin{Lem}\label{l:leftExpansionsAtEndpointsLargeAlp}   Let $v = c_1 d_1\cdots c_s \in \mathcal V$ and 
$k \in \mathbb N$.    Suppose that $\overline{b}(-k,v)$ is admissible on $\mathscr J_{-k,v}$.   Set $\zeta = \zeta_{-k,v}$ and $\eta = \eta_{-k,v}$.
Then for each $N \in \mathbb N$,  there exists $\beta_{-k,v,N}$ less than $\eta$ such that 
\[\overline{b}{}^{ \beta_{-k,v,N}}_{_{[1,(1+\overline{S})N\,]}} = [\, \overline{b}(-k,v) (1,1)\,]^N.\]

Furthermore,
\[
\overline{b}{}^{\zeta}_{[1,\infty)}  = 
                      \begin{cases} \overline{b}(-k, v), \overline{\mathcal G \mathcal E^{c_1}}&\text{if}\; v=c_1,\\
                      \\
                                             \overline{b}(-k,  (\,\overleftarrow{v'}\,)^{\infty}\,) &\text{otherwise}.
                      \end{cases}                
\]  
\end{Lem}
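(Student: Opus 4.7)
The plan is to derive both assertions from the endpoint synchronization structure established in Lemma~\ref{l:LargeAlpsEtaDigits}, combined with the group identity $L_{-k,v}= C^{-1}AC^2\,R_{-k,v}$ supplied by Proposition~\ref{l:longWordInU}. Admissibility of $\overline{b}(-k,v)$ on $\mathscr J_{-k,v}$ gives $r_{\overline{S}}(\alpha) = R_{-k,v}\cdot r_0(\alpha)$ throughout $[\eta,\zeta)$, so the proof splits cleanly into analyses at the two endpoints.

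For the formula at $\zeta$: one-sided continuity forces $\overline{b}{}^{\zeta}_{[1,\overline{S}]} = \overline{b}(-k,v)$. Invoking Lemma~\ref{l:LargeAlpsEtaDigits}(iii) with $R=R_{-k,v}$ and index $j-1=\overline{S}$, the next two right digits at $\zeta$ are $(1,2)$ and then $(1,1)$, and $r_{\overline{S}+2}(\zeta) = r_1(\zeta)$. Hence $\overline{b}{}^{\zeta}_{[1,\infty)}$ is eventually periodic with preperiod the first digit $(1,2)$ and period $b_2\cdots b_{\overline{S}}(1,2)(1,1)$. For $v=c_1$ this period is precisely the cyclic rotation of $\mathcal G\mathcal E^{c_1}$ obtained by moving the final $(1,2)(1,1)$ to the front, and re-bracketing at the preperiod gives the stated form $\overline{b}(-k,v),\overline{\mathcal G\mathcal E^{c_1}}$. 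For longer $v$, the palindromic decompositions supplied by Proposition~\ref{p:theWordsAreNice}, specifically $v=uaz$ and $v'=z'au$, let one identify the period (after the analogous cyclic regrouping) with the block read off from $\overline{b}(-k,(\overleftarrow{v'})^{\infty})$; the short case split on whether $c_1=1$ or $c_1>1$ aligns the leading $\mathcal E$- or $\mathcal F$-block correctly.

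For the existence of the $\beta_{-k,v,N}$, I would induct on $N$ using the block matrix $M_v = AC\cdot R_{-k,v}$, associated to $\overline{b}(-k,v)(1,1)$. At $\alpha=\eta$ the $(\overline{S}+1)$-th right digit is $(-k,2)$ by Lemma~\ref{l:LargeAlpsEtaDigits}(i), whereas for $\alpha$ slightly below $\eta$ the point $r_{\overline{S}}(\alpha) = R_{-k,v}\cdot r_0(\alpha)$ crosses into the cylinder $\Delta_\alpha(1,1)$: geometrically it moves past the branch change where $A^{-k}C^2$ gives way to $AC$. This locates $\beta_{-k,v,1}$ in a left-abutting subinterval. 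For the inductive step, treat $M_v^{N-1}\cdot r_0(\alpha)$ as a new right-endpoint-like initial point and rerun the base argument; an intermediate-value argument, using continuity of $\alpha\mapsto M_v^{N}\cdot r_0(\alpha)$, then produces $\beta_{-k,v,N}< \beta_{-k,v,N-1}$. The main obstacle is the admissibility check at each inductive step: one must verify that every iterate $M_v^{j}\cdot r_0(\beta_{-k,v,N})$, $0\le j<N$, together with the intra-block iterates, lies in the cylinder indexed by the specified digit. This reduces to monotonic dependence on $\alpha$ of the relevant cylinder boundaries, in the spirit of Lemma~\ref{l:admissBetween2PtsRvals}, combined with the rotational behavior of $M_v$ near $\eta$ that keeps the $N$-th iterate inside $\mathbb I_\alpha$.
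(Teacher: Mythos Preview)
Your argument at $\zeta$ is essentially the paper's: invoke Lemma~\ref{l:LargeAlpsEtaDigits}(iii) to get $\overline{b}{}^{\zeta}_{[1,\infty)} = \overline{b}(-k,v),(1,2)(1,1)\,\overline{b}{}^{\zeta}_{[2,\infty)}$, then re-bracket the period. The paper does this purely at the level of the $\mathcal E,\mathcal F,\mathcal G$ blocks (using $u^{k-1}(1,2)^{n-3}=(1,2)^{n-2}\mathcal E$ to see that the inserted $(1,2)(1,1)$ absorbs into a leading $\mathcal G\mathcal E$), finishing with the observation that $v$ palindromic gives $c_1d_1\cdots d_{s-1}(c_s-1)1 = \overleftarrow{v'}$. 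Your route through the $v=uaz$, $v'=z'au$ decomposition reaches the same identification, though with more overhead than the direct block rewrite.

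For the $\beta_{-k,v,N}$ you are working too hard and have not closed the gap. You correctly isolate $M_v=AC\,R_{-k,v}$, but the paper's point is simply that $r_0(\eta)$ is \emph{fixed} by $M_v$: from the defining relation $R_{-k,v}\cdot r_0(\eta)=C^{-1}\cdot\ell_0(\eta)$ one has $M_v\cdot r_0(\eta)=A\cdot\ell_0(\eta)=r_0(\eta)$. Thus the (inadmissible) right expansion of $r_0(\eta)$ is the purely periodic $[\overline{b}(-k,v)(1,1)]^{\infty}$, and for $\alpha$ sufficiently close to $\eta$ from below any prescribed finite prefix $[\overline{b}(-k,v)(1,1)]^{N}$ becomes admissible. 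That is the entire argument; no induction on $N$, no cylinder-crossing bookkeeping, and no residual admissibility ``obstacle''. Your inductive scheme, by contrast, requires at each step verifying that the intra-block iterates stay in the correct cylinders for $\alpha$ strictly outside $\mathscr J_{-k,v}$, where the hypothesis gives no direct control; you flag this as the main obstacle but do not actually resolve it. The fixed-point observation makes the whole issue disappear.
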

\begin{proof}  

The definition of $\eta_{-k, v}$ shows that $AC R_{-k,v}$ fixes $r_0(\eta_{-k, v})$.   Therefore, for each $N\in \mathbb N$,  there exists $\alpha = \alpha'_N< \eta_{-k,v}$ and sufficiently close,  with $\overline{b}{}^{\alpha}_{_{[1,(1+\overline{S})N\,]}} = [\, \overline{b}(-k,v) (1,1)\,]^N$.

Lemma~\ref{l:LargeAlpsEtaDigits}, (iii) yields 
$\overline{b}{}^{\zeta}_{[1,\infty)} = \overline{b}(-k,v), (1,2)(1,1) \overline{b}{}^{\zeta}_{[2,\infty)}$.
This sequence is of course periodic.   The prefix $u^{k-1}(1,2)^{n-3} $ of $\overline{b}(-k,v)$ can be rewritten as $(1,2)^{n-2}(1,1) u^{k-2}(1,2)^{n-3}$, thus $(1,2)(1,1) \overline{b}{}^{\zeta}_{[2,\infty)}$  has the prefix 
$(1,2)(1,1) (1,2)^{n-3}(1,1) u^{k-2}(1,2)^{n-3}  = \mathcal G \,\mathcal E$.   This prefix is followed by the complement of the prefix $u^{k-1}(1,2)^{n-3} $ of $\overline{b}(-k,v)$.     The case of $v = c_1$ is now easily verified.  

For general $v$,  
$\overline{b}{}^{\zeta}_{[1,\infty)} = \overline{b}(-k,v), \overline{ \mathcal G \,\mathcal E\; \mathcal E^{c_1-1} \, \mathcal F^{d_1 } \,\mathcal E^{c_2} \,\mathcal F^{d_2} \cdots \mathcal E^{c_{s-1} }  \,\mathcal F^{d_{s-1}} \,\mathcal E^{c_s}}$.   This last equals 
$\overline{b}(-k, (c_1 d_1 \cdots d_{s-1} (c_s-1) 1)^{\infty}\,)$.  Since $v$ is a palindrome, this in turn equals \newline $\overline{b}(-k, (c_s d_{s-1} \cdots d_1 (c_1-1) 1)^{\infty}\,)$.   Since $c_s d_{s-1} \cdots d_1 (c_1-1) 1 = \overleftarrow{v'}$, the result holds. 
\end{proof}

\begin{Lem}\label{l:leftWAdmissLargeAlps}   Fix $j\in \mathbb N$ and $0\le i < |u|$. If there is some $\alpha \ge \epsilon_{3,n}$ such that
$\overline{b}{}^{\alpha}_{[1, j |u|+i ]\,} = u^j u_{[1, i]}$, then $\overline{b}{}^{\alpha'}_{[1, j |u|+i ]\,} = u^ju_{[1, i]}$ for all $\alpha'>\alpha$.
\end{Lem}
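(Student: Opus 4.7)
The plan is to mirror the proof of Lemma~\ref{l:leftWAdmiss} verbatim, replacing the role of the base parameter $\alpha=0$ by $\alpha=1$, the role of the periodic word $W$ by the word $U$ of Proposition~\ref{p:veeOnTheRight}, and the role of Lemma~\ref{l:admissBetween2Pts} by its mirror, Lemma~\ref{l:admissBetween2PtsRvals}. Accordingly, my first step would be to recall from part~(1) of Proposition~\ref{p:veeOnTheRight} that for $m=3$ the $T_{3,n,1}$-orbit of $r_0(1)=t$ is purely periodic with admissible word $U = AC(AC^{-1})^{n-2}$; since in the projective quotient $C^{-1}=C^2$ when $m=3$, this translates to the digit sequence $\overline{b}{}^{\alpha=1}_{[1,\infty)} = u^{\infty}$, where $u = u_{3,n} = (1,2)^{n-2},(1,1)$.

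From this I would extract precisely what is needed: for every $j\in\mathbb N$ and every $0\le i<|u|$, the prefix $u^j u_{[1,i]}$ occurs as an initial segment of $\overline{b}{}^{\alpha=1}_{[1,\infty)}$ and hence is admissible at $\alpha=1$. (Here $u_{[1,0]}$ denotes the empty word, as in Lemma~\ref{l:leftWAdmiss}.) Symbolically, this gives, for each $j$ and $i$, a branch of right-endpoint digits corresponding to the iterates $U^{j}\cdot r_0$, $AC^2 U^{j}\cdot r_0$, $(AC^2)^2 U^{j}\cdot r_0$, and so on, which persist continuously as $\alpha$ decreases from $1$.

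The conclusion then follows in one step from Lemma~\ref{l:admissBetween2PtsRvals}. Under the lemma's hypothesis that $\overline{b}{}^{\alpha}_{[1,\,j|u|+i]} = u^j u_{[1,i]}$ at some $\alpha \ge \epsilon_{3,n}$, the same prefix is admissible at the two parameter values $\alpha$ and $1$; applying Lemma~\ref{l:admissBetween2PtsRvals} with $\alpha' = \alpha$, $\alpha''=1$ and $N = j|u|+i$ yields $\overline{b}{}^{\alpha^{*}}_{[1,N]} = u^j u_{[1,i]}$ for every $\alpha^{*}\in[\alpha,1]$. Since the parameter range is $[0,1]$, this is exactly the asserted admissibility for all $\alpha'>\alpha$.

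There is no substantive obstacle here; the lemma is a routine mirror of Lemma~\ref{l:leftWAdmiss}, with the ``input'' roles of \cite{CaltaSchmidt} and Lemma~\ref{l:admissBetween2Pts} taken over respectively by Proposition~\ref{p:veeOnTheRight} and Lemma~\ref{l:admissBetween2PtsRvals}. The only point that merits a brief sanity check is the combinatorial identification of the matrix word $U$ with the digit string $u$ under the projective identity $C^{-1}=C^{2}$ valid when $m=3$; once this is in hand, the rest of the argument is purely a matter of reading off a prefix and invoking the appropriate monotonicity lemma.
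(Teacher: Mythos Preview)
Your proof is correct and follows essentially the same approach as the paper: invoke Proposition~\ref{p:veeOnTheRight} to get admissibility of $u^\infty$ at $\alpha=1$, then apply Lemma~\ref{l:admissBetween2PtsRvals} with endpoints $\alpha$ and $1$. Your write-up is in fact more explicit than the paper's, which compresses the argument to two sentences.
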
 
\begin{proof}  (Of course, if $i=0$, then $w_{[1, i]}$ is the empty word.) Using Proposition~\ref{p:veeOnTheRight} with Lemma~\ref{l:admissBetween2PtsRvals}, one has that  all powers of $U$ are admissible when $\alpha = 1$,  there are thus branches of digits corresponding to each power and appropriate suffix of $U$ that continue to the left from $\alpha = 1$.   For any of these, admissibility at any given $\alpha$  thus guarantees admissibility at each $\alpha' \ge \alpha$. 
\end{proof}

\begin{Lem}\label{l:leftExpansionsAtOmegaKBigAlps}   Fix 
$k\ge 2$.    We have   

\[
\overline{b}{}^{\omega_{-k,1}}_{[1,\infty)} = u^{k-1} (1,2)^{n-3}\, \overline{\mathcal E}\,. 
\]

Furthermore, the digits  
$\overline{b}{}^{\alpha}_{[1, n-3 + (k-1) |u|\,]} =  u^{k-1} (1,2)^{n-3}$ are admissible for all $\alpha \ge \omega_{k,1}$. 
\end{Lem}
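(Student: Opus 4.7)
The proof runs in close parallel with the small-$\alpha$ analog, Lemma~\ref{l:leftExpansionsAtOmegaK}. By Definition~\ref{d:fullBranchedAsWordBigAlps} with $\mathfrak f(1)=1$, the value $\omega_{-k,1}$ is characterized by $A^{-k}C\cdot \ell_0(\omega_{-k,1}) = \ell_0(\omega_{-k,1})$. Since $r_0(\alpha) = A\cdot \ell_0(\alpha)$ for every $\alpha$, this is equivalent to
\[
 A^{1-k}CA^{-1}\cdot r_0(\omega_{-k,1}) \;=\; r_0(\omega_{-k,1}),
\]
so that $r_0(\omega_{-k,1})$ is a fixed point of $A^{1-k}CA^{-1}$ on the ``right'' side.

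The next step is to invoke the group identity supplied by Proposition~\ref{l:longWordInU}, the large-$\alpha$ counterpart of Lemma~\ref{l:shortRightId}, to rewrite $A^{1-k}CA^{-1}$ in a form exposing $U^{k-1}$. The identity should yield, using only $C^{3}=\mathrm{Id}$ and $(A^{-1}C)^{n}=\mathrm{Id}$ projectively, that the point $(AC^{2})^{n-3}U^{k-1}(AC^{2})^{n-2}\cdot r_0(\omega_{-k,1})$ is a fixed point of $\tilde{\mathcal E}$. Equivalently, provided the prefix $u^{k-1}(1,2)^{n-3}$ is admissible at $\omega_{-k,1}$, the orbit element $r_{kn-k-2}(\omega_{-k,1})$ is fixed by the period matrix $\tilde{\mathcal E}$, producing the claimed periodic tail $\overline{\mathcal E}$.

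Admissibility of $u^{k-1}(1,2)^{n-3}$ at $\omega_{-k,1}$ is then obtained exactly as in the proof of Lemma~\ref{l:leftExpansionsAtOmegaK}: the cylinder $\Delta(1,2)$ is full on $[\epsilon_{3,n},1)$ by Proposition~\ref{p:veeOnTheRight}, so the graph $y = (AC^{2})^{n-3}U^{k-1}(AC^{2})^{n-2}\cdot x$ meets $x = r_0(\omega_{-k,1})$ inside $\mathbb I_{\omega_{-k,1}}$, and Lemma~\ref{l:leftWAdmissLargeAlps} propagates the admissibility of $u^{k-1}$ from values of $\alpha$ close to $1$—where all powers of $u$ are admissible by Proposition~\ref{p:veeOnTheRight}—down to $\alpha = \omega_{-k,1}$. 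The fixed-point relation from the preceding paragraph then automatically yields admissibility of the periodic continuation $\overline{\mathcal E}$, completing the first statement. The second statement follows immediately from one further application of Lemma~\ref{l:leftWAdmissLargeAlps}, starting from the admissibility just established at the base point $\omega_{-k,1}$.

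The main obstacle is producing the group identity of the second paragraph. In the small-$\alpha$ setting the analogous identity is the content of Lemma~\ref{l:shortRightId}, proved by induction on $k$ with a further induction on the exponent $a$. The corresponding induction here must be performed inside $U = AC(AC^{-1})^{n-2}$ rather than $W$, and threading the cancellations so that the single-step matrix $A^{1-k}CA^{-1}$ is traded in for a factor $U^{k-1}$ flanked by the matching powers of $AC^{2}$ is the combinatorial heart of the argument; this is precisely what is ultimately established by Proposition~\ref{l:longWordInU}, whose derivation is the main technical cost of the whole section.
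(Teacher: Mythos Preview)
Your outline follows essentially the same route as the paper's main argument, but two points deserve correction.

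First, the relevant group identity is not Proposition~\ref{l:longWordInU} but the simpler Lemma~\ref{l:oneStepId} with $a=1$: it gives directly $CA^{-1}C\,A^{-k}C\,A^{-1} = (AC^2)^{n-3}U^{k-1}$. Applying $CA^{-1}C$ to the fixed-point relation $A^{-k}C\cdot\ell_0(\omega)=\ell_0(\omega)$ and using $\ell_0=A^{-1}\cdot r_0$ yields $(AC^2)^{n-3}U^{k-1}\cdot r_0(\omega) = CA^{-1}CA^{-1}\cdot r_0(\omega) = (AC^2)^{n-2}\cdot r_0(\omega)$. This equality is the heart of the matter: it simultaneously shows that $(AC^2)^{n-2}\cdot r_0(\omega)$ is fixed by $\tilde{\mathcal E}=(AC^2)^{n-3}U^{k-2}AC$, and that the value reached after the long prefix coincides with the value reached after the short prefix $(1,2)^{n-2}$.

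Second, your admissibility step is not quite right. You invoke fullness of $\Delta(1,2)$ on $[\epsilon_{3,n},1)$, but Proposition~\ref{p:veeOnTheRight} only treats $\alpha=1$. The paper instead uses the equality above: since $(AC^2)^{n-2}\cdot r_0(\omega)$ corresponds to the first $n-2$ digits of $r_0(\omega)$, and these are known to be $(1,2)^{n-2}$ for all $\alpha>\delta_{3,n}$ (established at the start of Section~\ref{s:LargeAlps}), this short expression is admissible and in particular lies in $\mathbb I_\omega$. Hence so does the long expression $(AC^2)^{n-3}U^{k-1}\cdot r_0(\omega)$, and now Lemma~\ref{l:leftWAdmissLargeAlps} gives admissibility of the prefix $u^{k-1}(1,2)^{n-3}$ at $\omega$. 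The paper also records, for $k\ge3$, an alternative derivation via $\omega_{-(k+1),1}=\zeta_{-k,1}$ and Lemma~\ref{l:leftExpansionsAtEndpointsLargeAlp}.
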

\begin{proof}  We first give one proof for $k\ge 3$.
Since $\mathfrak f(1) = 1$,  $\zeta_{-k,1} = \omega_{-(k+1),1}$.   From
\[
\begin{aligned} 
 \overline{b}{}^{\zeta_{-k,1}}_{[1,\infty)}  &=   u^{k-1}, (1,2)^{n-3} \,\overline{\mathcal G \mathcal E_{k}} \\
     &=    u^{k-1}, (1,2)^{n-3} \mathcal G\, \overline{ \mathcal E_{k}\mathcal G}\\      
     &=   u^{k} (1,2)^{n-3}\, \overline{\mathcal E_{k+1}}. 
\end{aligned} 
\]

The following works for all $k$. 
 For typographical ease, let $\omega= \omega_{-k,1}$.  Since $\mathfrak f(1) = 1$, by definition,    $A^{-k}C \cdot \ell_0(\omega) = \ell_0(\omega)$.   
By Lemma~\ref{l:oneStepId},  $CA^{-1}CA^{-k}CA^{-1} =  (AC^2)^{n-3} U^{k-1}$.   By Lemma~\ref{l:leftExpansionsAtOmegaKBigAlps},  $(AC^2)^{n-3} U^{k-1}\cdot r_0(\omega)$ is an admissible expansion if 
it has value in $\mathbb I_{\omega}$.      

For {\em any} $\alpha$,
$CA^{-1}C \cdot \ell_0(\alpha) =  CA^{-1}CA^{-1}\cdot r_0(\alpha) = (AC^2)^{n-2}\cdot r_0(\alpha)$.    Hence,   $(AC^2)^{n-3} U^{k-1}\cdot r_0(\omega) = CA^{-1}CA^{-k}CA^{-1}\cdot  \ell_0(\omega) = CA^{-1}CA^{-1}\cdot  \ell_0(\omega) = (AC^2)^{n-2}\cdot r_0(\omega)$.   Since $(AC^2)^{n-2}$ is a suffix of $U$, we have that $(AC^2)^{n-2}\cdot r_0(\omega)$ is admissible.  In particular it has value in  $\mathbb I_{\omega}$.  It follows that 
 $(AC^2)^{n-3} U^{k-1}\cdot r_0(\omega)$ is an admissible expansion.   Furthermore, the equality
 $(AC^2)^{n-2}\cdot r_0(\omega) = (AC^2)^{n-3} U^{k-2}AC (AC^2)^{n-2}\cdot r_0(\omega)$ shows that  $(AC^2)^{n-2}\cdot r_0(\omega)$ is fixed by $(AC^2)^{n-3} U^{k-2}AC$.  Therefore,  
 \[\overline{b}{}^{\omega_{-k,1}}_{[1,\infty)} = (1,2)^{n-2}\overline{(1,1) u^{k-2} (1,2)^{n-3}} = u^{k-1} (1,2)^{n-3}\overline{(1,1) u^{k-2} (1,2)^{n-3}} = u^{k-1} (1,2)^{n-3}\, \overline{\mathcal E}\,. \]

 Lemma~\ref{l:leftWAdmissLargeAlps}  yields that 
the digits  
$\overline{b}{}^{\alpha}_{[1, n-3 + (k-1) |u|\,]} =  u^{k-1} (1,2)^{n-3}$ are admissible for all $\alpha \ge \omega_{-k,1}$.    
\end{proof}

\begin{table} 
\[\arraycolsep=3pt\def\arraystretch{1.2}
\begin{array}{c|ccc} 
                            &v&\alpha'&\alpha''\\[2pt]
  \hline
                            &1&1&\omega_{-k,1}\\
                            &c>1&\beta_{-k, c-1,2}&\omega_{-k,1} \\[2pt]
\text{Base cases}&c\,1\,c, c\ge 1  &\beta_{-k,c,3}&\zeta_{-k, c+1}\\[2pt]
                            &\Theta_q(c), q\ge 2&\beta_{-k,c,q+3}&\zeta_{-k, \Theta_{q-1}(c)}\\[2pt]
                            &\Theta_1(c), c>1&\beta_{-k,c,4}&\zeta_{-k,\Theta_{0}(c)}\\[2pt]                   
  \hline \rule{0pt}{\dimexpr.7\normalbaselineskip+1mm}
\text{Case 1}&\Theta_{0}^{h}(c)&\beta_{-k,  \Theta_{0}^{h-1}(c), 2}&\zeta_{-k,c+1}\\[5pt]
\hline \rule{0pt}{\dimexpr.7\normalbaselineskip+1mm}
\text{Case 2}&\Theta_q\circ \Theta_{p}(u), \; p,q\ge 1&\beta_{-k,  \Theta_{p}(u), q+3}&\zeta_{-k,\Theta_{q-1}\circ\Theta_{p}(u)}\\[5pt]
\hline \rule{0pt}{\dimexpr.7\normalbaselineskip+1mm}
\text{Case 3}&\Theta_{0}^{h}\circ \Theta_{p}(u), \, p\ge 1&\beta_{-k, \Theta_{0}^{h-1}\circ\Theta_{p}(u),3}&\zeta_{-k,\Theta_{p-1}(u)}\\[2pt]
 \end{array}
\]
\bigskip
\caption{Admissibility of $\overline{d}(-k,v)$ on $\mathscr J_{-k,v}$ is shown by finding $\alpha'> \zeta_{-k,v}>\eta_{-k,v}> \alpha''$ such that  $\overline{b}_{[1, \infty)}^{\alpha}$  has prefix $\overline{b}(-k,v)$ for both $\alpha = \alpha', \alpha''$. See the proof of Proposition~\ref{p:rightDigitsAreGood}. Compare with Table~\ref{t:casesForGoodPtsShowingAdmissibility}.}\label{t:casesForPtsGivingRightAdmiss}
\end{table}

\bigskip 
\begin{Prop}\label{p:rightDigitsAreGood}     Suppose that $v \in \mathcal V$  and $k\ge 2$.  Then for 
all $\alpha \in \mathscr J_{-k,v}$,  
\[  \overline{b}{}^{\alpha}_{_{[1,\overline{S}\,]}}  =  \overline{b}(-k,v)\,.\]
\end{Prop}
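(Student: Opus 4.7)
The plan is to mirror the proof of Proposition~\ref{p:leftDigitsAreGood}, inducting on the length of $v \in \mathcal V$ and for each $v$ producing a ``sandwich'' $\alpha'' < \eta_{-k,v} \le \zeta_{-k,v} < \alpha'$ at which both $\overline{b}{}^{\alpha''}_{[1,\infty)}$ and $\overline{b}{}^{\alpha'}_{[1,\infty)}$ are already known (via an earlier lemma or the inductive hypothesis) and each has $\overline{b}(-k,v)$ as a prefix of length at least $\overline{S}(-k,v)$. Once this is done, Lemma~\ref{l:admissBetween2PtsRvals} immediately propagates admissibility of the prefix $\overline{b}(-k,v)$ to every $\alpha \in [\alpha'',\alpha']$, in particular to all of $\mathscr J_{-k,v}$.

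The concrete choices of $\alpha'$ and $\alpha''$ are exactly those tabulated in Table~\ref{t:casesForPtsGivingRightAdmiss}. At the outer ``right'' end I will systematically take $\alpha'$ to be a parameter of the form $\zeta_{-k,w}$ for an appropriate neighbor $w$ of $v$ in the tree $\mathcal V$; at this $\alpha'$ the sequence $\overline{b}{}^{\alpha'}_{[1,\infty)}$ is purely periodic with an explicit period provided by Lemma~\ref{l:leftExpansionsAtEndpointsLargeAlp}. At the outer ``left'' end I will take $\alpha''$ either to be $\omega_{-k,1}$ (in the initial case $v = 1$, using Lemma~\ref{l:leftExpansionsAtOmegaKBigAlps} together with the continuation of admissibility of $u^{k-1}(1,2)^{n-3}$ granted by Lemma~\ref{l:leftWAdmissLargeAlps}), or to be a point $\beta_{-k,w,N}$ at which $N$ copies of $\overline{b}(-k,w)(1,1)$ are admissible, with $w$ the closest relative of $v$ in $\mathcal V$ and $N$ chosen large enough that the resulting prefix swallows $\overline{b}(-k,v)$.

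The main obstacle will be Case~2, namely $v = \Theta_q(\Theta_p(u))$ with $p,q\ge 1$: writing $\Theta_p(u) = uau$ and decomposing $v$ via Remark~\ref{r:looksLikePalindromicSplitting}, one must verify that the pure period $\overleftarrow{(\Theta_{q-1}(v))'}$ appearing at $\alpha' = \zeta_{-k,\Theta_{q-1}(v)}$ really does begin with $\overline{b}(-k,v)$, and that the concatenation of $q+3$ blocks $\overline{b}(-k,\Theta_p(u))(1,1)$ available at $\alpha'' = \beta_{-k,\Theta_p(u),q+3}$ does the same. These are purely combinatorial identities at the level of words that rest on the palindromic splittings of Proposition~\ref{p:theWordsAreNice} together with the simple identities $\mathcal E \mathcal G = \mathcal F$ and $\mathcal E_{k+1} = \mathcal F_k$ among the blocks introduced just before Definition~\ref{d:derivedWordsLargerAlps}; these play exactly the role that $\mathcal C, w, \mathcal D$ played in the small-$\alpha$ analogue. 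Cases~1 and~3 run on the same template and are strictly easier, the former reducing to comparing the periodic expansion at $\zeta_{-k,c+1}$ with the two-block expansion available at $\beta_{-k,\Theta_0^{h-1}(c),2}$, and the latter being handled by the same mechanism using the decomposition $\Theta_0^h\circ\Theta_p(u) = \Theta_p(u)\,[(u')^p u'']^h$ already used in the small-$\alpha$ proof.

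I note finally that, in contrast with the proof of Proposition~\ref{p:leftDigitsAreGood}, here there is no need for a separate step to deal with the final letter of the target word: the entire word $\overline{b}(-k,v)$ sits strictly inside the prefixes known at both sandwich endpoints, so the conclusion is read off Lemma~\ref{l:admissBetween2PtsRvals} directly, without invoking any analogue of Lemma~\ref{l:alsoFinalDigitIsAdmissible}.
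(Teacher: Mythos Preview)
Your proposal is correct and follows essentially the same route as the paper: induct on the length of $v$, and for each $v$ choose the sandwich endpoints exactly as in Table~\ref{t:casesForPtsGivingRightAdmiss} (a $\beta_{-k,w,N}$ or $\omega_{-k,1}$ on the low side, a $\zeta_{-k,w}$ on the high side), then invoke Lemma~\ref{l:admissBetween2PtsRvals}. Your identification of the key block identity $\mathcal E\mathcal G = \mathcal F$ as the analogue of $\mathcal C\, w = \mathcal D$, and your closing remark that no analogue of Lemma~\ref{l:alsoFinalDigitIsAdmissible} is needed here because the full word $\overline{b}(-k,v)$ (not merely all but its last letter) is a prefix at both sandwich endpoints, are exactly the points the paper relies on.
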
 
\begin{proof}   By Lemma~\ref{l:admissBetween2PtsRvals}, for each $v$ it suffices to find $\alpha'<\eta_{-k,v}$ and $\alpha''>\zeta_{-k,v}$ such that  $\overline{b}{}^{\alpha'}_{_{[1,\overline{S}\,]}}  = \overline{b}{}^{\alpha''}_{_{[1,\overline{S}\,]}}  = \overline{b}(-k,v)$.  See Table~\ref{t:casesForPtsGivingRightAdmiss} for a summary of the choices of $\alpha', \alpha''$ in the various cases.

Since $\alpha'$ is always of the general form  $\beta_{-k,u,N}$,  we note immediately that the identity  $\mathcal F = \mathcal E \mathcal G$ yields 

\begin{equation}\label{e:thePowersThatBetaBe}
\begin{split}& \lbrack \,\overline{b}(-k,v) (1,1)\,\rbrack^N\\
 & \phantom{score} =\,\overline{b}(-k,v)  [\mathcal F \mathcal E^{c_1-1} \, \mathcal F^{d_1} \,\mathcal E^{c_2} \,\mathcal F^{d_2} \cdots \mathcal E^{c_{s-1}} \,\mathcal F^{d_{s-1}} \,\mathcal E^{c_s}]^{N-1}(1,1)\\
& \phantom{score} =\,\overline{b}(-k,v)  [\mathcal E \mathcal G \mathcal E^{c_1-1} \, \mathcal F^{d_1} \,\mathcal E^{c_2} \,\mathcal F^{d_2} \cdots \mathcal E^{c_{s-1}} \,\mathcal F^{d_{s-1}} \,\mathcal E^{c_s}]^{N-1}(1,1)\\
& \phantom{score} =\, \overline{b}(-k,v) \mathcal E  [\mathcal G \mathcal E^{c_1-1 } \, \mathcal F^{d_1} \,\mathcal E^{c_2}  \,\mathcal F^{d_2} \cdots \mathcal E^{c_{s-1}}\,\mathcal F^{d_{s-1}} \,\mathcal E^{c_s+1}]^{N-2}\\
&\phantom{score}\;\;\;\;\;\;\;\;\;\;\cdot\mathcal E \mathcal G \mathcal E^{c_1-1} \, \mathcal F^{d_1} \,\mathcal E^{c_2} \,\mathcal F^{d_2} \cdots \mathcal E^{c_{s-1}}   \,\mathcal F^{d_{s-1}} \,\mathcal E^{c_s}(1,1)\\
& \phantom{score} =\,\overline{b}'(-k,v (v')^{N-2})X,
\end{split}
\end{equation}
where $X = \mathcal E \mathcal G \mathcal E^{c_1-1} \,  \mathcal F^{d_1} \,\mathcal E^{c_2}   \,\mathcal F^{d_2} \cdots \mathcal E^{c_{s-1}} \,\mathcal F^{d_{s-1}} \,\mathcal E^{c_s}(1,1)$ and we admit to our abuse of notation by writing $\overline{b}'$.

\bigskip 

\noindent
{\bf Base cases.} 
Consider $v=c_1 = c$.  Since $\overline{b}{}^{\omega_{-k,1}}_{[1,\infty)}$ has $\overline{b}(-k,1)$ as a prefix,  Lemma~\ref{l:leftWAdmissLargeAlps} yields the result when $v=1$.    From \eqref{e:thePowersThatBetaBe}, 
one finds that for $c\ge 2$,  $\overline{b}(-k,c)$ is a prefix of $[\overline{b}(-k,c-1) (1,1)]^2$, which in turn is a prefix of 
$\overline{b}_{[1, \infty)}^{\beta_{-k, c-1,2}}$.      Lemma~\ref{l:leftExpansionsAtOmegaKBigAlps} shows that also the right digit sequence of $\omega_{-k, 1}$ has each $\overline{b}(-k,c)$ as a prefix.  Therefore,  the result holds for all $v$ of length one. 
 
\bigskip 

\noindent
When $v=c\,1\,c$, we use $\alpha' = \beta_{-k,c,3}$, and $\alpha'' =  \zeta_{-k,c+1}$.     Since $\overline{b}(-k, v) = u^{k-1}, (1,2)^{n-3}\, (\mathcal E^c \mathcal G)^{1}\, \mathcal E^c$, \eqref{e:thePowersThatBetaBe} leads to $[\,\overline{b}(-k,v) (1,1)\,]^3$  having  $\overline b(-k, v)$ as a prefix.     
Lemma~\ref{l:leftExpansionsAtEndpointsLargeAlp}   yields that $\overline{b}{}^{\zeta_{-k,c+1}}_{[1,\infty)}$ also has $\overline b(-k,v)$ as a prefix.     
 
\bigskip 

\noindent
Suppose $v=\Theta_q(c), q\ge 1$ and $c>1$.  Thus,   $\overline{b}(-k, v) = u^{k-1}, (1,2)^{n-3}\, (\mathcal E^c \mathcal G)^{q+1}\, \mathcal E^c$.   We set $\alpha' = \beta_{-k,c, N}$, with $N = q+2$.  Using \eqref{e:thePowersThatBetaBe}, one  shows that  $\overline{b}_{_{[1,(1+\overline{S})N\,]}}^{\alpha'}$ has  $\overline b(-k,v)$ as a prefix.     
We set $\alpha'' = \zeta_{-k, \Theta_{q-1}(c)}$. Lemma~\ref{l:leftExpansionsAtEndpointsLargeAlp} yields that  $\overline{b}{}^{\alpha''}_{[1,\infty)}$ has prefix $\overline{b}(-k, \Theta_{q-1}(c))\, \mathcal G \mathcal E^{c+1}$.  The result thus holds in this case.  Since $c=1$ gives  $\overline{b}(-k, v) = u^{k-1}, (1,2)^{n-3}\, \mathcal E  \mathcal G \mathcal F^{q-1}\, \mathcal E$,   again using \eqref{e:thePowersThatBetaBe}  shows that the same argument succeeds.

\bigskip 

\noindent
{\bf Case 1.}  Suppose   $v = \Theta_{0}^{h}(c), h\ge 2$ for some $c> 1$.  We have $v =c (1\,c)^h$ and thus   $\overline{b}(-k, v) = u^{k-1}, (1,2)^{n-3}\,\mathcal E^c \mathcal G\, (\mathcal E^{c+1} \mathcal G)^{h-1}\, \mathcal E^c$.  Lemma~\ref{l:leftExpansionsAtEndpointsLargeAlp}  easily yields that  $\overline{b}{}^{\zeta_{-k,c+1}}_{[1,\infty)}$  has $\overline b(-k,v)$ as a prefix. That is, we can take $\alpha'' =  \zeta_{-k,c+1}$.    With  $\alpha' = \beta_{-k,  \Theta_{0}^{h-1}(c), 2}$ from \eqref{e:thePowersThatBetaBe} one  finds that $\overline{b}{}^{\alpha'}_{[1,\infty)}$ also has $\overline b(-k,v)$ as a prefix.

\bigskip  
\noindent
{\bf Case 2.}    Suppose that the result holds for $v = uau = \Theta_p(u)$ with $u \in \mathcal V$ and $p \ge 1$.  We prove that the result holds for   $\Theta_q(v), q\ge 1$.    
By Lemma~\ref{l:leftExpansionsAtEndpointsLargeAlp}, for general $k$ and $v$, one has $\overline{b}{}^{\zeta_{-k,v}}_{[1,\infty)}  =   \overline{b}(-k,  (\,\overleftarrow{v'}\,)^{\infty}\,)$, and thus \eqref{eq:zetaWordCalc} gives 
$\overline{b}{}^{\zeta_{-k, \Theta_{q-1}(v)}}_{[1,\infty)}  =   \overline{b}(-k, v \,[\, (v')^q v''\,]^{\infty} \,)$, which clearly has $\overline{b}(-k, \Theta_q(v ) \,)$ as a prefix.  Therefore,  we can take $\alpha'' = \zeta_{-k, \Theta_{q-1}(v)}$.

We first specialize to $c>1$, that $v' =  a \overleftarrow{u'} u = a u_{[-1]} (c_1-1)\,1\,u$, $v'' = au$ and that $u$ both begins and ends with the letter $c_1$.  
Since  terms in general $\overline{b}(-k,v)$ corresponding to any $d_j = 1$ vanish, a final $v'$ appearing inside of $\overline{b}'$ in \eqref{e:thePowersThatBetaBe} has a prefix coming from $v''$.  From this, one finds 
that $[\overline{b}(-k,v) (1,1)]^{q+3}$ has $\overline{b}(-k, \Theta_q(v ) \,)$ as a prefix.   That is, we can take $\alpha' = \beta_{-k,  v, q+3}$.

We now consider the case of $c=1$, where $v' =  a u_{[-2]} (d_1+1)$ and $v'' = au$. In   \eqref{e:thePowersThatBetaBe} we can  let  the first  $\mathcal F^{d_1}$  corresponding to the final power of $v'$ revert to  $\mathcal E  \,\mathcal G\, \mathcal F^{d_1 - 1}$ so as to confirm that here also a final $v'$ contributes a subword that has as a prefix the contribution of $v''$.  Using this, 
  $[\overline{b}(-k,v) (1,1)\,]^{q+3}$ has $\overline{b}(-k, \Theta_q(v ) \,)$ as a prefix.   That is, we can take $\alpha' = \beta_{-k,  v, q+3}$.

\bigskip  
\noindent
{\bf Case 3.}  Suppose that $\Theta_p(u) = uau$ in our usual notation.  Recall that $\Theta_{0}^{h}\circ \Theta_p(u) = u\, [(u')^{p} u'']^{h+1}$.     As in Case 2,   Lemma~\ref{l:leftExpansionsAtEndpointsLargeAlp}  and  \eqref{eq:zetaWordCalc} 
give
$\overline{b}{}^{\zeta_{-k, \Theta_{p-1}(u)}}_{[1,\infty)}  =   \overline{b}(-k, u \,[\, (u')^p u''\,]^{\infty} \,)$.  Thus,   we 
 use   $\alpha'' = \zeta_{-k, \Theta_{p-1}(u)}$.       From \eqref{eq:thetaZeroNeighborsAndPrimes} and \eqref{e:thePowersThatBetaBe},  we can take $\alpha' = \beta_{-k, \Theta_{0}^{h-1}\circ\Theta_p(u), 3}$.  
 \end{proof}
 
\bigskip
\subsubsection{There are no other points of synchronization}   That all $\alpha> \epsilon_{3,n}$ for which there is synchronization lie in the union of the $\mathscr J_{-k,v}$ is shown as for the case of small $\alpha$.     We find that there is some sequence of $q_i$ such that  $\alpha \in \cap_{j=1}^{\infty}\, \mathscr I_{k, \Theta_{q_{j}}\circ \cdots \circ\Theta_{q_{1}}(1)}$.   But,  this implies that  $\underline{b}{}^{\alpha}_{[1,\infty)}$ has digits only in $\{(-k,1), (-k-1,1)\}$,  while  $\overline{b}{}^{\alpha}_{[1,\infty)}$ has digits only in $\{(-1,1), (-2,1), (1,2)\}$, with the digit $(1,2)$ appearing infinitely often.  
  Therefore, the two orbits obviously cannot synchronize.  

\bigskip
\subsubsection{The non-synchronization set is of measure zero}   

  For $\alpha> \epsilon_{3,n}$ not in any $\mathscr J_{-k,v}$,  there is some $k$ such that the $T_{\alpha}$-orbit of $\ell_0(\alpha)$ is always in the set of points whose simplified digits are $-k$ or  $-k-1$.  In particular, this orbit certainly remains in $[-t,0)$ and we can argue as in Subsection~\ref{ss:nonSynchMeasZero}      to conclude that the set of these $\alpha$ values has measure zero.

\section{Synchronization for $\gamma_{3,n} < \alpha < \epsilon_{3,n}\,,\;\; n\ge m$}
Recall    that $\gamma = \gamma_{3,n}$ is characterized by $C^{-1}\cdot \ell_0(\gamma) = r_0(\gamma)$.
Recall also that  $\ell_1 = A^{-1}C\cdot \ell_0$ for all $\alpha \in [0, \epsilon_{3,n})$.   Thus, for $\gamma_{3,n} < \alpha < \epsilon_{3,n}$ we use the notation of Section~\ref{s:LargeAlps}, but now with $k=1$ (and certain further technical adjustments as noted below).    
\begin{Thm}\label{t:synchMidAlpsGenNCantor}   Fix $m = 3$ and $n\ge m$.  
If $n=3$,  then there is synchronization for all $\alpha \in (\gamma_{3,3}, \epsilon_{3,3})$. 
If $n > 3$,  then the set of $\alpha \in  (\gamma_{3,n}, \epsilon_{3,n})$ for which there is not synchronization is uncountable, but of Lebesgue measure zero. 
\end{Thm}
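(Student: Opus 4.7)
\medskip

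\noindent\textbf{Proof plan.}  The strategy is to carry over the machinery of Section~\ref{s:LargeAlps} to the range $(\gamma_{3,n}, \epsilon_{3,n})$ by taking $k=1$, with the understanding that the right endpoint of $\mathbb I_\alpha$ now lies in a distinct cylinder (since $\mathfrak b_\alpha \in \mathbb I_\alpha$ for $\alpha > \gamma_{3,n}$) and that the first left digit is always $(-1,1)$.  I would begin by defining, for each $v \in \mathcal V$, the cylinder $\mathscr I_{-1,v}$, the potential synchronization interval $\mathscr J_{-1,v}$, and the matrices $L_{-1,v}, R_{-1,v}$ by the same formulas as in Definition~\ref{d:upperDigsCylsMatsLargeAlps}, making only the technical adjustments forced by the new cylinder at the right end.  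The group identity $L_{-1,v} = C^{-1}AC^2 R_{-1,v}$ of Proposition~\ref{l:longWordInU} continues to apply.

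With these definitions in place, I would establish the analog of Theorem~\ref{t:cantorSetLargeAlps}: that $(\gamma_{3,n}, \epsilon_{3,n}) = \mathscr I_{-1,1}$ (since for $\alpha$ in this range the first left digit is forced to be $(-1,1)$), and that each $\mathscr I_{-1,v}$ partitions as $\mathscr J_{-1,v} \sqcup \bigcup_{q} \mathscr I_{-1,\Theta_q(v)}$.  The proofs of Proposition~\ref{p:AdmissibleLargeAlps}, Proposition~\ref{p:rightDigitsAreGood}, and Lemma~\ref{l:LargeAlpsEtaDigits} go through with only notational modification, since the combinatorial structure of $\mathcal V$ and the identities of Section~\ref{s:groupIdsSmallAlp} do not depend on the specific value of $k$.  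Synchronization on each $\mathscr J_{-1,v}$ then follows exactly as in Section~\ref{s:LargeAlps}.

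For the measure-zero claim when $n > 3$, I would argue precisely as in Subsection~\ref{ss:nonSynchMeasZero}: any non-synchronization $\alpha$ corresponds to a unique infinite sequence $(q_1, q_2, \ldots)$ of $\Theta$-operations, so that $\alpha \in \bigcap_j \mathscr I_{-1, \Theta_{q_j}\circ \cdots \circ \Theta_{q_1}(1)}$.  The resulting left orbit of $\ell_0(\alpha)$ stays within $[-t,0)$ with simplified digits only in $\{-1,-2\}$, and hence coincides with the orbit of $\beta = \ell_0(\alpha)$ under the map $g_{3,n}$ of \cite{CaltaSchmidt}.  The ergodicity of the accelerated map $f$ with respect to a finite invariant measure equivalent to Lebesgue then forces the set of such $\beta$ (and hence of $\alpha$) to have Lebesgue measure zero.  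That this set is uncountable is immediate from the injective correspondence with infinite sequences $(q_j) \in \mathbb N^{\mathbb N}$, giving the Cantor-set structure.

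The main obstacle is the stronger claim for $n=3$, namely that there are \emph{no} non-synchronization points in $(\gamma_{3,3},\epsilon_{3,3})$.  Here I would show that when $n=3$ the hypothetical digit restrictions on a non-synchronization $\alpha$ are inconsistent: the constraints coming from $\mathfrak b_\alpha \in \mathbb I_\alpha$ together with the simplified-digit restriction on $\underline{b}{}^\alpha$ and $\overline{b}{}^\alpha$ force the orbits to meet.  Concretely, one must verify that for $n=3$ the nested intersection $\bigcap_j \mathscr I_{-1, \Theta_{q_j}\circ \cdots \circ \Theta_{q_1}(1)}$ actually collapses to the common endpoint $\lim_{j\to\infty} \zeta_{-1,\Theta_{q_j}\circ\cdots\circ\Theta_{q_1}(1)}$, which coincides with an $\eta_{-1, v}$ by Lemma~\ref{l:LargeAlpsEtaDigits}, and thus lies in a synchronization interval.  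I expect this collapse to follow from an explicit length computation showing that the product of the ratios of cylinder lengths $|\mathscr I_{-1, \Theta_q(v)}|/|\mathscr I_{-1, v}|$ tends to zero along any branch when $n=3$, whereas for $n>3$ this product has a positive limit on a Cantor-sized set of branches.
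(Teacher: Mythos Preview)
Your approach for $n > 3$ is broadly on the right track, but you omit a structural restriction that the paper imposes, and this omission is precisely what causes your treatment of $n=3$ to go wrong.

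The key point you miss is that one must replace $\mathcal V$ by a \emph{trimmed} subtree $\check{\mathcal V}_n \subset \mathcal V$, consisting of those $v = c_1 d_1 \cdots c_s$ with every $c_i \le n-2$, and in which the only word beginning with the letter $n-2$ is $v = n-2$ itself. This trimming is forced by admissibility of the left digits in this range. Concretely, one computes $R_{-1,\, n-2} = \text{Id}$ (cf.\ Example~\ref{e:seeingUinverseCausesNoProblem}), so that $\eta_{-1,\,n-2}$ satisfies $r_0(\eta_{-1,\,n-2}) = C^{-1}\cdot \ell_0(\eta_{-1,\,n-2})$, i.e.\ $\eta_{-1,\,n-2} = \gamma_{3,n}$. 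Hence $\mathscr I_{-1,\, n-2} = \mathscr J_{-1,\, n-2} = [\gamma_{3,n}, \zeta_{-1,\,n-2})$: this cylinder is \emph{entirely} a synchronization interval and admits no further subdivision into child cylinders.

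The case $n=3$ is then immediate and requires none of the apparatus you propose. When $n=3$ one has $n-2=1$, so $\check{\mathcal V}_3 = \{1\}$ consists of a single word, and the entire interval $[\gamma_{3,3}, \epsilon_{3,3}) = \mathscr I_{-1,1} = \mathscr J_{-1,1}$ is itself one synchronization interval (noting $\zeta_{-1,1} = \epsilon_{3,3}$). There is no Cantor structure, no infinite nested intersection to analyze, and no ratio computation to perform. Your proposed ``collapse'' argument is aimed at a phenomenon that does not exist for $n=3$; the distinction between $n=3$ and $n>3$ is combinatorial (the tree $\check{\mathcal V}_n$ is a singleton exactly when $n=3$), not metric.
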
 

\bigskip
For $n\ge 3$,  define $\check{\mathcal V}_n\subset \mathcal V$  to be the trimming of $\mathcal V$ such that for all $v = c_1 d_1 \cdots c_s \in \check{\mathcal V}_n$, $c_i \le n-2$  and furthermore  such that the only word with prefix $n-2$ is $v=n-2$ itself.    Define each  $\mathscr I_{-1,c_1}$  to be as above, except that  we insist on a left endpoint at least $\gamma$. 

\medskip 

\begin{Thm}\label{t:cantorSetMidAlps}  Fix $m = 3$ and $n\ge m$.  We have the following  equality
\[  [\gamma_{3,n}, \epsilon_{3,n}) = \bigcup_{v=1}^{n-2}\, \mathscr I_{-1,v}\,.\]

Furthermore, for each $v \in \check{\mathcal V}_n\setminus\{n-2\}$,  the following is a partition: 
\[ \mathscr I_{-1,v} = \mathscr J_{-1,v} \cup \, \bigcup_{q=q'}^{\infty}\, \mathscr I_{-1, \Theta_q(v)}\,,\]
where $q' = 0$ unless $v= c_1$, in which case $q' = -1$.
Moreover,     $\mathscr I_{-1,n-2}  = \mathscr J_{-1,n-2}  = [\gamma_{3,n}, \zeta_{-1, n-2})$.
\end{Thm}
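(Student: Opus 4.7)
The plan is to adapt Theorem~\ref{t:cantorSetLargeAlps} with $k=1$, accounting for the boundary condition $\alpha \ge \gamma_{3,n}$ by truncating the word tree to $\check{\mathcal V}_n$.

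For the union decomposition, I first observe that for every $\alpha \in [\gamma_{3,n}, \epsilon_{3,n})$, the defining relation of $\epsilon_{3,n}$ together with monotonicity in $\alpha$ gives $\ell_1(\alpha) = A^{-1}C\cdot \ell_0(\alpha)$, so the leading simplified $\ell$-digit is $-1$ and $\alpha \in \mathscr I_{-1, c_1}$ for some $c_1 \ge 1$. I claim $c_1 \le n-2$. Since $B := A^{-1}C$ has projective order $n$, a run of length $n-1$ would force $B^{n-1}\ell_0 = B^{-1}\ell_0 \in \mathbb I_\alpha$. At $\alpha = \gamma_{3,n}$ the relation $C\cdot r_0(\gamma) = \ell_0(\gamma)$ gives $B^{-1}\ell_0(\gamma) = A\cdot r_0(\gamma) = r_0(\gamma) + t$, which lies outside $\mathbb I_\gamma$; monotonicity of $B^i \ell_0$ in $\alpha$ extends the bound throughout the interval.

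For the recursive partition when $v \in \check{\mathcal V}_n \setminus \{n-2\}$, the proofs of Proposition~\ref{p:AdmissibleLargeAlps} and its supporting lemmas transfer verbatim with $k=1$. One only verifies that each child $\Theta_q(v)$ appearing in the union remains in $\check{\mathcal V}_n$: the only operation that can inflate the $c$-alphabet is $\Theta_{-1}$ applied to $v = c_1$ (which returns $c_1 + 1$), and this is available only when $c_1 < n-2$; all other $\Theta_q$ (with $q \ge 0$) produce $c$-letters equal to $1$ or to values already present in $v$, so the alphabet constraint is preserved.

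The main obstacle is the case $v = n-2$, where I must show (i) $\mathscr I_{-1, n-2}$ is truncated on the left at $\gamma_{3,n}$ and (ii) $\eta_{-1, n-2} = \gamma_{3,n}$, so that the synchronization interval $\mathscr J_{-1, n-2}$ fills the cylinder. Assertion (i) is immediate from the sharpness of the bound $c_1 \le n-2$ at $\alpha = \gamma_{3,n}$. For (ii), I compute $L_{-1, n-2} = B^{n-2} A^{-1} = B^{-2}A^{-1} = C^{-1}AC^{-1}$ (projectively), whence $CA^{-1}C\cdot L_{-1, n-2} = CA^{-1}C\cdot C^{-1}AC^{-1} = CC^{-1} = \mathrm{Id}$. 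Thus the defining equation $CA^{-1}C\cdot L_{-1, n-2}\cdot r_0(\eta) = C^{-1}\ell_0(\eta)$ of $\eta_{-1, n-2}$ degenerates to the defining equation $r_0(\gamma) = C^{-1}\ell_0(\gamma)$ of $\gamma_{3,n}$, forcing $\eta_{-1, n-2} = \gamma_{3,n}$. Admissibility of the corresponding right-digit expansion on $[\gamma_{3,n}, \zeta_{-1, n-2})$ then follows from the analogue of Proposition~\ref{p:rightDigitsAreGood}, and Lemma~\ref{l:alsoFinalDigitIsAdmissible} finishes. The would-be sub-cylinders $\mathscr I_{-1, \Theta_q(n-2)}$ that the abstract theory would produce accumulate at $\eta_{-1, n-2} = \gamma_{3,n}$ from below, hence lie entirely outside $[\gamma_{3,n}, \epsilon_{3,n})$, which justifies their exclusion from $\check{\mathcal V}_n$.
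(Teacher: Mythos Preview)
Your approach is essentially the same as the paper's: reduce to the $k\ge 2$ machinery of Theorem~\ref{t:cantorSetLargeAlps}, and for the terminal case $v=n-2$ compute that the defining equation of $\eta_{-1,n-2}$ collapses to the defining equation of $\gamma_{3,n}$. Your computation $CA^{-1}C\,L_{-1,n-2} = \mathrm{Id}$ is the $L$-side version of the paper's $R_{-1,n-2}=\mathrm{Id}$ (from Example~\ref{e:seeingUinverseCausesNoProblem}), and is correct.

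One correction: your auxiliary claim that $B^{-1}\ell_0(\gamma) = A\cdot r_0(\gamma)$ is wrong. One has $B^{-1}\ell_0 = C^{-1}A\ell_0 = C^{-1}r_0$, and at $\alpha=\gamma$ the defining relation gives $C^{-1}\ell_0=r_0$, not $C^{-1}r_0 = Ar_0$; a direct check (say for $n=3$) shows $C^{-1}r_0(\gamma)\neq r_0(\gamma)+t$. Fortunately this step is not needed: the paper simply \emph{defines} $\mathscr I_{-1,c_1}$ to be truncated on the left at $\gamma$, so the first displayed equality is just $[\gamma,\epsilon)=\mathscr I_{-1,1}$ (the sets being nested), which follows from $\zeta_{-1,1}=\epsilon_{3,n}$ and the first $\ell$-digit being $-1$ throughout. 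Your correct identification $\eta_{-1,n-2}=\gamma_{3,n}$ then gives $\mathscr J_{-1,n-2}=[\gamma,\zeta_{-1,n-2})$ directly, and the equality with $\mathscr I_{-1,n-2}$ is automatic from the truncation convention. So you may simply delete the $c_1\le n-2$ paragraph.
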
 
 
 \begin{proof}
Note that for $\alpha = \zeta_{-1,1}$, we have   $A^{-1}C\cdot \ell_0(\alpha) = r_0(\alpha)$, which is exactly the definition of $\alpha = \epsilon_{3,n}$.   Arguing as we did for $\alpha>\epsilon_{3,n}$ shows that   
first equality of the theorem, giving the basic partition,   holds.    Similarly, that $\mathscr J_{-1,v} $ with the union of the $ \mathscr I_{-1, \Theta_q(v)}$ partition $ \mathscr I_{-1,v}$ holds since the proof of Theorem~\ref{t:cantorSetLargeAlps} is easily checked to extend to this case.


Recall that  
$\eta_{-k, v}$ is such that $R_{-k,v}\cdot r_0(\eta)= C^{-1}\cdot\ell_0(\eta)$, and thus  $\eta_{-1,n-2}$ is such that $r_0(\eta_{-1, n-2}) = C^{-1}\cdot\ell_0(\eta_{-1, n-2})$.  That is, $\eta_{-1,n-2} = \gamma_{3,n}$.   
The statement $\mathscr I_{-1,n-2}  = \mathscr J_{-1,n-2}  = [\gamma_{3,n}, \zeta_{-1, n-2})$ thus follows.\end{proof}

\bigskip

\begin{Prop}\label{p:synchonoForMidAlps}  Fix $m = 3$ and $n\ge m$.   For all $v\in \check{V}_n$, synchronization occurs along $\mathscr J_{-1,v}$.
\end{Prop}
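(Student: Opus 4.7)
The plan is to mimic the large-$\alpha$ argument of Section~\ref{s:LargeAlps}, with $k=1$ throughout, and to invoke Lemma~\ref{l:LargeAlpsEtaDigits} to obtain synchronization. First, I would specialize Proposition~\ref{l:longWordInU} to $k=1$. The definitions of $\mathcal E_k, \mathcal F_k, \tilde{\mathcal E}_k, \tilde{\mathcal F}_k$ must be recast to accommodate the $u^{k-2}$ factors with $k=1$; setting $\mathcal E_1 = (1,1)(1,2)^{n-3}$, $\mathcal F_1 = (1,1)(1,2)^{n-2}$ and correspondingly $\tilde{\mathcal E}_1=(AC^2)^{n-3}AC$, $\tilde{\mathcal F}_1=(AC^2)^{n-2}AC$, the same manipulations as in Proposition~\ref{l:longWordInU} yield the identity $L_{-1,v} = C^{-1}AC^2\,R_{-1,v}$. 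Granted this identity, together with the admissibility of $\overline b(-1,v)$ as a prefix of $\overline b{}^\alpha_{[1,\infty)}$ throughout $\mathscr J_{-1,v}$, the hypotheses (a)--(d) of Lemma~\ref{l:LargeAlpsEtaDigits} are met at $\alpha=\eta_{-1,v}$, and synchronization follows (with one extra step of the orbit of $r_0(\alpha)$ for $\alpha>\delta$, exactly as in the large-$\alpha$ case).

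The entire proof therefore reduces to establishing the analog of Proposition~\ref{p:rightDigitsAreGood}, namely $\overline b{}^\alpha_{[1,\overline S]}=\overline b(-1,v)$ for all $\alpha\in\mathscr J_{-1,v}$. I would run the same induction on word length, following Table~\ref{t:casesForPtsGivingRightAdmiss} with $k=1$: the auxiliary points $\alpha'=\beta_{-1,\cdot,N}$ and $\alpha''=\zeta_{-1,\cdot}$ play identical roles, identity~\eqref{e:thePowersThatBetaBe} holds verbatim, Lemma~\ref{l:leftExpansionsAtEndpointsLargeAlp} carries over, and admissibility on the full parameter interval then follows from Lemma~\ref{l:admissBetween2PtsRvals}. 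The base case, previously supplied by Lemma~\ref{l:leftExpansionsAtOmegaKBigAlps}, must be reproved for $k=1$: one verifies that $\overline b{}^{\omega_{-1,1}}_{[1,\infty)}=(1,2)^{n-3}\,\overline{\mathcal E_1}$ from $A^{-1}C\cdot\ell_0(\omega_{-1,1})=\ell_0(\omega_{-1,1})$, again exactly as in the large-$\alpha$ proof.

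The special word $v=n-2$ is already handled by Theorem~\ref{t:cantorSetMidAlps}: there $\mathscr J_{-1,n-2}=\mathscr I_{-1,n-2}$ and synchronization at $\eta_{-1,n-2}=\gamma_{3,n}$ follows at once from $r_0(\gamma_{3,n})=C^{-1}\cdot\ell_0(\gamma_{3,n})$, which is precisely the defining property of $\gamma_{3,n}$. Propagation along the interval is then by continuity and Lemma~\ref{l:admissBetween2PtsRvals}, just as in the penultimate sentence of Lemma~\ref{l:alsoFinalDigitIsAdmissible}.

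The main obstacle I expect is the bookkeeping needed to keep the induction inside the restricted tree $\check{\mathcal V}_n$. The constraint $c_i\le n-2$ is exactly what precludes a prefix $(A^{-1}C)^{n-1}$ from appearing in the left expansion, which (by $B^n=\mathrm{Id}$ together with $C=AB$) would collapse $(A^{-1}C)^{n-1}$ to a word forcing the $C$-exponent of some orbit digit to jump from $1$ to $2$, thereby taking the symbolic itinerary out of the left alphabet $\{(-1,1),(-2,1)\}$ legal on the middle parameter range. Concretely, I would need to verify for each case of Table~\ref{t:casesForPtsGivingRightAdmiss} that the auxiliary words $\Theta_{q-1}(v)$, $\Theta_{p-1}(u)$, and the various $\beta$-neighbors remain in $\check{\mathcal V}_n$, and that the corresponding parameter values $\alpha',\alpha''$ still lie in $(\gamma_{3,n},\epsilon_{3,n})$. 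Once this bookkeeping is in place, the proof is a direct transcription of the large-$\alpha$ argument.
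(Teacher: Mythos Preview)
Your overall strategy---transplant the large-$\alpha$ argument to $k=1$ and invoke Lemma~\ref{l:LargeAlpsEtaDigits} once the group identity $L_{-1,v}=C^{-1}AC^2 R_{-1,v}$ and right-admissibility are in hand---is exactly the paper's approach. The gap is in how you handle the $k=1$ specialization of $\mathcal E_k$ and $\tilde{\mathcal E}_k$.

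Your proposed values $\tilde{\mathcal E}_1=(AC^2)^{n-3}AC$ and $\tilde{\mathcal F}_1=(AC^2)^{n-2}AC$ are incorrect and do not satisfy the required identity. A quick check: with $v=1$ your formula gives $R_{-1,1}=\tilde{\mathcal E}_1(AC^2)^{n-2}=(AC^2)^{n-3}AC(AC^2)^{n-2}=(AC^2)^{n-3}U$, whereas the identity $L_{-1,1}=C^{-1}AC^2 R_{-1,1}$ with $L_{-1,1}=A^{-1}CA^{-1}$ forces $R_{-1,1}=(CA^{-1})^3=(AC^2)^{n-3}$. Your value is off by a right factor of $U$, which is not the identity. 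The correct specialization is to read the definitions literally: $\tilde{\mathcal E}_k=(AC^2)^{n-3}U^{k-2}AC$ becomes, at $k=1$, the \emph{formal negative power} $\tilde{\mathcal E}_1=(AC^2)^{n-3}U^{-1}AC=(AC^2)^{-1}$, using $U^{-1}=(AC^2)^{2-n}(AC)^{-1}$. Likewise $\mathcal E_1$ must be interpreted as $(1,2)^{-1}$.

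This is where the restriction defining $\check{\mathcal V}_n$ actually enters on the \emph{right} side, not (primarily) on the left as you suggest. In Proposition~\ref{l:longWordInU} each block $\tilde{\mathcal E}_1^{\,a}=(AC^2)^{-a}$ sits adjacent to a factor $(AC^2)^{n-3}$ (and, when the exponent is $a=c_j+1$, is also preceded by an $AC^2$ from the $\tilde{\mathcal G}$ block). The bound $c_i\le n-2$ is precisely what guarantees that after these cancellations every surviving exponent of $AC^2$ is nonnegative, so that $R_{-1,v}$ is a genuine admissible word in $AC,\,AC^2$. Without recognizing this cancellation mechanism, you cannot define $R_{-1,v}$ correctly, and the relation $\mathcal E\mathcal G=\mathcal F$ (which drives \eqref{e:thePowersThatBetaBe}) would fail for your choices. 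Once you adopt the negative-power interpretation and verify the cancellation (as in Example~\ref{e:seeingUinverseCausesNoProblem}), the rest of your outline goes through verbatim.
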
 
\begin{proof}[Sketch] 
Here also the arguments of the previous section give the proof, however we must make minor adjustments.   Note that 
$U^{-1} = [AC (AC^2)^{n-2}]^{-1} = (AC^2)^{2-n} (A C)^{-1}$.    
Thus in this setting of $k=1$,  any term of the form $ [ (AC^2)^{n-3} U^{k-2} AC]^a$ as in the statements of 
 Lemma~\ref{l:oneStepId} and Proposition~\ref{l:longWordInU}  becomes  
 $ [ (AC^2)^{-1}]^a$.    Note that each such term is followed by an appearance of $(AC^2)^{n-3}$;  since each exponent $a$ will arise as either $c_j$ or $c_j+1$ for some $c_j$ letter of some $v \in \check V$,  we have $a \le n-2$.   Since an exponent of the form $c_j+1$ occurs only when the block is also preceeded by an occurrence of $AC^2$,  the identity guarantees an expression that has only positive powers of $AC$ and of $AC^2$.  (Example~\ref{e:seeingUinverseCausesNoProblem} exhibits this phenomenon.)
  Thus our definition of $R_{-k,v}$ extends to include the case of $d=-1$.  Similarly,  $\mathcal E_1$ must now denote $(1,2)^{-1}$ where we recognize that occurrences of $\mathcal E_1$ are surrounded by $(1,2)$ occurring to a sufficiently high power so that the usual arithmetic of exponents results in a sensible word.  
Note that the key relation $\mathcal E \mathcal G = \mathcal F$ thus holds in this setting.  
\end{proof} 
  
  Finally, both that synchronization only occurs along the $\mathscr J_{-1,v}$ and that this is a set of full measure follow from the arguments of the previous section.   
  
\bigskip
\begin{Eg}\label{e:seeingUinverseCausesNoProblem}  Note first that  $R_{-1, n-2} = CA^{-1}C\; (A^{-1}C)^{n-2}A^{-1} = Id$ holds for all $n\ge 3$.  We also have $R_{-1,1} = (AC^2)^{-1} \cdot(AC^2)^{n-2} = (AC^2)^{n-3}$.  Note that these calculations agree when $n=3$.  Let $n>3$ and let us calculate one longer right matrix: 
 \[R_{-1,111} = \tilde{\mathcal E}\tilde{\mathcal F}\tilde{\mathcal E}(AC^2)^{n-2} = (AC^2)^{-1}\cdot(AC^2)^{n-3}AC\cdot (AC^2)^{-1}\cdot(AC^2)^{n-2} = (AC^2)^{n-4}AC (AC^2)^{n-3}.\]
\end{Eg}

\section{Group element identities for the setting $\alpha > \gamma_{3,n}$}

Analagously to Section~\ref{s:groupIdsSmallAlp},
we gather group identities used in the preceding two sections,  here in this section.
\begin{Lem}\label{l:oneStepId}   Suppose that $m=3$, and $k, a \in \mathbb N$.  Then 
\[ CA^{-1}C (A^{-k}C)^a A^{-1} = [ (AC^2)^{n-3} U^{k-2} AC]^{a-1} (AC^2)^{n-3} U^{k-1}\,.\]
\end{Lem}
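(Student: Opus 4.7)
The plan is a double induction: first on $a$, with the base case $a=1$ handled by a secondary induction on $k$. For $a=1$ we need
\[CA^{-1}CA^{-k}CA^{-1} = (AC^2)^{n-3}U^{k-1}.\]
At $k=1$ this reduces to $(CA^{-1})^3 = (AC^2)^{n-3}$, which I would derive from the projective relations $C^3 = \mathrm{Id}$ and $(A^{-1}C)^n = \mathrm{Id}$; these together imply $CA^{-1} = (AC^2)^{n-1}$ and hence $(CA^{-1})^j = (AC^2)^{n-j}$ for every $j$. For the inductive step on $k$ I would multiply $(AC^2)^{n-3}U^{k-1}$ on the right by $U = AC(AC^2)^{n-2}$, use the induction hypothesis to rewrite the left factor as $CA^{-1}CA^{-k}CA^{-1}$, and then collapse the tail $A^{-1}\cdot AC\cdot(AC^2)^{n-2}$ to $A^{-1}CA^{-1}$ by invoking the same identity $(AC^2)^{n-1} = CA^{-1}$ once more. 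This yields $CA^{-1}CA^{-(k+1)}CA^{-1}$ as desired.

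For the inductive step on $a$ the key is to re-express the repeating block on the right side. Write $V := (AC^2)^{n-3}U^{k-2}AC$ and $X := (AC^2)^{n-3}U^{k-1}$, so the RHS equals $V^{a-1}X$. The identity $U = AC(AC^2)^{n-2}$ gives the trivial relation $X = V(AC^2)^{n-2}$, i.e. $V = X(AC^2)^{2}$ (mod the order-$n$ relation on $AC^2$). Combining this with the base case expression for $X$ and simplifying $A^{-1}(AC^2)^2 = C^2 AC^2$, I would obtain the compact formula
\[V = CA^{-1}CA^{-k+1}C^2.\]

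With $V$ in this form, the induction step on $a$ becomes the one-line identity
\[V \cdot \bigl(CA^{-1}C(A^{-k}C)^a A^{-1}\bigr) = CA^{-1}C(A^{-k}C)^{a+1}A^{-1},\]
which collapses by just observing that a $C^2$ meets a $C$ in the middle to yield $C^3 = \mathrm{Id}$. Combined with the induction hypothesis $V^{a-1}X = CA^{-1}C(A^{-k}C)^a A^{-1}$, this closes the induction: $V^a X = V\cdot V^{a-1}X = CA^{-1}C(A^{-k}C)^{a+1}A^{-1}$.

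The only genuine obstacle is the bookkeeping in the base case, where one must repeatedly commute powers of $C$ past $A^{\pm 1}$ via the relation $(AC^2)^{n-1} = CA^{-1}$ and keep track of exponents reduced modulo $n$; once the clean reformulation $V = CA^{-1}CA^{-k+1}C^2$ is in hand, the induction on $a$ is essentially formal.
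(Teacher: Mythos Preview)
Your proof is correct and follows the same double-induction skeleton as the paper: first establish the case $a=1$ by induction on $k$, then induct on $a$. The base case is handled identically in spirit---both arguments rest on the order-$n$ relation $(AC^2)^n = \mathrm{Id}$ (equivalently, $CA^{-1}$ has order $n$) together with $C^3 = \mathrm{Id}$.

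The $a$-induction is where the two arguments diverge in organization. The paper writes $(A^{-k}C)^{a+1} = (A^{-k}C)^a \cdot A^{-k}C$, applies the induction hypothesis to the first factor, then inserts the trivial factor $C^{-1}AC^{-1}\cdot CA^{-1}C$ so as to peel off a copy of the $a=1$ identity (with $k$ replaced by $k-1$) from the tail; several regroupings are then needed to recognize the desired form. You instead first derive the closed form $V = CA^{-1}CA^{-k+1}C^2$ for the repeating block, after which the step $V\cdot\bigl(CA^{-1}C(A^{-k}C)^aA^{-1}\bigr) = CA^{-1}C(A^{-k}C)^{a+1}A^{-1}$ is a single collapse via $C^3 = \mathrm{Id}$. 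Your route front-loads a short auxiliary computation (the formula for $V$) in exchange for a cleaner induction step, while the paper's route avoids that detour but pays for it with more bookkeeping at each step. Both are straightforward once the plan is clear.
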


\begin{proof} We first prove the result when $a=1$.   As a base case,  we consider $k=1$.   Since $C = AB$, 
$CA^{-1}$ is a conjugate of $B$ and hence has order $n$.  Therefore,  
$CA^{-1}C \,A^{-1}C\,  A^{-1}  = (AC^{-1})^{n-3} = (AC^2)^{n-3}$.    Now suppose that the identity holds for $a=1$ and a value of $k$.  Then 
\[ 
\begin{aligned} 
CA^{-1}C (A^{-k-1}C) A^{-1} &=  CA^{-1}C (A^{-k}C) A^{-1} \; A C^{-1} \; A^{-1} C A^{-1}\\
                                             &= (AC^2)^{n-3} U^{k-1}\; A C^{-1}A^{-1} C A^{-1}\\
                                             &=  (AC^2)^{n-3} U^k,
\end{aligned} 
\]
since $U = AC(AC^2)^{n-2} = AC (CA^{-1})^2 = A C^2 A^{-1}C A^{-1} =    A C^{-1}A^{-1} C A^{-1}$. 

The proof of the result for general $a$ has an induction step as follows.                                              
\[ 
\begin{aligned} 
CA^{-1}C (A^{-k}C)^{a+1} A^{-1}  &=  CA^{-1}C (A^{-k}C)^a A^{-1} \; A ^{-k+1}C A^{-1}\\
                                                     &= [ (AC^2)^{n-3} U^{k-2} AC]^{a-1} (AC^2)^{n-3} U^{k-1}\;  A ^{-k+1}C A^{-1}\\
                                                     &= [ (AC^2)^{n-3} U^{k-2} AC]^{a} (AC^2)^{n-2} \; (C^{-1}AC^{-1}\; CA^{-1}C)\; A ^{-k+1}C A^{-1}\\
                                                     &=  [ (AC^2)^{n-3} U^{k-2} AC]^{a} (AC^2)^{n-2}   C^{-1}AC^2\;  (AC^2)^{n-3} U^{k-2} \\
                                                     &= [ (AC^2)^{n-3} U^{k-2} AC]^{a} (AC^2)^{n-3}  AC  (AC^2)^{n-2} U^{k-2} \\
                                                     &= [ (AC^2)^{n-3} U^{k-2} AC]^{a} (AC^2)^{n-3}  U^{k-1}. 
\end{aligned} 
\]
\end{proof} 

\begin{Prop}\label{l:longWordInU}   Fix  $m=3$ and $k \in \mathbb N$.  If $s \in \mathbb N$, $s>1$ and  $a_1, \dots, a_s, b_1, \dots, b_{s-1} \in \mathbb N$,  then  
\[
\begin{aligned} 
 &CA^{-1}C\; (A^{-k}C)^{a_s}\, (A^{-(k+1)}C)^{b_{s-1}}\, (A^{-k}C)^{a_{s-1}}\,\cdots (A^{-(k+1)}C)^{b_1}\, (A^{-k}C)^{a_1}A^{-1}\\
&= [ (AC^2)^{n-3} U^{k-2} AC]^{a_s}\; \;  [ (AC^2)^{n-3}U^{k-1} AC]^{b_{s-1} -1 }(AC^2)^{n-3}ACAC^2\\
&\;\;\;\;\cdot [ (AC^2)^{n-3} U^{k-2} AC]^{1+a_{s-1}}\;\; \;  [(AC^2)^{n-3}U^{k-1} AC]^{b_{s-2} -1}(AC^2)^{n-3}ACAC^2\\
&\;\;\;\;\;\;\;\;\;\;\;\;\;\;\;\;\;\;\;\;\;\;\;\;\;\;\;\;\;\;\;\;\;\;\;\;\;\;\;\;\;\;\;\;\;\;\;\;\vdots\\
&\;\;\;\;\cdot [ (AC^2)^{n-3} U^{k-2} AC]^{1+a_{2}}\;\; \;  [(AC^2)^{n-3}U^{k-1} AC]^{b_{1} -1 }(AC^2)^{n-3}ACAC^2\\
&\;\;\;\;  \cdot \;\; [ (AC^2)^{n-3} U^{k-2} AC]^{a_1}(AC^2)^{n-3}U^{k-1}.\\
\end{aligned} 
\]

\end{Prop}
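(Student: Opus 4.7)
The plan is to prove the identity by induction on $s \ge 2$, mirroring the structure of the proof of Proposition~\ref{p:longWordInWmIsThree} in the small-$\alpha$ setting. For the base case $s = 2$, the idea is to use the identity $A \cdot A^{-(k+1)}C = A^{-k}C$ to absorb an $A$ from between the leftmost block and the first $(A^{-(k+1)}C)^{b_1}$ factor. Concretely, I rewrite
\[
CA^{-1}C (A^{-k}C)^{a_2} (A^{-(k+1)}C)^{b_1} (A^{-k}C)^{a_1} A^{-1}
= CA^{-1}C (A^{-k}C)^{a_2} A^{-1} \cdot A^{-k}C \, (A^{-(k+1)}C)^{b_1-1} (A^{-k}C)^{a_1} A^{-1},
\]
apply Lemma~\ref{l:oneStepId} to the first factor, and use the easily-checked auxiliary identity $U^{k-1} A = AC^2 A^{-(k-1)} C$ (valid for $k \ge 2$ by induction using $U = AC^{-1}A^{-1}CA^{-1}$ together with $C^3 = I$) to maneuver the resulting $U$-word through the shifted $A$-powers. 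Iterating the $A$-absorption across the remaining copies of $A^{-(k+1)}C$, and reapplying Lemma~\ref{l:oneStepId} at each stage, should yield the right-hand side in the $s=2$ case.

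For the inductive step $s \ge 3$, I would peel off the leftmost $(A^{-k}C)^{a_s} (A^{-(k+1)}C)^{b_{s-1}}$ portion in the same way, reducing to an $(s-1)$-block expression on which the inductive hypothesis applies (after noting that the first inner $a$-exponent becomes $1 + a_{s-1}$ to account for the factor of $A^{-k}C$ absorbed from the peeled block). The crux is careful bookkeeping at each junction: a trailing $(AC^2)^{n-3} U^{k-1} AC$ factor immediately followed by the beginning of the next block must be shown to produce the connector $(AC^2)^{n-3} A C A C^2$ together with the $+1$ shift on the next $a$-exponent. This calls for an auxiliary ``transition lemma'' analogous to Lemma~\ref{l:wInTheMiddleMis3}, which I would prove by induction on $k$ using $U = AC^{-1}A^{-1}CA^{-1}$, the relation $C^3 = I$ valid in the projective setting since $m = 3$, and $(A^{-1}C)^n = I$ coming from $B^n = I$.

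The main obstacle I expect is the transition lemma itself together with the combinatorial bookkeeping of exponents. In particular, one must explain why the first and last $a$-blocks retain their original exponents $a_s$ and $a_1$ \emph{without} the $+1$ adjustment enjoyed by the interior blocks. This is a boundary effect: at the left end, there is no preceding block to donate an extra $A^{-k}C$ factor, while at the right end the terminal $A^{-1}$ plays the role otherwise filled by the junction absorption. I would verify this boundary behavior explicitly during the $s=2$ base case, and then check that the inductive peeling preserves the edge structure throughout, so that after $s-2$ peelings the remaining two-block core yields exactly the rightmost $[(AC^2)^{n-3} U^{k-2} AC]^{a_1}(AC^2)^{n-3} U^{k-1}$ predicted by the statement.
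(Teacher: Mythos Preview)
Your plan is essentially the paper's approach: peel off blocks from the left, absorb an $A$ via $A\cdot A^{-(k+1)}C = A^{-k}C$, and re-apply Lemma~\ref{l:oneStepId}. The paper, however, avoids both your auxiliary identity $U^{k-1}A = AC^2A^{-(k-1)}C$ and the anticipated separate ``transition lemma''. Instead, after writing $CA^{-1}C(A^{-k}C)^aA^{-1} = [(AC^2)^{n-3}U^{k-2}AC]^{a}(AC^2)^{n-2}$, it simply inserts the trivial identity $(C^{-1}AC^2)(CA^{-1}C)=I$ at the junction; this splits $(AC^2)^{n-2}\cdot C^{-1}AC^2 = (AC^2)^{n-3}ACAC^2$ on the left while producing a fresh $CA^{-1}C\,A^{-k}C\cdots$ on the right, ready for another application of Lemma~\ref{l:oneStepId}. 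Iterating this single move yields the two-block formula
\[
CA^{-1}C(A^{-k}C)^{a}(A^{-(k+1)}C)^{b}
= [\cdots]^{a}(AC^2)^{n-3}[U^{k-1}AC(AC^2)^{n-3}]^{b-1}ACAC^2\cdot CA^{-1}C\,A^{-k}C,
\]
whose tail $CA^{-1}C\,A^{-k}C$ is exactly what is needed to absorb into the next $(A^{-k}C)^{a_{j-1}}$ block (giving the $+1$ on interior exponents) and allowing the whole argument to proceed by direct iteration rather than by a separate inductive lemma. Your outline would work, but the paper's insertion trick is the cleaner device: it replaces both your auxiliary computations with a one-line identity and makes the $+1$ shift on interior blocks, as well as the unshifted boundary blocks, automatic.
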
 
\begin{proof} Note that we can rewrite the result of Lemma~\ref{l:oneStepId}  as
\[ CA^{-1}C\, (A^{-k}C)^a A^{-1} = [ (AC^2)^{n-3} U^{k-2} AC]^{a} (AC^2)^{n-2}.\]  
Therefore,  for any $a,b\in \mathbb N$, 
\[
\begin{aligned} 
&CA^{-1}C\; (A^{-k}C)^{a}\, (A^{-(k+1)}C)^{b}  = CAC^{-1}\; (A^{-k}C)^{a}\,A^{-1}\cdot A^{-k}C  (A^{-(k+1)}C)^{b-1}\\
                 &=   [ (AC^2)^{n-3} U^{k-2} AC]^{a} (AC^2)^{n-2}\; (C^{-1}A C^2\cdot CA^{-1}C)\; A^{-k}C A^{-1}\cdot A^{-k}C(A^{-(k+1)}C)^{b-2}\\
                 &=  [ (AC^2)^{n-3} U^{k-2} AC]^{a} (AC^2)^{n-3} AC A C^2\cdot (AC^2)^{n-3} U^{k-1} \cdot A^{-k}C (A^{-(k+1)}C)^{b-2}\\
                  &= [ (AC^2)^{n-3} U^{k-2} AC]^{a} (AC^2)^{n-3} AC A C^2\cdot(AC^2)^{n-3} U^{k-2}AC (AC^2)^{n-3}ACAC^2\\
                  &\;\;\;\;\;\;\cdot CA^{-1}C\;    A^{-k}C (A^{-(k+1)}C)^{b-2}\\
                  &= [ (AC^2)^{n-3} U^{k-2} AC]^{a} (AC^2)^{n-3}  U^{k-1} AC(AC^2)^{n-3}ACAC^2\cdot CA^{-1}C\;    A^{-k}C (A^{-(k+1)}C)^{b-2}\\
                  &\vdots\\
                &= [ (AC^2)^{n-3} U^{k-2} AC]^{a} (AC^2)^{n-3} \; [U^{k-1}AC(AC^2)^{n-3} ]^{b-1}ACAC^2\;  \cdot CA^{-1}C A^{-k}C. 
\end{aligned} 
\]
The result now follows by applying this repeatedly,  beginning with $CA^{-1}C\; (A^{-k}C)^{a_s}\, (A^{-(k+1)}C)^{b_{s-1}}$,  along with a final application of Lemma~\ref{l:oneStepId} to the resulting final term.
\end{proof}

 \section{To be continued} As already mentioned, in work in progress we apply the results of this paper to determine the natural extensions of the $T_{3,n,\alpha}$ as well the entropy functions  $\alpha \mapsto h(T_{3,n,\alpha})$.

\end{document}